\documentclass[12pt]{amsart}
\usepackage{latexsym,amsxtra,amscd,ifthen}
\usepackage{amsfonts}
\usepackage{verbatim}
\usepackage{amsmath}
\usepackage{amsthm}
\usepackage{amssymb}
\usepackage{url}
\theoremstyle{plain}

\newtheorem{theorem}{Theorem}
\newtheorem{lemma}[theorem]{Lemma}
\newtheorem{proposition}[theorem]{Proposition}
\newtheorem{corollary}[theorem]{Corollary}
\newtheorem*{corollary*}{Corollary}
\newtheorem{conjecture}[theorem]{Conjecture}

\numberwithin{theorem}{section}
\numberwithin{equation}{theorem}

\theoremstyle{definition}
\newtheorem{definition}[theorem]{Definition}
\newtheorem{example}[theorem]{Example}
\newtheorem{examples}[theorem]{Examples}
\newtheorem{remark}[theorem]{Remark}
\newtheorem*{remark*}{Remark}
\newtheorem{remarks}[theorem]{Remarks}
\newtheorem{question}[theorem]{Question}
\newtheorem{questions}[theorem]{Questions}
\newtheorem{family}[theorem]{Families}
\newtheorem*{question*}{Question}

\newtheorem{obs}[theorem]{Observation}

\newcommand{\CC}{\mathbb{C}}
\newcommand{\QQ}{\mathbb{Q}}
\newcommand{\SB}{\mathbb{S}}
\newcommand{\bfs}{\mathbf{s}}

\newcommand{\FF}{\mathbb{F}}
\newcommand{\ZZ}{\mathbb{Z}}
\DeclareMathOperator \Fract { {\mathrm{Fract}} }

\DeclareMathOperator{\ddb}{ddb}
\DeclareMathOperator{\dpb}{dpb}
\DeclareMathOperator{\fdb}{fdb}

\DeclareMathOperator{\Aut}{Aut}
\DeclareMathOperator{\fht}{fht}

\DeclareMathOperator{\vht}{vht}

\DeclareMathOperator{\gr}{gr}

\DeclareMathOperator{\trdeg}{tr{.}deg}

\newcommand \bff {{\mathbf{f}}}
\newcommand \bfg {{\mathbf{g}}}

\newcommand \calO {{\mathcal{O}}}
\newcommand \calV {{\mathcal{V}}}

\newcommand \xhat {\hat{x}}
\newcommand \yhat {\hat{y}}

\newcommand \Znn {\ZZ_{\ge 0}}
\newcommand \Zpos {\ZZ_{> 0}}

\DeclareMathOperator{\Weyl}{Weyl}
\newcommand \Kweyl {K_{\Weyl}}
\newcommand \kx {\Bbbk^\times}
\newcommand \kbar {\overline{\Bbbk}}
\DeclareMathOperator{\Id}{Id}

\begin{document}

\title{Poisson fields of two variables}

\author{Ken Goodearl and James J. Zhang}

\address{Goodearl: Department of Mathematics, 
University of California, Santa Barbara, California, USA}

\email{goodearl@math.ucsb.edu}

\address{Zhang: Department of Mathematics, Box 354350,
University of Washington, Seattle, Washington 98195, USA}

\email{zhang@math.washington.edu}

\begin{abstract}
We study invariants and structures of 
Poisson fields of rational functions in 
two variables. For four particular families, 
we classify the members, establish criteria 
for isomorphisms and, with the exception 
of the Weyl Poisson field, describe the 
automorphism groups. Embeddings are also 
investigated, along with an analog of the 
Dixmier Conjecture: For which Poisson 
fields is every Poisson endomorphism an 
automorphism? The answer is negative for 
the first family, but positive answers are 
obtained for several subclasses of the 
other families. Finally, we exhibit a 
Poisson field which is not isomorphic to 
any Poisson field $\Bbbk(x,y)$ for which 
$\{x,y\}$ is a polynomial in $\Bbbk[x,y]$.
\end{abstract}

\subjclass[2000]{Primary 17B63; secondary  17B40, 16W20}

\keywords{Poisson algebra, Poisson field, 
valuation, flag height, isomorphism problem, 
automorphism problem, Dixmier property}

\maketitle

\section*{Introduction}
\label{zzsec0}

Poisson algebras have been used extensively 
in many areas, and several important 
questions, such as automorphism and 
isomorphism problems, for Poisson algebras 
or fields, have recently been studied by 
several authors \cite{Bl, MLU, HTWZ1, HTWZ2}. 
The subject of Poisson algebras is closely 
connected to Poisson geometry, deformation 
quantization, and noncommutative algebraic 
geometry. For example, Poisson fields are 
viewed as an analogue of skew fields that 
are essential in the study of noncommutative 
birational geometry. 

There are two important Poisson fields 
that are commonly studied and have close 
connections to other research topics. The 
first one is the {\it Weyl Poisson field} 
$$\Kweyl:=\Bbbk(x,y\mid \{x,y\}=1)$$ 
whose underlying commutative field is 
the field of rational functions of two 
variables $\Bbbk(x,y)$ and whose Poisson 
structure is determined by the equation 
$\{x,y\}=1$. Here we fix a base field 
$\Bbbk$ that is of characteristic zero. 
Throughout this introduction, we assume 
that $\Bbbk$ is algebraically closed.

The second well known instance is the 
{\it $q$-skew Poisson field}, for a scalar 
$q \in \Bbbk$, namely 
\begin{equation} 
\notag
K_{q}:=\Bbbk(x,y\mid \{x,y\}=qxy)
\end{equation}
whose underlying commutative field is 
$\Bbbk(x,y)$ and whose Poisson structure 
is determined by the equation $\{x,y\}=qxy$. 
Note that the Weyl and $q$-skew Poisson 
fields can be viewed as semiclassical 
limits of the Weyl skew field 
$\Fract \Bbbk \langle x,y\mid xy-yx=1 
\rangle$ and the skew field 
$\Fract \Bbbk \langle x,y\mid xy 
= [(p-1)q+1] yx \rangle$. 

More generally, for every element 
$\bff\in \Bbbk(x,y)$ one can define a 
Poisson algebra structure on $\Bbbk(x,y)$ 
by using the equation
\begin{equation}
\label{E0.0.1}\tag{E0.0.1}
\{x,y\}=\bff. 
\end{equation}
More precisely, there is a unique Poisson 
structure on $\Bbbk(x,y)$ such that 
\eqref{E0.0.1} holds (see Corollary 
\ref{zzcor9.4}). For simplicity, 
we use $K\{\bff\}$ to denote this 
Poisson field. In this case we say $\bff$ 
is the {\it flag} of $K\{\bff\}$. Using 
this convention, $\Kweyl=K\{1\}$ and 
$K_{q}=K\{qxy\}$. It is well-known that 
$K_{q}$ is not isomorphic to $\Kweyl$ 
(e.g., \cite[Corollary 5.3]{GoLa}). In 
fact, $K_{q}$ cannot be embedded in 
$\Kweyl$ \cite[Corollary 5.9(2)]{HTWZ2}. 
On the other hand, given two general 
elements $\bff_1, \bff_2$ in $\Bbbk(x,y)$, 
it is difficult to tell whether 
$K\{\bff_1\}$ is isomorphic to $K\{\bff_2\}$ 
or not. So we formally ask the following 
questions:

\begin{question}[Classification Problem]
\label{zzque0.1}
Can we classify all Poisson fields of 
the form $K\{\bff\}$ up to Poisson 
isomorphism? 
\end{question}

This problem is extremely difficult and 
the moduli of Poisson fields $K\{\bff\}$ 
is very complicated. We introduce some 
invariants that can help or control the 
classification. 

\begin{definition}
\label{zzdef0.2}
Let $K$ be a Poisson field of the form 
$K\{\bff\}$ as defined above. The {\it 
flag height} of $K$ is defined to be
\begin{equation}
\label{E0.2.1}\tag{E0.2.1}
\fht(K) :=\min\{\deg \bfg \; \mid \; 
K\cong K\{\bfg\},\; \bfg\in \Bbbk[x,y]\}
\end{equation}
if there exist $\bfg\in \Bbbk[x,y]$ such 
that $K\cong K\{\bfg\}$. Otherwise, we 
define $\fht(K) :=\infty$. 
\end{definition}

For every non-negative integer $h$, let 
$Poi_{h}$ denote the moduli space of 
Poisson fields $K\{\bff\}$ with $\fht$ $h$. 
Trivially, $Poi_{0}$ consists of one 
Poisson field, which is $\Kweyl$.  
(This assumes that we view the zero 
polynomial as having degree less than $0$.) 
It is easy to check that $Poi_{1}$ is empty 
(cf.~Proposition \ref{zzpro1.1}(1)(2)). 
Dumas \cite[Proposition, p.12]{Dumas} 
proved that every Poisson field 
$K\{\bff\}$ with $\fht$ 2 is isomorphic 
to $K_{q}$ $(\cong K_{-q})$ for some 
$q\in \Bbbk^{\times}$.  This also gives 
a classification of the Poisson fields 
in $Poi_{2}$. In \cite{GZ2} we work out 
a description of $Poi_{3}$ (as well as 
some partial classifications of $Poi_{h}$ 
when $h=4,5$). Suppose $\bff\in \Bbbk[x,y]$ 
has degree 3. If $\bff$ is reducible (or 
more generally if the curve $\bff=0$ 
is singular), then $K\{\bff\}$ is isomorphic 
to either $\Kweyl$ or $K_q$. If the curve 
$\bff=0$ is smooth, then $K\{\bff\}$ is 
completely determined by the (elliptic) 
curve $\bff=0$. So the moduli of $Poi_{3}$ 
are related to the moduli of elliptic 
curves \cite{GZ2}. For $h=4$ or $5$, the 
big picture of $Poi_{h}$ is mysterious. 
When $\bff$ is irreducible of degree 4, we 
have some conjectures \cite{GZ2}. When 
$\bff$ is reducible of degree 4, it is 
also very complicated to understand 
$K\{\bff\}$.

For higher $h$, it is impossible to 
completely solve the classification problem 
of $Poi_{h}$ using currently available 
tools. The approach in this paper is to 
consider several families of reducible 
$\bff$ and the corresponding Poisson 
fields. Here is a list of the Poisson 
fields that we will study in detail.

\begin{family}
\label{zzfam0.3}
\begin{enumerate}
\item 
$K\{\bff\}$ where $\bff=q x^a y^b$ for 
some $q\in \Bbbk^{\times}$ and $a,b\in {\ZZ}$.
\item 
$K\{\bff\}$ where $\bff=p(x) xy $ for 
some $p(x)\in \Bbbk[x]$.
\item
$K\{\bff\}$ where $\bff\in \Bbbk[x,y]$ is 
homogeneous.
\item 
$K\{\bff\}$ where $\bff = f(x) g(y)$ for 
some $f(x)\in \Bbbk[x]$ and $g(y)\in \Bbbk[y]$.
\end{enumerate}
\end{family}

To better understand the classification 
project (Question \ref{zzque0.1}), we 
consider as well several subprojects, as 
below.

\begin{question}[Isomorphism Problem]
\label{zzque0.4}
Let $\bff_1$ and $\bff_2$ be two elements 
in $\Bbbk(x,y)$. Are there any effective 
criteria on when $K\{\bff_1\}\cong K\{\bff_2\}$?
\end{question}

It is not hard to see that $K\{g(x)\} \cong 
K\{1\}$ for any nonzero rational function 
$g(x) \in \Bbbk(x)$ (Proposition 
\ref{zzpro1.1}(1)); and with some effort 
one can show that $K\{x^{n} y\} \not\cong 
K\{x^{m} y\}$ if $n>m>0$ (Corollary 
\ref{zzcor4.2} and Proposition 
\ref{zzpro4.6}). Answers to Question 
\ref{zzque0.4} within the families (1)--(4) 
are given in Proposition \ref{zzpro4.6} 
and Corollaries \ref{zzcor5.5}, 
\ref{zzcor6.4}, \ref{zzcor7.4}. To save 
space we will not give full statements 
about the isomorphism problem in this 
introduction. 

\begin{question}[Embedding Problem]
\label{zzque0.5}
Let $\bff_1$ and $\bff_2$ be two elements 
in $\Bbbk(x,y)$. Are there any effective 
criteria on when there is a Poisson algebra 
morphism $K\{\bff_1\}\to K\{\bff_2\}$?
\end{question}

The Embedding Problem within family (1) 
is solved in Proposition \ref{zzpro4.10}.

Motivated by the Embedding Problem, we 
introduce the following concepts. 

\begin{definition}
\label{zzdef0.6}
Let $K$ and $K'$ be Poisson fields of 
the form $K\{\bff\}$.
\begin{enumerate}
\item
We say $K$ is {\it height cohereditary} 
if $\fht(K) \leq
\fht(K')$ for arbitrary $K' \supseteq K$.
\item
We say $K$ is {\it height hereditary}
if $\fht(K')\leq \fht(K)$ for arbitrary 
$K' \subseteq K$.
\end{enumerate}
\end{definition}

\begin{question}
\label{zzque0.7} 
Is there a Poisson field $K\{\bff\}$ that 
is not height (co)hereditary?
\end{question}

By definition, $\Kweyl$ is height cohereditary. 
It follows from \cite[Corollary 5.9(2)]{HTWZ2} 
that $K_{q}$ is height cohereditary. In 
this paper we will show that several 
families of Poisson fields are height 
cohereditary, see Lemmas \ref{zzlem6.2} 
and \ref{zzlem7.2}. But we cannot verify 
any example that is height hereditary; 
in particular, we do not know if 
$\Kweyl$ (resp., $K_{q}$ for $q\neq 0$) 
is height hereditary. 

There is another notion of ``height'', 
called {\it $1$-valuation height} and 
denoted by $\vht_1$, see Definition 
\ref{zzdef3.13}. The $1$-valuation height 
has a few advantages:

\begin{enumerate}
\item[(1)]
$\vht_1$ is defined for all Poisson 
fields with nontrivial Poisson bracket;
\item[(2)]
$\vht_1$ has the cohereditary property 
[Lemma \ref{zzlem3.14}(2)];
\item[(3)]
$\vht_1$ is bounded above by $\fht$ 
when applied to $K\{\bff\}$ 
[Lemma \ref{zzlem3.14}(4)].
\end{enumerate}

In general, $\vht_1(K)\neq \fht(K)$, 
see Lemma \ref{zzlem4.4}(3). We 
compute both $\vht_1$ and $\fht$ for 
Poisson fields in the four families 
mentioned above.

The automorphism problem is always 
an interesting question.

\begin{question}[Automorphism Problem]
\label{zzque0.8}
Let $\bff$ be a nonzero element in 
$\Bbbk(x,y)$. Are there any effective 
ways of computing the Poisson algebra 
automorphism group of $K\{\bff\}$?
\end{question}

Since the algebra automorphism group 
of $\Bbbk(x,y)$ is called the {\it 
Cremona group} of rank 2 and denoted 
by $Cr_2(\Bbbk)$, we call the Poisson 
algebra automorphism group of 
$K\{\bff\}$ the {\it Poisson 
Cremona group} associated to the 
flag $\bff$ and denote it by 
$Cr_2(\Bbbk,\bff)$. Note that 
$Cr_2(\CC,qxy)$ $(=\Aut_{Poi}(K_q))$ 
was computed by Blanc in 
\cite[Theorem 1]{Bl} (solving a 
conjecture of Usnich \cite{Us}). 
However, it is an open problem to 
describe $Cr_2(\Bbbk, 1)$ 
$(=\Aut_{Poi}(\Kweyl))$.  The group 
$Cr_2(\Bbbk, 1)$ does contain the 
automorphism group of the Weyl 
Poisson algebra (i.e., $\Bbbk[x,y]$ 
with $\{x,y\} = 1$), which was 
determined in the famous paper of 
Jung \cite{Jung}, but 
$Cr_2(\Bbbk, 1)$ is much larger.

In this paper we will compute 
$Cr_2(\Bbbk,\bff)$ for several 
classes of flags $\bff$, see 
Proposition \ref{zzpro4.8}, Corollary \ref{zzcor5.7}, and Theorems 
\ref{zzthm6.6}, \ref{zzthm7.5}.  
In the families (1) and (2), 
$Cr_2(\Bbbk,\bff)$ is infinite, 
while it could be finite in 
families (3), (4), and may even 
be trivial (Corollary \ref{zzcor7.7}).

A Poisson field $K$ is said to have 
the \emph{Dixmier property} if 
every Poisson endomorphism of $K$ 
is an automorphism.  We have named 
it so in reference to the 
\emph{Dixmier Conjecture} which 
asserts that all endomorphisms of 
a Weyl algebra $A_n(\Bbbk)$ in 
characteristic $0$ should be 
automorphisms.  We ask

\begin{question}[Dixmier Problem]
\label{zzque0.9}
For which nonzero $\bff$ in $\Bbbk(x,y)$ 
does $K\{\bff\}$ have the Dixmier 
property?
\end{question}

It is well-known that both $\Kweyl$ 
and $K_q$ do not have the Dixmier 
property, and neither do the other 
Poisson fields in the family (1): 
Corollary \ref{zzpro4.12}. However we 
prove that several classes of $K\{\bff\}$ 
within the families (2), (3), (4) have 
the Dixmier property: Corollary 
\ref{zzcor5.3} and Propositions 
\ref{zzpro6.3}(4), \ref{zzpro7.3}(3).

In the families (1)--(4), all $\bff$ 
are polynomials in $\Bbbk[x,y]$.
The question arises, whether all 
$K\{\bff\}$ have this form:

\begin{question}[Polynomial Flag Problem]
\label{zzque0.10}
Let $\bff$ be a nonzero element in 
$\Bbbk(x,y)$. Is there a polynomial 
$\bfg\in \Bbbk[x,y]$ such that
$K\{\bff\}\cong K\{\bfg\}$? 
Equivalently, does every $K\{\bff\}$
have finite flag height?
\end{question}

This question has a negative answer: 
Example \ref{zzexa8.5}.

Finally, we repeat one problem from 
the paper \cite{GZ2}.  As in 
\cite{HTWZ1}, the \emph{Sklyanin 
Poisson field} of GK-dimension $2$, 
here denoted $K_{Skly}$, is 
$\Fract(A_{\Omega}/ \langle 
\Omega-1 \rangle)$ where $A := 
\Bbbk[x,y,z]$ with the Poisson bracket
$$
\{f,g\}_\Omega := \det \begin{pmatrix}
f_x&f_y&f_z\\ g_x&g_y&g_z\\ \Omega_x&\Omega_y&\Omega_z \end{pmatrix} \qquad \forall\; f,g \in A
$$
and $\Omega := x^3+y^3+z^3+ \lambda xyz$ 
for some $\lambda\in\kx$ such that 
$\lambda^3 \ne - 27$. 

\begin{conjecture}
\label{zzcon0.11}
Every Poisson field $K\{\bff\}$ contains 
either $\Kweyl$, or some $K_{q}$, or 
some $K_{Skly}$.
\end{conjecture}

This conjecture holds for family (1), 
by Corollary \ref{zzcor4.2}(2).
\medskip

The paper is organized as follows. 
Section 1 contains some initial 
classification and subfield results, 
while Section 2 develops some calculations 
with logarithmic derivatives of 
rational functions that are needed in 
later sections.  Section 3 is concerned 
with Poisson valuations and related 
invariants from \cite{HTWZ1, HTWZ2}.  
The Automorphism, Isomorphism, Embedding, 
and Dixmier Problems for the families 
(1)--(4) are studied in Sections 4-7, 
while Section 8 gives the solution to 
the Polynomial Flag Problem.  Finally, 
Section 9 is an appendix with 
some general Poisson algebra results.

\subsection{Conventions}
\label{zzsec0.1}
Throughout the paper, $\Bbbk$ is a fixed
base field of characteristic zero, 
all algebras and fields are commutative 
$\Bbbk$-algebras, and $K$ is a field 
extension of $\Bbbk$.  We do not 
assume $\Bbbk$ algebraically closed 
except where necessary. When $K$ is 
not denoted a general (Poisson) field, 
we assume that $K = \Bbbk(x,y)$ is a 
rational function field in two 
variables.

For any $\bff \in K= \Bbbk(x,y)$, 
there is a unique Poisson bracket on 
$K$ such that $\{x,y\} = \bff$ 
(Corollary \ref{zzcor9.4}). The Poisson 
field with this bracket will be denoted 
$K\{\bff\}$.

The symbol $\cong$ between Poisson fields 
$K$, $L$ means that $K$ and $L$ are 
Poisson-iso\-mor\-phic.

We denote the Poisson bracket on $\Kweyl$ 
by $\{-,-\}_w$, and we recall that this 
bracket is given by
$$
\{f,g\}_w = \frac{\partial f}{\partial x} \frac{\partial g}{\partial y} 
- \frac{\partial f}{\partial y} 
\frac{\partial g}{\partial x} 
\qquad \forall\; f,g \in K.
$$

%%%%%%%%%%%%%%%%%%%%%%%%%%%%%%%%
\section{Some initial results}  
\label{zzsec1}

We give some classification results and 
some instances of Poisson subfields that 
involve only direct bracket calculations 
and changes of variables.

%%%%%%%%%%%%%%%%%%%%

\subsection{Poisson fields isomorphic to 
the Weyl or $q$-skew Poisson fields}  
\label{zzsec1.1} 

Dumas showed in \cite{Dumas} that when 
$\bff \in \Bbbk[x,y]$ is a nonzero 
polynomial with (total) degree at most 
$2$, the Poisson field $K\{\bff\}$ is 
isomorphic to either $\Kweyl$ or $K_q$ 
for some $q \in \kx$.  We give some 
extensions of Dumas' arguments and 
consequences.

\begin{proposition}  
\label{zzpro1.1}
Let $K = K\{\bff\}$ with $\bff \in K^\times$.
\begin{enumerate}
\item[(1)] 
If $\bff \in \Bbbk(x)$ or 
$\bff \in \Bbbk(y)$, then $K \cong \Kweyl$.
\item[(2)] 
If $\bff = f+g$ with $f \in \Bbbk(x)$, 
$g \in \Bbbk[y]$, and $\deg g \le 1$,  
then $K \cong \Kweyl$.
\item[(3)] 
Suppose $\bff = xf+g$ with $f,g \in 
\Bbbk(y)$ and $f \ne 0$. Then 
$K = \Bbbk(x',y)$ with $\{x',y\} = x'f$.
\begin{itemize}
\item 
If $f \in \kx$, then $K \cong \Kweyl$.
\item 
If $f = qy+r$ with $q \in \kx$ and 
$r \in \Bbbk$, then $K \cong K_q$.
\end{itemize}
\item[(4)] 
Suppose $\bff = x^2f + xg + h$ with 
$f,g,h \in \Bbbk(y)$ and $f \ne 0$. 
Then $K = \Bbbk(x'',y)$ with 
$\{x'',y\} = (x'')^2 + h''$ and 
$h'' := fh - \frac 14 g^2 \in \Bbbk(y)$.
\begin{itemize}
\item 
If $h'' \in \Bbbk[y]$ with 
$\deg h'' \le 1$, then $K \cong \Kweyl$.
\item 
If $h'' = \alpha y^2 + \beta y 
+ \gamma$ with $\alpha \in \kx$ 
and $\beta,\gamma \in \Bbbk$, and if 
$- \alpha$ has square roots in $\Bbbk$, 
then $K \cong K_q$ with 
$q := 2 \sqrt{- \alpha} \in \kx$.
\end{itemize}
\end{enumerate}
\end{proposition}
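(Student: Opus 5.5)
The whole proposition reduces to changes of variables of the form $x \mapsto \phi(x,y)$ with $y$ fixed. The key identity comes from the Leibniz rule and uniqueness of the bracket (Corollary \ref{zzcor9.4}). For any $u \in K$ we have $\{u,y\} = \frac{\partial u}{\partial x}\{x,y\} = \frac{\partial u}{\partial x}\,\bff$, and $\{x,u\} = \frac{\partial u}{\partial y}\,\bff$. Whenever $K = \Bbbk(u,y)$ or $K=\Bbbk(x,u)$, the field $K$ is therefore $K\{\bfg\}$ in the new generators, where $\bfg$ is this bracket rewritten in those generators. The main tools are this identity and the isomorphisms $K\{\bff\}\cong K\{\bff'\}$ obtained by relabelling generators. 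In each part I choose a birational substitution that normalizes the flag.

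For (1), take $\bff \in \Bbbk(x)$. I will find $u \in \Bbbk(x,y)$ with $\Bbbk(u,y)=K$ and $\{u,y\}=1$, which requires $\partial u/\partial x = 1/\bff$. If $1/\bff$ has a rational antiderivative in $x$ this works directly, but in general it need not (there may be logarithmic terms). So I switch roles and instead set $v = y/\bff(x)$. Then $\Bbbk(x,v) = K$ and $\{x,v\} = \bff\cdot \frac{1}{\bff} = 1$, so $K \cong \Kweyl$. The case $\bff\in\Bbbk(y)$ follows by symmetry, using $u=x/\bff(y)$.

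For (2), write $g = a y + b$. If $a = 0$, then $\bff\in \Bbbk(x)$ and part (1) applies. If $a\ne 0$, I substitute $y' = y + c(x)$ with $c\in\Bbbk(x)$. Then $\{x,y'\} = \bff = f(x) + a y' - a c(x) + b$, and choosing $c = (f+b)/a$ gives $\{x,y'\}=a y'\in\Bbbk(y')$. Part (1) then finishes.

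For (3), I set $x' = x + s(y)$ with $s\in\Bbbk(y)$. Then $\{x',y\} = xf + g = x' f + (g - sf)$, so $s = g/f$ gives $\{x',y\} = x'f$. If $f=c\in\kx$, then $x'c \in \Bbbk(x')$, so part (1) gives $\Kweyl$. If $f = qy+r$, put $y'=y+r/q$; then $\{x',y'\} = q x'y'$, so $K\cong K_q$.

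For (4), the plan is to complete the square and rescale. Set $x_1 = x + g/(2f)$, so that $\bff = f x_1^2 + (h - g^2/(4f))$. Then set $x'' = f x_1$; this is legitimate since $f\in\Bbbk(y)^\times$, and it keeps $\Bbbk(x'',y)=K$. This gives $\{x'',y\} = f\{x_1,y\} = f^2x_1^2 + fh - g^2/4 = (x'')^2 + h''$. In the first bullet, $\{y,x''\} = -(x'')^2 - h''$, and part (2) applies with the roles of the two variables exchanged and a sign adjustment via $y\mapsto -y$.

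The second bullet is the main obstacle, because one needs an explicit birational map rather than just a translation. First I translate $y$ to remove $\beta$, so $h'' = \alpha y^2 + \gamma'$, and rescale to reach $(x'')^2 - \delta^2 y^2 + \gamma'$ with $\delta = \sqrt{-\alpha}$. I factor $(x'')^2-\delta^2y^2 = (x''-\delta y)(x''+\delta y)$ and use $X = x''-\delta y$, $Y=x''+\delta y$ as new generators, following Dumas' trick; then $\{X,Y\}$ equals $2\delta$ times the bracket, namely $2\delta(XY+\gamma')$. If $\gamma'=0$ this is $K_q$ with $q=2\delta$. Otherwise I treat $XY + \gamma'$ as linear in $X$ with coefficient $Y$ and apply the method of part (3): setting $X' = X + \gamma'/Y$ should give $\{X',Y\} = 2\delta X'Y$, hence $K\cong K_q$. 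Checking these bracket computations and signs carefully is the routine part.
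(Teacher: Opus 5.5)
Your proposal is correct and takes essentially the same route as the paper: the same substitutions in (1)--(3), with your $x'$ in (3) equal to the paper's divided by $f$, the same element $x'' = xf + \tfrac12 g$ in (4), and in the second bullet of (4) exactly the paper's $x^*, y^*, x^{**}$ argument. One small point is that in the first bullet of (4), part (2) applies directly to the generators $x'', y$, since $(x'')^2 \in \Bbbk(x'')$ and $h'' \in \Bbbk[y]$ has degree at most $1$, so no exchange of roles or sign change is needed (literally swapping the roles would put the degree-$2$ term in the wrong slot).
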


\begin{proof}
(1) If $\{x,y\} = f \in \Bbbk(x)$, 
take $y' := f^{-1} y$. Then 
$K = \Bbbk(x,y')$ and $\{x,y'\} 
= f^{-1} \{x,y\} = 1$, so 
$K \cong \Kweyl$. The case when 
$\{x,y\} \in \Bbbk(y)$ is symmetric.

(2) If $\deg g \le 0$, then 
$\{x,y\} \in \Bbbk(x)$, and so  
$K \cong \Kweyl$ by (1). Now suppose 
that $g = \beta y + \gamma$ with 
$\beta \in \kx$, $\gamma \in \Bbbk$. 
If $y' := f+g$, then $K = \Bbbk(x,y')$ 
and 
$$\{x,y'\} = \beta \{x,y\} = \beta y',$$
and again (1) implies that 
$K \cong \Kweyl$.

(3) Set $x' := xf+g$. Then $K = 
\Bbbk(x',y)$ and $\{x',y\} = 
\{x,y\} f = x' f$.

When $f \in \kx$, we have $\{x',y\} 
\in \Bbbk(x')$, so $K \cong \Kweyl$ 
by (1).

Now suppose $f = qy+r$ with $q \in 
\kx$ and $r \in \Bbbk$. Then $K = 
\Bbbk(x',f)$ and $\{x',f\} = 
q \{x',y\} = q x' f$, so $K \cong K_q$.

(4) First set $x' := x f$. Then 
$K = \Bbbk(x',y)$ and 
$$\{x',y\} = \{x,y\} f = x^2 f^2 
+ x gf + hf = (x')^2 + x' g + hf.$$
Now set $x'' := x' + \frac12 g$. Then 
$K = \Bbbk(x'',y)$ and 
$$\{x'',y\} = \{x',y\} = (x')^2 
+ x' g + hf = (x'')^2 - \tfrac14 g^2 + hf,$$
so we have the first part of (4).
The first bullet then follows from (2).

Suppose $h'' = \alpha y^2 + \beta y 
+ \gamma$ with $\alpha \in \kx$, 
$\beta,\gamma \in \Bbbk$. Then 
$h'' = \alpha ( y + \frac\beta{2\alpha} )^2 
+ ( \gamma - \frac{\beta^2}{4\alpha})$. 
If $y' := y + \frac\beta{2\alpha}$, then 
$K = \Bbbk(x'',y')$ and 
$$\{x'',y'\} = \{x'',y\} 
= (x'')^2 + h'' 
= (x'')^2 + \alpha (y')^2 + \gamma'$$
where $\gamma' := \gamma - 
\frac{\beta^2}{4\alpha}$. 

By assumption, there exists 
$\zeta \in \kx$ such that $\zeta^2 
= - \alpha$. If $x^* := x'' - \zeta y'$ 
and $y^* := x'' + \zeta y'$, then 
$K = \Bbbk(x^*,y^*)$ and
$$\{x^*,y^*\} = 2 \zeta \{x'',y'\} 
= 2 \zeta \bigl( (x'')^2 + \alpha (y')^2 
+ \gamma' \bigr) 
= 2 \zeta x^* y^* + 2 \zeta \gamma' \,.$$
Setting $q := 2 \zeta$ and 
$x^{**} := x^* + \gamma' (y^*)^{-1}$, we 
see that $K = \Bbbk(x^{**},y^*)$ and
$$\{x^{**},y^*\} = \{x^*,y^*\} 
= q x^* y^* + q \gamma' 
= q \bigl( x^{**} -  \gamma' (y^*)^{-1} \bigr) 
y^* + q \gamma' = q x^{**} y^*.$$
Thus, $K \cong K_q$ in this case.
\end{proof}

\begin{corollary}  
\label{zzcor1.2}
\cite[Proposition, \S1.2.1, p.12]{Dumas}
Assume that all elements of $\Bbbk$ have 
square roots in $\Bbbk$. Let $K = K\{\bff\}$ 
where $\bff\in K^{\times}$.
\begin{enumerate}
\item[(1)]
If $\bff$ is a polynomial in $\Bbbk[x,y]$ 
with total degree $\le 2$, then either 
$K \cong \Kweyl$ or $K \cong K_q$ for 
some $q \in \kx$.
\item[(2)]
If $\bff= qxy+ a x+ g(y)$ for 
$q\in \Bbbk^{\times}$, $a\in \Bbbk$, 
and $g(y)\in \Bbbk(y)$, then 
$K\cong K_q$.
\item[(3)]
If $\bff=a x^2 +bx + cy+d$ for 
$a \in \kx$ and $b,c,d \in \Bbbk$, then 
$K\cong \Kweyl$.
\end{enumerate}
\end{corollary}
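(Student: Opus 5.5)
All three parts reduce to Proposition \ref{zzpro1.1}, after a linear change of variables and sorting $\bff$ by its degree in one of the variables.

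For (1), write $\bff = \alpha x^2 + \beta xy + \gamma y^2 + (\text{terms of degree} \le 1)$. If $\bff$ has degree at most $1$, then (after swapping $x,y$ if needed, which changes $\bff$ to $-\bff$ with the roles of the variables reversed) $\bff = f + g$ with $f \in \Bbbk[x]$ and $\deg g \le 1$, so $K \cong \Kweyl$ by Proposition \ref{zzpro1.1}(2).

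Now let the quadratic part be nonzero. I would first make $\alpha \neq 0$ by a linear change of variables. If $\alpha = 0$ and $\gamma \neq 0$, swap $x$ and $y$. If $\alpha = \gamma = 0$, replace $x$ by $x + y$; this keeps $\Bbbk(x,y)$, and a linear substitution with determinant $\delta$ just multiplies the flag by a nonzero constant, giving a nonzero $y^2$- or $x^2$-coefficient. Constants are harmless: rescaling $y$ by $\lambda$ turns $\{x,y\} = \bff$ into $\{x, \lambda y\} = \lambda \bff$, with $\bff$ re-expressed in the new variables.

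With $\alpha \neq 0$, write $\bff = x^2 f + x g + h$ where $f = \alpha$, $g$ is linear in $y$, and $h$ is quadratic in $y$. Proposition \ref{zzpro1.1}(4) then applies with $h'' = fh - \frac14 g^2$, a polynomial in $y$ of degree $\le 2$.
\begin{itemize}
\item If $\deg h'' \le 1$, then $K \cong \Kweyl$ by the first bullet.
\item Otherwise the leading coefficient of $h''$ is nonzero, and $-\alpha''$ has a square root by hypothesis, so the second bullet gives $K \cong K_q$ with $q \neq 0$.
\end{itemize}
The only real bookkeeping obstacle is ensuring this case split covers every quadratic form, which the change of variables above handles.

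For (2), view $\bff = x(qy + a) + g(y)$, which is Proposition \ref{zzpro1.1}(3) with $f = qy + a$. Since $f = qy + r$ with $q \in \kx$, the second bullet gives $K \cong K_q$.

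For (3), apply Proposition \ref{zzpro1.1}(4) with $f = a \in \kx$, $g = b$ and $h = cy + d$. Then $h'' = a(cy + d) - \frac14 b^2$ is a polynomial in $y$ of degree $\le 1$, so $K \cong \Kweyl$ by the first bullet. Note that (3) does not need the square-root hypothesis.
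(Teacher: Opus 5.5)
Your proof is correct, but part (1) is organized differently from the paper's. The paper makes no change of variables. It writes $\bff = x^2 f_0 + x f_1 + f_2$ with $f_i \in \Bbbk[y]$ and $\deg f_i \le i$, and sends the three cases $f_0 = f_1 = 0$, $f_0 = 0 \ne f_1$, and $f_0 \ne 0$ to parts (1), (3), and (4) of Proposition \ref{zzpro1.1}, respectively. So every flag with no $x^2$ term (for instance $xy + y^2$) is handled directly by part (3) of that proposition, with no square roots needed. The degree $\le 1$ case also needs no separate treatment there.

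You instead force a nonzero $x^2$-coefficient by a linear substitution and push every quadratic flag through part (4). This is uniform, but it costs the extra (correct) observation that a linear substitution preserves total degree and scales the flag by its determinant. It also hides the fact that, in the cases you reach from $\alpha = 0$, the square root required by the second bullet exists automatically: the quadratic coefficient of $h''$ there is $-\beta^2/4$.

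For (3), the paper simply applies Proposition \ref{zzpro1.1}(2) with $f = ax^2 + bx + d$ and $g = cy$. Your use of the first bullet of part (4) is valid but roundabout, since that bullet is itself deduced from part (2).

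Part (2) is identical to the paper's argument. The swap you allow in the degree $\le 1$ case is never needed, since $a_0 + a_1 x + a_2 y$ is already of the form $f(x) + g(y)$ with $\deg g \le 1$.
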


\begin{proof}
(1) By assumption, $\{x,y\} = x^2 f_0 
+ x f_1 + f_2$ for some $f_i \in \Bbbk[y]$ 
with $\deg f_i \le i$. The cases $(f_0=f_1=0)$, $(f_0=0,\; f_1 \ne 0)$, and $(f_0 \ne 0)$ 
are covered by parts (1), (3), and (4) 
of Proposition \ref{zzpro1.1}, respectively.

(2) This is covered by Proposition 
\ref{zzpro1.1}(3).

(3) This is a special case of  
Proposition \ref{zzpro1.1}(2). 
\end{proof}

\begin{corollary}  
\label{zzcor1.3}
Let $K = K\{xf\}$ for some nonzero 
polynomial $f \in \Bbbk[y]$.
\begin{enumerate}
\item[(1)] 
If $f \in \kx$, then $K \cong \Kweyl$.
\item[(2)] 
If $f = qy+r$ with $q \in \kx$ and 
$r \in \Bbbk$, then $K \cong K_q$.
\item[(3)] 
Suppose $f = \alpha y^2 + \beta y + \gamma$ 
with $\alpha \in \kx$ and $\beta,\gamma 
\in \Bbbk$. 
\begin{itemize}
\item 
If $\beta^2 = 4 \alpha \gamma$, then 
$K \cong \Kweyl$.
\item 
If $\beta^2 \ne 4 \alpha \gamma$ and 
$\beta^2 - 4 \alpha \gamma$ has square 
roots in $\Bbbk$, then $K \cong K_q$ 
where $q = \sqrt{\beta^2 - 4 \alpha \gamma} 
\in \kx$.
\end{itemize}
\end{enumerate}
\end{corollary}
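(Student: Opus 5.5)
Parts (1) and (2) follow at once from Proposition \ref{zzpro1.1}(3) with $g = 0$. There $\{x,y\} = xf$ with $f \in \Bbbk[y]$ nonzero. If $f \in \kx$, the first bullet gives $K \cong \Kweyl$. If $f = qy + r$ with $q \in \kx$, the second bullet gives $K \cong K_q$.

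For part (3), the plan is to view the flag as a quadratic polynomial in $y$ with coefficients in $\Bbbk(x)$, and then apply Proposition \ref{zzpro1.1}(4) with the roles of $x$ and $y$ interchanged. Put $X := y$ and $Y := x$. Then $K = \Bbbk(X,Y)$ and
$$\{X,Y\} = -\{x,y\} = -x(\alpha y^2 + \beta y + \gamma) = X^2(-\alpha Y) + X(-\beta Y) + (-\gamma Y).$$
So $K$, as a Poisson field in the ordered generators $(X,Y)$, has the form of Proposition \ref{zzpro1.1}(4), with coefficients $-\alpha Y$, $-\beta Y$, $-\gamma Y \in \Bbbk(Y)$, and the leading one is nonzero. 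The only care needed is to keep the generators in this order, so that we really have $K = K\{\bff'\}$ with $\bff' = \{X,Y\}$ and no sign ambiguity arises.

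The key computation is the invariant
$$h'' = (-\alpha Y)(-\gamma Y) - \tfrac14(\beta Y)^2 = -\tfrac14(\beta^2 - 4\alpha\gamma)\,Y^2 \in \Bbbk[Y].$$
If $\beta^2 = 4\alpha\gamma$, then $h'' = 0$, which has degree $\le 1$, so the first bullet of Proposition \ref{zzpro1.1}(4) gives $K \cong \Kweyl$.

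Otherwise $h'' = \alpha' Y^2$ with $\alpha' := -\tfrac14(\beta^2 - 4\alpha\gamma) \in \kx$. Then $-\alpha' = \tfrac14(\beta^2 - 4\alpha\gamma)$ has the square root $\tfrac12\sqrt{\beta^2 - 4\alpha\gamma} \in \Bbbk$ by hypothesis. The second bullet therefore yields $K \cong K_q$ with
$$q = 2\sqrt{-\alpha'} = \sqrt{\beta^2 - 4\alpha\gamma}.$$
A different choice of square root only changes $q$ to $-q$, which is harmless since $K_q \cong K_{-q}$ by swapping the generators.

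There is no real obstacle here. The one point needing attention is the variable swap: the hypotheses of Proposition \ref{zzpro1.1}(4) must be checked for the ordered pair $(X,Y)$, including that the coefficients lie in $\Bbbk(Y)$ and that the constant normalizations match.
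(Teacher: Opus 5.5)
Your proof is correct and is essentially the paper's argument. Parts (1) and (2) come from Proposition~\ref{zzpro1.1}(1),(3), and part (3) is reduced to Proposition~\ref{zzpro1.1}(4), where your value $h'' = -\tfrac14(\beta^2-4\alpha\gamma)Y^2$ correctly gives both bullets and $q=\sqrt{\beta^2-4\alpha\gamma}$. The only difference is that the paper completes the square in $y$, with $y' := y+\beta/(2\alpha)$, and changes generators by hand: to scalar multiples of $x$ and $xy'$ in the nondegenerate case, and to $(y')^{-1}$ and $x$ in the degenerate case. You instead swap $x$ and $y$ and let the substitutions inside the proof of Proposition~\ref{zzpro1.1}(4) do that work, and in the nondegenerate case this yields the same generators $x$ and $-\alpha xy'$ up to scalars.
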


\begin{proof}
(1) and (2) follow from Proposition 
\ref{zzpro1.1}(1)(3).

(3) Write $f = \alpha ( y' )^2 + \gamma'$ 
where $y' := y + \frac\beta{2\alpha}$ 
and $\gamma' := \gamma - 
\frac{\beta^2}{4 \alpha}$. Then 
$K = \Bbbk(x,y')$ and
$$\{x,y'\} = \{x,y\} 
= x \bigl( \alpha ( y' )^2 + \gamma' \bigr).$$

$\bullet$ Suppose that $\beta^2 
= 4 \alpha \gamma$, so that $\gamma' = 0$. 
Then $K = \Bbbk( (y')^{-1}, x)$ and 
$$\{(y')^{-1}, x\} = -(y')^{-2} \{y',x\} 
= (y')^{-2} \alpha x (y')^2 =\alpha x,$$ 
whence $K \cong \Kweyl$ by Proposition 
\ref{zzpro1.1}(1).

$\bullet$ Suppose that $\beta^2 
\ne 4 \alpha \gamma$, so that 
$\gamma' \ne 0$. Then 
$K = \Bbbk(\gamma' x, xy')$ and
$$\{\gamma' x, xy'\} = \gamma' x \{x,y'\} 
= \gamma' x^2 \bigl( \alpha ( y' )^2 
+ \gamma' \bigr) = (\gamma' x)^2 
+ \gamma' \alpha (xy')^2,$$
so Proposition \ref{zzpro1.1}(4) 
implies that $K \cong K_q$ where
$q := 2 \sqrt{- \gamma' \alpha} 
= \sqrt{\beta^2 - 4 \alpha \gamma}$.
\end{proof}

What happens in Corollary \ref{zzcor1.3} 
when $\deg f \ge 3$? As we show later, the 
resulting Poisson fields need not be 
isomorphic to $\Kweyl$ or to any 
$K_q$ -- see Proposition \ref{zzpro4.6} 
and the remarks following Proposition 
\ref{zzpro5.2}, Lemma \ref{zzlem6.2} 
(after interchanging $x$ and $y$).  In 
particular, $K\{xy^3\}$ is not 
isomorphic to $\Kweyl$ or to any $K_q$.

%%%%%%%%%%%%%%%%%%%%

\subsection{Some Poisson subfields}
\label{zzsec1.2}
Understanding subfields of $K\{\bff\}$ is helpful for 
the Embedding Problem [Question \ref{zzque0.5}].
We exhibit a number of Poisson subfields of our standard 
Poisson fields. Here and later, we use the following 
observation without explicit mention:

\begin{obs} \cite[Lemma 1.6(1)]{HTWZ1}
\label{zzobs1.4}
Let $K$ be a Poisson 
field over $\Bbbk$ and $f,g \in K$ such that $\{f,g\} \ne 0$. Then $f$ 
and $g$ are algebraically independent over $\Bbbk$.
\end{obs}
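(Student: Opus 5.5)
The plan is to argue by contradiction. Suppose $f$ and $g$ are algebraically dependent over $\Bbbk$. Then there is a nonzero polynomial $P \in \Bbbk[s,t]$ with $P(f,g)=0$. We will show this forces $\{f,g\}=0$, contradicting the hypothesis.

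The first step reduces to the case of one variable. Choose $P$ of minimal total degree among all nonzero polynomials vanishing at $(f,g)$. If $P$ involves only $t$, then $g$ is algebraic over $\Bbbk$. The same holds symmetrically if $P$ involves only $s$, in which case $f$ is algebraic. So first we show that an element $a\in K$ algebraic over $\Bbbk$ is Poisson central.
\begin{itemize}
\item Let $m(t)$ be the minimal polynomial of $a$ over $\Bbbk$.
\item Since $\{-,b\}$ is a $\Bbbk$-linear derivation, $0=\{m(a),b\}=m'(a)\{a,b\}$.
\item Because $\operatorname{char}\Bbbk=0$, the polynomial $m$ is separable, so $m'(a)\neq 0$. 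Hence $\{a,b\}=0$.
\end{itemize}
This gives $\{f,g\}=0$ in that case.

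Next, in general, apply the derivation $\{-,g\}$ to the relation $P(f,g)=0$. The chain rule for derivations gives
$$0=\{P(f,g),g\}=\frac{\partial P}{\partial s}(f,g)\,\{f,g\}+\frac{\partial P}{\partial t}(f,g)\,\{g,g\}=\frac{\partial P}{\partial s}(f,g)\,\{f,g\}.$$
If $\partial P/\partial s\neq 0$, it is a nonzero polynomial of smaller total degree than $P$. By minimality, $\frac{\partial P}{\partial s}(f,g)\neq 0$. Since $K$ is a field, this forces $\{f,g\}=0$, a contradiction. If instead $\partial P/\partial s=0$, then $P$ involves only $t$, since $\operatorname{char}\Bbbk=0$. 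That is exactly the case handled in the first step.

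The main obstacle is making sure $\frac{\partial P}{\partial s}(f,g)\neq0$, and this is precisely what the minimal-degree choice of $P$ guarantees. The characteristic-zero assumption is used twice: to know that a vanishing partial derivative means the variable is absent, and to get $m'(a)\neq0$. Everything else is the Leibniz rule in $K$.
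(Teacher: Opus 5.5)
Your proof is correct and complete. Choosing $P$ of minimal total degree guarantees $\frac{\partial P}{\partial s}(f,g)\neq 0$ whenever $\partial P/\partial s\neq 0$. The degenerate case $P\in\Bbbk[t]$ is handled correctly by showing that elements algebraic over $\Bbbk$ are Poisson central. Characteristic $0$ is used exactly where you say.

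The paper gives no proof of this observation; it simply quotes \cite[Lemma 1.6(1)]{HTWZ1}. So there is no in-paper argument to compare against. Two small remarks:
\begin{enumerate}
\item Your two cases merge into one. Since $P$ is nonconstant, either $\partial P/\partial s\neq 0$, and you apply $\{-,g\}$, or $\partial P/\partial t\neq 0$, and you apply $\{f,-\}$ to get $0=\frac{\partial P}{\partial t}(f,g)\{f,g\}$. In both cases minimality makes the coefficient nonzero. Your separate minimal-polynomial step is this same argument in one variable.
\item A common alternative phrasing, not necessarily the one in the cited source, uses the Poisson centralizer $C=\{b\in K\mid \{f,b\}=0\}$. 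It is a subfield containing $\Bbbk(f)$, and it is algebraically closed in $K$: apply $\{f,-\}$ to the minimal polynomial $m$ of $b$ over $C$ and use $m'(b)\neq 0$. That phrasing gives the slightly stronger structural fact that centralizers are algebraically closed subfields. Your version is more self-contained and needs nothing beyond the Leibniz rule.
\end{enumerate}
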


For any rational function field in one variable, we abbreviate the standard derivative by $'$.  In particular, $f' := df/dx$ and $g' := dg/dy$ for $f \in \Bbbk(x)$ and $g \in \Bbbk(y)$.

\begin{examples}  
\label{zzexa1.5}  
\textbf{(1)} 
\emph{Let $K = \Kweyl$. For any $f \in \Bbbk(x) \setminus \Bbbk$, 
the subfield $\Bbbk(f,y/f')$ is a Poisson subfield of $K$ 
isomorphic to $K$. If $f \in \Bbbk[x]$, then 
$[K:\Bbbk(f,y/f')] = \deg f$.}

\emph{Similarly, for any $g \in \Bbbk(y) \setminus \Bbbk$, the 
subfield $\Bbbk(x/g',g)$ is a Poisson subfield of $K$ 
isomorphic to $K$. If $g\in \Bbbk[y]$, then 
$[K:\Bbbk(x/g',g)] = \deg g$.}

In the first case, set $L := \Bbbk(f,y/f')$. Since 
$$
\{f,y/f'\} = f'\cdot (1/f') - 0\cdot (- y (f')^{-2} f'') = 1,
$$
$L$ is a Poisson subfield isomorphic to $K$. 

Now suppose that $f \in \Bbbk[x]$, and note that $L(x) = K$. 
If $\deg f = 1$, then $x \in L$ and so $[K:L] = 1$. Next, 
assume that $\deg f = n > 1$. We have a polynomial 
$f(t) - f(x) \in L[t]$ with $t$-degree $n$ having $x$ as a 
root, so $[K:L] \le n$. If this inequality is strict, 
$\sum_{i=0}^m b_i x^i = 0$ for some $b_i \in L$ with $m < n$ 
and $b_m \ne 0$. After multiplying this equation by a suitable 
element of $\Bbbk[f,y/f']$, we may assume that all 
$b_i \in \Bbbk[f,y/f']$. However, this contradicts the fact 
that $\Bbbk[x,y/f']$ is a free module over $\Bbbk[f,y/f']$ 
with basis $(1,x,\dots,x^{n-1})$. Therefore $[K:L] = n$, as 
claimed.

The second case is proved in the same fashion.

\textbf{(2)}
\emph{Let $K = \Kweyl$. For any nonzero integer $a$, the 
subfield $\Bbbk(x^a,xy)$ is a Poisson subfield of $K$ 
isomorphic to $K$, with $[K:\Bbbk(x^a,xy)] = |a|$. 
Similarly, $\Bbbk(xy,y^a)$ is a Poisson subfield of $K$ 
isomorphic to $K$, with $[K:\Bbbk(xy,y^a)] = |a|$.}

In the first case, set $f(x) = x^a$ and note that
$$
\Bbbk(x^a,xy) = \Bbbk(f(x), af(x)y/f'(x)) 
= \Bbbk(f(x), y/f'(x)),
$$
and so $\Bbbk(x^a,xy) \cong K$ by part (1). If $a>0$, 
part (1) also shows that  $[K:\Bbbk(x^a,xy)] = a$. If 
$a<0$, we have $[K:\Bbbk(x^a,xy)] = - a$ because 
$\Bbbk(x^a,xy) = \Bbbk(x^{-a},xy)$.

The second case is similar.

\textbf{(3)}
\emph{Let $K = K_q$ for some $q \in \kx$. For any 
integers $a,b,c,d$ with $ad \ne bc$, the subfield 
$\Bbbk(x^ay^b, x^cy^d)$ is a Poisson subfield of $K$ 
isomorphic to $K_{(ad-bc)q}$, and 
$[K: \Bbbk(x^ay^b, x^cy^d)] = |ad-bc|$.}

The Poisson statements are immediate from the fact that
\begin{equation}
\label{E1.5.1}\tag{E1.5.1}
\begin{aligned}
\{ x^ay^b, x^cy^d \} &= \bigl( (ax^{a-1}y^b) (dx^cy^{d-1}) 
- (bx^ay^{b-1}) (cx^{c-1}y^d) \bigr) \{x,y\}  \\
&= (ad-bc) q (x^ay^b) (x^cy^d).
\end{aligned}
\end{equation}
For any integral matrix 
$A=\left( \begin{smallmatrix} r&s\\ t&u \end{smallmatrix}\right)$ 
with $\det A \ne 0$, there is a $\Bbbk$-algebra endomorphism 
$\phi_A$ of $K$ sending $x \mapsto x^ry^s$ and 
$y \mapsto x^ty^u$. Moreover, for any $B \in M_2(\ZZ)$ with 
$\det B \ne 0$, we have $\phi_A \circ \phi_B = \phi_{BA}$. 
When $B \in GL_2(\ZZ)$, the map $\phi_B$ is invertible, 
whence $\phi_A(K) = \phi_{BA}(K)$.

Now set 
$A:=\left(\begin{smallmatrix} a&b\\ c&d\end{smallmatrix}\right)$, 
so that $\phi_A(K) = \Bbbk(x^ay^b, x^cy^d)$. There is a matrix 
$B \in GL_2(\ZZ)$, a composition of matrices corresponding to 
suitable row operations, such that $BA$ is upper triangular, 
say 
$BA=\left( \begin{smallmatrix} r&s\\ 0&u \end{smallmatrix}\right)$. 
Then $\Bbbk(x^ay^b, x^cy^d) = \phi_{BA}(K) = \Bbbk(x^ry^s, y^u)$ 
and $ru = \det(BA) = \pm(ad-bc)$. It follows that 
$[K: \Bbbk(x^ay^b, x^cy^d)] = |r|\cdot|u| = |ad-bc|$.

\textbf{(4)}
\emph{Let $K = K\{qx^{ac+1}y^{bd+1}\}$ for some $q \in \kx$ and 
some integers $a,b,c,d$ with $c,d \ne 0$. Then $\Bbbk(x^c,y^d)$ 
is a Poisson subfield of $K$ isomorphic to 
$K\{qcd x^{a+1} y^{b+1}\}$, and $[K:\Bbbk(x^c,y^d)] = |cd|$. If 
there exist $\xi,\zeta \in \kx$ such that $\xi^a \zeta^b = qcd$, 
then $\Bbbk(x^c,y^d) \cong K\{x^{a+1} y^{b+1}\}$.}

For the main case, we observe that
$$
\{x^c, y^d\} = c x^{c-1} d y^{d-1} \{x,y\} 
= cdq x^{ac+c} y^{bd+d} = qcd (x^c)^{a+1} (y^d)^{b+1}.
$$
If there exist $\xi,\zeta \in \kx$ with $\xi^a \zeta^b 
= qcd$, then we have
\begin{align*}
\{ \xi x^c, \zeta y^d \} 
&= \xi \zeta qcd (x^c)^{a+1} (y^d)^{b+1} 
= \xi^{-a} \zeta^{-b} qcd (\xi x^c)^{a+1} (\zeta y^d)^{b+1}  \\
&= (\xi x^c)^{a+1} (\zeta y^d)^{b+1},
\end{align*}
and the subsidiary case follows. The degree equality is clear.

\textbf{(5)}
\emph{Let $K = K\{(qx^a-b)xy\}$ for some $q,b \in \kx$ and 
some nonzero integer $a$. Then $\Bbbk(x^a,y)$ is a Poisson 
subfield of $K$ isomorphic to $K_{ab}$, and 
$[K: \Bbbk(x^a,y)] = |a|$.}

Observe that $\Bbbk(x^a,y) = \Bbbk(bx^{-a}-q,y)$ and
$$
\{ bx^{-a}-q, y \} = -ba x^{-a-1} \{x,y\} 
= - ba x^{-a-1} (qx^a-b)xy = ab (bx^{-a}-q) y.
$$
The result follows.
\end{examples}

\begin{lemma}
\label{zzlem1.6}
Let $K = K\{f(x)g(y)\}$ for some nonzero $f \in \Bbbk(x)$ 
and $g \in \Bbbk(y)$, and let $r(t) \in \Bbbk(t)^\times$ 
and $\lambda \in \kx$. If there exists $p \in \Bbbk(x)^\times$ 
with $r(p) \ne 0$ and $\frac{p'(x)}{r(p(x))} = \frac\lambda{f(x)}$, then 
$\Bbbk(p,y)$ is a Poisson subfield of $K$ isomorphic to 
$K\{\lambda r(x)g(y)\}$. 
\end{lemma}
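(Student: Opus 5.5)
The plan is a direct bracket computation, in the style of Examples \ref{zzexa1.5}, followed by a check that the two generators are algebraically independent.

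First, compute the bracket of the generators. Since $p \in \Bbbk(x)$, the chain rule for Poisson brackets gives
$$\{p,y\} = p'(x)\{x,y\} = p'(x) f(x) g(y).$$
The hypothesis $\frac{p'(x)}{r(p(x))} = \frac{\lambda}{f(x)}$ can be rewritten as $p'(x) f(x) = \lambda\, r(p(x))$, which makes sense because $f \ne 0$ and $r(p) \ne 0$. Hence
$$\{p,y\} = \lambda\, r(p)\, g(y).$$

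Next, identify the subfield. Since $\lambda r(p) g(y) \ne 0$, Observation \ref{zzobs1.4} shows that $p$ and $y$ are algebraically independent over $\Bbbk$. So $L := \Bbbk(p,y)$ is a rational function field in the two variables $p,y$. Note also that $p \notin \Bbbk$, since $p' \neq 0$.

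We then check that $L$ is closed under the bracket. A Poisson bracket on $K$ is a biderivation, so for $u,v \in L$ we have
$$\{u,v\} = \bigl(\partial_p u\,\partial_y v - \partial_y u\,\partial_p v\bigr)\{p,y\},$$
with partial derivatives taken in $L = \Bbbk(p,y)$. Here we use that derivations of $K$ restrict on $L$ to combinations of $\partial_p$ and $\partial_y$, since $L$ is purely transcendental over $\Bbbk$ in $p,y$. The factor $\{p,y\}$ lies in $L$, so $\{L,L\} \subseteq L$.

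Finally, define the $\Bbbk$-algebra isomorphism $\Bbbk(x,y) \to L$ by $x \mapsto p$ and $y \mapsto y$. It sends $\lambda r(x) g(y)$ to $\lambda r(p) g(y) = \{p,y\}$. By the uniqueness of the Poisson bracket with a prescribed value of $\{x,y\}$ (Corollary \ref{zzcor9.4}), transporting the bracket of $L$ back through this map gives exactly $K\{\lambda r(x) g(y)\}$. Therefore $L \cong K\{\lambda r(x)g(y)\}$.

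None of these steps is a real obstacle. The only points needing care are the nonvanishing of $r(p)$, which is assumed and is what makes $\{p,y\} \neq 0$, and the closure of $L$ under the bracket, which follows from the biderivation formula above.
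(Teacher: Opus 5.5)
Your proposal is correct and is essentially the paper's proof, which consists of the single computation $\{p,y\} = p'(x)\{x,y\} = p'(x)f(x)g(y) = \lambda r(p)g(y)$. You additionally make explicit the routine points the paper leaves implicit: the algebraic independence of $p$ and $y$, the closure of $\Bbbk(p,y)$ under the bracket via the biderivation formula, and the identification of the transported bracket through the uniqueness in Corollary~\ref{zzcor9.4}.
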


\begin{proof}
If there exists $p \in \Bbbk(x)$ with 
$\frac{p'(x)}{r(p(x))} = \frac\lambda{f(x)}$, then 
$$
\{p,y\} = p'(x) \{x,y\} = p'(x) f(x) g(y) = \lambda r(p) g(y),
$$
whence $\Bbbk(p,y) \cong K\{\lambda r(x)g(y)\}$. 
\end{proof}

\begin{corollary}
\label{zzcor1.7}
Let $K = K\{f(x)g(y)\}$ for some nonzero $f \in \Bbbk(x)$ 
and $g \in \Bbbk(y)$, and let $\lambda,\mu \in \kx$.
\begin{enumerate}
\item
Let $m \in \ZZ$. If there exists $p \in \Bbbk(x)^\times$ with 
$\frac{p'(x)}{p(x)^m} = \frac\lambda{f(x)}$, then $\Bbbk(p,y)$ 
is a Poisson subfield of $K$ isomorphic to 
$K\{\lambda x^mg(y)\}$. 
\item
If there exists $p \in \Bbbk(x)^\times$ with 
$p'(x) = \frac\lambda{f(x)}$, then $\Bbbk(p,y)$ is a Poisson 
subfield of $K$ isomorphic to $\Kweyl$. 
\item
If there exists $p \in \Bbbk(x)^\times$ with 
$\frac{p'(x)}{p(x)} = \frac\lambda{f(x)}$, then $\Bbbk(p,y)$ 
is a Poisson subfield of $K$ isomorphic to $K\{\lambda xg(y)\}$. 
Similarly, if there exists $q \in \Bbbk(y)^\times$ with 
$\frac{q'(y)}{q(y)} = \frac\mu{g(y)}$, then $\Bbbk(x,q)$ is a 
Poisson subfield of $K$ isomorphic to $K\{\mu f(x)y\}$. In case 
such $p$ and $q$ both exist, $\Bbbk(p,q)$ is a Poisson 
subfield of $K$ isomorphic to $K\{\lambda \mu xy\}=K_{\lambda\mu}$.
\end{enumerate}
\end{corollary}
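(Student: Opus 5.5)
The plan is to derive all three parts directly from Lemma \ref{zzlem1.6} by choosing the function $r(t)$ appropriately. The $y$-variable statements in (3) will follow by the symmetric argument with the roles of $x$ and $y$ swapped.

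For part (1), take $r(t) := t^m \in \Bbbk(t)^\times$. Then $r(p) = p^m \ne 0$ because $p \ne 0$. The hypothesis $p'(x)/p(x)^m = \lambda/f(x)$ is exactly the condition in Lemma \ref{zzlem1.6}, so $\Bbbk(p,y)$ is a Poisson subfield isomorphic to $K\{\lambda x^m g(y)\}$.

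Part (2) is the case $m=0$ of (1), which gives $\Bbbk(p,y) \cong K\{\lambda g(y)\}$. Since $\lambda g(y) \in \Bbbk(y)$ is nonzero, Proposition \ref{zzpro1.1}(1) gives $K\{\lambda g(y)\} \cong \Kweyl$. The first statement of (3) is the case $m=1$ of (1).

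For the second statement of (3), apply the symmetric version of Lemma \ref{zzlem1.6}. Here $\{x,q\} = q'(y)\{x,y\} = q'(y) f(x) g(y) = \mu f(x) q$. Hence $\Bbbk(x,q) \cong K\{\mu f(x) y\}$. Note that $\{x,q\} \ne 0$, so Observation \ref{zzobs1.4} guarantees that $x$ and $q$ are algebraically independent, making $\Bbbk(x,q)$ a rational function field in two variables.

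The remaining step, which needs a short computation rather than a lemma citation, is the case where both $p$ and $q$ exist. Compute
$$\{p,q\} = p'(x)\, q'(y)\, \{x,y\} = p' q' f g = \frac{\lambda p}{f}\cdot\frac{\mu q}{g}\cdot fg = \lambda\mu\, pq.$$
This is nonzero, so $p$ and $q$ are algebraically independent by Observation \ref{zzobs1.4}. Therefore $\Bbbk(p,q)$ is a Poisson subfield isomorphic to $K\{\lambda\mu xy\} = K_{\lambda\mu}$.

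I expect no real obstacle. The only points to check are that $r(p)\neq 0$ holds in each case and that the generators are algebraically independent, which Observation \ref{zzobs1.4} handles.
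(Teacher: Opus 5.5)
Your proposal is correct and follows the paper's own proof: it specializes Lemma~\ref{zzlem1.6} to $r(t)=t^m$, takes $m=0$ together with Proposition~\ref{zzpro1.1}(1) for part (2), and takes $m=1$ for part (3). You also write out the $q$-version and the computation $\{p,q\}=p'q'fg=\lambda\mu\, pq$ explicitly, which the paper leaves to symmetry.
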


\begin{proof}
(1) Apply Lemma \ref{zzlem1.6} with $r(t) = t^m$.

(2) Take $m=0$ and apply part (1) together with Proposition \ref{zzpro1.1}(1).

(3) Take $m=1$ and apply part (1).
\end{proof}

\begin{remarks}
\label{zzrem1.8}
In particular, Corollary \ref{zzcor1.7}(2) shows that if $K = K\{x^m g(y)\}$ where $m \in \ZZ \setminus \{1\}$ and
$g \in \Bbbk(y)^\times$, then 
$\Bbbk(x^{1-m},y)$ is a Poisson subfield of $K$ isomorphic to 
$\Kweyl$. 

Part (3) may be applied in the case $K = K\{f(x)y\}$ where 
$f(x) = x (x-\alpha_1) (x-\alpha_2)$ for some distinct 
$\alpha_1,\alpha_2 \in \kx$ such that $\alpha_1/\alpha_2 \in \QQ$. 
In this case, there exist nonzero $s_1,s_2 \in \ZZ$ such that 
$s_1\alpha_1 + s_2\alpha_2 = 0$. Set $s_0 := - (s_1+s_2)$ and 
$p(x) := x^{s_0} (x-\alpha_1)^{s_1} (x-\alpha_2)^{s_2}$.  One checks that
$$
\frac{p'(x)}{p(x)} = \frac{s_0}{x} + \frac{s_1}{x-\alpha_1} + \frac{s_2}{x-\alpha_2} = 
\frac\lambda{f(x)} \,,
$$
where $\lambda := s_0 \alpha_1 \alpha_2$ (cf.~Remark \ref{zzrem2.3}). 
Corollary \ref{zzcor1.7}(3) then shows that $\Bbbk(p,y)$ is a 
Poisson subfield of $K$ isomorphic to $K_\lambda$.
\end{remarks}

%%%%%%%%%%%%%%%%%%%%%%%%%%%%%%%%
\section{Logarithmic derivatives}
\label{zzsec2}

Various calculations with Poisson brackets
involve terms which are formal logarithmic
derivatives -- $f'(x)/f(x)$ or $g'(y)/g(y)$ 
for $f \in \Bbbk(x)^\times$ or $g \in 
\Bbbk(y)^\times$.  To deal with these, we
develop some elementary calculus facts 
about logarithmic derivatives.

Throughout this section, $F$ stands for a
general field of characteristic zero. As 
noted above, for $s(t)$ in a rational 
function field $F(t)$, we write $s'(t)$ 
for $ds/dt$.

\begin{lemma}  
\label{zzlem2.1} 
Let $F(t) \subseteq \overline{F}(t)$ be 
rational function fields over $F$ and 
over its algebraic closure $\overline{F}$.
\begin{enumerate}
\item[(1)]
If $s \in F(t)^\times$, then
\begin{equation} 
\label{E2.1.1}\tag{E2.1.1}
\dfrac{s'}{s} 
= \frac{z_1}{t-a_1} + \cdots + \frac{z_m}{t-a_m}
\end{equation}
in $\overline{F}(t)$, for some $z_i \in \ZZ$
and some distinct $a_i \in \overline{F}$.
\item[(2)]
If $s \in F(t)^\times$ and
\begin{equation} 
\label{E2.1.2}\tag{E2.1.2}
\dfrac{s'}{s} = \frac{c_1}{(t-b_1)^{r_1}} 
+ \cdots + \frac{c_n}{(t-b_n)^{r_n}}
\end{equation}
in $F(t)$, for some $c_j \in F^\times$, 
some distinct $b_j \in F$, and some 
$r_j \in \Znn$, then $c_j \in \ZZ$ 
and $r_j = 1$ for all $j=1,\dots,n$. 
Moreover,
$$s = \alpha (t-b_1)^{c_1} \cdots (t-b_n)^{c_n}$$
for some $\alpha \in F^\times$.
\end{enumerate}
\end{lemma}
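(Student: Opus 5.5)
For part (1) the plan is to factor $s$ over $\overline{F}$. Since $\overline{F}$ is algebraically closed, we can write
$$s = \alpha \prod_{i=1}^m (t-a_i)^{z_i}$$
in $\overline{F}(t)$, with $\alpha \in \overline{F}^\times$, distinct $a_i \in \overline{F}$, and $z_i \in \ZZ$. We then take the logarithmic derivative. The map $s \mapsto s'/s$ sends products to sums, because $(uv)'/(uv) = u'/u + v'/v$. Applying this to the factorization gives \eqref{E2.1.1} at once, using $((t-a)^z)'/(t-a)^z = z/(t-a)$ and $\alpha'=0$.

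For part (2), we compare \eqref{E2.1.2} with \eqref{E2.1.1} inside $\overline{F}(t)$. The tool is uniqueness of partial fraction decomposition over $\overline{F}$: every rational function is uniquely a polynomial plus a finite sum of terms $c/(t-b)^r$ with $r \ge 1$. There is one degenerate point: a term with $r_j = 0$ is a constant $c_j$. Since the $b_j$ are distinct, at most one such term can occur, and then it would be the polynomial part. But the right side of \eqref{E2.1.1} has zero polynomial part, so by uniqueness such a term cannot exist. The remaining terms of \eqref{E2.1.2} are already distinct partial-fraction terms, since the $b_j$ are distinct and the $c_j$ are nonzero. Matching them with \eqref{E2.1.1} then forces each $r_j = 1$, each $b_j$ to equal some $a_i$, and $c_j = z_i \in \ZZ$. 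Conversely every $a_i$ with $z_i \neq 0$ appears as some $b_j$; we may discard any $a_i$ with $z_i = 0$.

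Having shown $c_j \in \ZZ$ and $r_j = 1$, set $u := (t-b_1)^{c_1}\cdots(t-b_n)^{c_n} \in F(t)^\times$. Then $u'/u = s'/s$, so $(s/u)'/(s/u) = 0$ and hence $(s/u)' = 0$. In characteristic zero a rational function with zero derivative is constant, so $s/u$ is a constant. It also lies in $F(t)$, so $s/u \in F(t) \cap \overline{F} = F$, and it is nonzero. This gives $s = \alpha u$ with $\alpha \in F^\times$.

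The only delicate step is the bookkeeping in the uniqueness comparison: handling the possible $r_j = 0$ term and the vanishing $z_i$. This is done by appealing explicitly to uniqueness of the polynomial-plus-principal-parts decomposition over $\overline{F}$. Everything else is routine.
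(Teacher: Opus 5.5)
Your main line is the same as the paper's proof. For (1) you factor $s$ over $\overline{F}$ and take logarithmic derivatives. For (2) you compare with \eqref{E2.1.1} in $\overline{F}(t)$ using uniqueness of partial fractions, then set $u=\prod_j (t-b_j)^{c_j}$ and use $(s/u)'=0$.

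The one place you go beyond the paper is the paragraph on terms with $r_j=0$, and it contains a false step. Distinctness of the $b_j$ does not limit how many such terms occur, because $c_j/(t-b_j)^0=c_j$ does not depend on $b_j$. Several such terms can appear. If their constants sum to zero, the right side of \eqref{E2.1.2} has zero polynomial part and uniqueness gives no contradiction. In that situation the conclusion actually fails. For example, for $s=t$,
\[
\frac{s'}{s}=\frac{1}{t}+\frac{1/2}{(t-1)^0}+\frac{-1/2}{(t-2)^0}
\]
satisfies every hypothesis with $r_j\in\Znn$, yet $r_2=r_3=0$ and $c_2,c_3\notin\ZZ$.

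So no argument can dispose of the degenerate case: the lemma must be read with $r_j\in\Zpos$. That is what the paper's proof tacitly assumes when it says the comparison gives $r_i=1$. It is also all that is needed later, where the right-hand side is a single term $c/t$. With that reading, delete your degenerate-case paragraph; the rest of your proof is correct and complete.
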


\begin{proof} 
(1) Write $s = f/g$ for some nonzero relatively
prime $f,g \in F[t]$. Without loss of
generality, $g$ is monic. Let $a_1,\dots,a_n$ 
be the distinct roots of $f$ in $\overline{F}$, 
and let $a_{n+1},\dots,a_m$ be the distinct 
roots of $g$ in $\overline{F}$. Since $f$ 
and $g$ are relatively prime, 
$a_1,\dots,a_n,\dots,a_m$ are all
distinct. Write 
$f = \alpha (t-a_1)^{z_1} \cdots (t-a_n)^{z_n}$
for some 
$\alpha \in F^\times$ and $z_i \in \ZZ_{>0}$, 
and write 
$g = (t-a_{n+1})^{-z_{n+1}} \cdots 
(t-a_m)^{-z_m}$ for some $z_j \in \ZZ_{<0}$.
Then
$$s = \alpha (t-a_1)^{z_1} 
\cdots (t-a_n)^{z_n} (t-a_{n+1})^{z_{n+1}} 
\cdots (t-a_m)^{z_m}\,.$$
Since $(\alpha^{-1}s)'/(\alpha^{-1}s) = s'/s$, 
\eqref{E2.1.1} follows by a direct computation.

(2) By comparing \eqref{E2.1.1} and 
\eqref{E2.1.2}
in $\overline{F}(t)$, one sees that $r_i=1$ 
for all $i$ and that the set $\{a_i\}$ is 
equal to the set $\{b_j\}$. As a consequence,
$m=n$ and $c_i=z_i$ after we set $a_i=b_i$
for all $1\leq i\leq m=n$. Now let 
$u := (t-b_1)^{c_1} \cdots (t-b_n)^{c_n}$ and 
observe that $u'/u = s'/s$. Then $(s/u)'=0$,
and therefore $s/u \in F^\times$.
\end{proof}

\begin{corollary}  
\label{zzcor2.2}
Let $\gamma \in F^\times$ and
$$
f(t) = \gamma (t-a_1) \cdots (t-a_n) \in F[t]
$$
for some $a_i \in F$.
Then $\frac1f = \frac{s'}s$ for some $s \in F(t)^\times$ 
if and only if $n>0$, the $a_i$ are distinct, and
\begin{equation} 
\label{E2.2.1}\tag{E2.2.1}
\gamma^{-1} \prod_{\substack{j=1\\ j\ne i}}^n (a_i-a_j)^{-1} 
\in \ZZ \qquad \forall\; i=1,\dots,n.
\end{equation}
\end{corollary}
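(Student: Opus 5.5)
The plan is to reduce everything to Lemma \ref{zzlem2.1}(2) via a partial fraction decomposition of $1/f$ over $F$.

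\emph{Necessity.} Suppose $\frac1f = \frac{s'}{s}$ for some $s \in F(t)^\times$. First, $n>0$: if $n=0$ then $\frac{s'}{s} = \gamma^{-1}$ is a nonzero constant. Lemma \ref{zzlem2.1}(1) writes $\frac{s'}{s}$ as $\sum z_i/(t-a_i)$, and this sum is either $0$ or has numerator degree strictly less than denominator degree, so it cannot equal a nonzero constant. Second, the $a_i$ are distinct: if some root $b$ of $f$ had multiplicity $r \ge 2$, then the partial fraction expansion of $1/f$ over $\overline{F}$ would contain a nonzero term $c/(t-b)^r$. By uniqueness of partial fractions this contradicts \eqref{E2.1.1}, which has only simple poles.

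Once the $a_i$ are distinct, the standard expansion is
$$\frac1f = \sum_{i=1}^n \frac{c_i}{t-a_i}, \qquad c_i = \frac{1}{f'(a_i)} = \gamma^{-1}\prod_{j\ne i}(a_i-a_j)^{-1} \in F^\times .$$
The formula for $c_i$ comes from multiplying by $t-a_i$ and evaluating at $t=a_i$. This is exactly of the form \eqref{E2.1.2}, with $b_j=a_j$ and all $r_j=1$. Lemma \ref{zzlem2.1}(2) then gives $c_i \in \ZZ$, which is \eqref{E2.2.1}.

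\emph{Sufficiency.} Assume $n>0$, the $a_i$ are distinct, and each $c_i$ (given by the formula above) lies in $\ZZ$. Note $c_i \neq 0$ automatically. Set
$$s := (t-a_1)^{c_1}\cdots(t-a_n)^{c_n} \in F(t)^\times .$$
Then $\frac{s'}{s} = \sum_i \frac{c_i}{t-a_i} = \frac1f$, so $s$ works.

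\emph{Main obstacle.} The only delicate point is the distinctness argument in the necessity direction, since it must be carried out over $\overline{F}$ even though $s \in F(t)$. I would argue it directly from Lemma \ref{zzlem2.1}(1): $\frac{s'}{s}$ has only simple poles in $\overline{F}(t)$, while $1/f$ has a pole of order $r$ at a root of multiplicity $r$. Comparing pole orders at $b$ forces $r=1$. The rest is routine.
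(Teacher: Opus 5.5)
Your proof is correct and follows essentially the paper's route: compare $1/f$ with the simple-pole, integer-residue form of $s'/s$ from Lemma \ref{zzlem2.1}, and read off the residues $1/f'(a_i)=\gamma^{-1}\prod_{j\ne i}(a_i-a_j)^{-1}$. The paper does the matching by hand, whereas you invoke uniqueness of partial fractions over $\overline{F}$ together with Lemma \ref{zzlem2.1}(2); for necessity it clears denominators in $1/f=\sum z_j/(t-b_j)$ and evaluates at the $b_i$, and for sufficiency it proves $1/f=\sum z_i/(t-a_i)$ via an interpolation polynomial, and your version is equally valid.
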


\begin{proof} Let $F(t) \subseteq \overline{F}(t)$ as in Lemma 
\ref{zzlem2.1}.

$(\Longrightarrow)$: By Lemma \ref{zzlem2.1}(1), 
\begin{equation} 
\label{E2.2.2}\tag{E2.2.2}
\frac1f = \sum_{j=1}^m \frac{z_j}{t-b_j}
\end{equation}
for some $z_j \in \ZZ$ and some distinct $b_j \in 
\overline{F}$. We may assume that all $z_j \ne 0$. 
Set $g(t) := \prod_{j=1}^m (t-b_j)$ and 
$g_j(t) := \frac{g(t)}{t-b_j}$ for $j \in [1,m]$. 
Multiply \eqref{E2.2.2} by $fg$ to get
\begin{equation} 
\label{E2.2.3}\tag{E2.2.3}
g = f \sum_{j=1}^m z_j g_j \,.
\end{equation} 
Consequently, $f \mid g$ in $\overline{F}[t]$. On the 
other hand, if we substitute $t=b_i$ in \eqref{E2.2.3} 
for some $i \in [1,m]$, we find that
$$
0 = g(b_i) = f(b_i) z_i \prod_{j\ne i} (b_i-b_j),
$$
whence $f(b_i) = 0$. Therefore $n=m$ and, after 
reindexing, $b_i = a_i$ for all $i$, whence $f = \gamma g$. 
In particular, $n>0$ and the $a_i$ are distinct. Now 
divide \eqref{E2.2.3} by $f$, to get
$$
\gamma^{-1} = \sum_{j=1}^m z_j g_j \,.
$$
Substituting $t=a_i$, we get 
$\gamma^{-1} = z_i \prod_{j\ne i}(a_i-a_j)$, establishing 
\eqref{E2.2.1}.

$(\Longleftarrow)$: Set 
$z_i := \gamma^{-1} \prod_{j\ne i} (a_i-a_j)^{-1} \in \ZZ$ 
for $i \in [1,n]$ and 
$h(t) := \sum_{i=1}^n z_i \frac{f}{t-a_i} \in F[t]$. Note 
that $\deg h < n$. For $i \in [1,n]$, we have
$$
h(a_i) = z_i \gamma \prod_{j\ne i} (a_i-a_j) = 1.
$$
Thus, the polynomial $h(t)-1$ has $n$ distinct roots, so it 
must be zero. Therefore
$$
\frac1f = \frac{h}{f} = \sum_{i=1}^n \frac{z_i}{t-a_i} 
= \frac{s'}{s} \,,
$$
where $s := \prod_{i=1}^n (t-a_i)^{z_i}$.
\end{proof}

\begin{remark}  
\label{zzrem2.3} 
In the linear case of Corollary \ref{zzcor2.2}, that 
is, $f(t) = \gamma(t-a_1)$, the only requirement is that 
$\gamma^{-1} \in \ZZ$. In the quadratic case, the two 
instances of \eqref{E2.2.1} are negatives of each other, 
so the requirements are $a_1 \ne a_2$ and 
$\gamma^{-1} (a_1-a_2)^{-1} \in \ZZ$. Given $a_1 \ne a_2$, 
one can always choose $\gamma$ to satisfy the second condition.

In higher degrees, the $a_i$ must be linearly dependent 
over $\QQ$, as follows. We claim that if the conditions 
in the corollary hold and $n \ge 3$, then the nonzero 
integers $z_i := \gamma^{-1} \prod_{j\ne i} (a_i-a_j)^{-1}$ 
must satisfy
\begin{equation}  
\label{E2.3.1}\tag{E2.3.1}
z_1 + \cdots + z_n = 0 \qquad\text{and}
\qquad z_1a_1 + \cdots + z_na_n = 0.
\end{equation}
As in the proof of Corollary \ref{zzcor2.2}, we have
$$
\frac1f = \sum_{i=1}^n \frac{z_i}{t-a_i} \,,
$$
and consequently
$$
\sum_{i=1}^n z_i \prod_{j\ne i} (t-a_j) = \gamma^{-1}.
$$
Since the coefficients of $t^{n-1}$ and $t^{n-2}$ on the 
left side of this equation must be zero, we find that 
$\sum_i z_i = 0$ and $\sum_i z_i \sum_{j\ne i} a_j = 0$. But
$$
\sum_i z_i \sum_{j\ne i} a_j = 
\sum_j \biggl( \sum_{i\ne j} z_i \biggr) a_j = - \sum_j z_j a_j \,,
$$
and therefore $\sum_j z_ja_j = 0$.

For an example, suppose $a_1=0,a_2,a_3$ are distinct elements 
of $\Bbbk$ with $a_2/a_3 \in \QQ$. There are nonzero integers 
$z_2$, $z_3$ such that $z_2a_2 + z_3a_3 = 0$. Set 
$z_1 := - (z_2+z_3)$, so that \eqref{E2.3.1} holds, and set 
$s(t) := t^{z_1} (t-a_2)^{z_2} (t-a_3)^{z_3}$. In this case, 
$\frac{s'}{s} = \frac1f$ where $f(t) = (z_1a_2a_3)^{-1}t (t-a_2) (t-a_3)$. 
(Cf.~Remarks \ref{zzrem1.8} and \ref{zzrem5.4} 
for an application.)
\end{remark}

%%%%%%%%%%%%%%%%%%%%%%%%%%
\section{Valuations and associated invariants}
\label{zzsec3}

In \cite{HTWZ1, HTWZ2}, the authors 
introduced the notion of a Poisson valuation 
which will be used in later sections. We 
first recall some definitions, lemmas, theorems 
from \cite{HTWZ1, HTWZ2}. Let $w$ denote an 
integer.

\begin{definition}
\label{zzdef3.1}
Recall that a \emph{discrete valuation} on an 
algebra $A$ is a map
$$\nu: A\to {\ZZ}\cup\{\infty\}$$
such that for all $a,b \in A$,
\begin{enumerate}
\item[(1)]
$\nu(a)=\infty$ if and only if $a=0$,
\item[(2)]
$\nu(ab)=\nu(a)+\nu(b)$,
\item[(3)]
$\nu(a+b)\geq \min(\nu(a),\nu(b))$.
\end{enumerate} 
It is well known that 
\begin{enumerate}
\item[($3^+$)] $\nu(a+b) = \min(\nu(a),\nu(b))$ 
when $\nu(a) \ne \nu(b)$.
\end{enumerate}

Now assume that $A$ is a Poisson algebra over 
a base field $\Bbbk$, and let $w \in \ZZ$.  
A discrete valuation $\nu$ on $A$ is called 
a (\emph{Poisson}) \emph{$w$-valuation} on $A$ if
\begin{enumerate}
\item[(4)]
$\nu(a)=0$ for all $a\in \Bbbk^{\times}$,
\item[(5)]
$\nu(\{a,b\})\geq \nu(a)+\nu(b)-w$ for all 
$a,b\in A$.
\end{enumerate} 

A $w$-valuation $\nu$ of $A$ is called 
{\it trivial} if $\nu(a)=0$ for all nonzero 
$a\in A$.  (This requires that either 
$w \ge 0$ or the Poisson bracket on $A$ 
is trivial.)  It is called {\it classical} 
if $\nu(\{a,b\})> \nu(a)+\nu(b)-w$ for all 
nonzero $a,b \in A$.  Of course, any 
$w$-valuation becomes classical when viewed 
as a $(w+1)$-valuation.  On the other hand, 
if the Poisson bracket on $A$ is nontrivial 
and $\nu$ is any $w$-valuation on $A$, there 
is a largest nonnegative integer $l$ such 
that $\nu(\{a,b\})\geq \nu(a)+\nu(b)-w+l$ 
for all nonzero $a,b \in A$, and $\nu$ is a 
non-classical $(w-l)$-valuation.

Let ${\mathcal V}_w(A)$ denote the set of 
non-trivial $w$-valuations of $A$. 
\end{definition}

\begin{remark}  
\label{zzrem3.2}
In \cite[Theorem 4.3(3)]{HTWZ1}, it is 
proved that if $K$ is a finitely generated 
field extension of $\Bbbk$, and $K$ is the
quotient field of a Poisson domain with 
GK-dimension at least $2$, then there are
infinitely many nontrivial $1$-valuations on $K$.

On the Poisson fields $K\{\bff\}$, some 
$w$-valuations may be constructed according 
to a standard recipe which we give in 
Lemma \ref{zzlem3.4}.
\end{remark}

\begin{definition}
\label{zzdef3.3}
Let $A$ be an algebra. Let
${\FF}:= (F_{i})_{i\in {\ZZ}}$
be a descending chain of $\Bbbk$-subspaces 
of $A$. 
\begin{enumerate}
\item[(1)] 
We say ${\FF}$ is a (\emph{descending})
\emph{filtration} of $A$ if it satisfies
\begin{enumerate}
\item[(a)]
$1 \in F_0\setminus F_{1}$, 
\item[(b)]
$F_i F_j\subseteq F_{i+j}$ for all $i,j$,
\item[(c)]
$\bigcup_{i\in {\ZZ}} (F_i\setminus F_{i+1})
=A\setminus \{0\}$.
\end{enumerate}
\item[(2)]
Suppose that $A$ is a Poisson algebra and 
that ${\FF}$ is a filtration of $A$. If 
further
\begin{enumerate}
\item[(d)]
$\{F_{i},F_{j}\}\subseteq F_{i+j-w}$ for 
all $i,j\in {\ZZ}$, 
\end{enumerate}
then ${\FF}$ is called a (\emph{Poisson})
\emph{$w$-filtration} of $A$.
\end{enumerate}
\end{definition}

Given a discrete valuation $\nu$ on $A$, we 
can define a descending filtration
${\FF}^{\nu}:= (F^{\nu}_i)_{i\in {\ZZ}}$ on 
$A$ by
\begin{equation}
\notag
F^{\nu}_i:=\{a\in A\mid \nu(a)\geq i\}, 
\quad {\text{for all $i \in \ZZ$}}.
\end{equation}
The {\it associated graded ring of $\nu$} 
is defined to be the associated graded ring 
of the filtration $\FF^\nu$, namely 
\begin{equation}
\notag
\gr_{\nu}(A):= \bigoplus_{i\in {\ZZ}} F^{\nu}_i/F^{\nu}_{i+1} \,.
\end{equation}
By routine verification, $\gr_{\nu}(A)$ is 
a domain.

\begin{lemma}  
\label{zzlem3.4}
Given $z_1,z_2 \in \ZZ$, there is a 
discrete valuation $\nu$ on $\Bbbk(x,y)$
with the following properties:
\begin{enumerate}
\item[(1)] 
$\nu(x) = z_1$ and $\nu(y) = z_2$.
\item[(2)]
If $u = \sum_{l=1}^m \lambda_l x^{a_l} y^{b_l} \in \Bbbk[x,y]$ for some $\lambda_l \in \kx$ and some distinct pairs $(a_l,b_l) \in \Znn^2$, then $\nu(u) = \min( a_lz_1+b_lz_2 \mid l \in [1,m] )$.
\item[(3)]
If $\bff \in \Bbbk(x,y)^\times$ and 
$w \in \ZZ$ such that $\nu(\bff) \ge 
z_1+z_2-w$, then $\nu$ is a $w$-valuation 
on $K\{\bff\}$. 
\end{enumerate}
\end{lemma}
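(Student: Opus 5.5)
The plan is to build $\nu$ as the monomial (weight) valuation. For nonzero $u = \sum_l \lambda_l x^{a_l}y^{b_l} \in \Bbbk[x,y]$, written as in (2), define $\nu(u) := \min_l (a_lz_1+b_lz_2)$, and set $\nu(0) = \infty$. Extend to $\Bbbk(x,y)$ by $\nu(u/v) := \nu(u)-\nu(v)$.

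First, $\nu$ is a discrete valuation on $\Bbbk[x,y]$. Axioms (1), (3) and (4) are immediate from the definition. The substantive point is multiplicativity, $\nu(uv) = \nu(u)+\nu(v)$. To prove it, grade $\Bbbk[x,y]$ by $\deg(x^ay^b) := az_1+bz_2 \in \ZZ$. Let $\mathrm{in}(u)$ denote the sum of the terms of $u$ of minimal weight. Then $\mathrm{in}(uv) = \mathrm{in}(u)\,\mathrm{in}(v)$, provided this product is nonzero. It is nonzero because $\Bbbk[x,y]$ is a domain and the product is homogeneous of weight $\nu(u)+\nu(v)$. With multiplicativity in hand, the extension to the fraction field is well defined and remains a valuation, by the standard argument. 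Properties (1) and (2) then hold by construction.

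For (3), I first reduce the inequality $\nu(\{a,b\}) \ge \nu(a)+\nu(b)-w$ to polynomials. Suppose it holds for $a,b \in \Bbbk[x,y]$. For quotients $a = a_1/a_2$, $b = b_1/b_2$, the Leibniz rule expresses $\{a,b\}$ as a sum of terms of the form $\pm\{a_i,b_j\}\, a_{i'}^{\pm}\cdots /(a_2^2b_2^2)$. Each such term has valuation at least $\nu(a)+\nu(b)-w$, by the polynomial case and multiplicativity. Equivalently, one can use $\{u^{-1},v\} = -u^{-2}\{u,v\}$ together with the derivation property.

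For polynomials, bilinearity and axiom (3) reduce the claim to monomials. Here
\[
\{x^ay^b, x^cy^d\} = (ad-bc)\,x^{a+c-1}y^{b+d-1}\,\bff .
\]
Its valuation is either $\infty$ or $(a+c)z_1+(b+d)z_2 - z_1-z_2+\nu(\bff)$. By the hypothesis $\nu(\bff) \ge z_1+z_2-w$, this is at least $\nu(x^ay^b)+\nu(x^cy^d)-w$. Some care is needed when $a+c-1<0$, but then $ad-bc$ forces a vanishing factor, or one treats $x^{-1}$ via valuation additivity, which is fine in the field. For general polynomials $a = \sum m_i$ and $b = \sum n_j$, we have $\nu(\{a,b\}) \ge \min_{i,j} \nu(\{m_i,n_j\}) \ge \min_{i,j}(\nu(m_i)+\nu(n_j)) - w = \nu(a)+\nu(b)-w$.

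The main obstacle is the multiplicativity step, that is, showing initial forms multiply. Everything else is routine bookkeeping with the Leibniz rule.
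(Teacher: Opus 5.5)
Your proposal is correct and takes essentially the paper's route. Like the paper, you use the monomial weight valuation on $\Bbbk[x,y]$, extend it to the fraction field, and check the $w$-valuation inequality in three stages: on monomials via the formula of Lemma \ref{zzlem9.1}, then on polynomials by bilinearity, then on quotients via the Leibniz expansion of $\{u_1/u_2, v_1/v_2\}$. Your initial-form argument for multiplicativity spells out a step the paper passes over when it says the degree function of the filtration $(F_i)$ defines a discrete valuation.
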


\begin{proof}
For $i \in \ZZ$, let $F_i$ be the $\Bbbk$-span in $A := \Bbbk[x,y]$ of the set
$$
\{ x^ay^b \mid a,b \in \Znn\ \text{and}\ az_1+bz_2 \ge i\}.
$$
Clearly the family $\FF := (F_i)_{i\in\ZZ}$ satisfies conditions (b) and (c) of Definition \ref{zzdef3.3}(1). We claim that
\begin{enumerate}
\item[($*$)]
Suppose $u = \sum_{l=1}^m \lambda_l x^{a_l} y^{b_l} \in A$ for some $\lambda_l \in \kx$ and some distinct pairs $(a_l,b_l) \in \Znn^2$.  Let $i \in \ZZ$.  Then $u \in F_i$ if and only if $a_lz_1+b_lz_2 \ge i$ for all $l \in [1,m]$.
\end{enumerate}
Sufficiency follows from the definition of $F_i$.  Now suppose that $u \in F_i$ but the conclusion fails.  Without loss of generality, $a_1z_1+b_1z_2 < i$.  Since $u \in F_i$, we must have $u = \sum_{s=1}^t \mu_s x^{c_s} y^{d_s}$ for some $\mu_s \in \kx$ and some distinct pairs $(c_s,d_s) \in \Znn^2$ such that $c_sz_1+d_sz_2 \ge i$.  Then $(a_1,b_1) \ne (c_s,d_s)$ for all $s$, and so the equation $\sum_{l=1}^m \lambda_l x^{a_l} y^{b_l} = \sum_{s=1}^t \mu_s x^{c_s} y^{d_s}$ contradicts the linear independence of the monomials $x^\bullet y^\bullet$.  Thus $(*)$ is proved.

It follows from $(*)$ that $1 \in F_0 \setminus F_1$ and $\bigcap_{i \in \ZZ} F_i = \{0\}$.  Thus $\FF$ is a filtration on $A$, and its degree function defines a discrete valuation $\nu$ on $A$, where $\nu(0) := +\infty$ and $\nu(u) := \max\{i \in \ZZ \mid u \in F_i\}$ for nonzero $u \in A$.  This extends uniquely to a discrete valuation $\nu$ on $\Bbbk(x,y)$ (e.g., \cite[Chapter VI, \S3, Proposition 4]{Bo}).

(1)(2) These are immediate from $(*)$.

(3) We need to show that $\nu(\{u,v\}) \ge \nu(u) + \nu(v) - w$ for all $u,v \in K\{\bff\}$.  The argument of \cite[Lemma 2.9(3)]{HTWZ1} (which could be applied directly in case $\bff \in A$) can be used for $u,v \in A$, as follows.

First consider monomials $u = x^ay^b$ and $v = x^cy^d$ with $a,b,c,d \in \Znn$.  The desired inequality holds trivially if $\{u,v\} = 0$.  Otherwise, it follows from Lemma \ref{zzlem9.1} that
\begin{align*}
\nu(\{u,v\}) &= \nu( x^{a+c-1} y^{b+d-1} \bff ) \ge (a+c-1)z_1 + (b+d-1)z_2 + z_1+z_2-w  \\
&= \nu(u) + \nu(v) - w.
\end{align*}
Next, consider nonzero $u,v \in A$.  Write $u = \sum_{i=1}^m \lambda_i x^{a_i} y^{b_i}$ and $v = \sum_{j=1}^n \mu_j x^{c_j} y^{d_j}$ for some $\lambda_i,\mu_j \in \kx$ and some families $\bigl( (a_i,b_i) \bigr)_{i=1}^m$ and $\bigl( (c_j,d_j) \bigr)_{j=1}^n$ of distinct elements of $\Znn^2$.  In view of (2), $\nu(u) \le a_iz_1+b_iz_2$ for all $i$ and $\nu(v) \le c_jz_1+d_jz_2$ for all $j$.  Since $\{u,v\} = \sum_{i,j} \lambda_i \mu_j \{ x^{a_i} y^{b_i}, x^{c_j} y^{d_j} \}$, we see that
\begin{align*}
\nu(\{u,v\}) &\ge \underset{i,j}\min\bigl( \nu( \{ x^{a_i} y^{b_i}, x^{c_j} y^{d_j} \} ) \bigr) \ge \underset{i,j}\min\bigl( \nu(x^{a_i} y^{b_i}) + \nu(x^{c_j} y^{d_j}) - w \bigr)  \\
&= \underset{i,j}\min\bigl( a_iz_1+b_iz_2 + c_jz_1+d_jz_2 - w \bigr)  \ge \nu(u) + \nu(v) - w.
\end{align*}

Given general nonzero $u,v \in K$, write $u = u_1/u_2$ and $v = v_1/v_2$ for some nonzero $u_i,v_j \in A$.  Then
$$
\{u,v\} = u_2^{-1}v_2^{-1} \{u_1,v_1\} - u_2^{-1}v_1v_2^{-2} \{u_1,v_2\} - u_1u_2^{-2}v_2^{-1} \{u_2,v_1\} + u_1u_2^{-2}v_1v_2^{-2} \{u_2,v_2\}.
$$
Since $\nu(\{u_i,v_j\}) \ge \nu(u_i) + \nu(v_j) - w$ for $i,j=1,2$, each of the terms
$$
u_2^{-1}v_2^{-1} \{u_1,v_1\},\ u_2^{-1}v_1v_2^{-2} \{u_1,v_2\},\ u_1u_2^{-2}v_2^{-1} \{u_2,v_1\},\ u_1u_2^{-2}v_1v_2^{-2} \{u_2,v_2\}
$$
has $\nu$-value $\ge \nu(u)+\nu(v)-w$.  Therefore $\nu(\{u,v\}) \ge \nu(u)+\nu(v)-w$, as required.
\end{proof}

Now we introduce some $\gamma$-invariants 
\cite[Definition 4.1]{HTWZ1}.

\begin{definition}
\label{zzdef3.5}
Let $K$ be a Poisson field. Let $w,v$ be two integers.
\begin{enumerate}
\item[(1)]
The {\it $^{w}\Gamma_{v}$-cap} of $K$ is defined to be
\begin{equation}
\notag
{^{w}\Gamma_{v}}(K) :=\bigcap_{\nu \in {\mathcal V}_w(K)} F^{\nu}_v(K) = \{a\in K\mid \nu(a)\geq v, \
\forall \; \nu\in {\mathcal V}_{w}(K)\}
\end{equation}
if $\calV_w(K)$ is nonempty, while ${^{w}\Gamma_{v}}(K) := K$ if $\calV_w(K) = \emptyset$.  In particular, 
$$
{^{w}\Gamma_{v}}(K) = \{a\in K\mid \nu(a)\geq v, \
\forall \; w\text{-valuations}\ \nu\ \text{on}\ K \} \qquad \text{if}\ v \le 0.
$$

Note that ${^{w}\Gamma_{0}}(K)$ is a $\Bbbk$-subalgebra of $K$ 
containing $\Bbbk$, since all the $F^\nu_0(K)$ are 
$\Bbbk$-subalgebras containing $\Bbbk$.
\item[(2)]
The \emph{${^{\ast}\Gamma}$-subalgebras} of $K$ are defined 
to be the algebras in the family
$${{^\ast}\Gamma_{0}}(K):
=\bigl( {^{w}\Gamma_{0}}(K) \bigr)_{w\in {\ZZ}} \,.$$
\end{enumerate} 
\end{definition}

\begin{conjecture}  
\label{zzcon3.6}
We conjecture that if $K = K\{\bff\}$ and $\fht(K)\geq 4$, then 
${^1\Gamma}_0(K)\neq \Bbbk$.
\end{conjecture}

By Corollary \ref{zzcor1.2} and Lemmas \ref{zzlem4.3}, \ref{zzlem4.4}, 
${^1\Gamma}_0(K) = \Bbbk$ for 
$K = K\{\bff\}$ with $\fht(K) \le 2$. 
This is extended to $\fht(K) \le 3$ 
in \cite{GZ2}.  For other $K = 
K\{\bff\}$ in our four families, we 
have ${^1\Gamma}_0(K) = \Bbbk[x]$ 
or $\Bbbk[x,y]$ and $\fht(K) \ge 4$ 
(Lemmas \ref{zzlem4.5}, 
\ref{zzlem6.2}, \ref{zzlem7.2}, 
Proposition \ref{zzpro5.2}). 

The first two items of the following lemma are similar to \cite[Lemma 4.2]{HTWZ1}.
 
\begin{lemma}
\label{zzlem3.7}
Let $f: K\to Q$ be a Poisson algebra morphism between two 
Poisson fields. Let $w,v$ be integers.
\begin{enumerate}
\item[(1)]
Suppose either $\nu \circ f$ is not trivial for any
$\nu\in {\mathcal V}_w(Q)$ 
or $v \le 0$. Then $f$ maps ${^{w}\Gamma_{v}}(K)$ into ${^{w}\Gamma_{v}}(Q)$.
\item[(2)]
${^{w}\Gamma_{v}}(K)\supseteq {^{w+1}\Gamma_{v}}(K)$ for all 
$v$ and ${^{w}\Gamma_{v}}(K)\supseteq \Bbbk$ if $v\leq 0$.
\item[(3)]
${^{w}\Gamma_{0}}(K)$ is integrally closed in $K$.
\item[(4)]
Suppose $w\le -1$. Then ${^{w}\Gamma_{v}}(K)\supseteq 
{^{-1}\Gamma_{v}}(K)$ for all $v$ and 
${^{w}\Gamma_{0}}(K)={^{-1}\Gamma_{0}}(K)$. 
\item[(5)]
Suppose ${\mathcal V}_w(K)$ is non-empty. Then 
$\{{^{w}\Gamma_{i}}(K)\mid i\in {\ZZ}\}$ is a filtration 
of $K_{w}:=\bigcup_{i\in {\ZZ}} {^{w}\Gamma_{i}}(K)$.
\end{enumerate}
\end{lemma}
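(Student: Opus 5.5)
Parts (1) and (2) are direct checks on valuations; (3) uses the valuation ring of each $\nu$; (4) compares $w$-valuations with $(-1)$-valuations; and (5) is read off from Definition \ref{zzdef3.3}.

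\textbf{(1)} Let $a \in {^{w}\Gamma_{v}}(K)$ and $\nu \in \calV_w(Q)$. Since $f$ is a Poisson map between fields, it is injective, so $\nu\circ f$ is a discrete valuation on $K$ vanishing on $\kx$. It also satisfies
$$\nu f(\{a,b\}) = \nu(\{fa,fb\}) \ge \nu f(a)+\nu f(b)-w,$$
so $\nu\circ f$ is a $w$-valuation on $K$.
\begin{itemize}
\item If $\nu\circ f$ is nontrivial, then $\nu\circ f \in \calV_w(K)$, hence $\nu(f(a)) \ge v$.
\item If $v\le 0$ and $\nu\circ f$ is trivial, then $\nu(f(a)) \ge 0 \ge v$ when $a\neq 0$, and the case $a=0$ is clear.
\end{itemize}
If $\calV_w(K)$ is empty, then ${^{w}\Gamma_{v}}(K)=K$. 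In that case every $\nu\circ f$ must be trivial, so we are in the case $v\le 0$, and the previous argument together with the description of ${^{w}\Gamma_{v}}$ for $v\le0$ still gives the claim. If $\calV_w(Q)$ is empty, the claim is trivial.

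\textbf{(2)} A $w$-valuation satisfies $\nu(\{a,b\})\ge \nu(a)+\nu(b)-w \ge \nu(a)+\nu(b)-(w+1)$, so it is also a $(w+1)$-valuation. Hence $\calV_w(K)\subseteq \calV_{w+1}(K)$, and intersecting over a larger set gives a smaller set. One subtlety is when $\calV_{w}$ is empty but $\calV_{w+1}$ is not; then ${^{w}\Gamma_{v}}(K)=K$ and the inclusion is trivial. The containment of $\Bbbk$ for $v\le0$ holds because $\nu(\kx)=0$.

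\textbf{(3)} Each $F^\nu_0(K)$ is a valuation ring, so it is integrally closed in $K$. An intersection of integrally closed subrings is again integrally closed. If $\calV_w(K)$ is empty, the ring is $K$ itself, which is trivially integrally closed.

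\textbf{(4)} This is the key step. I claim that for $w\le -1$, every $w$-valuation is a $(-1)$-valuation, and conversely every nontrivial $(-1)$-valuation is a $w$-valuation. This gives $\calV_w(K)=\calV_{-1}(K)$, i.e.\ equality of the caps for all $v$. To prove the claim, take $\nu \in \calV_{-1}(K)$ and use the derivation property: for $a$ with $\nu(a)=n$, compare $\{a^m,b\}=ma^{m-1}\{a,b\}$. The char $0$ hypothesis gives $\nu(m)=0$. However, this only yields the original inequality and not an improvement, so I expect the true content to be only the stated weaker form: ${^{w}\Gamma_{v}}\supseteq{^{-1}\Gamma_{v}}$ by (2) iterated downward in the reverse direction. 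Indeed, (2) gives ${^{w}\Gamma_{v}}\supseteq{^{w+1}\Gamma_{v}}\supseteq\cdots\supseteq{^{-1}\Gamma_{v}}$ for $w\le -1$, which is the first assertion.

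For the equality at $v=0$, I take $a \in {^{w}\Gamma_0}$ and argue by contradiction, assuming $\nu(a)<0$ for some $\nu\in\calV_{-1}$. I will try to rescale $\nu$: replacing $\nu$ by $k\nu$ for a positive integer $k$ gives
$$k\nu(\{a,b\})\ge k\nu(a)+k\nu(b)+k \ge k\nu(a)+k\nu(b)-w$$
as soon as $k\ge -w$, which holds for $k=-w$ when $w\le -1$ (indeed for $k \ge 1$ we need $k\ge -w$). So $k\nu$ is a $w$-valuation, apart from the discrete-valuation normalization issue, which is harmless since $k\nu$ is still a valuation with values in $\ZZ$. Then $k\nu(a)<0$, contradicting $a\in{^{w}\Gamma_0}$. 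This scaling trick is the main obstacle to get right, including the empty-set conventions.

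\textbf{(5)} Each ${^{w}\Gamma_{i}}$ is an intersection of the $F^\nu_i$, so properties (b) and (d) of Definition \ref{zzdef3.3} pass to intersections, and the chain is descending. Condition (c) on $K_w$ follows because an element of ${^{w}\Gamma_i}$ with $a\ne0$ cannot lie in all ${^{w}\Gamma_j}$: a nontrivial $\nu$ has $\nu(a)<\infty$. For condition (a), $1\in{^{w}\Gamma_0}$, and $1\notin{^{w}\Gamma_1}$ since $\nu(1)=0$ and $\calV_w$ is nonempty.
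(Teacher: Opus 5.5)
Your proposal is correct and follows the paper's proof essentially step by step. The ingredients are the same throughout: (1) splits on whether $\nu\circ f$ is trivial; (2) uses $\mathcal{V}_w(K)\subseteq\mathcal{V}_{w+1}(K)$; (3) uses integral closedness of the valuation rings $F^\nu_0(K)$; (4) iterates (2) and rescales $\nu\mapsto|w|\nu$ for the reverse inclusion at $v=0$; and (5) uses ${^{w}\Gamma_{i}}(K)\,{^{w}\Gamma_{j}}(K)\subseteq{^{w}\Gamma_{i+j}}(K)$.

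Delete the opening claim in (4) that $\mathcal{V}_w(K)=\mathcal{V}_{-1}(K)$, since it is false in general. In $\Kweyl$, the valuation of Lemma~\ref{zzlem3.4} with $\nu(x)=-1$, $\nu(y)=0$ is a $(-1)$-valuation but not a $(-2)$-valuation. Your final argument does not rely on that claim.
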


\begin{proof}
(1) 
Let $\nu$ be a $w$-valuation of $Q$. Define 
$\nu f := \nu \circ f$, which is a
$w$-valuation of $K$ (possibly trivial, even if $\nu$ is not). 
We consider two cases.

Case 1: Suppose $\nu f$ is not trivial for any
$\nu\in {\mathcal V}_w(Q)$.
Then $f$ maps 
$${^{w}\Gamma_{v}}(K)=\bigcap_{\mu\in {\mathcal V}_{w}(K)}
F^{\mu}_v(K) \subseteq \bigcap_{\nu\in {\mathcal V}_{w}(Q)}
F^{\nu f}_v(K) \longrightarrow \bigcap_{\nu\in {\mathcal V}_{w}(Q)}
F^{\nu }_v(Q)={^{w}\Gamma_{v}}(Q),$$ 
whence the assertion follows.

Case 2: Suppose $v \le 0$. In this 
case, $F^{\nu f}_{v}(K)= K$ whenever 
$\nu f$ is trivial, and so $f$ maps 
$$
\begin{aligned}
{^{w}\Gamma_{v}}(K)
&= \{a\in K\mid \nu(a)\geq v, \
\forall \; w\text{-valuations}\ 
\nu\ \text{on}\ K \}  \\
&\subseteq \bigcap_{\nu\in 
{\mathcal V}_{w}(Q)}
F^{\nu f}_v(K) \longrightarrow 
\bigcap_{\nu\in {\mathcal V}_{w}(Q)} 
F^{\nu }_v(Q) ={^{w}\Gamma_{v}}(Q).
\end{aligned}
$$ 
The assertion follows.

(2) By Definition \ref{zzdef3.1}(4), $\Bbbk\subseteq F^{\nu}_0(K)$
for every $w$-valuation $\nu$ on $K$. 
So 
$\Bbbk\subseteq {^{w}\Gamma_{0}}(K)
\subseteq {^{w}\Gamma_{v}}(K)$ when 
$v\leq 0$. Note that ${\mathcal V}_w(K)
\subseteq {\mathcal V}_{w+1}(K)$ by 
Definition \ref{zzdef3.1}(5),
so ${^{w}\Gamma_{v}}(K)\supseteq 
{^{w+1}\Gamma_{v}}(K)$ by definition.

(3) This follows from 
\cite[Lemma 2.4(3)]{HTWZ1}, which 
shows that $F^\nu_0(K)$ is integrally 
closed in $K$ for all 
$\nu \in \calV_w(K)$.

(4) By part (2), when $w\leq -1$, 
${^{w}\Gamma_{v}}(K)\supseteq 
{^{-1}\Gamma_{v}}(K)$ for all $v$. 
Now for each $\nu\in {\mathcal V}_{-1}(K)$, 
we have $|w| \nu\in {\mathcal V}_{w}(K)$. 
Then 
$${^{-1}\Gamma_{0}}(K)
=\bigcap_{\nu\in {\mathcal V}_{-1}(K)}
F^{\nu}_0(K)
=\bigcap_{\nu\in {\mathcal V}_{-1}(K)}
F^{|w|\nu}_0(K) \supseteq 
\bigcap_{\nu\in {\mathcal V}_{w}(K)}
F^{\nu}_0(K)={^{w}\Gamma_{0}}(K).$$
The assertion follows.

(5) The assertion follows from an easy fact: 
${^{w}\Gamma_{i}}(K){^{w}\Gamma_{j}}(K)
\subseteq {^{w}\Gamma_{i+j}}(K)$ 
for all $i,j$. 
\end{proof}

Next we state an important result from \cite{HTWZ1}.

\begin{theorem}[${^{1}\Gamma_{0}}$-Controlling theorem]
\label{zzthm3.8}
Let $A$ be a Poisson noetherian normal domain of 
GK-dimension at least 2 and $K$ be the Poisson 
fraction field of $A$. Then ${^{1}\Gamma_{0}}(K)\subseteq A$.
\end{theorem}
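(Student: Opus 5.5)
The plan is to realize $A$ as an intersection of discrete valuation rings and to check that each of the corresponding valuations is a Poisson $1$-valuation on $K$. Since $A$ is a noetherian normal domain, it is a Krull domain. So
$$A=\bigcap_{\mathfrak p} A_{\mathfrak p},$$
where $\mathfrak p$ runs over the height-one primes of $A$ and each $A_{\mathfrak p}$ is a DVR. If $A$ has no height-one primes, then $A$ is a field, $A=K$, and the inclusion ${^{1}\Gamma_{0}}(K)\subseteq A$ is trivial. Otherwise, let $\nu_{\mathfrak p}$ be the discrete valuation of $K$ attached to $A_{\mathfrak p}$. It is nontrivial because $\mathfrak p\neq 0$. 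It also vanishes on $\kx$, because nonzero scalars are units of $A$; this is condition (4) of Definition \ref{zzdef3.1}.

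The key step is to verify condition (5) with $w=1$, that is,
$$\nu_{\mathfrak p}(\{a,b\})\ge \nu_{\mathfrak p}(a)+\nu_{\mathfrak p}(b)-1 \quad\text{for all } a,b\in K.$$
First, the Poisson bracket extends uniquely to every localization by the quotient rule (cf. the formula used in the proof of Lemma \ref{zzlem3.4}). Since $A_{\mathfrak p}$ is a localization of $A$, we get $\{A_{\mathfrak p},A_{\mathfrak p}\}\subseteq A_{\mathfrak p}$. Next, fix a uniformizer $\pi$ of $A_{\mathfrak p}$ and write $a=\pi^m u$ and $b=\pi^n v$ with $u,v$ units of $A_{\mathfrak p}$ and $m,n\in\ZZ$. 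Expanding by the Leibniz rule and using $\{\pi,\pi\}=0$ gives
$$\{a,b\}=\pi^{m+n}\{u,v\}+n\pi^{m+n-1}u\{u,\pi\}\cdot u^{-1}v+m\pi^{m+n-1}\{\pi,v\}u.$$
The middle term is written loosely; precisely, it is $n\pi^{m+n-1}u\{u,\pi\}v$ up to signs and unit factors. In each term the bracket factor lies in $A_{\mathfrak p}$, so each term has valuation at least $m+n-1$. Hence $\nu_{\mathfrak p}(\{a,b\})\ge m+n-1$, and $\nu_{\mathfrak p}\in\calV_1(K)$.

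To conclude, let $a\in{^{1}\Gamma_{0}}(K)$. Because $\calV_1(K)$ contains the $\nu_{\mathfrak p}$, it is nonempty, so by Definition \ref{zzdef3.5} we have $\nu(a)\ge 0$ for every $\nu\in\calV_1(K)$. In particular $\nu_{\mathfrak p}(a)\ge0$, i.e. $a\in A_{\mathfrak p}$, for every height-one prime $\mathfrak p$. Therefore $a\in\bigcap_{\mathfrak p}A_{\mathfrak p}=A$.

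There is no serious obstacle. The points needing care are that the bracket really preserves $A_{\mathfrak p}$, so that no negative powers of $\pi$ appear beyond the single $\pi^{-1}$ produced by differentiating $\pi^m$, and the standard fact that a noetherian normal domain is the intersection of its localizations at height-one primes. The GK-dimension hypothesis plays no essential role here; it only excludes degenerate cases.
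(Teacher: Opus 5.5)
Your proof is correct. The paper gives no argument of its own here and simply cites \cite[Theorem 4.3(2)]{HTWZ1}. Your Krull-intersection argument is a complete, self-contained proof of that cited result: $A=\bigcap_{\mathfrak p}A_{\mathfrak p}$ over height-one primes, each $A_{\mathfrak p}$ is closed under the bracket, so each $\nu_{\mathfrak p}$ is a nontrivial $1$-valuation, and hence ${^{1}\Gamma_{0}}(K)\subseteq A$. You are also right that the GK-dimension hypothesis is not needed for this part.

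The only blemish is cosmetic. The Leibniz expansion is exactly $\{a,b\}=\pi^{m+n}\{u,v\}+n\pi^{m+n-1}v\{u,\pi\}+m\pi^{m+n-1}u\{\pi,v\}$, with no stray factor of $u$ in the middle term. This does not affect the estimate $\nu_{\mathfrak p}(\{a,b\})\ge \nu_{\mathfrak p}(a)+\nu_{\mathfrak p}(b)-1$.
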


\begin{proof}
See \cite[Theorem 4.3(2)]{HTWZ1}.
\end{proof}

In this paper we often use the following 
immediate consequence.

\begin{corollary}  
\label{zzcor3.9} 
If $K = K\{\bff\}$ with $\bff \in 
\Bbbk[x,y]$, then ${}^1\Gamma_0(K) 
\subseteq \Bbbk[x,y]$.
\qed\end{corollary}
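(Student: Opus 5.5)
The plan is to apply the ${}^1\Gamma_0$-Controlling Theorem (Theorem \ref{zzthm3.8}) with $A := \Bbbk[x,y]$. Three hypotheses have to be checked: $A$ is a Poisson subalgebra of $K\{\bff\}$, $A$ is a noetherian normal domain of GK-dimension at least $2$, and $K\{\bff\}$ is the Poisson fraction field of $A$.

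First I check that $A$ is closed under the bracket. Since $\bff \in \Bbbk[x,y]$, the bracket on $K\{\bff\}$ is given by
$$\{u,v\} = \Bigl(\frac{\partial u}{\partial x}\frac{\partial v}{\partial y} - \frac{\partial u}{\partial y}\frac{\partial v}{\partial x}\Bigr)\bff .$$
This is the formula in Lemma \ref{zzlem9.1}, and it also follows from the uniqueness in Corollary \ref{zzcor9.4} via the Leibniz rule. For $u,v \in \Bbbk[x,y]$ the right-hand side is again a polynomial, so $A$ is a Poisson subalgebra.

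Next I check the ring-theoretic conditions. $\Bbbk[x,y]$ is a noetherian UFD, hence normal, and it has GK-dimension $2$. Its fraction field is $\Bbbk(x,y)$, and the Poisson structure induced on this fraction field is the unique extension of the bracket on $A$. That extension is exactly $K\{\bff\}$, because both brackets satisfy $\{x,y\}=\bff$ (Corollary \ref{zzcor9.4}).

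Theorem \ref{zzthm3.8} then gives ${}^1\Gamma_0(K) \subseteq \Bbbk[x,y]$ directly. The case $\bff = 0$ needs no separate treatment, since the theorem does not require a nonzero bracket; if one insists $\bff \neq 0$, it holds here as well. The only mildly nontrivial point is the identification of $K\{\bff\}$ with the Poisson fraction field of $A$, and that is handled by the uniqueness in Corollary \ref{zzcor9.4}.
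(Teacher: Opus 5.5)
Your proposal is correct and follows the paper's route exactly: the corollary is an immediate consequence of Theorem \ref{zzthm3.8} applied to the Poisson subalgebra $A=\Bbbk[x,y]$, a noetherian normal domain of GK-dimension $2$ whose Poisson fraction field is $K\{\bff\}$. One small citation fix: the identity $\{u,v\}=\{u,v\}_w\bff$ is Lemma \ref{zzlem9.2}, not Lemma \ref{zzlem9.1}.
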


\begin{lemma} 
\label{zzlem3.10}
Let $K$ be a field containing $\Bbbk$, and let $\nu$ be a discrete valuation on $K$ such that $\nu = 0$ on $\kx$. 
Let $h\in K$ and $f(t)\in \Bbbk[t^{\pm 1}]$ be a nonzero
Laurent polynomial.  Write $f(t) = a_l t^l + a_{l+1} t^{l+1} + \cdots + a_n t^n$ with 
$l \le n$, coefficients $a_i \in \Bbbk$, and $a_l,a_n \ne 0$.
\begin{enumerate}
\item[(1)]
\begin{itemize}
\item 
If $\nu(h) > 0$, then $\nu(f(h)) = l \nu(h)$.
\item 
If $\nu(h) = 0$, then $\nu(f(h)) \ge 0$.
\item 
If $\nu(h) < 0$, then $\nu(f(h)) = n \nu(h)$.
\end{itemize}
\item[(2)]
$\nu(f(h)) \ge 0$ if and only if either $\nu(h) = 0$, or $l\geq 0$ and 
$\nu(h) > 0$, or $n\leq 0$ and $\nu(h) < 0$.
\item[(3)]
$\nu(f(h)) < 0$ if and only if either $l< 0$ and $\nu(h)>0$,  or $n>0$ and $\nu(h)<0$.  In the first case,  $0< -l \leq -\nu(f(h))$, while in the second, $0<n\le -\nu(f(h))$. 
\end{enumerate}
\end{lemma}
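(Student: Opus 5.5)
The plan is to factor $f(h)$ as a power of $h$ times a polynomial with nonzero constant term, and read off valuations term by term using the ultrametric property ($3^+$) of Definition \ref{zzdef3.1} together with $\nu = 0$ on $\kx$.

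For part (1), write $f(h) = \sum_{i=l}^n a_i h^i$. Each nonzero term satisfies $\nu(a_i h^i) = i\,\nu(h)$, since $\nu(a_i) = 0$ by hypothesis.
\begin{itemize}
\item If $\nu(h) > 0$, the values $i\nu(h)$ for $i$ with $a_i \ne 0$ are pairwise distinct. The unique minimum is attained at $i = l$, so repeated use of ($3^+$) gives $\nu(f(h)) = l\,\nu(h)$.
\item If $\nu(h) < 0$, the unique minimum of $i\nu(h)$ is attained at $i = n$, so likewise $\nu(f(h)) = n\,\nu(h)$.
\item If $\nu(h) = 0$, every term has value $0$ or $\infty$, and Definition \ref{zzdef3.1}(3) gives $\nu(f(h)) \ge 0$.
\end{itemize}
One should also check that $h \ne 0$ whenever $l < 0$, so that $f(h)$ is defined. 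If $h = 0$, then $\nu(h) = \infty > 0$; the case $l \ge 0$ is harmless because $f(0) = a_0$ or $0$. I will treat $h$ as nonzero, or note this convention explicitly.

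Part (2) is then a case check on the sign of $\nu(h)$.
\begin{itemize}
\item If $\nu(h) = 0$, then $\nu(f(h)) \ge 0$ by (1).
\item If $\nu(h) > 0$, then $\nu(f(h)) = l\nu(h)$, which is $\ge 0$ exactly when $l \ge 0$.
\item If $\nu(h) < 0$, then $\nu(f(h)) = n\nu(h)$, which is $\ge 0$ exactly when $n \le 0$.
\end{itemize}
These three cases give precisely the stated disjunction.

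Part (3) is the negation of (2), split by the sign of $\nu(h)$: the case $\nu(h) = 0$ never gives a negative value, $\nu(h) > 0$ requires $l < 0$, and $\nu(h) < 0$ requires $n > 0$. For the inequalities, in the first case $-\nu(f(h)) = (-l)\nu(h) \ge -l > 0$, because $\nu(h)$ is a positive integer. In the second case $-\nu(f(h)) = n(-\nu(h)) \ge n > 0$, because $-\nu(h) \ge 1$. Integrality of $\nu$ is the only extra input needed here.

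There is no real obstacle. The only care needed is to apply ($3^+$) correctly to a sum of several terms: since the minimal valuation is attained uniquely, an induction on the number of terms (grouping all the non-minimal terms together) handles it.
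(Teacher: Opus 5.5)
Your proposal is correct and follows essentially the same route as the paper: compare the values $i\,\nu(h)$ of the nonzero terms $a_ih^i$, use the unique minimum at $i=l$ or $i=n$ according to the sign of $\nu(h)$, and derive (2) and (3) by a case check that uses the integrality of $\nu$. Your remark that $h\neq 0$ must be assumed is a reasonable clarification the paper leaves implicit; it is needed when $l<0$, and it is harmless in the paper's applications, where $h\notin\Bbbk$.
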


\begin{proof}
(1) Let $j_1 = l < j_2 < \cdots < j_s = n$ be those 
$i \in [l,n]$ such that $a_i \ne 0$, and note that 
$\nu(a_{j_m} h^{j_m}) = j_m\nu(h)$ for $m \in [1,s]$.

If $\nu(h) > 0$, then $\nu(a_{j_1} h^{j_1}) < 
\nu(a_{j_2} h^{j_2}) < \cdots < \nu(a_{j_s} h^{j_s})$, and so 
$$\nu(f(h)) = \min
\bigl( \nu(a_{j_m} h^{j_m}) \mid m \in [1,s] \bigr) 
= \nu(a_{j_1} h^{j_1}) = l \nu(h).$$
The case when $\nu(h) < 0$ is proved in the same manner. In 
case $\nu(h) = 0$, we have $\nu(a_{j_m} h^{j_m}) = 0$ for all 
$m$, and all we can conclude is that $\nu(f(h)) \ge 0$.

(2) and (3) follow immediately from part (1).
\end{proof}

\begin{corollary}  
\label{zzcor3.11}
Let $K$ be a Poisson field with $h\in K\setminus \Bbbk$ 
and let $f(t)$ be a Laurent polynomial
$a_{l}t^l+a_{l+1} t^{l+1}+\cdots+ a_{n} t^n$ with 
$l\leq n$, $a_i\in \Bbbk$, and $a_{l}, a_{n}\neq 0$.
Suppose $u$, $v$ are nonzero elements in $K$ such that
$\{u,v\}=uv f(h)$. Suppose $0\leq d<n $. Then 
$h\in {^{d}\Gamma_{0}}(K)$.
\end{corollary}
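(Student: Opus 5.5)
We have to show that $\nu(h)\ge 0$ for every $d$-valuation $\nu$ on $K$. By Definition \ref{zzdef3.5} this is exactly membership in ${^{d}\Gamma_{0}}(K)$; the case $\calV_d(K)=\emptyset$ is trivial, and trivial valuations give $\nu(h)=0$. So fix a nontrivial $d$-valuation $\nu$ on $K$ and argue by contradiction, assuming $\nu(h)<0$.

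\textbf{Step 1 (lower bound from the valuation axiom).} Since $\{u,v\}=uvf(h)\ne 0$ whenever $f(h)\neq 0$, axiom (5) of Definition \ref{zzdef3.1} gives
$$\nu(u)+\nu(v)+\nu(f(h)) = \nu(\{u,v\}) \ge \nu(u)+\nu(v)-d,$$
that is, $\nu(f(h))\ge -d$.

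To justify $f(h)\neq 0$: $h\notin\Bbbk$, and $h$ is transcendental over $\Bbbk$. It is not algebraic because $\Bbbk$ is algebraically closed in $K$ in the situations of interest; I would note this hypothesis or handle it separately. Alternatively, if $f(h)=0$ then $\{u,v\}=0$ and the claim is vacuous in form. More robustly, Lemma \ref{zzlem3.10} only needs $\nu(h)\ne 0$: in that case $\nu(f(h))$ is finite by part (1), so $f(h)\ne0$ automatically. This removes the issue entirely, since we only use it under the assumption $\nu(h)<0$.

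\textbf{Step 2 (upper bound from Lemma \ref{zzlem3.10}).} If $\nu(h)<0$, then part (1) of Lemma \ref{zzlem3.10} gives $\nu(f(h))=n\nu(h)$. Here $n>d\ge 0$, so $n\ge 1$, and $\nu(h)\le -1$. Hence
$$\nu(f(h))\le -n< -d.$$
This contradicts Step 1. Therefore $\nu(h)\ge 0$ for every $d$-valuation $\nu$, and so $h\in{^{d}\Gamma_{0}}(K)$.

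The only delicate point is ensuring that the bracket is nonzero so that axiom (5) yields a finite inequality. That is handled by the observation above: $\nu(h)<0$ forces $\nu(f(h))$ finite, which gives $f(h)\neq0$, and $u,v\ne0$ then gives $\{u,v\}\ne0$. Everything else is a direct combination of Lemma \ref{zzlem3.10}(1) with Definition \ref{zzdef3.1}(5).
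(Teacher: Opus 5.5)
Your proof is correct and follows the paper's argument: axiom (5) gives $\nu(f(h))\ge -d$, while Lemma \ref{zzlem3.10}(1) gives $\nu(f(h))=n\nu(h)\le -n<-d$ whenever $\nu(h)<0$. Your detour about ensuring $f(h)\neq 0$ is unnecessary (and the remark about $\Bbbk$ being algebraically closed in $K$ is not needed), because axiom (5) holds trivially with $\nu(0)=\infty$ when the bracket vanishes.
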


\begin{proof} Let $\nu$ be any nontrivial $d$-valuation of $K$. Then 
$$\nu(f(h))=\nu(\{u,v\} u^{-1} v^{-1})
\geq \nu(u)+\nu(v)-d -\nu(u)-\nu(v) =-d$$ 
by the valuation axioms. Suppose that $\nu(h)<0$. By Lemma 
\ref{zzlem3.10}(1), $\nu(f(h))=n\nu(h)\leq -n$, and 
consequently, $-n\geq -d$. This yields a contradiction as 
$d<n$. Therefore $\nu(h)\geq 0$ as required. 
\end{proof}

\begin{corollary}  
\label{zzcor3.12}
Let $K = K\{ f(x) xy\}$ for some $f \in \Bbbk[x]$ with 
$\deg f \ge 2$. Then ${^{1}\Gamma_{0}}(K) = \Bbbk[x]$.
\end{corollary}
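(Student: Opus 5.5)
Both inclusions are fairly direct from earlier results.

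For ${^{1}\Gamma_{0}}(K) \subseteq \Bbbk[x]$: since $f(x)xy \in \Bbbk[x,y]$, Corollary \ref{zzcor3.9} already gives ${^{1}\Gamma_{0}}(K)\subseteq \Bbbk[x,y]$. To cut this down to $\Bbbk[x]$, I would exhibit one nontrivial $1$-valuation that is negative on every polynomial involving $y$. Take the valuation $\nu$ of Lemma \ref{zzlem3.4} with $z_1 = 0$ and $z_2 = -1$. Then
\[
\nu(f(x)xy) = \min(\text{exponents of } x \text{ times } 0) + (-1) = -1 = z_1+z_2-1,
\]
so by Lemma \ref{zzlem3.4}(3) this $\nu$ is a $1$-valuation on $K$. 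It is nontrivial since $\nu(y) = -1$. By Lemma \ref{zzlem3.4}(2), a polynomial $u=\sum \lambda_l x^{a_l}y^{b_l}$ has $\nu(u) = -\max b_l$. Hence $\nu(u) \ge 0$ forces every $b_l = 0$, that is, $u\in\Bbbk[x]$, and so ${^{1}\Gamma_{0}}(K)\subseteq \Bbbk[x]$.

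For $\Bbbk[x]\subseteq {^{1}\Gamma_{0}}(K)$: since ${^{1}\Gamma_{0}}(K)$ is a $\Bbbk$-subalgebra, it suffices to show $x \in {^{1}\Gamma_0}(K)$. Apply Corollary \ref{zzcor3.11} with $u=x$, $v=y$, $h=x$, and the Laurent polynomial $f(t)$ itself. Here $\{x,y\} = xy\,f(x)$, the top degree is $n=\deg f\ge 2$, and $d=1<n$. The corollary then gives $x\in{^{1}\Gamma_{0}}(K)$. This is exactly where the hypothesis $\deg f\ge 2$ enters. One small point: $f$ could have $l>0$ (for instance $f(0)=0$), but Corollary \ref{zzcor3.11} only needs $a_n\neq 0$ and $d<n$, so this causes no problem.

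Combining the two inclusions gives equality. The only step needing care is checking that the chosen $\nu$ satisfies the hypothesis of Lemma \ref{zzlem3.4}(3): with $z_1=0$, the factor $x f(x)$ contributes value $0$ because $\nu(x)=0$ and $\nu$ vanishes on constants. I do not expect a serious obstacle.
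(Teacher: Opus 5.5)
Your proof is correct. The inclusion $\Bbbk[x]\subseteq {^{1}\Gamma_{0}}(K)$ is obtained exactly as in the paper, via Corollary \ref{zzcor3.11} applied to $\{x,y\}=xy\,f(x)$.

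For the reverse inclusion, both you and the paper start from Corollary \ref{zzcor3.9}, but you cut $\Bbbk[x,y]$ down differently. The paper applies Corollary \ref{zzcor3.9} a second time, to the generators $x,y^{-1}$. Since $\{x,y^{-1}\}=-f(x)xy^{-1}$ is again polynomial in $x,y^{-1}$, this gives ${^{1}\Gamma_{0}}(K)\subseteq \Bbbk[x,y]\cap\Bbbk[x,y^{-1}]=\Bbbk[x]$. You instead exhibit an explicit witness: the valuation $\nu=-\deg_y$ from Lemma \ref{zzlem3.4} with $(z_1,z_2)=(0,-1)$.

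The paper's route is shorter and needs no valuation check, but it uses the deep controlling theorem (Theorem \ref{zzthm3.8}) twice. Your route replaces the second use with an elementary, concrete $1$-valuation, in the same spirit as the valuations built in Lemma \ref{zzlem4.5}.

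One small arithmetic slip: $z_1+z_2-1=-2$, not $-1$. The hypothesis of Lemma \ref{zzlem3.4}(3) therefore reads $\nu(\bff)=-1\ge -2$, which holds; in fact $-1\ge z_1+z_2-0$, so $\nu$ is even a $0$-valuation. The conclusion is unaffected.
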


\begin{proof}
By Corollary \ref{zzcor3.11}, $x\in {^{1}\Gamma_{0}}(K)$. 
Since ${^{1}\Gamma_{0}}(K)$ is a $\Bbbk$-subalgebra,
$\Bbbk[x]\subseteq {^{1}\Gamma_{0}}(K)$. 

By Corollary \ref{zzcor3.9}, ${^{1}\Gamma_{0}}(K)
\subseteq \Bbbk[x,y]$. Since $K = \Bbbk(x,y^{-1})$ and 
$$\{x,y^{-1}\}= - y^{-2}\{x,y\}=
-y^{-2}f(x)xy= - f(x) x y^{-1}\,,$$
Corollary \ref{zzcor3.9} also implies that 
${^{1}\Gamma_{0}}(K) \subseteq \Bbbk[x,y^{-1}]$. Combining 
these two facts, we obtain that ${^{1}\Gamma_{0}}(K)
\subseteq \Bbbk[x]$.
\end{proof}

Next we introduce another notion 
of {\it height} that has the 
(co)hereditary property and agrees 
with the flag height for several 
families of Poisson fields $K\{\bff\}$.
Let $K$ be a Poisson algebra/field 
and $w\in {\ZZ}$. Let 
${\mathcal V}_{nc, w}(K)$ denote 
the set of non-classical $w$-valuations 
of $K$. (When $w>0$, 
${\mathcal V}_{nc, w}(K)$ contains 
the trivial valuation.)  For 
$n \in \ZZ$, define
$$
{\mathcal V}_{nc, n,w}(K)
:=\{ \nu \in {\mathcal V}_{nc, w}(K) \;\mid \;
F^{\nu}_0(K)\cap {^n\Gamma}_0(K) = \Bbbk\}.
$$

\begin{definition}
\label{zzdef3.13}
Let $K$ be any Poisson field with 
nontrivial Poisson bracket, and 
let $n$ be an integer.
The {\it $n$-valuation height}
of $K$ is defined to be
\begin{equation}
\notag
\vht_n(K):=2+ \inf\{ w \in \ZZ \; \mid \; {\mathcal V}_{nc, n,w}(K)
\neq \emptyset\}.
\end{equation}
 Note that if ${\mathcal V}_{nc,n,w}(K)= \emptyset$ for every $w \in \ZZ$, 
then $\vht_n(K)=+\infty$. 
\end{definition}

Valuation height is closely related to 
the flag height when $K$ is of the 
form $K\{\bff\}$ as indicated below.

\begin{lemma}
\label{zzlem3.14}
Let $K$ and $K'$ be two Poisson fields, 
and let $n$, $n'$ be integers.
\begin{enumerate}
\item[(1)]
$\vht_n(K)$ is either $-\infty$ or $\geq 2$.
\item[(2)]
If $K\subseteq K'$ and the Poisson bracket on $K$ is nontrivial, then $\vht_n(K)\leq \vht_n(K')$.
\item[(3)]
If $n\leq n'$, then $\vht_n(K)\geq \vht_{n'}(K)$.
\item[(4)]
If $K$ is of the form $K\{\bff\}$, then 
$\vht_n(K)\leq \vht_1(K)\leq \fht(K)$
for every $n\geq 1$.
\end{enumerate}
\end{lemma}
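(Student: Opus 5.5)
The plan is to prove the four parts separately. Every part rests on two facts. First, rescaling or restricting valuations changes the defining conditions in a controlled way. Second, by the remark in Definition \ref{zzdef3.1}, a nontrivial $w$-valuation on a field with nontrivial bracket is a non-classical $(w-l)$-valuation for some $l \ge 0$. So whenever we produce some $w$-valuation satisfying the intersection condition, we get a non-classical one at a level $\le w$, and this can only lower the infimum in Definition \ref{zzdef3.13}.

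For (1), suppose $\calV_{nc,n,w}(K) \ne \emptyset$ for some $w \le -1$, and pick $\nu$ in it. Since $w<0$ and the bracket is nontrivial, $\nu$ is nontrivial. For any $k \ge 1$, $k\nu$ is a $kw$-valuation. It is still non-classical, because equality $\nu(\{a,b\}) = \nu(a)+\nu(b)-w$ for some $a,b$ scales to equality for $k\nu$ at level $kw$. Also $F^{k\nu}_0 = F^\nu_0$, so the intersection condition with ${}^n\Gamma_0(K)$ is preserved. Hence the infimum is $-\infty$. Otherwise every admissible $w$ is $\ge 0$, and $\vht_n(K) \ge 2$ (or $+\infty$).

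For (3), Lemma \ref{zzlem3.7}(2) gives ${}^{n}\Gamma_0(K) \supseteq {}^{n'}\Gamma_0(K)$. Hence $\calV_{nc,n,w}(K) \subseteq \calV_{nc,n',w}(K)$ for every $w$, and the infimum for $n$ is at least that for $n'$.

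For (2), take $\nu' \in \calV_{nc,n,w}(K')$ and set $\nu := \nu'|_K$, which is a $w$-valuation on $K$. By Lemma \ref{zzlem3.7}(1) with $v=0$, the inclusion maps ${}^n\Gamma_0(K)$ into ${}^n\Gamma_0(K')$. Therefore $F^\nu_0(K) \cap {}^n\Gamma_0(K) \subseteq F^{\nu'}_0(K') \cap {}^n\Gamma_0(K') = \Bbbk$. The main obstacle is that $\nu$ may be classical, or even trivial. I would split into two cases.
\begin{itemize}
\item If $\nu$ is nontrivial, it is a non-classical $(w-l)$-valuation for some $l \ge 0$, so it lies in $\calV_{nc,n,w-l}(K)$.
\item If $\nu$ is trivial, pick $a,b \in K$ with $\{a,b\} \ne 0$. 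The $w$-condition gives $0 \ge -w$, so $w \ge 0$. The trivial valuation is then a non-classical $0$-valuation, witnessed by $a,b$. Since $F^\nu_0(K) = K$, the computed intersection shows ${}^n\Gamma_0(K) = \Bbbk$, so the trivial valuation lies in $\calV_{nc,n,0}(K)$.
\end{itemize}
In either case some admissible level for $K$ is $\le w$, which gives (2).

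For (4), the inequality $\vht_n \le \vht_1$ for $n \ge 1$ is part (3). For $\vht_1(K) \le \fht(K)$, we may assume $\fht(K) = d < \infty$ and $K = K\{\bfg\}$ with $\bfg \in \Bbbk[x,y]$ of degree $d$. Take the valuation $\nu$ of Lemma \ref{zzlem3.4} with $z_1 = z_2 = -1$. By part (2) of that lemma, $\nu(\bfg) = -d = z_1+z_2-(d-2)$, so by part (3) $\nu$ is a nontrivial $(d-2)$-valuation. Again by Lemma \ref{zzlem3.4}(2), the only polynomials with $\nu \ge 0$ are the constants. Combined with ${}^1\Gamma_0(K) \subseteq \Bbbk[x,y]$ from Corollary \ref{zzcor3.9}, this gives $F^\nu_0 \cap {}^1\Gamma_0(K) = \Bbbk$. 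Passing to the non-classical level $d-2-l$ as above gives $\vht_1(K) \le 2 + (d-2) = d$.
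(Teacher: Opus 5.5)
Your proposal is correct and follows the paper's proof essentially step for step. The steps match: rescaling $\nu$ to $k\nu$ in (1), the inclusion ${}^{n'}\Gamma_0(K)\subseteq{}^{n}\Gamma_0(K)$ in (3), restriction plus Lemma \ref{zzlem3.7}(1) and lowering to the non-classical level $w-l$ in (2), and the valuation of Lemma \ref{zzlem3.4} with $\nu(x)=\nu(y)=-1$ in (4). Your separate treatment of a trivial restriction in (2) just unpacks the paper's uniform ``possibly trivial'' step, since the remark in Definition \ref{zzdef3.1} already gives $l=w$ there. In (4) you could skip the level-lowering, because $\nu(\{x,y\})=\nu(x)+\nu(y)-(d-2)$ already shows $\nu$ is non-classical at level $d-2$.
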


\begin{proof}
(1) Suppose that $\vht_n(K)<2$. Then there is an integer $w<0$ 
such that ${\mathcal V}_{nc,n,w}(K)\neq \emptyset$, namely,
there is some $\nu\in {\mathcal V}_{nc,w}(K)$ such that 
$F^{\nu}_0(K) \cap {^n\Gamma}_{0}(K) = \Bbbk$. For every 
positive integer $d$, we have $d\nu \in {\mathcal V}_{nc,dw}(K)$.
Note that $F^{d\nu}_0(K)=F^{\nu}_0(K)$ by definition. Then
$F^{d\nu}_0(K) \cap {^n\Gamma}_{0}(K) = \Bbbk$. This means
that ${\mathcal V}_{nc,n,dw}(K)\neq \emptyset$, and so $\vht_n(K) \le 2+dw$.  Consequently, $\vht_n(K)=-\infty$. 

(2) If $\vht_n(K')=+\infty$, nothing needs to be proved. Now assume 
that $\vht_n(K') < +\infty$, and let $w \in \ZZ$ such that $\calV_{nc,n,w}(K') \ne \emptyset$. Then there is a non-classical 
$w$-valuation $\nu$ of $K'$ such that 
$F^{\nu}_0(K') \cap {^n\Gamma}_{0}(K') = \Bbbk$. By restriction,
$\mu:=\nu|_{K}$ is a $w$-valuation 
of $K$ (possibly trivial), and there is some $l \in \Znn$ such that $\mu$ is a non-classical $(w-l)$-valuation. By Lemma \ref{zzlem3.7}(1), ${^n\Gamma}_0(K)
\subseteq {^n\Gamma}_0(K')$, and by definition, $F^{\mu}_0(K)
\subseteq F^{\nu}_0(K')$. Consequently, $F^{\mu}_0(K)\cap 
{^n\Gamma}_{0}(K) = \Bbbk$. Therefore ${\mathcal V}_{nc,n,w-l}(K) \neq \emptyset$ and $\vht_n(K) \le 2+w-l \leq 2+w$. 

(3) follows from the definition and Lemma \ref{zzlem3.7}(2). 

(4) If $\fht(K)=+\infty$, nothing needs to be proved.
Now assume that $d:=\fht(K)$ is finite.  We may thus assume that $\deg \bff = d$.  Let $\nu$ be the discrete valuation on $K$ from Lemma \ref{zzlem3.4} with $\nu(x) = \nu(y) = -1$.  We claim that $\nu$ is a non-classical $(d-2)$-valuation.
This follows from the fact that
$$\nu(\{x,y\})=\nu(\bff)=-d= -2-(d-2) =\nu(x)+\nu(y)-(d-2).$$
Moreover, Lemma \ref{zzlem3.4} shows 
that $\nu < 0$ on the set $\Bbbk[x,y] 
\setminus \Bbbk$, which implies that $F^{\nu}_0(K)\cap {^1\Gamma}_{0}(K)
\subseteq F^{\nu}_0(K) \cap 
\Bbbk[x,y]=\Bbbk$. Therefore 
$\nu\in {\mathcal V}_{nc, 1,d-2}(K)$. 
Hence
$$d-2\geq \inf\{ w \; \mid \; 
{\mathcal V}_{nc, 1,w}(K)
\neq \emptyset\}.$$
By definition, $d=2+(d-2) \geq \vht_1(K)$.
\end{proof}

\begin{lemma}
\label{zzlem3.15}
Let $K$ be a Poisson field of the form $K\{\bff\}$, and suppose that $\vht_n(K)=\fht(K)$ for some positive integer $n$. Then $K$ is height cohereditary. 
\end{lemma}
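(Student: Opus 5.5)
The argument is a direct chain of inequalities built from Lemma \ref{zzlem3.14}. Let $K'$ be an arbitrary Poisson field of the form $K\{\bff'\}$ with $K \subseteq K'$. We must show $\fht(K) \le \fht(K')$. The plan is
$$
\fht(K) = \vht_n(K) \le \vht_n(K') \le \vht_1(K') \le \fht(K').
$$
The first equality is the hypothesis. The second step is Lemma \ref{zzlem3.14}(2), the cohereditary property of $\vht_n$. The last two steps are Lemma \ref{zzlem3.14}(4) applied to $K'$, which is legitimate since $n \ge 1$ and $K'$ has the form $K\{\bff'\}$.

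The only hypothesis requiring care is the one in Lemma \ref{zzlem3.14}(2): the Poisson bracket on $K$ must be nontrivial. Definition \ref{zzdef3.13} needs this too, since $\vht_n(K)$ is only defined for fields with nontrivial bracket.

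If $\bff \neq 0$, then $\{x,y\} = \bff \neq 0$ and the bracket is nontrivial. I would also check that the bracket on $K'$ is nontrivial, because Lemma \ref{zzlem3.14}(4) and Definition \ref{zzdef3.13} require it there as well. This follows from the inclusion: the nonzero bracket $\{x,y\}$ computed in $K$ is the same element computed in $K'$. So whenever $\vht_n(K)$ is defined, everything else is defined.

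The degenerate case $\bff = 0$ is the potential obstacle, since then $\vht_n(K)$ is not defined. I would handle it as follows. Either the hypothesis $\vht_n(K) = \fht(K)$ implicitly presupposes a nontrivial bracket, so the case is excluded, or we argue directly. With the paper's convention, the zero polynomial has degree less than $0$, so $\fht(K)$ is minimal among all flag heights and the required inequality $\fht(K) \le \fht(K')$ holds trivially.

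Infinite values cause no difficulty. If $\fht(K') = \infty$, there is nothing to prove. Otherwise the chain gives finiteness of every term. Hence $K$ is height cohereditary in the sense of Definition \ref{zzdef0.6}(1).
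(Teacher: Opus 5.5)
Your proposal is correct and is exactly the paper's argument: the chain $\fht(K)=\vht_n(K)\le\vht_n(K')\le\vht_1(K')\le\fht(K')$, obtained from Lemma~\ref{zzlem3.14}(2)(4). Your extra remarks are harmless additions that the paper leaves implicit: that the bracket is nontrivial on both $K$ and $K'$, the degenerate case $\bff=0$, and infinite values.
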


\begin{proof}
Let $K' := K\{\bfg\}$ and $\phi: K\to K'$ an embedding.
By Lemma \ref{zzlem3.14}(2)(4),
we have
$$\fht(K)=\vht_n(K)\leq \vht_n(K') \leq \vht_1(K')\leq \fht(K').$$
Therefore $K$ is height cohereditary.
\end{proof}

Next we give a lower bound for valuation height.

\begin{lemma}  
\label{zzlem3.16}
Let $K$ be a Poisson field with $h\in K\setminus \Bbbk$ 
and let $f(t)$ be a Laurent polynomial
$a_{l}t^l+a_{l+1} t^{l+1}+\cdots+ a_{n} t^n$ with 
$l\leq n$, $a_i\in \Bbbk$, and $a_{l}, a_{n}\neq 0$.
Assume $n\geq 2$.
Suppose $u,v$ are nonzero elements in $K$ such that
$\{u,v\}=uv f(h)$. Then $\vht_{n-1}(K)\geq 2+n$.
\end{lemma}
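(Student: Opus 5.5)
We have to show: for every $w \le n-1$, ${\mathcal V}_{nc,n-1,w}(K)=\emptyset$. Then the infimum in Definition \ref{zzdef3.13} is at least $n$, so $\vht_{n-1}(K)\ge 2+n$.

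The key input is Corollary \ref{zzcor3.11} with $d=n-1$. Since $n\ge 2$, we have $0\le n-1<n$, so $h\in {^{n-1}\Gamma}_0(K)$. Because $h\notin\Bbbk$, it suffices to show that every non-classical $w$-valuation $\nu$ with $w\le n-1$ satisfies $\nu(h)\ge 0$. Then $h\in F^\nu_0(K)\cap {^{n-1}\Gamma}_0(K)$, so this intersection is not $\Bbbk$, and hence $\nu\notin{\mathcal V}_{nc,n-1,w}(K)$.

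Fix $w\le n-1$ and a non-classical $w$-valuation $\nu$. There are two cases.

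If $\nu$ is trivial, then $\nu(h)=0$, and we are done. (For $w\le 0$ with nontrivial bracket, the trivial valuation is classical anyway, but this case needs no extra care.)

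If $\nu$ is nontrivial, then $\nu$ is a $w$-valuation and hence also an $(n-1)$-valuation, because $w\le n-1$. Since $h\in{^{n-1}\Gamma}_0(K)$, this gives $\nu(h)\ge 0$ directly. Alternatively, one can rerun the argument of Corollary \ref{zzcor3.11}: $\nu(f(h))\ge -w\ge -(n-1)$, while $\nu(h)<0$ would force $\nu(f(h))=n\nu(h)\le -n$ by Lemma \ref{zzlem3.10}(1), a contradiction.

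So in all cases $\nu(h)\ge 0$, which shows ${\mathcal V}_{nc,n-1,w}(K)=\emptyset$ for all $w\le n-1$. Hence $\inf\{w\mid {\mathcal V}_{nc,n-1,w}(K)\ne\emptyset\}\ge n$, giving $\vht_{n-1}(K)\ge n+2$. This also covers the case where the infimum is $+\infty$. Note that the bracket on $K$ is nontrivial because $\{u,v\}=uvf(h)\ne 0$, so Definition \ref{zzdef3.13} applies.

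There is no real obstacle here. The only point needing care is that membership in ${^{n-1}\Gamma}_0$ is tested against nontrivial valuations, whereas a non-classical valuation may be trivial. That case is harmless because the trivial valuation takes value $0$ on $h$.
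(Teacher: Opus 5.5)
Your proof is correct and is essentially the paper's argument. Both use Corollary~\ref{zzcor3.11} to put $h$ in ${^{n-1}\Gamma}_0(K)$, note that $\nu\in{\mathcal V}_{nc,n-1,w}(K)$ forces $\nu(h)<0$, and conclude $w\ge n$. The paper does this directly via $\nu(u)+\nu(v)-w\le\nu(u)+\nu(v)+n\nu(h)$, which is your ``alternative''; your main route instead observes that a $w$-valuation with $w\le n-1$ is already an $(n-1)$-valuation.

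One harmless slip in your parenthetical aside: the trivial valuation is classical when $w>0$. For $w<0$ it is not a $w$-valuation at all, and for $w=0$ it is non-classical. Since you dispose of the trivial case via $\nu(h)=0$ anyway, nothing depends on this.
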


\begin{proof} By Corollary \ref{zzcor3.11},
$h\in {^{n-1}\Gamma}_0(K)$. If $w$ is an integer and 
$\nu\in {\mathcal V}_{nc,n-1,w}(K)$, then 
$\nu(h)<0$ because $h \notin \Bbbk$. Then, using Lemma \ref{zzlem3.10}(1),
$$\nu(u)+\nu(v)-w \leq \nu(uv f(h))=
\nu(u)+\nu(v)+\nu(f(h))=
\nu(u)+\nu(v)+n \nu(h)$$
which implies that $w\geq n(-\nu(h))\geq n$. Therefore
$\vht_{n-1}(K)\geq 2+n$.
\end{proof}

In this paper, we will consider elements with the 
following property.

\begin{definition}
\label{zzdef3.17}
Let $d$ be a positive integer. 
A polynomial $\bfg\in \Bbbk[x,y]$ is called {\it $d$-flabby} if 
${^d\Gamma}_{0}(K\{\bfg\})=\Bbbk[x,y]$.  If $\bfg$ is $d$-flabby for some unspecified $d>0$, we just say that $\bfg$ is \emph{flabby}.
\end{definition}

\begin{proposition}
\label{zzpro3.18}
Let $\bff=\sum_{i=0}^m \sum_{j=0}^n c_{ij} x^i y^j$ with $c_{ij} \in \Bbbk$ and
$c_{mn}\neq 0$, where $m\geq 1$, $n\geq 1$. Let $K$ be 
$K\{\bff\}$. If $\bff$ is $d$-flabby for some $d\geq 1$, then 
$\vht_d(K)=\fht(K)=m+n$. 
As a consequence, $K$ is height cohereditary. 
\end{proposition}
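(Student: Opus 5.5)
The plan is to squeeze $\vht_d(K)$ between two bounds: an upper bound coming for free from Lemma \ref{zzlem3.14}, and a matching lower bound obtained from the flabbiness hypothesis by evaluating an arbitrary valuation on the leading monomial $x^my^n$.

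\textbf{Upper bound.} Since $\bff$ has $x$-degree at most $m$ and $y$-degree at most $n$, its total degree is at most $m+n$. Hence $\fht(K)\le m+n$ directly from Definition \ref{zzdef0.2}. Since $d\ge 1$, Lemma \ref{zzlem3.14}(4) gives
$$\vht_d(K)\le \vht_1(K)\le \fht(K)\le m+n .$$
It therefore suffices to prove $\vht_d(K)\ge m+n$.

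\textbf{Lower bound.} Let $w\in\ZZ$ and $\nu\in \calV_{nc,d,w}(K)$. Because $\bff$ is $d$-flabby, ${}^d\Gamma_0(K)=\Bbbk[x,y]$, so the defining condition of $\calV_{nc,d,w}(K)$ reads $F^\nu_0(K)\cap \Bbbk[x,y]=\Bbbk$. Since $x,y\notin\Bbbk$, this forces $\nu(x)<0$ and $\nu(y)<0$. Write $a:=-\nu(x)\ge 1$ and $b:=-\nu(y)\ge 1$.

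Each nonzero monomial $c_{ij}x^iy^j$ of $\bff$ has value $-(ia+jb)$. Because $i\le m$, $j\le n$ and $a,b\ge 1$, every such monomial with $(i,j)\neq(m,n)$ has value strictly larger than $-(ma+nb)$. So the term $c_{mn}x^my^n$ is the unique one of minimal value, and by the valuation axiom $(3^+)$ we get
$$\nu(\bff)=-(ma+nb).$$

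Now apply axiom (5) of Definition \ref{zzdef3.1} to the pair $x,y$:
$$-(ma+nb)=\nu(\{x,y\})\ge \nu(x)+\nu(y)-w=-a-b-w .$$
This gives
$$w\ge (m-1)a+(n-1)b\ge (m-1)+(n-1)=m+n-2 ,$$
where the second inequality uses $m,n\ge 1$ and $a,b\ge 1$. Since $w$ was arbitrary with $\calV_{nc,d,w}(K)\ne\emptyset$, Definition \ref{zzdef3.13} yields $\vht_d(K)\ge 2+(m+n-2)=m+n$.

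Combining the two bounds gives $\vht_d(K)=\fht(K)=m+n$. Height cohereditarity then follows immediately from Lemma \ref{zzlem3.15}, applied with the positive integer $d$.

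\textbf{Main obstacle.} The only delicate point is the strict domination of $x^my^n$ among the monomials of $\bff$. This is exactly where both hypotheses enter: $c_{mn}\ne 0$ with $m,n$ the maximal exponents, and $\nu(x),\nu(y)$ both strictly negative, the latter supplied by flabbiness. No cancellation can occur, since the minimum value is attained by a single monomial.
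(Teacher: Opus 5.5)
Your proof is correct and matches the paper's argument essentially step for step. Both use the upper bound from Lemma \ref{zzlem3.14}(4), then flabbiness to force $\nu(x),\nu(y)<0$, so that $\nu(\bff)=m\nu(x)+n\nu(y)$, then the inequality $w\ge (m-1)(-\nu(x))+(n-1)(-\nu(y))\ge m+n-2$, and finally Lemma \ref{zzlem3.15}; your explicit check that $c_{mn}x^my^n$ is the unique monomial of minimal value is a detail the paper leaves implicit.
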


\begin{proof} By Lemma \ref{zzlem3.14}(4), $\vht_d(K)
\leq \fht(K)\leq m+n$. It remains to show 
that $\vht_d(K)\geq m+n$. This is automatic if $\vht_d(K) = +\infty$, so assume $\vht_d(K) < +\infty$.

Let $w$ be an integer such that ${\mathcal V}_{nc,d,w}(K)
\neq \emptyset$. So there is a valuation  
$\nu\in {\mathcal V}_{nc,w}(K)$ such that 
$F^{\nu}_0(K)\cap {^d\Gamma}_{0}(K)=\Bbbk$.  
Then $\nu(x)<0$ and $\nu(y)<0$, which implies
that $\nu(\bff)=m \nu(x)+n \nu(y)$. Now the
equation $\{x,y\}=\bff$ implies that
$$\nu(x)+\nu(y)-w \leq \nu(\{x,y\})=
\nu(\bff)=m \nu(x)+n \nu(y).$$
An easy computation shows that
$$2+ w\geq 2+ (m-1)(-\nu(x))+(n-1) (-\nu(y))
\geq 2+ m-1+n-1=m+n,$$
which implies that $\vht_d(K)\geq m+n$.

Therefore $\vht_d(K)=\fht(K)=m+n$.  By Lemma \ref{zzlem3.15}, $K$ is height cohereditary.
\end{proof}

%%%%%%%%%%%%%%%%%%%%%%%%%

We now begin work on the families of Poisson 
fields $K\{\bff\}$ described in the introduction, and study the Automorphism, Isomorphism, Embedding, and Dixmier Problems. One key tool is the $\gamma$-invariants 
that were introduced in the last section.

Recall from Proposition \ref{zzpro1.1}(1) that if $\bff \in \Bbbk(x)^{\times}$ or $\bff \in \Bbbk(y)^\times$, then $K\{\bff\}\cong \Kweyl$. So everything is 
reduced to the Poisson Weyl field in those cases. The first larger family 
we will consider is the following.  

%%%%%%%%%%%%%%%%%%%%%%%%%

\section{Family (1): $K\{ q x^a y^b\}$}
\label{zzsec4}

In this section, we consider the Poisson fields
$K\{\bff\}$ where $\bff$ is of the form $q x^a y^b$ with $q \in \kx$ and $a,b \in \ZZ$. It is convenient to 
rewrite $\bff$ as $q x^{1+\kappa_1} y^{1+\kappa_2}$ 
with $q\in \Bbbk^{\times}$ and $\kappa_1,\kappa_2\in {\ZZ}$, and to relabel $K\{\bff\}$ as $K_{q,\kappa_1,\kappa_2}$ or $K_{q,\kappa}$ where $\kappa := (\kappa_1,\kappa_2)$.  Thus, $K_q = K_{q,0,0}$ and $\Kweyl = K_{1,-1,-1}$.

Note that $K_{q,\kappa} = K_{q,\kappa_1,\kappa_2}$ is the quotient field of the 
Poisson torus $T_{q,\kappa} = T_{q,\kappa_1,\kappa_2}$, which is the Laurent polynomial ring
$\Bbbk[x^{\pm 1}, y^{\pm 1}]$ with 
Poisson structure determined by $\{x,y\} = q x^{1+\kappa_1} y^{1+\kappa_2}$.

\begin{lemma}
\label{zzlem4.1}
Retain the above notation, and let $q, q' \in \kx$ and $\kappa, \kappa' \in \ZZ^2 \setminus \{(0,0)\}$.  For parts {\rm(3)--(5)}, assume that $\Bbbk$ is algebraically closed. 
\begin{enumerate}
\item[(1)]
If there exist $\alpha,\beta \in \kx$ such that $\alpha^{\kappa_1} \beta^{\kappa_2} = q$, then
$T_{q,\kappa}\cong T_{1,\kappa}$ and $K_{q,\kappa}\cong K_{1,\kappa}$.
\item[(2)]
$T_{q,\kappa} \cong T_{q,\kappa_0,0}$ and $K_{q,\kappa} \cong K_{q,\kappa_0,0}$ where $\kappa_0:=\gcd(\kappa)$.
In particular, $T_{q,\kappa_0,0}\cong 
T_{q,\kappa_0,d\kappa_0}$ and $K_{q,\kappa_0,0}\cong 
K_{q,\kappa_0,d\kappa_0}$
for all $d\in {\ZZ}$.
\item[(3)] 
Let $\kappa_0$ and $\kappa'_0$ be nonzero integers.  Then
$T_{q,\kappa_0,0} \cong T_{q',\kappa'_0,0}$
if and only if $\kappa_0=\pm \kappa'_0$.  In particular, if $\kappa_0=\pm \kappa'_0$ then $K_{q,\kappa_0,0} \cong K_{q',\kappa'_0,0}$.
\item[(4)]
$T_{q,\kappa} \cong T_{q',\kappa'}$ if and only if 
$\gcd(\kappa')=\gcd(\kappa)$.  In particular, if $\gcd(\kappa')=\gcd(\kappa)$ then $K_{q,\kappa} \cong K_{q',\kappa'}$.
\item[(5)]
There are Poisson algebra embeddings $T_{1,\kappa}\to 
T_{1,d\kappa}$ and $K_{1,\kappa}\to 
K_{1,d\kappa}$ for all nonzero integers $d$.
\item[(6)]
$T_{q,\kappa}$ is Poisson-simple, meaning that it has no proper nonzero Poisson ideals.
\end{enumerate}
\end{lemma}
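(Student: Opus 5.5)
The whole lemma should follow from monomial changes of variables and rescalings, together with the fact that the units of a Laurent polynomial ring are scalar multiples of monomials.

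\textbf{The basic formula.} For $A=\left(\begin{smallmatrix} a&b\\ c&d\end{smallmatrix}\right)\in GL_2(\ZZ)$, put $x'=x^ay^b$ and $y'=x^cy^d$ in $T_{q,\kappa}$. Exactly as in \eqref{E1.5.1},
$$\{x',y'\}=(ad-bc)\,q\,x'y'\,x^{\kappa_1}y^{\kappa_2}.$$
Since $A$ is invertible, $x^{\kappa_1}y^{\kappa_2}=(x')^{\kappa_1'}(y')^{\kappa_2'}$ with $\kappa'=\kappa A^{-1}$. This gives isomorphisms
$$T_{q,\kappa}\cong T_{\det(A)q,\;\kappa A^{-1}}\quad\text{and}\quad K_{q,\kappa}\cong K_{\det(A)q,\;\kappa A^{-1}}.$$

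\textbf{Parts (1) and (2).} For (1), rescale the generators:
$$\{\alpha x,\beta y\}=q\,\alpha^{-\kappa_1}\beta^{-\kappa_2}(\alpha x)^{1+\kappa_1}(\beta y)^{1+\kappa_2}.$$
Using $\alpha^{-1},\beta^{-1}$ in place of $\alpha,\beta$ reduces $q$ to $1$ when $\alpha^{\kappa_1}\beta^{\kappa_2}=q$. For (2), $SL_2(\ZZ)$ acts transitively on primitive vectors, so it acts transitively on the nonzero integer vectors with a given gcd. Choosing $A\in SL_2(\ZZ)$ with $\kappa A^{-1}=(\kappa_0,0)$ keeps $q$ unchanged and gives $T_{q,\kappa}\cong T_{q,\kappa_0,0}$. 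Applying this to the vector $(\kappa_0,d\kappa_0)$, which has gcd $|\kappa_0|$, gives the ``in particular'' statement.

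\textbf{Parts (3) and (4), the sufficiency direction.} Since $\Bbbk$ is algebraically closed, part (1) lets us take $q=q'=1$ (choose $\alpha^{\kappa_0}=q$, $\beta=1$). Inverting $x$ gives
$$\{x^{-1},y\}=-q\,(x^{-1})^{1-\kappa_0}y,$$
so $\kappa_0$ and $-\kappa_0$ give isomorphic tori after another rescaling. Combined with (2), this shows that $\gcd(\kappa)=\gcd(\kappa')$ implies $T_{q,\kappa}\cong T_{q',\kappa'}$.

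\textbf{Parts (3) and (4), the necessity direction.} This is the main point of the lemma. A Poisson isomorphism $\phi:T_{q,\kappa}\to T_{q',\kappa'}$ is in particular an algebra isomorphism of Laurent polynomial rings, so it maps units to units. Hence $\phi(x)=\lambda x^ay^b$ and $\phi(y)=\mu x^cy^d$ with $\lambda,\mu\in\kx$. Because $\phi$ is surjective, the exponent matrix $A$ lies in $GL_2(\ZZ)$.

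Now compare $\phi(\{x,y\})$ with $\{\phi(x),\phi(y)\}$ using the basic formula. This forces the monomial parts to agree, namely $\kappa'=\pm\,\kappa A^{-1}$ after accounting for the scalars. Since $A^{-1}\in GL_2(\ZZ)$, the gcd is preserved, so $\gcd(\kappa')=\gcd(\kappa)$. In particular $(\kappa_0,0)$ and $(\kappa_0',0)$ force $\kappa_0=\pm\kappa_0'$.

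The isomorphisms of the fields $K$ follow by passing to fraction fields.

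\textbf{Part (5).} By (1) and (2) it suffices to embed $T_{1,\kappa_0,0}$ into $T_{1,d\kappa_0,0}$. In the target,
$$\{x^d,y\}=d\,(x^d)^{1+\kappa_0}\,y.$$
Since $\Bbbk$ is algebraically closed, choose $\xi\in\kx$ with $\xi^{\kappa_0}=d$ and set $u:=\xi x^d$. Then
$$\{u,y\}=\xi d\,(x^d)^{1+\kappa_0}y=\xi d\,\xi^{-1-\kappa_0}\,u^{1+\kappa_0}y=u^{1+\kappa_0}y.$$
So $x\mapsto\xi x^d$, $y\mapsto y$ is a Poisson homomorphism. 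It is injective because $d\neq0$, and it extends to the fraction fields.

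\textbf{Part (6).} Consider the derivations
$$D_y:=q^{-1}x^{-1-\kappa_1}y^{-\kappa_2}\{x,-\}=y\,\partial_y,\qquad D_x:=-q^{-1}x^{-\kappa_1}y^{-1-\kappa_2}\{y,-\}=x\,\partial_x.$$
Both preserve any Poisson ideal $I$, since $I$ is stable under $\{x,-\}$ and $\{y,-\}$ and under multiplication by units. The derivation $D_x$ (resp.\ $D_y$) acts on a monomial $x^iy^j$ by the scalar $i$ (resp.\ $j$). Take a nonzero $f\in I$. A Vandermonde argument, using that $\operatorname{char}\Bbbk=0$, shows that each monomial component of $f$ lies in $I$. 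Monomials are units, so $I=T_{q,\kappa}$.
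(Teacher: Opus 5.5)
Your proposal is correct, and for parts (1)--(5) it is the paper's argument: rescaling for (1); a monomial change of variables with matrix in $SL_2(\ZZ)$ having first row $\kappa/\kappa_0$ for (2); the substitution $x\mapsto x^{-1}$ for the sign in (3); for the converse, units of $\Bbbk[x^{\pm1},y^{\pm1}]$ being scalar multiples of monomials plus an exponent comparison; and a rescaled $u=x^d$ for (5).

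There are two slips to fix, though neither affects any conclusion.
\begin{itemize}
\item In (1), your displayed formula already gives coefficient $q\alpha^{-\kappa_1}\beta^{-\kappa_2}=1$ for the given $\alpha,\beta$. Passing to $\alpha^{-1},\beta^{-1}$ would give $q^2$ instead.
\item In the converse of (3)/(4), comparing $\phi(\{x,y\})$ with $\{\phi(x),\phi(y)\}$ gives the exponent identity $(1,1)A+\kappa A=(1,1)A+\kappa'$. This is exactly $\kappa'=\kappa A$: the exponent matrix of $\phi$ goes from source to target, so it is $A$ rather than $A^{-1}$. The sign $\det A$ appears only in the scalar identity $q\lambda^{\kappa_1}\mu^{\kappa_2}=\det(A)\,q'$.
\end{itemize}
Phrasing the converse as invariance of $\gcd$ under $GL_2(\ZZ)$ is slightly more direct than the paper. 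The paper first normalizes to $q=q'=1$ via (1) and then reads off $b=0$, $a=\pm1$. Your version shows that this direction needs no algebraic closure.

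Part (6) is where you genuinely differ. The paper takes a prime $P$ minimal over a nonzero Poisson ideal and uses that $P$ is again Poisson. It then applies Observation~\ref{zzobs1.4}: the cosets of $x,y$ in $T_{q,\kappa}/P$ are algebraically dependent, so $\{x,y\}\in P$, contradicting that $\{x,y\}$ is a unit. You instead multiply the Hamiltonian derivations $\{x,-\}$ and $\{y,-\}$ by units to obtain the Euler derivations $y\partial_y$ and $x\partial_x$, which stabilize every Poisson ideal. A Vandermonde argument then separates the $\ZZ^2$-weight components, so any nonzero Poisson ideal contains a monomial, hence a unit.

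Both arguments rest on the invertibility of $\{x,y\}$. Your route is self-contained and avoids the facts about minimal primes over Poisson ideals and transcendence degree, but it depends on the torus grading. The paper's route is shorter given those facts and adapts immediately to any affine Poisson domain of GK-dimension $2$ containing two elements whose bracket is a unit.
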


\begin{proof}
Since the statements about Poisson fields follow immediately from those about Poisson tori, we just need to prove the latter.

(1) This is clear since $\Bbbk[x^{\pm1}, y^{\pm1}] = \Bbbk[(\alpha x)^{\pm1}, (\beta y)^{\pm1}]$ and
$$
(\alpha x)^{1+\kappa_1} (\beta y)^{1+\kappa_2} = \alpha \beta q x^{1+\kappa_1} y^{1+\kappa_2} = \{ \alpha x, \beta y \}.
$$

(2) Let $(a,b) := \kappa/\kappa_0$, so that $\gcd(a,b) = 1$ and there are $c,d \in \ZZ$ such that $ad-bc = 1$.  Setting $\xhat := x^a y^b$ and $\yhat := x^c y^d$, we see that $\xhat^d \yhat^{-b} = x$ and $\xhat^{-c} \yhat^a = y$, so that $\Bbbk[x^{\pm1}, y^{\pm1}] = \Bbbk[\xhat^{\pm1}, \yhat^{\pm1}]$.  Moreover, using Lemma \ref{zzlem9.1} we have 
$$
\{\xhat,\yhat\} = x^{a+c-1} y^{b+d-1} q x^{1+a\kappa_0} y^{1+b\kappa_0} = q x^{a(1+\kappa_0)+c} y^{b(1+\kappa_0)+d} = q \xhat^{1+\kappa_0} \yhat.
$$
This establishes the first isomorphism, and then the second follows.

(3) If $\kappa_0 = - \kappa'_0$, we obtain $T_{1,\kappa_0,0} \cong T_{-1,\kappa'_0,0}$ from the observation that 
$$
\{x^{-1},y\} = - x^{-2} x^{1+\kappa_0} y = - x^{-1+\kappa_0} y = - (x^{-1})^{1+\kappa'_0} y
$$
in $T_{1,\kappa_0,0}$.  Consequently, $T_{q,\kappa_0,0} \cong T_{q',\kappa'_0,0}$ via part (1).

Conversely, if $T_{q,\kappa_0,0} \cong T_{q',\kappa'_0,0}$, then $T_{1,\kappa_0,0} \cong T_{1,\kappa'_0,0}$ by (1), and so there exist $u,v \in T_{1,\kappa_0,0}$ such that $\Bbbk[x^{\pm1}, y^{\pm1}] = \Bbbk[u^{\pm1}, v^{\pm1}]$ and $\{u,v\} = u^{1+\kappa'_0} v$.  We must have $u = \lambda x^a y^b$ and $v = \mu x^c y^d$ for some $\lambda,\mu \in \kx$ and $\left( \begin{smallmatrix} a&b\\ c&d \end{smallmatrix} \right) \in GL_2(\ZZ)$.  Then
$$
\lambda^{1+\kappa'_0} \mu (x^a y^b)^{1+\kappa'_0} (x^c y^d) = \{u,v\} = \lambda \mu (ad-bc) x^{a+c-1} y^{b+d-1} x^{1+\kappa_0} y,
$$
whence $a(1+\kappa'_0)+c = a+c+\kappa_0$ and $b(1+\kappa'_0)+d = b+d$.  But then $b \kappa'_0 = 0$ and so $b=0$.  Thus $a = \pm1$ and $\kappa_0 = a\kappa'_0 = \pm\kappa'_0$.

(4) follows from (2) and (3).

(5) In view of (2), we may assume that $\kappa = (\kappa_0,0)$.  View $T_{1,d\kappa}$ as $\Bbbk[x^{\pm1}, y^{\pm1}]$ with $\{x,y\} = x^{1+d\kappa_0} y$.  Set $u := x^d$ and observe that
$$
\{u,y\} = d x^{d-1} x^{1+d\kappa_0} y = d x^{d(1+\kappa_0)} y = d u^{1+\kappa_0} y \,.
$$
Thus $\Bbbk[u^{\pm1},y^{\pm1}] \cong T_{d,\kappa} \cong T_{1,\kappa}$.

(6) Suppose there is a proper nonzero Poisson ideal $I$ in $T_{q,\kappa}$.  Any prime ideal $P$ minimal over $I$ is a Poisson ideal (e.g., \cite[Lemma 1.1(c)]{Go1}), so the quotient field of $T_{q,\kappa}/P$ is a Poisson field.  Since $P$ is nonzero, the cosets of $x$ and $y$ in  $T_{q,\kappa}/P$ cannot be algebraically independent over $\Bbbk$, and hence the Poisson bracket of these cosets is zero (Observation \ref{zzobs1.4}).  But this means $\{x,y\} \in P$, which is impossible because $\{x,y\}$ is invertible in $T_{q,\kappa}$.  Therefore $T_{q,\kappa}$ is Poisson-simple.
\end{proof}

\begin{corollary}  
\label{zzcor4.2}  
Assume that $\Bbbk$ is algebraically closed, $q \in \kx$, and $\kappa \in \ZZ^2$.  
\begin{enumerate}
\item[(1)]
 $K_{q,\kappa}$ 
is isomorphic to either $K_{q}$, or $\Kweyl$, or $K_{1,n,0}$
for some $n\geq 2$. 
\item[(2)]
$K_{q,\kappa}$ has a Poisson subfield isomorphic to $K_q$ or $\Kweyl$.
\end{enumerate}
\end{corollary}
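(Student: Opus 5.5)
The plan is to reduce everything to Lemma \ref{zzlem4.1} after splitting off the case $\kappa=(0,0)$, and to get part (2) by exhibiting explicit subfields.

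\textbf{Part (1).} If $\kappa=(0,0)$, then $K_{q,\kappa}=K_q$ by definition. Otherwise set $\kappa_0:=\gcd(\kappa)>0$. By Lemma \ref{zzlem4.1}(2) we have $K_{q,\kappa}\cong K_{q,\kappa_0,0}$. Since $\Bbbk$ is algebraically closed, we can pick $\alpha$ with $\alpha^{\kappa_0}=q$, and Lemma \ref{zzlem4.1}(1) (with $\beta=1$) gives $K_{q,\kappa_0,0}\cong K_{1,\kappa_0,0}$, which is $K\{x^{1+\kappa_0}y\}$. There are two cases:
\begin{itemize}
\item If $\kappa_0=1$, the flag is $x^2y$. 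Then Remarks \ref{zzrem1.8} (equivalently Proposition \ref{zzpro1.1}(3) after swapping roles, or directly $\{x^{-1},y\}=-y$) shows $K\cong\Kweyl$.
\item If $\kappa_0\ge 2$, put $n=\kappa_0$; this is the third alternative.
\end{itemize}

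The one point to check is the normalization to $\kappa_0>0$. Lemma \ref{zzlem4.1}(3) lets us replace $\kappa_0$ by $-\kappa_0$, so the sign causes no trouble. Also, $\Kweyl=K_{1,-1,-1}$ has $\gcd=1$, which is consistent with the $\kappa_0=1$ case.

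\textbf{Part (2).} For part (2) it suffices, by part (1), to treat $K_{1,n,0}=K\{x^{1+n}y\}$ with $n\ge 1$; the other cases are trivially their own subfields. Remarks \ref{zzrem1.8}, applied with $m=1+n\neq 1$ and $g(y)=y$, gives the Poisson subfield $\Bbbk(x^{-n},y)\cong\Kweyl$. As an alternative, Example \ref{zzexa1.5}(4) with $c=0$ is not allowed, so instead we can directly compute $\{x^{-n},y\}=-nx^{-n-1}x^{1+n}y=-ny$, and then Proposition \ref{zzpro1.1}(1) gives $\Kweyl$.

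No step is a real obstacle. The only care needed is bookkeeping of the cases $\kappa=(0,0)$ versus $\kappa\neq(0,0)$, and the sign of $\gcd$.
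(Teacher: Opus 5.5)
Your proof is correct and follows the paper's route. Part (1) comes from Lemma \ref{zzlem4.1}(1)(2); you usefully make explicit that $K_{1,1,0}\cong\Kweyl$ via $\{x^{-1},y\}=-y$, which the paper leaves implicit through $\Kweyl=K_{1,-1,-1}$. Part (2) comes from the subfield $\Bbbk(x^{-n},y)\cong\Kweyl$ given by Corollary \ref{zzcor1.7}(2) and Remarks \ref{zzrem1.8}. One small slip: your parenthetical appeal to Proposition \ref{zzpro1.1}(3) with roles swapped does not actually apply to the flag $x^2y$, since the coefficient $x^2$ is neither constant nor linear; your direct computation makes that citation unnecessary.
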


\begin{proof}
(1) follows directly from Lemma \ref{zzlem4.1}.

(2) By part (1), it is enough to deal with $K_{1,n,0}$.  This has a Poisson subfield isomorphic to $\Kweyl$ by Corollary \ref{zzcor1.7}(2), as noted in Remarks \ref{zzrem1.8}.
\end{proof}

Next we consider the $\gamma$-invariants for 
the $K_{q,\kappa}$.
\medskip

Case 1: $K=K_q=K_{q,0,0}$ for some $q\in \Bbbk^{\times}$. By
\cite[Theorem 6.2(3)]{HTWZ1}, there is no $w$-valuation on $K$ for any 
$w<0$. 

\begin{lemma}
\label{zzlem4.3}
Let $K=K_q$ for some $q\in \Bbbk^{\times}$. 
\begin{enumerate}
\item[(1)]
${^w\Gamma}_{0}(K)=\Bbbk$ for all $w>0$ and 
${^w\Gamma}_{v}(K)=0$ for all $w,v>0$. As a consequence,
$qxy$ is not $1$-flabby.
\item[(2)]
$\vht_1(K)=\fht(K)=2$.
\end{enumerate}
\end{lemma}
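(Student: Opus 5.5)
The plan is to get both parts from Corollary \ref{zzcor3.9} applied to several presentations of $K$, combined with Lemma \ref{zzlem3.14} and the Dumas bound on flag height.

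\textbf{Part (1), ${^w\Gamma}_0(K)=\Bbbk$.} By Lemma \ref{zzlem3.7}(2), ${^w\Gamma}_0(K)\subseteq {^1\Gamma}_0(K)$ for all $w\ge 1$, so it suffices to treat $w=1$. Corollary \ref{zzcor3.9} applied to $K=K\{qxy\}$ gives ${^1\Gamma}_0(K)\subseteq \Bbbk[x,y]$. The same corollary applies to other generating pairs:
\begin{itemize}
\item $K=\Bbbk(x^{-1},y)$ with $\{x^{-1},y\}=-qx^{-1}y$, so ${^1\Gamma}_0(K)\subseteq \Bbbk[x^{-1},y]$;
\item symmetrically, ${^1\Gamma}_0(K)\subseteq \Bbbk[x,y^{-1}]$.
\end{itemize}
Intersecting inside $\Bbbk(x,y)$, we have $\Bbbk[x,y]\cap\Bbbk[x^{-1},y]=\Bbbk[y]$, and then $\Bbbk[y]\cap \Bbbk[x,y^{-1}]=\Bbbk$. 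These intersections are checked by comparing monomial supports in the Laurent ring $\Bbbk[x^{\pm1},y^{\pm1}]$, which contains all three polynomial rings. Lemma \ref{zzlem3.7}(2) gives $\Bbbk\subseteq{^w\Gamma}_0(K)$, so equality holds.

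\textbf{Part (1), ${^w\Gamma}_v(K)=0$ for $v>0$.} Here we need $\mathcal V_w(K)\neq\emptyset$. Remark \ref{zzrem3.2} gives a nontrivial $1$-valuation, and it is also a $w$-valuation for every $w\ge1$. Alternatively, take $\nu$ from Lemma \ref{zzlem3.4} with $\nu(x)=\nu(y)=-1$; then $\nu(qxy)=-2=z_1+z_2$, so $\nu$ is a $0$-valuation, hence a $w$-valuation. Now suppose $a\in{^w\Gamma}_v(K)$ with $a\ne0$. Since ${^w\Gamma}_v\subseteq{^w\Gamma}_1\subseteq{^w\Gamma}_0=\Bbbk$, we would have $a\in\kx$. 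But $\nu(a)=0<v$ for any $\nu\in\mathcal V_w(K)$, a contradiction; so ${^w\Gamma}_v(K)=0$. Finally, $qxy$ is not $1$-flabby because ${^1\Gamma}_0(K)=\Bbbk\neq\Bbbk[x,y]$.

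\textbf{Part (2).} The upper bound is $\vht_1(K)\le\fht(K)\le2$, by Lemma \ref{zzlem3.14}(4) and $\deg qxy=2$. For $\fht(K)=2$, rule out degree $\le1$: degree $0$ (or the zero polynomial) gives $\Kweyl$, and degree $1$ also gives $\Kweyl$ by Proposition \ref{zzpro1.1}(1)(2). Since $K_q\not\cong\Kweyl$ \cite[Corollary 5.3]{GoLa}, $\fht(K)=2$.

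For $\vht_1(K)\ge2$, Lemma \ref{zzlem3.14}(1) says $\vht_1(K)$ is either $\ge2$ or $-\infty$. To exclude $-\infty$, it suffices to show there is no $w$-valuation with $w<0$; this is \cite[Theorem 6.2(3)]{HTWZ1}, quoted just before the lemma. A nonclassical $w$-valuation with $w<0$ cannot be trivial, since trivial $\nu$ forces $0\ge -w>0$, so $\mathcal V_{nc,1,w}(K)=\emptyset$ for all $w<0$. Hence $\vht_1(K)\ge2$, and equality follows.

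The main obstacle is this last lower bound, and it rests entirely on the cited nonexistence theorem. If that theorem were unavailable, I would instead try to show directly that any $w$-valuation $\nu$ has $\nu(x)+\nu(y)\le\nu(qxy)+w$, which forces $w\ge0$, but this inequality is vacuous. So the cited result is genuinely needed.
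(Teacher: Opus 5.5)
Your proof is correct and is essentially the paper's: you bound ${^1\Gamma}_0(K)$ by intersecting polynomial presentations of $K_q$ (the paper uses only $\Bbbk[x,y]\cap\Bbbk[x^{-1},y^{-1}]=\Bbbk$), you get ${^w\Gamma}_v(K)=0$ because it is a proper ideal of $\Bbbk$, and you conclude $2\le\vht_1(K)\le\fht(K)\le 2$ from the absence of $w$-valuations with $w<0$, which also makes your separate appeal to $K_q\not\cong\Kweyl$ unnecessary. Your closing remark is mistaken, though: the direct inequality is not vacuous, because any $w$-valuation $\nu$ satisfies $\nu(x)+\nu(y)=\nu(qxy)=\nu(\{x,y\})\ge \nu(x)+\nu(y)-w$, which gives $w\ge 0$ at once, so this one-line check can replace the citation of \cite[Theorem 6.2(3)]{HTWZ1}.
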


\begin{proof} (1) 
Note that $A=\Bbbk[x,y]$ is a Poisson subalgebra of $K$
such that $Q(A)=K$. By Corollary \ref{zzcor3.9}, 
${^{1}\Gamma_{0}}(K)\subseteq A$. 
Note that 
$B:=\Bbbk[x^{-1},y^{-1}]$ is also a Poisson subalgebra of $K$ 
(with $\{x^{-1},y^{-1}\}=qx^{-1}y^{-1}$) such that $Q(B)=K$. 
By Theorem \ref{zzthm3.8}, ${^{1}\Gamma_{0}}(K)\subseteq B$. 
Then ${^{1}\Gamma_{0}}(K)\subseteq A\cap B=\Bbbk$. Since ${^{w}\Gamma_{0}}(K)\subseteq 
{^{1}\Gamma_{0}}(K)$ and 
${^{w}\Gamma_{0}}(K)\supseteq \Bbbk$ [Lemma \ref{zzlem3.7}(2)],
${^{w}\Gamma_{0}}(K)=\Bbbk$. 

For $w,v>0$, since each ${^{w}\Gamma_{v}}(K)$ is a proper ideal of 
${^{w}\Gamma_{0}}(K)$, the second assertion follows.

(2) We have $\vht_1(K) \le \fht(K)$ by Lemma \ref{zzlem3.14}(4), and $\fht(K) \le 2$ because $K = K\{qxy\}$.  On the other hand, $\vht_1(K) \ge 2$ since there is no $w$-valuation on $K$ for any $w<0$.
\end{proof}

Case 2: $K=\Kweyl$. By \cite[Example 2.10]{HTWZ1}, 
for each integer $w$, there is a nontrivial $w$-valuation 
on $K$. This implies that ${^{w}\Gamma_{v}}(K)\neq K$
for every pair $(w,v)$. 

\begin{lemma}
\label{zzlem4.4}
Let $K=\Kweyl$.
\begin{enumerate}
\item[(1)]
$(xy)^{-1}\in {^{w}\Gamma_{-w}}(K)$ for every $w \in \ZZ$. 
\item[(2)]
${^{w}\Gamma_{0}}(K)=\Bbbk$ for all $w>0$ and 
${^{w}\Gamma_{v}}(K)=0$ for all $w,v>0$. As a 
consequence, $1$ is not $1$-flabby.
\item[(3)]
$\vht_1(K)= - \infty$ and $\fht(K)=0$.
\end{enumerate}
\end{lemma}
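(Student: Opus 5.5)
For part (1), I will take an arbitrary $w$-valuation $\nu$ on $K=\Kweyl$ and bound $\nu$ of the bracket. From $\{x,y\}=1$ and axiom (5),
$$0=\nu(1)=\nu(\{x,y\})\ge \nu(x)+\nu(y)-w.$$
So $\nu(xy)\le w$, which gives $\nu((xy)^{-1})\ge -w$. Since this holds for every nontrivial $w$-valuation, $(xy)^{-1}\in {^{w}\Gamma_{-w}}(K)$. The inequality also holds for trivial valuations, but that case is not needed.

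For part (2), I will copy the argument of Lemma \ref{zzlem4.3}(1). By Corollary \ref{zzcor3.9}, ${^1\Gamma_0}(K)\subseteq \Bbbk[x,y]$. I then apply Theorem \ref{zzthm3.8} to a second polynomial model of $K$ that meets $\Bbbk[x,y]$ only in $\Bbbk$.

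\begin{itemize}
\item The choice $\Bbbk[x^{-1},y^{-1}]$ fails, because $\{x^{-1},y^{-1}\}=x^{-2}y^{-2}$.
\item Instead I take $u:=x^{-1}$ and $v:=x^2y$. Then $\{u,v\}=-x^{-2}\cdot x^2\{x,y\}=-1$, and $K=\Bbbk(u,v)$ because $x=u^{-1}$ and $y=u^2v$. Here I use $\{x,x^2y\}=x^2$.
\item So $B:=\Bbbk[u,v]=\Bbbk[x^{-1},x^2y]$ is a Poisson subalgebra. It is a polynomial ring, hence noetherian normal of GK-dimension 2, so Theorem \ref{zzthm3.8} gives ${^1\Gamma_0}(K)\subseteq B$.
\end{itemize}

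It remains to check $\Bbbk[x,y]\cap\Bbbk[x^{-1},x^2y]=\Bbbk$. A monomial $u^iv^j=x^{2j-i}y^j$ lies in $\Bbbk[x,y]$ only when $2j\ge i$. So the intersection is actually spanned by infinitely many monomials such as $x^{-1}\cdot x^2y=xy$, and this choice does not work as stated.

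I will therefore use three charts. Together with $u=x^{-1}$, $v=x^2y$, I use the symmetric chart $\Bbbk[y^{-1},y^2x]$, for which one checks $\{y^2x,y^{-1}\}=-1$. The intersection of the three polynomial rings $\Bbbk[x,y]$, $\Bbbk[x^{-1},x^2y]$ and $\Bbbk[y^{-1},xy^2]$ is computed monomial by monomial, since each is spanned by monomials.

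\begin{itemize}
\item $x^ay^b\in\Bbbk[x,y]$ requires $a,b\ge 0$.
\item $x^ay^b\in\Bbbk[x^{-1},x^2y]$ requires $b\ge0$ and $2b-a\ge0$.
\item $x^ay^b\in\Bbbk[y^{-1},xy^2]$ requires $a\ge0$ and $2a-b\ge 0$.
\end{itemize}

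This allows, for instance, $xy$, so three charts still fail. The obstacle is that $\{x,y\}=1$ leaves polynomial charts with fixed "center" elements like $xy$. I will break these by applying Theorem \ref{zzthm3.8} to further charts obtained from Jung-type automorphisms. For example, $\Bbbk[x',y']$ with $x'=x+y^2$, $y'=y$ also has bracket $1$, so ${^1\Gamma_0}(K)\subseteq\Bbbk[x+y^2,y]$ for every such automorphism image, and the same holds for images of the other charts.

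The main step is to show that $\bigcap_\phi \phi(\Bbbk[x,y])\cap\Bbbk[x^{-1},x^2y]$ over triangular automorphisms $\phi$ is $\Bbbk$. Alternatively, I could use part (1) directly. Since $(xy)^{-1}\in{^w\Gamma_{-w}}$, an element $h\in{^w\Gamma_0}$ with $w>0$ has $\nu(h)\ge0$ for all $w$-valuations. I would then produce, using Lemma \ref{zzlem3.4}, explicit $1$-valuations $\nu$ with $\nu(x)=z_1$, $\nu(y)=z_2$ and $z_1+z_2\le 1$, and also in the transformed coordinates, that are negative on any given nonconstant polynomial. Lemma \ref{zzlem3.4}(3) with $\bff=1$ allows any $z_1,z_2$ with $z_1+z_2\le w$. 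In particular $(z_1,z_2)=(-1,1)$ and $(1,-1)$ are $0$-valuations, hence $1$-valuations.

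A polynomial $h=\sum c_{ab}x^ay^b$ with $\nu(h)\ge0$ for all such weights $(z_1,z_2)$ with $z_1+z_2\le 1$ must have $az_1+bz_2\ge0$ for each of its monomials. The pairs $(-1,0)$ and $(0,-1)$ are allowed, and they force $a=b=0$. So ${^1\Gamma_0}(K)\cap\Bbbk[x,y]=\Bbbk$. This cleanly gives ${^w\Gamma_0}(K)=\Bbbk$ for all $w>0$, since ${^w\Gamma_0}\subseteq{^1\Gamma_0}\subseteq\Bbbk[x,y]$.

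Next, ${^w\Gamma_v}(K)$ for $v>0$ is an ideal of ${^w\Gamma_0}(K)=\Bbbk$ that does not contain $1$, so it is $0$. Finally, $1$ is not $1$-flabby, because ${^1\Gamma_0}(K\{1\})=\Bbbk\ne\Bbbk[x,y]$.

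For part (3), $\fht(K)=0$ holds because the flag $1$ has degree $0$ and the zero flag is excluded. For $\vht_1(K)=-\infty$, Lemma \ref{zzlem3.14}(1) says it suffices to find some $w<0$ and a non-classical $w$-valuation $\nu$ with $F^\nu_0\cap{^1\Gamma_0}=\Bbbk$. By part (2), ${^1\Gamma_0}(K)=\Bbbk$, so the intersection condition is automatic. I will take $\nu$ from Lemma \ref{zzlem3.4} with $\nu(x)=\nu(y)=1$. Then $\nu(\{x,y\})=0=\nu(x)+\nu(y)-2$, so $\nu$ is a non-classical $(-2)$-valuation, and hence $\vht_1(K)\le 0<2$. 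The main obstacle overall is part (2), and the valuation-weight argument with weights $(-1,0)$ and $(0,-1)$ is what I would use there.
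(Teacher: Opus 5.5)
\textbf{Gap in part (3).} The valuation you pick, with $\nu(x)=\nu(y)=1$, is not a $(-2)$-valuation. Your own computation $\nu(\{x,y\})=0=\nu(x)+\nu(y)-2$ shows that it is a non-classical $2$-valuation. Axiom (5) with $w=-2$ would require $0=\nu(1)\ge \nu(x)+\nu(y)+2=4$, which is false. Equivalently, Lemma \ref{zzlem3.4}(3) with $\bff=1$ needs $0\ge z_1+z_2-w$, i.e.\ $w\ge 2$ here. So this valuation only yields $\vht_1(K)\le 4$, and Lemma \ref{zzlem3.14}(1) cannot be invoked.

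The repair is to flip the sign: take $\nu$ from Lemma \ref{zzlem3.4} with $\nu(x)=\nu(y)=-1$. Then $0\ge -2-w$ holds for $w=-2$, with equality at the pair $(x,y)$, so $\nu$ is a nontrivial non-classical $(-2)$-valuation. It lies in $\calV_{nc,1,-2}(K)$ because ${^1\Gamma_0}(K)=\Bbbk$. Hence $\vht_1(K)\le 0<2$, and Lemma \ref{zzlem3.14}(1) gives $\vht_1(K)=-\infty$. The paper instead quotes from HTWZ1 the existence of non-classical $w$-valuations for every $w\le 0$. Your reduction via Lemma \ref{zzlem3.14}(1) (scaling $\nu$ to $d\nu$) reaches the same conclusion with one explicit valuation, once the sign is corrected.

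\textbf{Part (2).} Your reason for discarding $\Bbbk[x^{-1},y^{-1}]$ is mistaken. The bracket $\{x^{-1},y^{-1}\}=(x^{-1})^2(y^{-1})^2$ is a polynomial in the generators, so $\Bbbk[x^{-1},y^{-1}]$ is a Poisson polynomial subalgebra with fraction field $K$. Corollary \ref{zzcor3.9}, applied to this presentation, gives ${^1\Gamma_0}(K)\subseteq\Bbbk[x,y]\cap\Bbbk[x^{-1},y^{-1}]=\Bbbk$ at once. That is exactly the paper's proof, and it makes your detour through $\Bbbk[x^{-1},x^2y]$, the three charts, and the Jung automorphisms unnecessary.

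The argument you finally adopt is nevertheless correct and genuinely different. After ${^1\Gamma_0}(K)\subseteq\Bbbk[x,y]$, the monomial valuations with weights $(-1,0)$ and $(0,-1)$ are nontrivial $1$-valuations, since $z_1+z_2\le 1$. They are negative on every nonconstant polynomial, so they replace the second use of the controlling theorem by an explicit construction. The single weight $(-1,-1)$ would already suffice, and it is the same valuation that fixes part (3). Parts (1), the vanishing of ${^w\Gamma_v}(K)$ for $w,v>0$, non-flabbiness, and $\fht(K)=0$ are fine as you have them.
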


\begin{proof}
(1) Let $\nu$ be a $w$-valuation on $K$. Then 
$0=\nu(1)=\nu(\{x,y\})\geq \nu(x)+\nu(y)-w$. Hence
$\nu((xy)^{-1})=-(\nu(x)+\nu(y))\geq -w$. So 
$(xy)^{-1}\in {^{w}\Gamma_{-w}}(K)$. 

(2) Note that $A=\Bbbk[x,y]$ is a Poisson subalgebra of $K$
such that $Q(A)=K$, and that 
$B:=\Bbbk[x^{-1},y^{-1}]$ is a Poisson subalgebra of $K$ 
(with $\{x^{-1},y^{-1}\}=(x^{-1}y^{-1})^2$) such that 
$Q(B)=K$. By Corollary \ref{zzcor3.9},  ${^{1}\Gamma_{0}}(K)\subseteq A\cap B=\Bbbk$. The first and second assertions now follow as in Lemma \ref{zzlem4.3}.

(3) Clearly $\fht(K) = 0$ since $K = K\{1\}$.  
For any integer $w \le 0$, there exist 
nontrivial non-classical $w$-valuations on 
$K$ \cite[Example 2.10]{HTWZ1}, and so
$\calV_{nc,1,w}(K) \ne \emptyset$ due 
to ${^{1}\Gamma_{0}}(K) =\Bbbk$.  Thus 
$\vht_1(K)= - \infty$. Of course 
$\fht(K) = 0$ because $\{x,y\}=1$ has 
degree $0$.
\end{proof}

Case 3: $K = K_{1,n,0}$ for some $n\geq 2$. Thus $K = \Bbbk(x,y)$ with $\{x,y\} = x^{n+1} y$.

\begin{lemma}
\label{zzlem4.5}
Let $n\geq 2$ and $K =K_{1,n,0}$. 
\begin{enumerate}
\item[(1)]
${^{w}\Gamma_{0}}(K)=\Bbbk[x]$ for $1\leq w\leq n-1$, and
${^{w}\Gamma_{0}}(K)=\Bbbk$ for all $w\geq n$. As a consequence, 
$x^{n+1}y$ is not $1$-flabby for $n\geq 2$.
\item[(2)]
${^{w}\Gamma_{v}}(K)=0$ for all $w,v>0$.
\item[(3)]
$\vht_1(K)=\fht(K)=n+2$.
\end{enumerate}
\end{lemma}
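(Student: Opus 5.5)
The plan is to get every containment from three sources: Corollary \ref{zzcor3.11} and Lemma \ref{zzlem3.16} applied to the identity $\{x,y\} = xy\cdot x^n$, explicit monomial valuations built with Lemma \ref{zzlem3.4}, and the controlling results Corollary \ref{zzcor3.9} and Lemma \ref{zzlem3.7}.

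For (1), I would apply Corollary \ref{zzcor3.11} with $u=x$, $v=y$, $h=x$ and $f(t)=t^n$. This gives $x\in{^w\Gamma_0}(K)$ for $0\le w\le n-1$. Since ${^w\Gamma_0}(K)$ is a $\Bbbk$-subalgebra, $\Bbbk[x]\subseteq{^w\Gamma_0}(K)$ for these $w$.

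For the reverse containment:
\begin{itemize}
\item Lemma \ref{zzlem3.7}(2) gives ${^w\Gamma_0}(K)\subseteq{^1\Gamma_0}(K)$ for all $w\ge1$.
\item Corollary \ref{zzcor3.9}, applied to the presentation $K\{x^{n+1}y\}$, gives ${^1\Gamma_0}(K)\subseteq\Bbbk[x,y]$.
\item In the generators $x,y^{-1}$ we have $\{x,y^{-1}\}=-x^{n+1}y^{-1}$, which is a polynomial flag. So Corollary \ref{zzcor3.9} also gives ${^1\Gamma_0}(K)\subseteq\Bbbk[x,y^{-1}]$.
\end{itemize}
Intersecting these, exactly as in Corollary \ref{zzcor3.12}, yields ${^1\Gamma_0}(K)\subseteq\Bbbk[x]$. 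Hence ${^w\Gamma_0}(K)=\Bbbk[x]$ for $1\le w\le n-1$.

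For $w\ge n$, I would take the valuation $\nu$ of Lemma \ref{zzlem3.4} with $\nu(x)=-1$ and $\nu(y)=0$. Then $\nu(x^{n+1}y)=-(n+1)\ge -1+0-w$ exactly when $w\ge n$, so by Lemma \ref{zzlem3.4}(3) $\nu$ is a $w$-valuation. It is nontrivial because $\nu(x)\ne0$. By Lemma \ref{zzlem3.4}(2), every nonconstant $p(x)\in\Bbbk[x]$ has $\nu(p)=-\deg p<0$. Therefore ${^w\Gamma_0}(K)\subseteq\Bbbk[x]\cap F^\nu_0(K)=\Bbbk$. Since ${^1\Gamma_0}(K)=\Bbbk[x]\ne\Bbbk[x,y]$, it follows that $x^{n+1}y$ is not $1$-flabby.

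For (2), let $w,v>0$ and $a\in{^w\Gamma_v}(K)$. Then $a\in{^w\Gamma_0}(K)\subseteq\Bbbk[x]$. I would use the valuation $\mu$ of Lemma \ref{zzlem3.4} with $\mu(x)=0$ and $\mu(y)=-1$. Here $\mu(x^{n+1}y)=-1\ge 0-1-w$ for every $w\ge0$, so $\mu$ is a $w$-valuation, and it is nontrivial because $\mu(y)=-1$. By Lemma \ref{zzlem3.4}(2), every nonzero polynomial in $x$ has $\mu$-value $0<v$. So $a$ must be $0$.

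For (3), the upper bound $\fht(K)\le n+2$ is immediate from the flag $x^{n+1}y$, and Lemma \ref{zzlem3.14}(4) gives $\vht_1(K)\le\fht(K)$. For the lower bound, Lemma \ref{zzlem3.16} with the same data $u=x$, $v=y$, $h=x$, $f=t^n$ (using $n\ge2$) gives $\vht_{n-1}(K)\ge n+2$. Since $1\le n-1$, Lemma \ref{zzlem3.14}(3) gives $\vht_1(K)\ge\vht_{n-1}(K)\ge n+2$. Combining the two bounds gives $\vht_1(K)=\fht(K)=n+2$.

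The only delicate point I expect is checking that the auxiliary valuations $\nu$ and $\mu$ really are nontrivial $w$-valuations in the required ranges of $w$. This is exactly what the inequality criterion in Lemma \ref{zzlem3.4}(3) is for, so I anticipate no real obstruction.
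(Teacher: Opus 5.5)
Your proof is correct and follows essentially the same route as the paper: it uses the inclusion ${^1\Gamma_0}(K)\subseteq \Bbbk[x,y]\cap\Bbbk[x,y^{-1}]$, Corollary \ref{zzcor3.11}, and monomial valuations from Lemma \ref{zzlem3.4}. The differences are cosmetic: in (2) you use $\mu(y)=-1$ and treat all $w,v>0$ directly, while the paper uses $\nu(y)=1$ after reducing to ${^1\Gamma_1}(K)$; in (3) you cite Lemma \ref{zzlem3.16} with Lemma \ref{zzlem3.14}(3), while the paper reruns the same valuation estimate using ${^1\Gamma_0}(K)=\Bbbk[x]$.
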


\begin{proof}
(1) By definition, $A=\Bbbk[x,y]$ is a Poisson subalgebra 
of $K$ such that $Q(A)=K$. By 
Corollary \ref{zzcor3.9}, ${^{1}\Gamma_{0}}(K)\subseteq A$.  Note that 
$B:=\Bbbk[x,y^{-1}]$ is also
a Poisson subalgebra of $K$ (with 
$\{x,y^{-1}\}=-x^{n+1}(y^{-1})$) 
such that $Q(B)=K$. By Theorem
\ref{zzthm3.8}, ${^{1}\Gamma_{0}}(K)\subseteq B$. Then 
${^{1}\Gamma_{0}}(K)\subseteq A\cap B=\Bbbk[x]$. 
As a consequence, ${^{w}\Gamma_{0}}(K)\subseteq \Bbbk[x]$
for all $w\geq 1$. By Corollary \ref{zzcor3.11},
$x\in {^{w}\Gamma_{0}}(K)$ for all $0 \le w<n$. So, 
if $1\leq w<n$, then ${^{w}\Gamma_{0}}(K)=\Bbbk[x]$. 

If $w\geq n$, there is a $w$-valuation $\nu$ such that
$\nu(x)=-1$ and $\nu(y)=0$ (Lemma \ref{zzlem3.4}). So $x\not\in 
{^{w}\Gamma_{0}}(K)$. In fact, one sees that
any polynomial $f(x)$ of positive degree is not
in ${^{w}\Gamma_{0}}(K)$. So ${^{w}\Gamma_{0}}(K)=\Bbbk$. 

(2) It suffices to show that ${^{1}\Gamma_{1}}(K)=0$. By part (1), 
${^{1}\Gamma_{1}}(K)$ is an ideal of $\Bbbk[x]$.

Let $\nu$ be a $1$-valuation such that $\nu(x)=0$ 
and $\nu(y)=1$ (Lemma \ref{zzlem3.4}). Then $\nu(f(x)) = 0$ and hence 
$f(x)\not\in F^{\nu}_1(K)$ for all nonzero $f(x) \in \Bbbk[x]$.
The assertion follows.

(3) We have $\vht_1(K) \le \fht(K)$ by Lemma \ref{zzlem3.14}(4), and clearly $\fht(K) \le n+2$, so it suffices to show that $\vht_1(K) \ge n+2$.  If not, there is an integer $w < n$ such that $\calV_{nc,1,w}(K) \ne \emptyset$, and so there exists a non-classical $w$-valuation $\nu$ on $K$ such that $F^\nu_0(K) \cap {^{1}\Gamma_0}(K) = \Bbbk$.  Since ${^{1}\Gamma_0}(K) = \Bbbk[x]$ and $F^\nu_0(K)$ is a subalgebra of $K$, it follows that $x \notin F^\nu_0(K)$, that is, $\nu(x) < 0$.  Now
$$
(n+1)\nu(x) + \nu(y) = \nu(\{x,y\}) \ge \nu(x) + \nu(y) - w,
$$
whence $n \nu(x) \ge - w$.  But then $\nu(x) \ge - w / n > -1$, contradicting the fact that $\nu(x)$ is a negative integer.  Therefore $\vht_1(K) \ge n+2$, as desired.
\end{proof}

Now we can solve the Isomorphism Problem for all
$K\{\bff\}$ where $\bff$ are of the form 
$q x^a y^b$ and $\Bbbk$ is algebraically closed. We have already seen that every 
such Poisson field is isomorphic to either
$K_q$, or $\Kweyl$, or $K_{1,n,0}$ for some $n\geq 2$. 
Now we are ready to show that these are non-isomorphic (whether or not $\Bbbk = \kbar$).

\begin{proposition}  [Isomorphism Problem]  
\label{zzpro4.6}
The following Poisson fields are 
pairwise non-isomorphic:
\begin{enumerate}
\item[(1)]
$K_q$, except for the obvious isomorphism $K_{q}\cong K_{-q}$,
\item[(2)]
$\Kweyl$ {\rm{(}}which is isomorphic to $K_{1,1,0}${\rm{)}},
\item[(3)]
$K_{1,n,0}$, for $n\geq 2$.
\end{enumerate}
\end{proposition}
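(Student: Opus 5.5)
The plan is to separate the three classes by the invariants computed in Lemmas \ref{zzlem4.3}, \ref{zzlem4.4}, \ref{zzlem4.5}. Then, within the $K_q$ family, I will identify which $q$ give isomorphic fields by a separate argument. Since $\vht_1$ is invariant under Poisson isomorphism (Lemma \ref{zzlem3.14}(2) applied in both directions), we have $\vht_1(\Kweyl)=-\infty$, $\vht_1(K_q)=2$, and $\vht_1(K_{1,n,0})=n+2$ for $n\ge 2$. These values are pairwise distinct across the three classes and distinguish the $K_{1,n,0}$ from one another. The remark $\Kweyl\cong K_{1,1,0}$ is Remarks \ref{zzrem1.8}, or Proposition \ref{zzpro1.1}(1) applied to $\{x^{-1},y\}=-x^{-1}\cdot x^{2}y\cdot x^{-1}$. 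Explicitly, $\{x,y\}=x^2y$ gives $\{x^{-1},y\}=-y$, which lies in $\Bbbk(y)$. These invariant computations hold over any $\Bbbk$, since Lemmas \ref{zzlem4.3}--\ref{zzlem4.5} did not use algebraic closure.

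The isomorphism $K_q\cong K_{-q}$ is obvious: swap $x$ and $y$, or use $x\mapsto x^{-1}$.

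The main obstacle is showing that $K_q\cong K_{q'}$ forces $q'=\pm q$. Here the $\Gamma$-invariants are trivial, so I will use the residual structure of valuations. For $K_q$, every $0$-valuation is computable. By \cite[Theorem 6.2]{HTWZ1}, $K_q$ has no $w$-valuation with $w<0$, and the nonclassical $0$-valuations are exactly the monomial valuations $\nu$ with $(\nu(x),\nu(y))=(a,b)\in\ZZ^2\setminus\{0\}$, up to the structure described there.

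I would attach to $K_q$ the set of scalars $\lambda$ with the following property: there is a nontrivial $0$-valuation $\nu$ and elements $u,v$ of $\nu$-value $0$ whose images in $\gr_\nu$ satisfy $\{\bar u,\bar v\}=\lambda\bar u\bar v$ with $\bar u,\bar v$ algebraically independent. For a monomial valuation with primitive weight $(a,b)$, the graded ring is a Laurent-type ring spanned by monomials. Using \eqref{E1.5.1}, the attainable $\lambda$ are $(ad'-bc')q$ where $(a',b'),(c',d')$ range over the monomials of weight $0$. This gives the set $\ZZ q$, or at least a set whose generator is $\pm q$.

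An isomorphism transports $0$-valuations and graded brackets, so $\ZZ q=\ZZ q'$, and therefore $q'=\pm q$. If that bookkeeping proves awkward, my fallback is to cite the known result that $K_q\cong K_{q'}$ iff $q'=\pm q$ (e.g.\ \cite{GoLa}, \cite[Corollary 5.9]{HTWZ2}).
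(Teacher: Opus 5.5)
Your separation of the three classes is the paper's argument with $\vht_1$ in place of $\fht$. Both invariants come out of Lemmas \ref{zzlem4.3}--\ref{zzlem4.5}: the values are $2,-\infty,n+2$ for $\vht_1$ versus $2,0,n+2$ for $\fht$, and either one works. Your checks that $\Kweyl\cong K_{1,1,0}$ and $K_q\cong K_{-q}$ are also fine. The problem is your main argument for $K_q\cong K_{q'}\Rightarrow q'=\pm q$.

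The invariant you define is empty for every $q$, so it distinguishes nothing. Elements of $\nu$-value $0$ have images in $F^\nu_0/F^\nu_1$, the residue field of $\nu$. For a nontrivial discrete valuation on $\Bbbk(x,y)$ that is trivial on $\Bbbk$, this field has transcendence degree at most $1$ (Abhyankar's inequality). Hence no two degree-$0$ elements of $\gr_\nu$ are algebraically independent.

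Your own monomial computation shows the same thing. For a primitive weight $(a,b)\neq(0,0)$, every weight-$0$ monomial is a power of $x^by^{-a}$. So the determinant of two weight-$0$ exponent vectors vanishes, and \eqref{E1.5.1} yields only $\lambda=0$, not $\ZZ q$.

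Repairing the valuation route would need three things. First, you would have to allow homogeneous elements of all degrees; for the monomial valuations one gets $\gr_\nu\cong\Bbbk(t)[s^{\pm1}]$ with $t=x^by^{-a}$ and $\{t,s\}=qts$. Second, you would need an actual classification of all nontrivial $0$-valuations on $K_q$. The paper uses only \cite[Theorem 6.2(3)]{HTWZ1}, the nonexistence of $w$-valuations for $w<0$, and your phrase ``up to the structure described there'' is not a proof. Third, you would need a logarithmic-derivative argument, as in Lemma \ref{zzlem2.1}, to exclude non-monomial homogeneous pairs.

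None of this is needed. The set $\{\lambda\in\Bbbk\mid \{u,v\}=\lambda uv\ \text{for some}\ u,v\in K^\times\}$ is already a Poisson isomorphism invariant of $K$. For $K=K_q$ it equals $\ZZ q$ by \cite[Proposition 5.2(b)]{GoLa}, exactly as used in the proof of Proposition \ref{zzpro4.10}(2). Then $\ZZ q=\ZZ q'$ forces $q'=\pm q$.

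As written, your proof of this step rests entirely on your fallback citation. That fallback is what the paper does: it quotes \cite[Corollary 5.4]{GoLa} for $K_q\cong K_{q'}\iff q'=\pm q$. The paper invokes \cite{HTWZ2} only for $K_q\not\cong\Kweyl$, not for this step.
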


\begin{proof} By Lemmas \ref{zzlem4.3}--\ref{zzlem4.5}, the Poisson fields in cases (1), (2), (3) have flag heights $2$, $0$, and $n+2$, respectively.  Thus, except possibly within case (1), no two of these are isomorphic.  By \cite[Corollary 5.4]{GoLa}, $K_{q}\cong K_{q'}$ if and only if $q'=\pm q$, which completes the proof.

The fact that $K_{q}\not\cong \Kweyl$ was already known, by \cite[Corollary 5.3]{GoLa} or \cite[Corollary 5.9(2)]{HTWZ2}. 
\end{proof}

\begin{remark}  \label{zzrem4.7}  
If a Poisson field $K = K\{\bff\}$ has flag height $\le 1$, then $K \cong \Kweyl$ by Proposition \ref{zzpro1.1}(2), whence $\fht(K) = 0$ by Lemma \ref{zzlem4.4}.  Thus, $\fht(K) = 1$ cannot occur.  In view of Lemmas \ref{zzlem4.3}, \ref{zzlem4.5} and Example \ref{zzexa8.5}, 
$\fht(K)$ can be 2, or any number strictly 
larger than $3$, or $\infty$. By \cite{GZ2}, there is a $K=K\{\bff\}$ such that $\fht(K)=3$. In summary the range of $\fht$ is $\{\infty\} \cup ({\mathbb Z}_{\geq 0} \setminus \{1\})$.
\end{remark}

Next we consider the Automorphism 
Problem for $K\{q x^a y^b\}$. 
The automorphism group of $K_q$ when 
$\Bbbk = \mathbb{C}$ is given in 
\cite[Theorem 1]{Bl}, 
while describing that of $\Kweyl$ 
is an open problem. We will work out 
the automorphism group of $K_{1,n,0}$ 
for $n\geq 2$. There are some 
obvious Poisson automorphisms:
$$
\eta_{a,b}: x\mapsto a x, 
\quad y \mapsto b y
$$
where $a\in \Bbbk^{\times}$ 
with $a^n=1$ and $b\in 
\Bbbk(x)^{\times}$, and
$$\tau_c: x\mapsto cx, 
\quad y\mapsto y^{-1}$$
where $c \in \kx$ with $c^n=-1$. 
Note that $\tau_c^2=\eta_{c^2,1}$.

\begin{proposition}[Automorphism Problem] 
\label{zzpro4.8} Let $K = K_{1,n,0}$ where $n\geq 2$ and $\Bbbk = \kbar$. 
\begin{enumerate}
\item[(1)] Then
$$\Aut_{Poi}(K)=\langle \eta_{a,b},\, \tau_c \mid a,c \in \kx,\; b\in 
\Bbbk(x)^{\times},\; a^n=1,\; c^n = -1\rangle,$$
and there is a short exact sequence
\begin{equation}  \label{E4.8.1}  \tag{E4.8.1}
1\to \Bbbk(x)^\times \rtimes C_n
\to \Aut_{Poi}(K)\to C_2\to 1,
\end{equation}
where we identify $C_n$ with $\{ a \in \kx \mid a^n = 1 \}$ and the action of $C_n$ on $\Bbbk(x)^{\times}$ is given by the rule
$a \cdot b(x) = b(ax)$.
\item[(2)]
If $n$ is odd, then the sequence \eqref{E4.8.1} is right-split and so $$\Aut_{Poi}(K) 
\cong  (\Bbbk(x)^{\times} \rtimes C_n)\rtimes C_2 \,,$$
where $C_2$ acts on $\Bbbk(x)^{\times} \rtimes C_n$ via the automorphism
$(b,a) \longmapsto (b(-x)^{-1},a)$.
 If $n$ is even, then \eqref{E4.8.1} is not right-split.
\end{enumerate}
\end{proposition}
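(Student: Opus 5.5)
Let $\phi$ be a Poisson automorphism of $K = K_{1,n,0}$, where $\{x,y\} = x^{n+1}y$. The plan is to pin down $\phi(x)$ and $\phi(y)$ in turn. For $\phi(x)$, use the invariant ${^1\Gamma}_0(K) = \Bbbk[x]$ from Lemma \ref{zzlem4.5}(1). By Lemma \ref{zzlem3.7}(1) with $v=0$, both $\phi$ and $\phi^{-1}$ preserve ${^1\Gamma}_0(K)$, so $\phi$ restricts to an automorphism of $\Bbbk[x]$. Hence $\phi(x) = \alpha x + \beta$ with $\alpha \in \kx$.

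Next, since $\phi$ is an automorphism fixing the subfield $\Bbbk(x)$ setwise, $K = \Bbbk(x)(\phi(y))$ with $\phi(y)$ transcendental over $\Bbbk(x)$. By Lüroth over $\Bbbk(x)$, or just the structure of $\Aut_{\Bbbk(x)}\Bbbk(x)(y)$, we get
$$\phi(y) = \frac{ay+b}{cy+d}$$
with $a,b,c,d \in \Bbbk(x)$ and $ad-bc \neq 0$.

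Now impose the bracket relation $\{\phi(x),\phi(y)\} = \phi(x)^{n+1}\phi(y)$. The left side is $\alpha\, \partial_y\phi(y)\, x^{n+1}y$, because $\{x,g\} = x^{n+1}y\,\partial_y g$ for $g \in K$ (Lemma \ref{zzlem9.1}-type derivation). So the condition becomes
$$\alpha\, x^{n+1}\, y\, \frac{ad-bc}{(cy+d)^2} = (\alpha x+\beta)^{n+1}\, \frac{ay+b}{cy+d}.$$
Compare zeros and poles in $y$ over $\overline{\Bbbk(x)}$. The left side vanishes at $y=0$ while the right side vanishes at $y=-b/a$, so $b=0$ or $a=0$.
\begin{itemize}
\item If $b=0$, then $y(ad)/(cy+d)^2 = (\text{unit})\, y/(cy+d)$ forces $c=0$, and $\phi(y) = (a/d)y$. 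The equation becomes $\alpha x^{n+1} = (\alpha x+\beta)^{n+1}$. Since $n \ge 1$, comparing coefficients gives $\beta = 0$ and $\alpha^{n} = 1$. This is $\eta_{\alpha,a/d}$.
\item If $a=0$, the symmetric argument forces $d=0$ and $\phi(y) = e(x)y^{-1}$, which yields $\beta=0$ and $\alpha^n=-1$. Composing with $\tau_c$ reduces this to the first case.
\end{itemize}
This gives the generator description in (1).

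For the exact sequence, map $\phi$ to its sign, i.e.\ whether $\phi(y) \in \Bbbk(x)^\times y$ or $\phi(y) \in \Bbbk(x)^\times y^{-1}$. This is a homomorphism onto $C_2$ with kernel $\{\eta_{a,b}\}$. Composition gives $\eta_{a,b}\eta_{a',b'}(x) = aa'x$ and $\eta_{a,b}\eta_{a',b'}(y) = b'(ax)\,b(x)\,y$. Thus the kernel is $\Bbbk(x)^\times \rtimes C_n$ with the stated action, after checking the order convention.

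For (2), a splitting needs an element of order $2$ in the nontrivial coset. Such an element is $\phi = \tau$-type with $\phi(x) = cx$, $c^n=-1$, and $\phi(y) = e(x)y^{-1}$. Then $\phi^2(x) = c^2x$, which requires $c^2 = 1$, and $\phi^2(y) = e(cx)^{-1}e(x)\,y$. If $n$ is odd, take $c=-1$ and $e=1$; this is $\tau_{-1}$, of order $2$, and conjugation gives the stated action $b(x) \mapsto b(-x)^{-1}$. If $n$ is even, $c^2=1$ contradicts $c^n=-1$, so there is no splitting. I expect the main care to go into the rational-function comparison forcing $c=0$ or $d=0$, and into fixing the semidirect product conventions.
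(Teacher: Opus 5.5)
Your proof is correct. Two parts of it coincide with the paper's argument: using ${^1\Gamma}_0(K)=\Bbbk[x]$ to get $\phi(x)=\alpha x+\beta$, and the order-$2$ argument in the nontrivial coset for (2).

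The genuine difference is how you pin down $\phi(y)$. The paper does not use surjectivity at this stage. From $\{\phi(x),\phi(y)\}=\phi(x)^{n+1}\phi(y)$ it derives the logarithmic-derivative equation $f_y/f = z/y$ with $z=\frac1c\bigl(\frac{cx+d}{x}\bigr)^{n+1}\in\Bbbk(x)$. Lemma \ref{zzlem2.1}(2), applied over $F=\Bbbk(x)$, then forces $z\in\ZZ$ and $f=g(x)y^z$. Only at the end does the fact that $x,f$ generate $K$ force $z=\pm1$.

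You instead use invertibility up front. Since $\phi$ maps $\Bbbk(x)$ onto itself, $\phi(y)$ generates $\Bbbk(x)(y)$ over $\Bbbk(x)$, so it is a M\"obius transformation in $y$, and a direct comparison finishes the job. Your route avoids the logarithmic-derivative lemma in favour of the classical description of generators of a simple transcendental extension. The paper's route, however, works verbatim for arbitrary Poisson endomorphisms: it yields $\phi(y)=h(x)y^z$ with $c^n=z$ any nonzero integer. That is exactly what is reused in Proposition \ref{zzpro4.9}(2) to show the Dixmier property fails.

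One small tidy-up. Your claim that ``the left side vanishes at $y=0$'' is false when $d=0$ and $c\neq 0$, since the left side then has a pole at $y=0$. Your conclusion is still right, but it is cleaner to clear denominators and compare the polynomial identity $\alpha(ad-bc)x^{n+1}y=(\alpha x+\beta)^{n+1}(ay+b)(cy+d)$ in $\Bbbk(x)[y]$. Comparing $y$-degrees and constant terms gives at once that exactly one of $a,c$ is zero and that $bd=0$.
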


\begin{proof}
(1) Let $G := \langle \eta_{a,b},\, \tau_c \rangle$ be the subgroup of $\Aut_{Poi}(K)$ generated by the $\eta_{a,b}$ and $\tau_c$ as described.

Let $\phi$
be an automorphism of $K$. Since ${^1\Gamma}_{0}(K)$
is a Poisson algebra invariant, by Lemma \ref{zzlem4.5}(1)
$\phi$ restricts to an algebra automorphism of 
${^1\Gamma}_{0}(K)=\Bbbk[x]$. Thus $\phi(x)=c x+d$
for some $c\in \Bbbk^{\times}$ and $d\in \Bbbk$. 
Then, with $f=\phi(y)$,
$$\begin{aligned}
(cx+d)^{1+n} f&=\phi(x^{1+n}y)=\phi(\{x,y\}) =\{\phi(x),\phi(y)\}=\{cx+d, f\}\\
&=c\{x,f\}=cf_{y} \{x,y\} =cf_{y} x^{1+n} y \,,
\end{aligned}
$$
which implies that
\begin{equation}
\notag
\frac{f_{y}}{f} = \frac{1}{c}
\left(\frac{cx+d}{x}\right)^{1+n} \frac{1}{y} \,.
\end{equation}
Solving this differential equation (or by Lemma \ref{zzlem2.1}), we 
obtain that
$$\frac{1}{c} \left(\frac{cx+d}{x}\right)^{1+n} =: z\in {\ZZ},
\quad {\text{and}} \quad f=g(x) y^{z}$$
 for some 
$g(x)\in \Bbbk(x)^{\times}$.
So $d=0$, whence $z=c^n\in {\ZZ}$. Since $x$ and $f$ generate the field
$\Bbbk(x,y)$, we must have $z= \pm 1$, i.e., $c^n= \pm 1$. 
If $z=1$, then $\phi = \eta_{c,g}$, while if $z = -1$, then $\phi \tau_c = \eta_{c^2,g^{-1}}$.  In either case, $\phi \in G$, and we have proved that $\Aut_{Poi}(K) = G$.

Next, set $H := \{ \eta_{a,b}  \mid a \in \kx,\; b\in 
\Bbbk(x)^{\times},\; a^n=1 \}$.  This is a subgroup of $G$ since
\begin{equation*}
\begin{aligned}
\eta_{a_1,b_1} \eta_{a_2,b_2} &= \eta_{a_1a_2, b_2(a_1x) b_1}  \\
\eta_{a,b}^{-1} &= \eta_{a^{-1}, b(a^{-1}x)^{-1}}
\end{aligned}
\qquad \forall\; \text{allowable}\; a_1,a_2,a,b_1,b_2,b,
\end{equation*}
 and it is normal in $G$ because
 \begin{equation}  \label{E4.8.2}  \tag{E4.8.2}
\tau_c^{-1} \eta_{a,b} \tau_c = \tau_{c^{-1}} \eta_{a,b} \tau_c = \eta_{a,b(c^{-1}x)^{-1}} \qquad \forall\; \text{allowable}\; a,b,c.
\end{equation}
Since $\tau_c \tau_{c'} = \eta_{cc',1}$ for all allowable $c$, $c'$, we see that $[G:H] = 2$.  Observe that the map
$$
\Bbbk(x)^\times \rtimes C_n \rightarrow H, \qquad (b,a) \mapsto \eta_{a,b}
$$
is a group isomorphism. The short exact sequence \eqref{E4.8.1} follows.

(2) If $n$ is odd, $\tau_{-1}$ is defined and $\tau_{-1}^2= \Id_K$, whence $\langle \tau_{-1}\rangle$
is a subgroup of $G$ of order $2$. Therefore $G = H \rtimes C_2$, and the action of $C_2$ on $H$ follows from \eqref{E4.8.2}. 

If $n$ is even, the assertion amounts to showing that $G$ has no elements of order $2$ outside $H$.  It follows from the proof of (1) that any $\psi \in G \setminus H$ has the form $\psi = \tau_c 
\eta_{a,b}$ for some allowable $a$, $b$, $c$.  Then $\psi(x) = acx$ and $\psi^2(x) = a^2c^2x$.  Since $n$ is even, $-1 = a^nc^n$ is a power of $a^2c^2$, so $a^2c^2 \ne 1$ and $\psi^2(x) \ne x$.  Thus no element of $G \setminus H$ has order $2$, as required.
\end{proof}

The Embedding and Dixmier Problems can also be solved using the 
$\gamma$-invariants.  We start with the $K_{q,l,0}$, $l \ge 2$.

\begin{proposition}
\label{zzpro4.9}
Let $q,r \in \kx$ and consider $K_{r,m,0}$ and $K_{q,n,0}$ for $m,n\geq 2$.
\begin{enumerate}
\item[(1)]{\rm{(Embedding Problem)}} 
There is an embedding from $K_{r,m,0}$ into $K_{q,n,0}$ if and only 
if $m\mid n$ and there exist $0 \ne z \in \ZZ$, $\beta \in \kx$ with $\beta^m = nqz/mr$.
\item[(2)]
The endomorphisms of $K_{q,n,0}$ are exactly those 
$\Bbbk$-algebra endomorphisms $\phi$ of $\Bbbk(x,y)$ such that
\begin{equation}  
\label{E4.9.1}\tag{E4.9.1}
\begin{aligned}
\phi(x)&=c x, \quad c \in \kx, \quad c^n=z\in \ZZ \setminus \{0\},\\
\phi(y)&=h(x) y^z, \quad h(x)\in \Bbbk(x)^{\times}.
\end{aligned}
\end{equation}
Consequently, every endomorphism of $K_{q,n,0}$ has image 
equal to $\Bbbk(x, y^{z})$ for some $0\neq z\in {\ZZ}$.
\item[(3)]{\rm{(Dixmier Problem)}}
$K_{q,n,0}$ does not have 
the Dixmier property.
\end{enumerate}
\end{proposition}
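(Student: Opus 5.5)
The plan is to use the invariant ${^1\Gamma}_0$ to pin down the image of $x$, and then to solve a logarithmic-derivative equation for the image of $y$.

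\textbf{Step 1: the invariant.} First I note that the proof of Lemma \ref{zzlem4.5}(1) works verbatim for any nonzero scalar $r$, using Corollary \ref{zzcor3.9}, the subalgebra $\Bbbk[x,y^{-1}]$ and Corollary \ref{zzcor3.11}. Hence ${^1\Gamma}_0(K_{r,m,0})=\Bbbk[x]$ for $m\ge 2$, and likewise for $K_{q,n,0}$.

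\textbf{Step 2: shape of an embedding.} Let $\phi:K_{r,m,0}\to K_{q,n,0}$ be a Poisson morphism. It is injective because its domain is a field. By Lemma \ref{zzlem3.7}(1) with $v=0$, $\phi$ maps $\Bbbk[x]$ into $\Bbbk[x]$, so $p:=\phi(x)\in\Bbbk[x]$. Moreover $p\notin\Bbbk$, since $\{\phi(x),\phi(y)\}\neq 0$.

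Put $f:=\phi(y)$. Applying $\phi$ to $\{x,y\}=rx^{m+1}y$ gives
$$p'f_y\,qx^{n+1}y = r p^{m+1}f,$$
that is,
$$\frac{f_y}{f}=\frac{r p^{m+1}}{q p' x^{n+1}}\cdot\frac1y .$$
The coefficient of $1/y$ lies in $\Bbbk(x)$. Working over $\Bbbk(x)$ as the base field $F$, Lemma \ref{zzlem2.1}(2) forces this coefficient to be an integer $z$, which is nonzero, and gives $f=g(x)y^z$ with $g\in\Bbbk(x)^\times$.

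\textbf{Step 3: solving for $p$.} This is the main step. We now have
$$q z\,p'\,x^{n+1}=r\,p^{m+1}.$$
Suppose $p$ has a root $\alpha\ne0$ in $\kbar$ of multiplicity $e\ge1$. Then the left side vanishes to order $e-1$ at $\alpha$ and the right side to order $(m+1)e$, which is impossible. So $p=\beta x^d$ with $\beta\in\kx$ and $d\ge1$.

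Comparing exponents gives $d+n=d(m+1)$, i.e.\ $n=md$, so $m\mid n$. Comparing coefficients gives $qzd=r\beta^m$, i.e.\ $\beta^m=nqz/(mr)$. This proves necessity in (1).

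For sufficiency, given such $z$ and $\beta$, define $x\mapsto\beta x^{n/m}$ and $y\mapsto y^z$. A direct bracket computation using Lemma \ref{zzlem9.1} shows that this map preserves the bracket.

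\textbf{Step 4: parts (2) and (3).} For (2), specialize to $m=n$ and $r=q$. Then $d=1$ and $\beta^n=z$, which is exactly \eqref{E4.9.1}. Conversely, the same computation shows that any such $\phi$ is a Poisson endomorphism. Its image is $\Bbbk(cx,h(x)y^z)=\Bbbk(x,y^z)$.

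For (3), take $c=2$ and $z=2^n$. The endomorphism $x\mapsto 2x$, $y\mapsto y^{2^n}$ has image $\Bbbk(x,y^{2^n})$, which is a proper subfield of $\Bbbk(x,y)$. So this endomorphism is not an automorphism, and $K_{q,n,0}$ does not have the Dixmier property. This choice works over any $\Bbbk$ of characteristic zero, algebraically closed or not.
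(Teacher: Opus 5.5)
Your proof is correct and follows the paper's route: the invariant ${^1\Gamma}_0=\Bbbk[x]$ forces $\phi(x)\in\Bbbk[x]$, and Lemma \ref{zzlem2.1}(2) gives $\phi(y)=g(x)y^z$. The resulting identity $qz\,p'x^{n+1}=r\,p^{m+1}$ then yields $m\mid n$ and the condition on $\beta$, and part (3) uses the same endomorphism $x\mapsto 2x$, $y\mapsto y^{2^n}$.

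The only variation is in handling that identity. Your root-multiplicity argument proves outright that every embedding has $\phi(x)=\beta x^{n/m}$, a monomial. The paper instead only compares degrees and leading coefficients, and for (2) reads off $\phi(x)=cx$ from $\deg g=1$.
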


\begin{proof}
(1) First assume that $m \mid n$ and there exist $z$, $\beta$ as stated.  Set $d := n/m$, and note that $d(m+1) = n+d$.  In $K_{q,n,0}$, we have
\begin{align*}
\{\beta x^d, y^z\} &= \beta d x^{d-1} z y^{z-1} \{x,y\} = \beta d x^{d-1} z y^{z-1} q x^{1+n}y  \\
&= \frac{\beta nqz}{m}\, x^{d+n} y^z = \beta^{m+1} r x^{d(m+1)} y^z = r(\beta x^d)^{m+1}(y^z),
\end{align*}
and so $\Bbbk(x^d,y^z)$ is a Poisson subfield isomorphic to $K_{r,m,0}$.

Conversely, assume that there is a Poisson algebra
morphism $\phi: K_{r,m,0}\to K_{q,n,0}$ for some $m,n\geq 2$. 
Let us use $y_1,y_2$ as the generators of $K_{q,m,0}$ 
satisfying $\{y_1,y_2\}= r y_1^{1+m} y_2$. By Lemma \ref{zzlem3.7}(1), $\phi$ 
induces an injective map from 
$$\Bbbk[y_1]={^1\Gamma}_{0}(K_{r,m,0}) \to {^1\Gamma}_{0}(K_{q,n,0})=\Bbbk[x].$$
Thus $g(x):=\phi(y_1)$ is a non-constant polynomial. 
Let $f=\phi(y_2)$. Then we have 
$$\begin{aligned}
r g(x)^{1+m} f
&=\phi(r y_1^{1+m} y_2)=\phi(\{y_1,y_2\}) =\{\phi(y_1),\phi(y_2)\}  \\
&=\{g(x),f\} =g'(x) f_{y}\{x,y\}=g'(x) f_{y}  q x^{1+n} y \,.
\end{aligned}
$$
Thus
$$\frac{f_{y}}{f}=\frac{r g(x)^{1+m}}{q g'(x) x^{1+n}}\,\frac{1}{y}.$$
By Lemma \ref{zzlem2.1},
\begin{equation}  
\label{E4.9.2}\tag{E4.9.2}
\frac{r g(x)^{1+m}}{q g'(x) x^{1+n}}=:z\in {\ZZ}
\quad
{\text{and}} \quad
f=h(x) y^z
\end{equation}
for some $h(x)\in \Bbbk(x)^\times$. By the first equation,
we obtain that $m\deg g= n$.
Therefore $m\mid n$.

Write $g(x) = \sum_{i=0}^k \gamma_i x^i$ with $k>0$, $\gamma_i \in \Bbbk$, and $\gamma_k \ne 0$.  By \eqref{E4.9.2}, $z \ne 0$ and $r g(x)^{1+m} = zq g'(x) x^{1+n}$.  Comparing degrees and leading coefficients in this equation, we find that $k(1+m) = k+n$ and $r \gamma_k^{1+m} = zqk \gamma_k$, whence $\gamma_k^m = qkz/r = qnz/mr$, as required.

(2) It is easy to check that if $c \in \kx$ with 
$c^n=z\in {\ZZ}$ and $h(x)\in \Bbbk(x)^{\times}$, 
then the $\Bbbk$-algebra endomorphism $\phi$ of 
$\Bbbk(x,y)$ satisfying \eqref{E4.9.1} is an 
endomorphism of $K_{q,n,0}$.

Conversely, let $\phi$ be an arbitrary Poisson endomorphism 
of $K_{q,n,0}$. Following the proof and notation of part (1), 
with $r=q$ and $m=n$, we obtain \eqref{E4.9.2} and $n \deg g = n$, 
whence $\deg g = 1$. Consequently, \eqref{E4.9.2} implies 
that $g(x)=cx$ for some $c \in \kx$ and $c^{n}=z$. 
Therefore $\phi$ satisfies \eqref{E4.9.1}.

(3) For instance, if $c=2$ and $z=2^n$, part (2) shows that $K_{q,n,0}$ has an endomorphism such that $x \mapsto cx$ and $y \mapsto y^z$.
\end{proof}

\begin{proposition}[Embedding Problem]  \label{zzpro4.10}
Among the Poisson fields $\Kweyl$, $K_q$, and $K_{1,n,0}$ there are the following embeddings:
\begin{enumerate}
\item[(1)]
$\Kweyl$ embeds in $K_{1,n,0}$ for any $n \ge 2$.
\item[(2)] 
For $p,q \in \kx$, $K_p$ embeds in $K_q$ if and only if $p \in \ZZ q$.
\item[(3)]
For $m,n \ge 2$, $K_{1,m,0}$ embeds in $K_{1,n,0}$ if and only if $m \mid n$.
\end{enumerate}
There are no other embeddings:
\begin{enumerate}
\item[(4)] 
$\Kweyl$ and $K_{1,n,0}$ for $n \ge 2$ do not embed in any $K_q$.
\item[(5)]
$K_{1,n,0}$ for $n \ge 2$ does not embed in $\Kweyl$.
\item[(6)]
No $K_q$ embeds in $\Kweyl$ or in $K_{1,n,0}$ for $n \ge 2$.
\end{enumerate}
\end{proposition}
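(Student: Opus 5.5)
The plan is to handle the positive statements (1)--(3) by explicit subfields, and the negative statements (4)--(6) by two invariants: the $1$-valuation height (cohereditary by Lemma \ref{zzlem3.14}(2)) and residues of the symplectic $2$-form attached to the Poisson bracket. As in this section, assume $\Bbbk=\kbar$ wherever roots are needed.

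\emph{Positive parts.} For (1), Remarks \ref{zzrem1.8} (Corollary \ref{zzcor1.7}(2) with $m=n+1$) give $\Bbbk(x^{-n},y)\cong\Kweyl$ inside $K_{1,n,0}$. For the ``if'' direction of (2), write $p=zq$ with $0\ne z\in\ZZ$. Then $\{x,y^z\}=zq\,xy^z$ in $K_q$, so $\Bbbk(x,y^z)\cong K_{zq}=K_p$; alternatively this is Example \ref{zzexa1.5}(3). Part (3) is Proposition \ref{zzpro4.9}(1) with $r=q=1$: when $m\mid n$, take $z=1$ and any $\beta$ with $\beta^m=n/m$.

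\emph{Negative parts via valuation height.} By Lemmas \ref{zzlem4.3}--\ref{zzlem4.5}, $\vht_1(\Kweyl)=-\infty$, $\vht_1(K_q)=2$ and $\vht_1(K_{1,n,0})=n+2$. Lemma \ref{zzlem3.14}(2) says $\vht_1$ cannot drop when passing to a larger field. Hence $K_{1,n,0}$ embeds neither in $K_q$ nor in $\Kweyl$, which gives (5) and half of (4). The same monotonicity cannot exclude $\Kweyl\hookrightarrow K_q$, $K_q\hookrightarrow\Kweyl$, or $K_q\hookrightarrow K_{1,n,0}$, nor prove the ``only if'' in (2). For these I will use residues.

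\emph{Residue argument.} Inverting the bracket $\{x,y\}=\bff$ gives the $2$-form $\omega=\bff^{-1}dx\wedge dy$ on $K$. A Poisson embedding $L\subseteq K$ (both of transcendence degree $2$, so $K/L$ is finite) identifies $\omega_L$ with $\omega_K$. The comparison rests on two facts about Parshin two-dimensional residues along flags (curve, point) on models:
\begin{itemize}
\item a residue computed on $K$ equals the residue on $L$ times an integer (a ramification/degree factor);
\item summing the residues over the flags of $K$ lying over a given flag of $L$ recovers the residue of that flag of $L$ (trace formula).
\end{itemize}
The forms in question behave as follows.
\begin{itemize}
\item $\omega_q=\frac1q\,\frac{dx}{x}\wedge\frac{dy}{y}$ has all residues in $\frac1q\ZZ$, and the residue at the toric flag $\{x=0\}\ni(0,0)$ is $\pm\frac1q$.
\item $\omega_{\Kweyl}=dx\wedge dy=d(x\,dy)$ is exact.
\item $\omega_{K_{1,n,0}}=x^{-n-1}dx\wedge\frac{dy}{y}=d\!\left(-\frac{x^{-n}}{n}\frac{dy}{y}\right)$ is exact.
\end{itemize}
Exact forms have all residues zero. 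This settles the three cases as follows.
\begin{itemize}
\item For $K_p\subseteq K_q$, choose a flag of $K_q$ with residue $\frac1q$. Its restriction to $K_p$ gives $\frac1q=e\cdot\frac kp$ with $e,k\in\ZZ$, hence $p=ekq\in\ZZ q$. This completes (2).
\item For $\Kweyl\subseteq K_q$, the same flag gives $\frac1q=e\cdot 0$, which is absurd. This completes (4).
\item For $K_q\subseteq\Kweyl$ or $K_q\subseteq K_{1,n,0}$, the trace formula turns the nonzero residue of $\omega_q$ at a flag of $K_q$ into a nonzero residue at some flag of the larger field, contradicting exactness there. This gives (6).
\end{itemize}

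\emph{Main obstacle.} The hard step is making the residue comparison rigorous. This means fixing the conventions for Parshin residues under finite extensions of two-dimensional function fields, including the integrality of the multiplier and the trace formula, and checking that the chosen toric flag of $K_q$ does restrict to a genuine flag of the subfield. If that proves too heavy, the fallback for (2) is to cite \cite[Corollary 5.4]{GoLa} or \cite{HTWZ2}, which already treat embeddings among the $K_q$ and $K_q\not\hookrightarrow\Kweyl$. Failing that, one can attempt a direct computation as in Proposition \ref{zzpro4.9}, using Lemma \ref{zzlem2.1} on the image of a suitable log-canonical pair.
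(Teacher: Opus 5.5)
Your proof is correct in outline. For the negative statements, though, it follows a different route from the paper; your constructions in (1)--(3) and your use of Proposition \ref{zzpro4.9}(1) match the paper.

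\textbf{How the paper handles the negative statements.} It uses a separate tool in each case:
\begin{enumerate}
\item[(i)] for the converse in (2), the invariant $C_r\cap M_2(\Bbbk)$ of \cite[Proposition 5.2(b)]{GoLa};
\item[(ii)] for $\Kweyl\not\hookrightarrow K_q$, the fact that $\{u,v\}\ne1$ in $K_q$ \cite[Proposition 5.2(a)]{GoLa};
\item[(iii)] for $K_{1,n,0}$ in (4) and (5), the comparison ${}^1\Gamma_0(K_{1,n,0})=\Bbbk[x]$ versus ${}^1\Gamma_0=\Bbbk$;
\item[(iv)] for (6), restriction of valuations.
\end{enumerate}
In (iv), the case of $\Kweyl$ is \cite[Corollary 5.9(2)]{HTWZ2}. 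For $K_{1,n,0}$ the paper uses the $(-n)$-valuation with $\nu(x)=1$, $\nu(y)=0$. This is a classical $0$-valuation, so it would restrict to a classical $0$-valuation on $K_q$. That is impossible, because $\nu'(\{x,y\})=\nu'(qxy)=\nu'(x)+\nu'(y)$ in $K_q$.

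\textbf{Comparison with your route.} Your $\vht_1$ argument is a valid substitute for (iii). It carries the same information, since Lemma \ref{zzlem4.5}(3) is deduced from ${}^1\Gamma_0=\Bbbk[x]$.

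Your residue invariant replaces (i), (ii) and (iv) by a single tool. The form $\omega_q$ has residue $\pm1/q$ at the toric flag and all its residues lie in $\frac1q\ZZ$, while the forms of $\Kweyl$ and $K_{1,n,0}$ are exact. This is conceptually attractive and explains the integrality behind $C_r\cap M_2(\Bbbk)$. The price is Parshin residue theory: independence of coordinates, the degree and trace formulas for finite extensions of two-dimensional local fields, and existence of flags over and under a given one. That is far heavier than anything the paper needs.

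Much of that machinery can be dropped. For (2) and for $\Kweyl\not\hookrightarrow K_q$ you never need to pass to the subfield, because the images $u,v$ of the generators already lie in $\Bbbk((y))((x))\supseteq K_q$. Work there in fixed coordinates.
\begin{itemize}
\item The constant term of $xy\{u,v\}_w/(uv)$ is $ad-bc\in\ZZ$, where $(a,b)$ and $(c,d)$ are the lexicographic leading exponents of $u$ and $v$. This gives (2).
\item Since $du\wedge dv=d(u\,dv)$, the coefficient of $x^{-1}y^{-1}$ in $\{u,v\}_w$ is $0$. But $\{u,v\}=1$ would force $\{u,v\}_w=(qxy)^{-1}$, whose coefficient is $1/q$. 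This excludes $\Kweyl\hookrightarrow K_q$.
\end{itemize}
Only (6) genuinely needs a flag of the larger field, obtained by extending a rank-two valuation from $\Bbbk(u,v)$. There the paper's valuation argument is shorter.

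\textbf{Smaller points.}
\begin{itemize}
\item Contrary to your remark, $\vht_1$ does exclude $K_q\hookrightarrow\Kweyl$: Lemma \ref{zzlem3.14}(2) would give $2=\vht_1(K_q)\le\vht_1(\Kweyl)=-\infty$.
\item The proposition does not assume $\Bbbk=\kbar$. This is harmless for the negative statements, since an embedding over $\Bbbk$ extends to one over $\kbar$. In (3), however, your choice $z=1$, $\beta^m=n/m$ needs an $m$-th root of $n/m$ in $\Bbbk$. The paper's choice $\beta=n/m$, $z=(n/m)^{m-1}$ works over any $\Bbbk$.
\item For a form coming from $L$, the trace formula gives $[K:L]$ times the residue on $L$, not the residue itself. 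This is harmless, since only nonvanishing is used.
\item In your fallback, the relevant reference is \cite[Proposition 5.2]{GoLa}. Corollary 5.4 there concerns isomorphisms, not embeddings.
\end{itemize}
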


\begin{proof}
Part (1) is given by Corollary \ref{zzcor1.7}(2) (see Remark \ref{zzrem1.8}).

(2) If $p = mq$ for some $m \in \ZZ$, then by Examples \ref{zzexa1.5}(3), $\Bbbk(x,y^m)$ is a Poisson subfield of $K_q$ isomorphic to $K_p$.

For the converse, we use the following invariant from \cite{GoLa}, applied to $K_r$ where $r = p$ or $q$.  Set 
$$
C_r(u_1,u_2) := \bigl( \{u_i,u_j\}(u_iu_j)^{-1} \bigr) \in M_2(K_r) \qquad \forall\; u_1,u_2 \in K_r^\times
$$
and $C_r := \{ C_r(u_1,u_2) \mid u_1,u_2 \in K_r^\times \}$.  By \cite[Proposition 5.2(b)]{GoLa},
$$
C_r \cap M_2(\Bbbk) = \left\{ A \begin{pmatrix} 0&r\\ -r&0 \end{pmatrix} A^{\text{tr}} \bigm| A \in M_2(\ZZ) \right\}.
$$
If $K_p$ embeds in $K_q$, then $C_p \cap M_2(\Bbbk) \subseteq C_q \cap M_2(\Bbbk)$, and consequently $p \in \ZZ q$.

(3) Necessity is given by Proposition
\ref{zzpro4.9}(1).  Conversely, if 
$m \mid n$, then $z := (n/m)^{m-1}$ is 
an integer and $(n/m)^m = nz/m$, so 
Proposition \ref{zzpro4.9}(1) implies 
that $K_{1,m,0}$ embeds in $K_{1,n,0}$.

(4)(5) By \cite[Proposition 5.2(a)]{GoLa}, 
$\{u,v\} \ne 1$ for all $u,v \in K_q$, 
and so $\Kweyl$ does not embed in $K_q$. 
By Lemmas \ref{zzlem4.3}--\ref{zzlem4.5}, 
$^1\Gamma_0(K_{1,n,0}) = \Bbbk[x]$ while 
$^1\Gamma_0(K) = \Bbbk$ for $K = \Kweyl$ 
or $K_q$.  Thus, in view of Lemma 
\ref{zzlem3.7}(1), $K_{1,n,0}$ does not embed in $\Kweyl$ or in $K_q$.

(6) \cite[Corollary 5.9(2)]{HTWZ2} shows that $K_q$ does not embed in $\Kweyl$, and we use the same method with $K_{1,n,0}$.  By Lemma \ref{zzlem3.4}, there is a $(-n)$-valuation $\nu$ on $K\{x^{n+1}y\} = K_{1,n,0}$ such that $\nu(x) = 1$ and $\nu(y) = 0$.  We can also view $\nu$ as a $0$-valuation, and then it is classical.  If $K_q$ embeds in $K_{1,n,0}$, then $\nu$ restricts to a classical $0$-valuation on $K_q$.  This is impossible, since $\nu'(\{x,y\}) = \nu'(xy) = \nu'(x) + \nu'(y)$ for all ($0$-)valuations $\nu'$ on $K_q$. Therefore $K_q$ does not embed in $K_{1,n,0}$.
\end{proof}

In \cite[\S13.6]{SvdB}, Stafford and van den Bergh showed that $D_q(\Bbbk)$, the division ring of the quantum plane $\Bbbk \langle x,y \mid xy = qyx \rangle$, is isomorphic to a proper sub-division algebra of itself.  A Poisson version of this example shows that $K_q$ fails to satisfy the Dixmier property, as follows.

\begin{example}
\label{zzexa4.11}  
Let $K = K_q$ for some $q \in \Bbbk$. We first construct special elements $f,g \in K$ such that $\{g,f\}_w = gf(xy)^{-1}$.  Namely, set $s := (y-y^{-1})^{-1}$ and take
\begin{align*}
f& := s(xy-x^{-1}y^{-1})=(y-y^{-1})^{-1}(xy-x^{-1}y^{-1}),\\
g& := s(x-x^{-1})=(y-y^{-1})^{-1}(x-x^{-1}).
\end{align*}
It is clear that $s_x=0$ and $s_y=-s^2(1+y^{-2})$. Then
$$\begin{aligned}
g_x&=s(1+x^{-2}),\\
g_y&=-s^{2}(1+y^{-2})(x-x^{-1})=s^{2}(1+y^{-2})(x^{-1}-x).
\end{aligned}
$$
Similarly, we have
$$
\begin{aligned}
f_x&= s (y+x^{-2}y^{-1}),\\
f_y&= s_y (xy-x^{-1}y^{-1})+s (x+x^{-1}y^{-2})\\
&=-s^2(1+y^{-2})(xy-x^{-1}y^{-1})+s^{2} (y-y^{-1})(x+x^{-1}y^{-2})\\
&=s^2 [(-xy+x^{-1}y^{-1}-xy^{-1}+x^{-1}y^{-3})\\
&\qquad +(xy+x^{-1}y^{-1}-xy^{-1}-x^{-1}y^{-3})]\\
&=2s^2[x^{-1}y^{-1}-xy^{-1}]=2s^2 (x^{-1}-x) y^{-1}.
\end{aligned}
$$
Now we are ready to compute
$$\begin{aligned}
\{f,g\}_w &= f_x g_y-f_y g_x  \\
&= s (y+x^{-2}y^{-1})s^{2}(1+y^{-2})(x^{-1}-x)
-2s^2 (x^{-1}-x) y^{-1}s(1+x^{-2})\\
&=s^3(x^{-1}-x)
[(y+x^{-2}y^{-1})(1+y^{-2})-2 y^{-1}(1+x^{-2})]\\
&=s^3(x^{-1}-x)
[y+x^{-2}y^{-1} + y^{-1}+x^{-2}y^{-3}-2 y^{-1}-2y^{-1}x^{-2}]\\
&=s^3(x^{-1}-x)
[y-x^{-2}y^{-1} - y^{-1}+x^{-2}y^{-3}]\\
&=s^3(x^{-1}-x)
(y-x^{-2}y^{-1})(1-y^{-2})\\
&=s^3(x^{-1}-x)(xy-x^{-1}y^{-1}) x^{-1}(y-y^{-1})y^{-1}\\
&=s (xy-x^{-1}y^{-1}) s (x-x^{-1}) (-x^{-1}y^{-1})=fg (-x^{-1}y^{-1}).
\end{aligned}
$$
Thus $\{g,f\}_w = gf(xy)^{-1}$, as announced.  Consequently,
$$
\{g,f\} =  \{g,f\}_w \, qxy = qgf,
$$
which implies that $\Bbbk(f,g)$ is a Poisson subfield of $K$ isomorphic to $K$.

Note that $K$ has a (Poisson) 
automorphism $\sigma$ such that $x \mapsto x^{-1}$ and $y \mapsto y^{-1}$.  It is clear that
both $f$ and $g$ lie in the fixed field $\Bbbk(x,y)^{\sigma}$. Therefore $\Bbbk(f,g) \subseteq K^{\sigma} \subsetneq K$, showing that $K$ fails to satisfy the Dixmier property.
\end{example}

\begin{proposition}[Dixmier Problem]  \label{zzpro4.12}
Assume that $\Bbbk$ is algebraically closed, $q \in \kx$, and $\kappa \in \ZZ^2$.  Then $K_{q,\kappa}$ fails to satisfy the Dixmier property.
\end{proposition}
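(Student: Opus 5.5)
By Corollary \ref{zzcor4.2}(1), which applies since $\Bbbk$ is algebraically closed, $K_{q,\kappa}$ is Poisson-isomorphic to one of $K_{q'}$ (for some $q'\in\kx$), $\Kweyl$, or $K_{1,n,0}$ with $n\ge 2$. The Dixmier property is invariant under Poisson isomorphism: if $\psi: K\to L$ is an isomorphism and $\phi$ is a non-surjective endomorphism of $K$, then $\psi\phi\psi^{-1}$ is a non-surjective endomorphism of $L$. So we treat these three cases separately, and in each case we exhibit a Poisson endomorphism whose image is a proper subfield.

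\emph{Case $K_{q'}$.} Example \ref{zzexa4.11} produces $f,g\in K_{q'}$ with $\{g,f\}=q'gf$. By Observation \ref{zzobs1.4}, $f$ and $g$ are algebraically independent, so $x\mapsto g$, $y\mapsto f$ defines a $\Bbbk$-algebra embedding of $\Bbbk(x,y)$ into itself. It is Poisson because $\{g,f\}=q'gf$ matches $\{x,y\}=q'xy$. Its image $\Bbbk(f,g)$ lies in the fixed field $K^\sigma\subsetneq K$, so it is not an automorphism. Alternatively, and more simply, $x\mapsto x$, $y\mapsto y^{2}$ does \emph{not} work directly, since $\{x,y^2\}=2q'xy^2$ changes the parameter. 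So I would rely on the example, or else on $x\mapsto x^ay^b$, $y\mapsto x^cy^d$ with $ad-bc=1$ and image of index $1$. Since the latter gives only automorphisms, Example \ref{zzexa4.11} is the right tool.

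\emph{Case $K_{1,n,0}$, $n\ge2$.} Proposition \ref{zzpro4.9}(3) already gives a non-surjective endomorphism, namely $x\mapsto 2x$, $y\mapsto y^{2^n}$, whose image is $\Bbbk(x,y^{2^n})\ne K$.

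\emph{Case $\Kweyl$.} By Examples \ref{zzexa1.5}(1) with $f=x^2$, the subfield $\Bbbk(x^2,y/(2x))$ is a Poisson subfield isomorphic to $\Kweyl$ of index $2$. Hence $x\mapsto x^2$, $y\mapsto y/(2x)$ extends to a Poisson endomorphism, since $\{x^2,y/(2x)\}=1$, and this endomorphism is not surjective.

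The only delicate point is the $K_q$ case, where the naive monomial maps give only automorphisms or change the parameter. This is exactly why the paper constructs Example \ref{zzexa4.11}, and invoking it resolves the case. What remains is the routine check that each assignment extends to a field endomorphism, which follows from algebraic independence via Observation \ref{zzobs1.4}.
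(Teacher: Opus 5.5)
Your proposal is correct and is the paper's proof: you reduce via Corollary \ref{zzcor4.2}(1) to $K_q$, $\Kweyl$, or $K_{1,n,0}$, and then invoke Example \ref{zzexa4.11}, Example \ref{zzexa1.5}(1), and Proposition \ref{zzpro4.9}(3) respectively. The explicit non-surjective endomorphisms you write down (for instance $x\mapsto x^2$, $y\mapsto y/(2x)$ on $\Kweyl$) are valid instances of those results, and your aside about monomial maps on $K_q$ is correct but not needed.
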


\begin{proof} 
By Corollary \ref{zzcor4.2}, $K_{q,\kappa}$ is isomorphic to $K_q$ or $\Kweyl$ or $K_{1,n,0}$ for some $n \ge 2$.  The failure of the Dixmier property in these cases is covered by Example \ref{zzexa4.11}, Example \ref{zzexa1.5}(1), and Proposition \ref{zzpro4.9}(3), respectively.
\end{proof}

%%%%%%%%%%%%%%%%%%%%%%%%%

\section{Family (2): $K\{p(x)xy\}$ where $p(x)\in \Bbbk[x]$}
\label{zzsec5}

Note that the class of the $K\{q x^a y^b\}$ 
is a subclass of the $K\{p(x)xy\}$
as every $K\{q x^a y^b\}$ is isomorphic 
to some $K\{q x^{n} xy\}$ 
(Lemma \ref{zzlem4.1}(2) plus the case 
$K\{qxy\}$). We skip over the case when 
$p(x)$ has degree $0$, since that is the 
case $K\{qxy\} = K_q$. When $p(x)$ has 
degree $1$, we have the following:

\begin{example}
\label{zzexa5.1}
If $a \in \kx$ and $q\in \Bbbk$, then 
$$K\{(ax+q)xy\} \cong 
\begin{cases} K_q & {\text{if $q\neq 0$}}\\
\Kweyl & {\text{if $q=0$.}}
\end{cases}
$$
Since $K\{(ax+q)xy\} \cong K\{x(-ay^2-qy)\}$, this follows from Corollary \ref{zzcor1.3}(3).

In view of Proposition \ref{zzpro4.6}, we also have
$$K\{(a_1x+q_1)xy\} \cong K\{(a_2x+q_2)xy\} \ \iff \ q_1 = \pm q_2$$
for any $a_1,a_2 \in \kx$ and $q_1,q_2 \in \Bbbk$.
\end{example}

As a consequence, if $p(x)$ has degree 1 or less, then $K\{ p(x)xy\}$
does not have the Dixmier property.

If $\deg p(x)>1$, then the situation is different. 

\begin{proposition} 
\label{zzpro5.2}
Let $K_1 := K\{p_1(x)xy\}$ and $K_2 :=K\{p_2(x)xy\}$ where 
$p_1(x)$ and $p_2(x)$ are nonzero polynomials  in $\Bbbk[x]$. Suppose
\begin{enumerate}
\item[(a)]
$\deg p_1(x)\geq 2$, and 
\item[(b)]
$\deg p_1(x)\geq \deg p_2(x)$. 
\end{enumerate}
Then ${^1\Gamma}_{0}(K_1)=\Bbbk[x]$.
\smallskip

Now let $\phi: K_1\to K_2$ be a Poisson homomorphism. Then the following 
hold.
\begin{enumerate}
\item[(1)]
$\deg p_1(x)=\deg p_2(x)$.
\item[(2)]
$\phi(x)=\alpha x+ \beta$ for some $\alpha\in \Bbbk^{\times}$ and
$\beta\in \Bbbk$. 
\item[(3)]
If further $p_2(x)=p_1(x)$ and
$p_1(x)$ has a nonzero root in $\overline{\Bbbk}$, then 
$\phi$ is an automorphism of $K_1$.
\item[(4)]
$\vht_1(K_1)=\fht(K_1)= \deg p_1 +2$. 
\end{enumerate}
\end{proposition}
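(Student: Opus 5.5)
The first claim, ${^1\Gamma}_{0}(K_1)=\Bbbk[x]$, is exactly Corollary \ref{zzcor3.12} applied with $f=p_1$, since $\deg p_1\ge 2$. For the statements about $\phi$, the plan is to locate $\phi(x)$ using the $\gamma$-invariants and then read off $\phi(y)$ from a logarithmic-derivative equation, as in the proofs of Propositions \ref{zzpro4.8} and \ref{zzpro4.9}.

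First I would show $\phi(x)\in\Bbbk[x]\setminus\Bbbk$. By Lemma \ref{zzlem3.7}(1) with $v=0$, $\phi$ maps $\Bbbk[x]={^1\Gamma}_0(K_1)$ into ${^1\Gamma}_0(K_2)$. Here I split into cases on $\deg p_2$.
\begin{itemize}
\item If $\deg p_2\ge 2$, then ${^1\Gamma}_0(K_2)=\Bbbk[x]$ again by Corollary \ref{zzcor3.12}.
\item If $\deg p_2\le 1$, then $K_2$ is $\Kweyl$ or some $K_q$ (Example \ref{zzexa5.1}, up to the square-root hypothesis on $\Bbbk$; one can pass to $\kbar$). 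In that case ${^1\Gamma}_0(K_2)=\Bbbk$ by Lemmas \ref{zzlem4.3}, \ref{zzlem4.4}. Then $\phi(x)\in\Bbbk$, which is impossible because $\{\phi(x),\phi(y)\}=\phi(p_1(x)xy)\neq0$.
\end{itemize}
So $g:=\phi(x)\in\Bbbk[x]$ is nonconstant; put $f:=\phi(y)$.

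Applying $\phi$ to $\{x,y\}=p_1(x)xy$ and using $\{g,f\}=g'f_y\,p_2(x)xy$ gives
$$\frac{f_y}{f}=\frac{p_1(g)\,g}{g'\,p_2(x)\,x}\cdot\frac1y .$$
The coefficient of $1/y$ lies in $\Bbbk(x)$, so Lemma \ref{zzlem2.1} (over the field $F=\Bbbk(x)$) forces it to be an integer $z\neq0$, with $f=h(x)y^z$. Hence
$$p_1(g)\,g=z\,g'\,p_2(x)\,x .$$
Comparing degrees with $k=\deg g$ gives $k(\deg p_1+1)=k-1+\deg p_2+1$, that is, $k\deg p_1=\deg p_2$. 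Together with (b) and $k\ge1$ this yields $k=1$ and $\deg p_1=\deg p_2$, which proves (1) and (2).

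For (3), take $p_2=p_1$ and $g=\alpha x+\beta$. The identity becomes
$$p_1(\alpha x+\beta)(\alpha x+\beta)=z\alpha\,p_1(x)\,x .$$
This is the step I expect to be the main obstacle: I must show $\beta=0$ and $z=\pm1$.
\begin{itemize}
\item For $\beta=0$: the affine map $\sigma(t)=\alpha t+\beta$ permutes the root multiset of $p_1(t)t$ over $\kbar$. Since $\deg p_1\ge2$ and a root $r\ne0$ of $p_1$ exists, I would look at the finite orbit structure. If $\sigma$ has finite order, its fixed point $\beta/(1-\alpha)$ is the centroid of each orbit. Comparing with the centroid of the root multiset, together with the multiplicity of $0$, should force the fixed point to be $0$, so $\beta=0$. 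The case $\alpha=1$, $\beta\ne0$ is impossible because a translation has no finite orbits.
\item Once $\beta=0$, the equation reads $p_1(\alpha x)\alpha=z\alpha\,p_1(x)$. Compare the coefficient of the term of $p_1$ of lowest degree $j$: this gives $\alpha^{j}=z$. Comparing the top coefficient gives $\alpha^{\deg p_1}=z$.
\item The nonzero root hypothesis is exactly what excludes $p_1=cx^n$. For that monomial, Proposition \ref{zzpro4.9}(2) shows non-surjective endomorphisms with $z\ne\pm1$ really exist. Here I would try to show $\alpha$ is a root of unity: $\alpha$ permutes the finite set of nonzero roots, so $\alpha$ has finite order, whence $z=\alpha^{\deg p_1}$ is an integer root of unity, i.e.\ $z=\pm1$.
\end{itemize}
Then $\phi(x)=\alpha x$ and $\phi(y)=h(x)y^{\pm1}$ generate $\Bbbk(x,y)$, so $\phi$ is surjective. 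This step needs the most care, in particular in getting the permutation argument to yield $\beta=0$ cleanly.

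For (4), $\fht(K_1)\le\deg p_1+2$ is visible from the flag, and $\vht_1\le\fht$ by Lemma \ref{zzlem3.14}(4). For the lower bound, let $\nu\in\calV_{nc,1,w}(K_1)$. Then $\nu(x)<0$ because ${^1\Gamma}_0=\Bbbk[x]$. From
$$\nu(x)+\nu(y)-w\le\nu(p_1(x)xy)=(\deg p_1+1)\nu(x)+\nu(y)$$
we get $w\ge\deg p_1$. Alternatively, this is Lemma \ref{zzlem3.16} applied with $u=x$, $v=y$, $h=x$.
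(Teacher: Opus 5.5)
Your arguments for ${^1\Gamma}_0(K_1)=\Bbbk[x]$ and for (1), (2), (4) match the paper's. Incidentally, Example \ref{zzexa5.1} needs no square-root hypothesis: for $K\{(ax+q)xy\}$ the relevant discriminant is $q^2$, so you can drop the detour through $\kbar$.

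The gap is in (3). The statement $\beta=0$ that you plan to prove is false, so no centroid or multiplicity argument can establish it. Take $p_1=p_2=(x-1)(x-2)$ and $\phi\colon x\mapsto 2-x$, $y\mapsto y$. Then
\[
\{2-x,\,y\}=-(x-1)(x-2)\,xy=(1-x)(-x)(2-x)\,y=\phi\bigl(p_1(x)\,xy\bigr),
\]
so $\phi$ is a Poisson automorphism of $K_1$ with $\alpha=-1$, $\beta=2$, $z=1$. Its fixed point $\beta/(1-\alpha)=1$ is the centroid of the roots $\{0,1,2\}$ of $tp_1(t)$, as it must be, and nothing forces it to be $0$. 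Remark \ref{zzrem5.6} and Corollary \ref{zzcor5.7} explicitly allow $b\neq 0$. As written, your proof covers only the case $\beta=0$ (where your third bullet is fine) and leaves $\beta\neq0$ unhandled.

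The repair is to drop $\beta=0$ altogether. Surjectivity only needs $z=\pm1$, since $\Bbbk(\alpha x+\beta,\,h(x)y^{\pm1})=\Bbbk(x,y)$ for every $\beta$. Comparing leading coefficients gives $z=\alpha^{d}$ with $d=\deg p_1$, and this integer is $\pm1$ as soon as $\alpha$ is a root of unity.

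Your permutation idea proves that directly. By the identity $p_1(\sigma(x))\,\sigma(x)=z\alpha\,p_1(x)\,x$, the map $\sigma(t)=\alpha t+\beta$ sends the finite set $S$ of distinct roots of $tp_1(t)$ in $\kbar$ injectively into itself. Also, $S$ contains $0$ and a nonzero root $r$ of $p_1$. So some power $\sigma^k$ fixes $S$ pointwise, and an affine map fixing two distinct points is the identity, giving $\alpha^k=1$.

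The paper reaches the same conclusion by contradiction. If $\alpha$ is not a root of unity, $p_1$ would have infinitely many distinct roots: $(\alpha^i+\cdots+1)\beta$ when $\beta\neq0$, or $\alpha^i r$ when $\beta=0$.
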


\begin{proof} 
Since $\deg p_1(x) \ge 2$, Corollary \ref{zzcor3.12} implies that 
${^1\Gamma}_{0}(K_1)=\Bbbk[x]$.

(1)(2) First we claim that $\deg p_2(x)\geq 2$, and consequently,
${^1\Gamma}_{0}(K_2)={^1\Gamma}_{0}(K_1)=\Bbbk[x]$.
By Lemma \ref{zzlem3.7}(1), $\phi$ maps 
${^1\Gamma}_{0}(K_1)$ to ${^1\Gamma}_0(K_2)$ injectively. 
Thus ${^1\Gamma}_{0}(K_2)\neq \Bbbk$. If $\deg p_2(x)\leq 1$,
then by Example \ref{zzexa5.1}, $K_2$ is isomorphic to either
$K_q$ or $\Kweyl$. In both cases ${^1\Gamma}_0(K_2)=\Bbbk$
by Lemmas \ref{zzlem4.3}(1) and \ref{zzlem4.4}(2). This 
yields a contradiction. So $\deg p_2(x)\geq 2$. By 
Corollary \ref{zzcor3.12}, ${^1\Gamma}_{0}(K_2)=\Bbbk[x]$.

Since $\phi$ maps ${^1\Gamma}_{0}(K_1)$ to ${^1\Gamma}_{0}(K_2)$, 
$\phi(x) = g(x)$ for some $g \in \Bbbk[x]\subseteq K_2$. 
Let $\phi(y)=h\in K_2$. Note that $g$ and $h$ must be 
algebraically independent over $\Bbbk$; in particular, 
$g,h \notin \Bbbk$. Now
$$p_1(g(x)) g(x) h = \phi(p_1(x) xy) = \phi(\{x,y\}) = \{g,h\} 
= \{g,h\}_w \{x,y\} = g'(x) h_y p_2(x) xy$$
and $g'(x) \ne 0$ because $g \notin \Bbbk$, whence
$$
\frac{h_y}{h} = \frac{p_1(g(x)) g(x)}{g'(x) x p_2(x)}\, \frac1y \,.
$$
Lemma \ref{zzlem2.1}(2), with $F := \Bbbk(x)$ and $t :=y$, 
implies that $p_1(g(x)) g(x)/ g'(x) x p_2(x) = :z \in \ZZ$ and 
$h = v(x) y^z$ for some $v(x) \in \Bbbk(x)^\times$. Since 
$g$ and $h$ are algebraically independent, $z \ne 0$.

Comparing degrees in the equation 
\begin{equation} 
\label{E5.2.1}\tag{E5.2.1}
p_1(g(x)) g(x) = z g'(x) x p_2(x),
\end{equation}
we find that
$$(\deg p_1)(\deg g) + \deg g = \deg g - 1 + 1 + \deg p_2\leq \deg g+\deg p_1 \,,$$
whence $\deg g = 1$ and $\deg p_1=\deg p_2=:d\geq 2$. So we have proved 
parts (1) and (2).

(3) We have $g(x) = \phi(x) = \alpha x + \beta$ as in the proof of part (2). Comparing leading coefficients in 
\eqref{E5.2.1} (with $p_1=p_2$), we see that $\alpha^d = z$. Evaluating \eqref{E5.2.1} at $x=0$ yields
$$p_1(\beta) \beta = 0.$$

We claim that $\alpha$ is a root of unity.  Suppose not.

First consider the case $\beta \ne 0$. Then $\beta$ is a root of 
$p_1$. We show that 
$$r_i := (\alpha^i + \alpha^{i-1} + \cdots + 1) \beta$$ 
is a root of $p_1$ for all $i \ge 0$. If $p_1(r_i) = 0$ for some $i$, 
then $p_1(g(r_i)) g(r_i) = 0$ by \eqref{E5.2.1}. Since 
$$g(r_i) = \alpha r_i + \beta = r_{i+1} \,,$$
which is nonzero because $\alpha^{i+1} + \alpha^i + \cdots + 1 
= (\alpha^{i+2}-1)/(\alpha-1) \ne 0$, we obtain $p_1(r_{i+1}) = 0$. 
Thus, by induction, all the $r_i$ are roots of $p_1$.  This is 
impossible, because the elements 
$r_i = (\alpha^{i+1}-1)\beta/(\alpha-1)$ are all distinct when 
$\alpha$ is not a root of unity.

Now assume that $\beta = 0$, whence \eqref{E5.2.1} reduces to
$$p_1(\alpha x) = z p_1(x).$$
By assumption, $p_1$ has a nonzero root 
$r \in \overline{\Bbbk}$. By induction, $\alpha^i r$ is a root of 
$p_1$ for all $i \ge 0$, which again yields a contradiction.

Thus $\alpha$ must be a root of unity, as claimed. Consequently, 
due to the equation $\alpha^d = z$, we conclude that $z = \pm 1$. 
As a result,
$$\phi(K_1) = \Bbbk(g,h) = \Bbbk(\alpha x + \beta, v(x) y^z) 
= \Bbbk(x,y) = K_1.$$
Therefore $\phi$ is an automorphism of $K_1$, as claimed.

(4) We have $\vht_1(K_1) \le \fht(K_1)$ by Lemma \ref{zzlem3.14}(4), and $\fht(K_1) \le d+2$ where $d := \deg p_1$.  If $\vht_1(K_1) < d+2$, then as in the proof of Lemma \ref{zzlem4.5}(3), there is an integer $w < d$ such that there exists a non-classical $w$-valuation $\nu$ on $K_1$ with $\nu(x) < 0$.  Now
$$
\nu(p_1) + \nu(x) + \nu(y) = \nu(\{x,y\}) \ge \nu(x) + \nu(y) - w,
$$
so $\nu(p_1) \ge - w$.  Since $\nu(x) < 0$, it follows from Lemma \ref{zzlem3.10}(1) that $\nu(p_1) = d \nu(x)$.  But then $\nu(x) \ge - w/d > -1$, which is impossible.  Therefore $\vht_1(K_1) \ge d+2$, establishing (4).
\end{proof}

Comparing Proposition \ref{zzpro5.2} with Lemmas \ref{zzlem4.3} and \ref{zzlem4.4}, we see that $K\{p(x)xy\}$ is not isomorphic to $\Kweyl$ or any $K_q$ when $p \in \Bbbk[x]$ is a polynomial with degree $\ge2$.

Using the above proposition we can solve a few 
problems concerning the class of Poisson fields $K\{p(x) xy\}$.
We start with the Dixmier Problem.

\begin{corollary}[Dixmier Problem]
\label{zzcor5.3}
Let $K = K\{p(x)xy\}$ for some polynomial $p \in \Bbbk[x]$. 
Then $K$ has the Dixmier property if and only if $\deg p \ge 2$ 
and $p$ has a nonzero root in $\overline{\Bbbk}$.
\end{corollary}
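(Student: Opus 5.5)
The plan is to prove the two directions separately, leaning on Proposition \ref{zzpro5.2} for sufficiency and on explicit non-surjective endomorphisms for necessity.

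\emph{Sufficiency.} Suppose $\deg p \ge 2$ and $p$ has a nonzero root in $\overline{\Bbbk}$. Let $\phi$ be any Poisson endomorphism of $K$. Apply Proposition \ref{zzpro5.2} with $p_1 = p_2 = p$. Hypotheses (a) and (b) then hold trivially. Part (3) of that proposition states directly that $\phi$ is an automorphism, so $K$ has the Dixmier property. There is nothing further to check.

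\emph{Necessity when $\deg p \le 1$.} If $\deg p = 0$, write $p = q \in \kx$; then $K = K_q$, which fails the Dixmier property by Example \ref{zzexa4.11}. If $\deg p = 1$, Example \ref{zzexa5.1} gives $K \cong K_q$ or $K \cong \Kweyl$. These fail the property by Example \ref{zzexa4.11} and Example \ref{zzexa1.5}(1) respectively. For instance, in $\Kweyl$ the subfield $\Bbbk(x^2, y/2x)$ is isomorphic to $\Kweyl$ but has index $2$. Since the Dixmier property is invariant under Poisson isomorphism, this case is done. These examples do not require $\Bbbk$ to be algebraically closed.

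\emph{Necessity when $\deg p = d \ge 2$ and $0$ is the only root of $p$ in $\overline{\Bbbk}$.} Then $p = \lambda x^d$ with $\lambda \in \kx$, so $K = K\{\lambda x^{d+1} y\} = K_{\lambda,d,0}$. Proposition \ref{zzpro4.9}(3) says $K_{q,n,0}$ lacks the Dixmier property, and its proof works over any $\Bbbk$: the map $x \mapsto 2x$, $y \mapsto y^{2^d}$ is a Poisson endomorphism by \eqref{E4.9.1}, since $c = 2$ gives $c^d = 2^d \in \ZZ$. Its image is $\Bbbk(x, y^{2^d})$, which is a proper subfield. This step is the one to watch, because we must not invoke Corollary \ref{zzcor4.2}, which assumes $\Bbbk$ algebraically closed. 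I would therefore verify the bracket directly:
\[
\{2x, y^{2^d}\} = 2\cdot 2^d y^{2^d-1}\lambda x^{d+1} y = \lambda (2x)^{d+1} y^{2^d},
\]
which confirms that the map is a Poisson endomorphism with no appeal to $\Bbbk = \kbar$.
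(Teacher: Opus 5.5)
Your proof is correct and follows the paper's argument essentially verbatim. Sufficiency comes from Proposition \ref{zzpro5.2}(3) with $p_1=p_2=p$; necessity comes from Example \ref{zzexa5.1} together with Examples \ref{zzexa1.5}(1) and \ref{zzexa4.11} when $\deg p\le 1$, and from Proposition \ref{zzpro4.9}(3) applied to $K=K_{\lambda,d,0}$ when $p=\lambda x^d$, while your explicit bracket check for $x\mapsto 2x$, $y\mapsto y^{2^d}$ correctly confirms that no algebraic closedness is needed.
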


\begin{proof}
Sufficiency is given by Proposition \ref{zzpro5.2}(3). It 
remains to show that the Dixmier property fails if either 
$\deg p \le 1$ or $p$ has no nonzero roots in $\overline{\Bbbk}$.

If $\deg p \le 1$, then by Example \ref{zzexa5.1}, 
$K \cong K_{\Weyl}$ or $K \cong K_q$ for some $q \in \kx$. 
These cases are 
covered by Examples \ref{zzexa1.5}(1) and \ref{zzexa4.11}.

Now assume that $\deg p \ge 2$ and $p$ has no nonzero 
roots in $\overline{\Bbbk}$. Hence, $p(x) = q x^n$ for 
some $q \in \kx$ and $n \ge 2$. Then $K = K_{q,n,0}$. By Proposition \ref{zzpro4.9}(3), $K$ fails to 
satisfy the Dixmier property.
\end{proof}

\begin{remark}
\label{zzrem5.4}
Although a Poisson field with the Dixmier property does not have any proper Poisson subfields isomorphic to itself, it may 
still have many Poisson subfields. Let $K = K\{p(x)xy\}$ as 
in Proposition \ref{zzpro5.2}. Then $\Bbbk(x,y^m)$, for any 
nonzero integer $m$, is a Poisson subfield of $K$, isomorphic 
to $K\{mp(x)xy\}$.

There may even be $q$-skew Poisson subfields. Suppose there 
exist $h(x)  \in \Bbbk(x)$ and $\lambda \in \kx$ such that 
$\frac{h'(x)}{h(x)} = \frac\lambda{p(x)x}$. 
(See Remark \ref{zzrem2.3} for an instance of $p$ for 
which such $h$ and $\lambda$ exist.) Then 
$\Bbbk(h,y) \cong K_\lambda$ by Corollary \ref{zzcor1.7}(3).

Questions: Does $K$ always have a $q$-skew Poisson subfield? 
If not, for which $p \in \Bbbk[x]$ does $K\{p(x)xy\}$ have a 
$q$-skew Poisson subfield?
\end{remark}

Next we solve the Isomorphism and Automorphism Problems for the 
class of Poisson fields $K\{p(x)xy\}$. 

\begin{corollary}[Isomorphism Problem]
\label{zzcor5.5} Suppose $p_1(x)$ and $p_2(x)$ are two 
polynomials in $\Bbbk[x]$ of positive degree. Then $K\{p_1(x)xy\}
\cong K\{p_2(x)xy\}$ if and only if there are 
$a\in \Bbbk^{\times}$
and $b\in \Bbbk$ such that 
\begin{equation}
\label{E5.5.1}\tag{E5.5.1}
a^{-1}(ax-b)p_1(ax-b)=\pm xp_2(x).
\end{equation}
In this case, $\deg p_1=\deg p_2$. 
\end{corollary}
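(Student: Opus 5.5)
Both directions are explicit changes of variables, with Proposition \ref{zzpro5.2} doing the rigidity work in the forward direction.

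\emph{Sufficiency.} Assume \eqref{E5.5.1} holds with sign $\epsilon=\pm1$. Work in $K_1=K\{p_1(x)xy\}$ and set $X := a^{-1}(x+b)$, so that $x = aX-b$. Then
$$
\{X,y\} = a^{-1}p_1(x)xy = a^{-1}(aX-b)p_1(aX-b)\,y = \epsilon X p_2(X)\, y.
$$
If $\epsilon=1$, then $K_1=\Bbbk(X,y)\cong K\{p_2(x)xy\}$ directly. If $\epsilon=-1$, replace $y$ by $y^{-1}$: we get $\{X,y^{-1}\} = -y^{-2}\{X,y\} = Xp_2(X)y^{-1}$, which again gives the required isomorphism.

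\emph{Necessity.} Let $\phi:K_1\to K_2$ be an isomorphism, where $K_i = K\{p_i(x)xy\}$.

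\textbf{Degrees agree when $\ge 2$.} Suppose $\deg p_1\ge 2$; by symmetry (using $\phi^{-1}$) we may assume $\deg p_1\ge\deg p_2$. Proposition \ref{zzpro5.2}(1)(2) then gives $\deg p_1=\deg p_2$ and $\phi(x)=\alpha x+\beta$.

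\textbf{The functional equation.} The proof of Proposition \ref{zzpro5.2} also gives $\phi(y)=v(x)y^z$ with $z\in\ZZ\setminus\{0\}$, together with \eqref{E5.2.1}:
$$
p_1(\alpha x+\beta)(\alpha x+\beta)=z\,\alpha\, x\, p_2(x).
$$
Since $\phi$ is surjective, $\Bbbk(\alpha x+\beta,\, v(x)y^z)=\Bbbk(x,y)$. The field $\Bbbk(x, y^z)$ has index $|z|$ in $\Bbbk(x,y)$, so this forces $z=\pm1$. Put $a:=\alpha$ and $b:=-\beta$. Rewriting $\alpha x+\beta = ax-b$, the displayed equation becomes $a^{-1}(ax-b)p_1(ax-b)=\pm x p_2(x)$, which is \eqref{E5.5.1}.

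\textbf{Both degrees equal to $1$.} If $\deg p_1\le 1$ and $\deg p_2\le 1$, both are exactly $1$ since they have positive degree, and Proposition \ref{zzpro5.2} does not apply. Here I use Example \ref{zzexa5.1}. Write $p_i = a_i x + q_i$. The example says $K_1\cong K_2$ if and only if $q_1=\pm q_2$. It remains to produce $a,b$ satisfying \eqref{E5.5.1} with degree-one polynomials. Try $b=0$: the equation becomes $x\,p_1(ax) = \pm x\,p_2(x)$, that is,
$$
a_1 a^2 x + a q_1 = \pm(a_2 x + q_2).
$$
\begin{itemize}
\item If $q_1=\pm q_2\neq 0$, this needs $a=\pm1$ (the sign arranged to match $q_1=\pm q_2$) and then $a_1=\pm a_2$. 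That fails in general.
\end{itemize}
So $b\neq 0$ is needed. Then $(ax-b)p_1(ax-b)$ has the two roots $(b/a)$ and $(b-q_1/a_1)/a$, while $x p_2(x)$ has roots $0$ and $-q_2/a_2$. Matching root sets and leading coefficients means solving two equations in $a,b$, and this seems to always be solvable, possibly requiring a square root $a=\pm\sqrt{\pm a_2/a_1}$ in $\Bbbk$ to match the leading coefficients.

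This degree-one case is the main obstacle. Solvability may depend on square roots existing in $\Bbbk$, which Example \ref{zzexa5.1} itself relies on through Corollary \ref{zzcor1.3}(3). I would work this case out concretely and, if necessary, note the square-root hypothesis implicit in Example \ref{zzexa5.1}.

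**Final claim.** In every case where the isomorphism exists, $\deg p_1=\deg p_2$. For degrees $\ge 2$ this is Proposition \ref{zzpro5.2}(1); when one degree is $1$, the other must also be $1$, because the $\Gamma$-invariants distinguish the two situations: ${^1\Gamma}_0=\Bbbk[x]$ versus ${^1\Gamma}_0=\Bbbk$ (Lemmas \ref{zzlem4.3}(1), \ref{zzlem4.4}(2) and Corollary \ref{zzcor3.12}).
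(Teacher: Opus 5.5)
Your sufficiency argument, and your necessity argument when some $\deg p_i\ge 2$, are correct and follow the paper. There Proposition~\ref{zzpro5.2} gives $\phi(x)=\alpha x+\beta$, $\phi(y)=v(x)y^z$ and \eqref{E5.2.1}, and surjectivity of $\phi$ forces $z=\pm1$.

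The gap is the case $\deg p_1=\deg p_2=1$, which you leave unresolved. The obstruction you describe there comes from an algebra slip. With $b=0$ the left side of \eqref{E5.5.1} is
$$a^{-1}(ax)\,p_1(ax)=x\,p_1(ax)=x(a_1ax+q_1),$$
not $x(a_1a^2x+aq_1)$: you dropped the factor $a^{-1}$. So for $b=0$, \eqref{E5.5.1} simply says $p_1(ax)=\pm p_2(x)$, i.e.\ $a_1a=e\,a_2$ and $q_1=e\,q_2$ for a common sign $e$.

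Example~\ref{zzexa5.1} gives $q_1=e\,q_2$ for some $e\in\{\pm1\}$ whenever $K\{p_1(x)xy\}\cong K\{p_2(x)xy\}$ (either sign works if $q_1=q_2=0$). So you can take $b:=0$ and $a:=e\,a_2/a_1\in\kx$, and then
$$a^{-1}(ax)\,p_1(ax)=x(e\,a_2x+e\,q_2)=e\,x\,p_2(x).$$
No root-matching with $b\neq 0$ and no square roots are needed. This is presumably the detail the paper omits in this case.

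Your worry about a hidden square-root hypothesis in Example~\ref{zzexa5.1} is also unfounded. There Corollary~\ref{zzcor1.3}(3) is applied to $f=-ay^2-qy$, for which $\beta^2-4\alpha\gamma=q^2$ always has a square root in $\Bbbk$.

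Finally, the last sentence of the statement needs no $\Gamma$-invariants. Whenever \eqref{E5.5.1} holds, comparing degrees of the two sides gives $1+\deg p_1=1+\deg p_2$ directly. Your $\Gamma$-argument is correct, but it is unnecessary.
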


\begin{proof} If $\deg p_1(x) = 1$ and $\deg p_2(x) = 1$, 
then the assertion follows from the classification given in 
Example \ref{zzexa5.1} (details are omitted). 

Now we assume that at least one of $p_1$ and $p_2$ has degree 
at least 2. By symmetry, we may assume $\deg p_1\geq \deg p_2$.

Suppose that $a^{-1}(ax-b)p_1(ax-b)=e xp_2(x)$ where $e=\pm 1$. It 
is clear that there is a Poisson 
isomorphism $K\{p_1(x)xy\}\to K\{p_2(x)xy\}$ that sends $x \mapsto ax-b$ and $y \mapsto y^{e}$.

Conversely, assume that $\phi: K\{p_1(x)xy\}\to K\{p_2(x)xy\}$
is a Poisson isomorphism.  By the proof
of Proposition \ref{zzpro5.2} we have $\phi(x) = g(x) = \alpha x + \beta$ for some $\alpha \in \kx$, $\beta \in \Bbbk$, while $\phi(y)=v(x) y^z$ for 
some $v \in \Bbbk(x)^\times$ and some nonzero integer $z$, and by \eqref{E5.2.1},
\begin{equation}
\label{E5.5.2}\tag{E5.5.2}
p_1(g(x)) g(x) =z g'(x) x p_2(x).
\end{equation}
Since $\phi$ is an isomorphism, $z=\pm 1$. Thus \eqref{E5.5.1} is 
another form of \eqref{E5.5.2}. The assertion follows.
\end{proof}

\begin{remark}  
\label{zzrem5.6}
We can divide \eqref{E5.5.1} into two cases. Let $p_1,p_2 \in 
\Bbbk[x]$ be polynomials of positive degree. Then $K\{p_1(x)xy\} 
\cong K\{p_2(x)xy\}$ if and only if one of the following holds:
\begin{itemize}
\item
$p_2(x) = \pm p_1(ax)$ for some $a \in \kx$.
\item 
There exist $a,b \in \kx$ and $u \in \Bbbk[x]$ such that 
$p_1(x) = (x+b) u(x)$ and $p_2(x) = \pm (ax-b) u(ax-b)$.
\end{itemize}

Sufficiency is clear from Corollary \ref{zzcor5.5}. Conversely, 
if $K\{p_1(x)xy\} \cong K\{p_2(x)xy\}$, then \eqref{E5.5.1} holds 
for some $a \in \kx$, $b \in \Bbbk$. In case $b=0$, this equation 
reduces to $p_1(ax) = \pm p_2(x)$.  In case $b\ne 0$, evaluating 
\eqref{E5.5.1} at $x=0$ shows that $-b$ is a root of $p_1$, 
whence $p_1(x) = (x+b) u(x)$ for some $u \in \Bbbk[x]$. 
Returning to \eqref{E5.5.1}, it follows that $p_2(x) = 
\pm (ax-b) u(ax-b)$.
\end{remark}

\begin{corollary}  \label{zzcor5.7}
{\rm(Automorphism Problem).}  
Let $K = K\{p(x)xy\}$ where $p \in \Bbbk[x]$ is a polynomial of degree $d \ge 2$.  Set
\begin{align*}
E_p &:= \{ (a,b,e) \in \kx \times \Bbbk \times \{\pm1\} \mid a^{-1} (ax-b) p(ax-b) = e x p(x) \}  \\
G_p &:= \biggl\{ \biggl( \begin{pmatrix} a&0\\ b&1 \end{pmatrix}, e \biggr) \biggm| (a,b,e) \in E_p \biggr\}.
\end{align*}
\begin{enumerate}
\item[(1)]
$G_p$ is a subgroup of $GL_2(\Bbbk) \times \{\pm1\}$, and there is a short exact sequence
$$
1 \rightarrow \Bbbk(x)^\times \rightarrow \Aut_{Poi}(K) \rightarrow G_p \rightarrow 1 \,.
$$
\item[(2)] 
$|G_p| \le d(d+1)$. 
\item[(3)]
If $p$ has $\ZZ$-linearly independent roots $r_1,\dots,r_d$ in $\kbar$ such that $r_i^{2d} \ne r_j^{2d}$ for all $i \ne j$, then $|G_p| = 1$ and $\Aut_{Poi}(K) \cong \Bbbk(x)^\times$.
\end{enumerate}
\end{corollary}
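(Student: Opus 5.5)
The plan is to build everything from the description of Poisson endomorphisms obtained in the proof of Proposition \ref{zzpro5.2} and Corollary \ref{zzcor5.5}.

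\textbf{Part (1).} Let $\phi\in\Aut_{Poi}(K)$. By that proof, $\phi(x)=ax-b$ for some $a\in\kx$, $b\in\Bbbk$. (I rename $\alpha,\beta$ as $a,-b$ to match \eqref{E5.5.1}.) Also $\phi(y)=v(x)y^{e}$ with $v\in\Bbbk(x)^\times$ and $e=\pm1$, since $\phi$ is surjective. Relation \eqref{E5.5.2} with $p_1=p_2=p$ is exactly $(a,b,e)\in E_p$.

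Conversely, for $(a,b,e)\in E_p$ and any $v\in\Bbbk(x)^\times$, the map $x\mapsto ax-b$, $y\mapsto vy^e$ is a Poisson automorphism. This follows from the computation in Corollary \ref{zzcor5.5}, because $\{ax-b,vy^e\}=a\,e\,v y^{e}\,p(x)x$, and this equals $p(ax-b)(ax-b)vy^e$ precisely by the defining equation of $E_p$.

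Define $\pi(\phi):=\bigl(\left(\begin{smallmatrix}a&0\\ b&1\end{smallmatrix}\right),e\bigr)$. I will check that $\pi$ is a homomorphism. Composing two such automorphisms gives $x\mapsto a_1(a_2x-b_2)-b_1$ in one composition order. The row vector $(x,1)$ transforms by the displayed lower-triangular matrix, so composition corresponds to matrix multiplication (up to the fixed order convention). The exponents $e$ multiply.

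This shows that $G_p=\pi(\Aut_{Poi}(K))$ is a subgroup, so no separate closure check is needed, and $\pi$ is surjective. Its kernel consists of the maps $x\mapsto x$, $y\mapsto v(x)y$. These compose as $v_1v_2$, giving an injection of $\Bbbk(x)^\times$ onto the kernel.

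\textbf{Part (2).} The idea is to bound the number of possible pairs $(a,b)$ and then use that $e$ is determined by $(a,b)$. Comparing leading coefficients in $a^{-1}(ax-b)p(ax-b)=e\,xp(x)$ gives $a^{d}=e$. So $a^{2d}=1$, giving at most $2d$ choices of $a$, and then $e=a^d$ is determined.

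For fixed $a$, the affine map $\sigma(x)=ax-b$ must carry the root multiset $Z$ of $xp(x)$ (of size $d+1$, inside $\kbar$) onto itself. In particular $\sigma^{-1}(0)=b/a\in Z$, so $b\in aZ$, which allows at most $d+1$ values of $b$. This gives only $|G_p|\le 2d(d+1)$, which is off by a factor of $2$ from the target.

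The main obstacle is removing this factor of $2$. My attempt is to use the sum of roots, which $\sigma$ must preserve. Writing $s=\sum_{z\in Z}z$, invariance gives $as-(d+1)b=s$, so $b=(a-1)s/(d+1)$ is determined by $a$. That gives $|G_p|\le 2d$, which is even stronger than $d(d+1)$ once $d\ge1$.

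The cleaner alternative is to treat $G_p$ as a group of affine maps permuting the finite set $Z$. This group acts faithfully on $Z$ when $|Z|\ge2$, and its size could then be controlled by the stabilizer of a point times the orbit size $\le d+1$. The stabilizer is cyclic, of order at most $d$, since $a\neq1$ fixes only one point.

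I will use whichever count comes out cleanly. The sum-of-roots argument seems to already give the claim.

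\textbf{Part (3).} Take $(a,b,e)\in E_p$. Then $\sigma$ permutes the roots $\{0,r_1,\dots,r_d\}$. If $b\ne0$, then $\sigma(0)=-b$ is some root $r_i$. The remaining roots then satisfy $ar_j+r_i\in\{0,r_k\}$, so among the images some root equals $0$. Following this through should yield a nontrivial $\ZZ$-linear relation among the $r_j$ (using also $a^{2d}=1$), contradicting independence.

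So $b=0$. Then $a$ permutes the $r_i$ by multiplication, so $r_j=ar_i$ and hence $r_j^{2d}=r_i^{2d}$. The hypothesis forces $j=i$, so $a=1$ and $e=1$. Thus $|G_p|=1$.

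I expect the $b\ne0$ elimination to require the most care. The plan there is to sum the permutation identity $\sum_j(ar_j-b)=\sum_j r_j$ over the nonzero roots and extract a rational relation.
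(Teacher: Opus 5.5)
Parts (1) and (2) are correct, and in (2) you take a different and better route than the paper. In (1), your row-vector heuristic has a sign slip: multiplying $(x,1)$ by that matrix gives $ax+b$, not $ax-b$. It is cleaner to compute directly that $\phi_{a,b,e,v}\circ\phi_{a',b',e',v'}$ has parameters $(aa',\,ba'+b',\,ee')$, which matches the matrix product. The paper bounds $|E_p|$ by noting $b\in\{0,-r_1,\dots,-r_m\}$ and counting $a$ through ratios of roots, which gives $m^2+m\le d(d+1)$. Your comparison of root sums, $(d+1)b=(a-1)\sum_{z\in Z}z$ (the analogue of \eqref{E7.5.4}), shows that $b$ is determined by $a$. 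This gives $|G_p|\le 2d\le d(d+1)$ directly.

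The gap is in (3), in the case $b\neq 0$, which you leave at ``should yield''. Your proposed summation fails as written. When $b\neq0$, the map $\sigma(z)=az-b$ does not permute the nonzero roots: it sends $0$ to $r_i=-b$ and some $r_j$ to $0$. So $\sum_j(ar_j-b)=\sum_j r_j$ is false. The correct identity, summing over all of $Z$, is $(1-a)\sum_j r_j=(d+1)r_i$. Because of the coefficient $1-a$, this is not a $\ZZ$-linear relation unless $a$ is already known.

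The missing step is to use the hypothesis $r_i^{2d}\neq r_j^{2d}$ in this case too. The root $r_j$ with $\sigma(r_j)=0$ satisfies $r_j=-a^{-1}r_i$, so $r_j^{2d}=a^{-2d}r_i^{2d}=r_i^{2d}$. This forces $j=i$, and then $a=-1$. Your sum identity now becomes $(d-1)r_i=2\sum_{j\neq i}r_j$, which is a nontrivial $\ZZ$-relation because $d\ge2$. Alternatively, as the paper does, take any $k\neq i$: then $\sigma(r_k)=r_i-r_k$ is a nonzero root $r_l$, so $r_i=r_k+r_l$. Independence together with $a^{2d}=1$ alone does suffice, but only through a considerably longer orbit argument, which you have not supplied. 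Your $b=0$ case is correct and agrees with the paper.
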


\begin{proof}
(1) That $G_p$ is a subgroup of $GL_2(\Bbbk) \times \{\pm1\}$ is a routine check.

Given $(a,b,e) \in E_p$ and $v \in \Bbbk(x)^\times$, there is a Poisson automorphism $\phi_{a,b,e,v}$ of $K$ sending $x \mapsto ax-b$ and $y \mapsto v(x) y^e$ (as already noted when $v=1$ in the proof of Corollary \ref{zzcor5.5}).  Conversely, it follows from the proofs of Proposition \ref{zzpro5.2} and Corollary \ref{zzcor5.5} that every automorphism of $K$ has the form $\phi_{a,b,e,v}$.  Observe that
$$
\phi_{a,b,e,v} \phi_{a',b',e',v'} = \phi_{aa', ba'+b', ee', v(x)^{e'} v'(ax-b)}
$$
for all $(a,b,e), (a',b',e') \in E_p$ and $v,v' \in \Bbbk(x)^\times$.  Consequently, the rule
$$
\theta(\phi_{a,b,e,v}) = \biggl( \begin{pmatrix} a&0\\ b&1 \end{pmatrix}, e \biggr)
$$
defines a homomorphism $\theta$ from $\Aut_{Poi}(K)$ onto $G_p$, and
$$
\ker \theta = \{ \phi_{1,0,1,v} \mid v \in \Bbbk(x)^\times \} \cong \Bbbk(x)^\times \,.
$$
The displayed short exact sequence follows.

(2) Let $r_1,\dots,r_m$ be the distinct roots of $p$ in $\kbar$.

If $m=1$ and $r_1=0$, then $p(x) = \lambda x^d$ for some $\lambda \in \kx$, and the equation
\begin{equation}  \label{E5.7.1}  \tag{E5.7.1}
a^{-1} (ax-b) p(ax-b) = e x p(x),
\end{equation}
reduces to $a^{-1}(ax-b)^{d+1} = ex^{d+1}$.  This holds if and only if $b=0$ and $a^d=e$, so there are at most $2d$ choices for $(a,b,e)$ in this case, that is, $|E_p| \le 2d \le d(d+1)$.

In general, on comparing leading coefficients in \eqref{E5.7.1}, we find that $a^d = e$.  Moreover, as noted in Remark \ref{zzrem5.6}, either $b=0$ or $-b$ is a root of $p$, so $b \in \{0,-r_1,\dots,-r_m\}$.

For the case $(m,r_1) \ne (1,0)$, we may assume that $r_1 \ne 0$.  Then \eqref{E5.7.1} reduces to
$$
a^{-1} (ax-b) \prod_{i=1}^m (ax-b-r_i)^{s_i} = e x \prod_{i=1}^m (x-r_i)^{s_i}
$$
for some positive integers $s_i$, and consequently
$$
\{ a^{-1}b, a^{-1}(b+r_1), \dots, a^{-1}(b+r_m) \} = \{0,r_1,\dots,r_m\}.
$$
There are at most $m$ choices for nonzero $b$, and when $b \ne 0$ we must have $a^{-1}b = r_i$ for some nonzero $r_i$, so given $b$ there are at most $m$ choices for $a$.  Since $e = a^d$, we have at most $m^2$ choices for $(a,b,e)$ with $b \ne 0$.  On the other hand, when $b=0$ we have $a^{-1}r_1 = r_j$ for some nonzero $r_j$, leaving at most $m$ choices for $a$ and at most $m$ choices for $(a,0,e)$.  Therefore $|E_p| \le m^2+m \le d(d+1)$ in this case.

Since $|G_p| = |E_p| $, part (2) is proved.

(3) Let $(a,b,e) \in E_p$.  As noted above, $a^d = e$ and $b \in \{0,-r_1,\dots,-r_d\}$, while
$$
\{ a^{-1}b, a^{-1}(b+r_1), \dots, a^{-1}(b+r_d) \} = \{0,r_1,\dots,r_d\}.
$$

If $b \ne 0$, then $b = - r_k$ for some $k$ and $- a^{-1} r_k = a^{-1}b = r_l$ for some $l$.  Then
$$
r_k^{2d} = (- a^{-1} r_k)^{2d} = r_l^{2d} \,,
$$
which forces $l=k$ and $a = -1$.  Since $d \ge 2$, there is an index $i \in [1,d]$ different from $k$, and
$$
r_k - r_i = a^{-1}(b+r_i) = r_j
$$
for some $j$, contradicting our assumptions.  Thus $b=0$.

Now $r_1 \ne 0$, and $a^{-1} r_1 = r_i$ 
for some $i$, so $r_1^{2d} = r_i^{2d}$, 
whence $i=1$ and $a=1$.  Finally, 
$e = a^d = 1$ as well.

Thus $E_p = \{(1,0,1)\}$ under the 
present hypotheses, and thus $|G_p| = 1$.
\end{proof}

\begin{remark}
\label{zzrem5.8}
Assume now that $\Bbbk$ is algebraically closed 
and $p_1 \in \Bbbk[x]$ has positive degree. Then there is 
some $a \in \kx$ such that $a^{-1} (ax) p_1(ax)$ is monic.
Consequently, $p_2(x):=p_1(ax)$ is monic and $K\{p_1(x)xy\} 
\cong K\{p_2(x)xy\}$ [Corollary \ref{zzcor5.5}]. Thus, in 
studying $K\{p(x)xy\}$ (for example, for the Isomorphism 
and Automorphism Problems), we may assume that $p$ is monic.

Let $p_1,p_2 \in \Bbbk[x]$ be two monic polynomials of 
positive degree. We say they are \emph{equivalent}, and 
write $p_1\sim p_2$, if there are $a\in \Bbbk^{\times}$, $b\in \Bbbk$, and $e \in \{\pm1\}$ such that 
\begin{equation}
\label{E5.8.1}\tag{E5.8.1}
a^{-1}(ax-b)p_1(ax-b)=e xp_2(x).
\end{equation}
In this case $a$, $b$, $e$ are called the \emph{equivalence parameters}.

By Corollary \ref{zzcor5.5}, $\sim$ is an equivalence 
relation, and if $p_1\sim p_2$, then $\deg p_1 = \deg p_2$.
We claim that if $p_1\sim p_2$ and $d:= \deg p_1 = \deg p_2$, then the number of equivalence parameters is at most $2d(d+1)$.

Since $p_1$ and $p_2$ are assumed to be monic, it follows from \eqref{E5.8.1} that $a^d = e$.  Hence, there are at most 
$2d$ choices for $(a,e)$.  As noted in Remark \ref{zzrem5.6}, either $b=0$ or $-b$ is a root of $p_1$, so there are at most $d+1$ choices for $b$.  Thus, with $p_1$ fixed, there are at 
most $2d(d+1)$ choices for $(a,b,e)$, and therefore at most 
$2d(d+1)$ choices for $p_2$.

In summary we have a one-to-one correspondence between
the isomorphism classes of the $K\{p(x)xy\}$ with $p(x)$ 
being a monic polynomial of positive degree and 
the equivalence classes of monic polynomials $p(x)$ of 
positive degree. In other words, if we define a map 
$p(x)\mapsto K\{p(x)xy\}$, and consider the image in the 
class of isomorphism classes of all Poisson fields, then 
this is a finite-to-one map. So this shows that 
the moduli space of the Poisson fields $K\{\bff\}$ is infinite dimensional
(and very complicated).
\end{remark}

%%%%%%%%%%%%%%%%%%%%%%%%%

\section{Family (3): $K\{\bff\}$ where $\bff\in 
\Bbbk[x,y]$ is homogeneous}
\label{zzsec6}

For simplicity, we assume that $\Bbbk$ is algebraically closed
in this section. Then $\bff$ is a product of linear 
terms $ax+by$ where $a,b \in \Bbbk$, not both zero.

\begin{lemma}
\label{zzlem6.1}
Let $K = K\{\bff\}$ where $\bff\in 
\Bbbk[x,y]$ is homogeneous of degree $d\geq 1$.
\begin{enumerate}
\item[(1)]
If $\bff$ has only one linear factor {\rm{(}}up to scalars{\rm{)}}, then 
$K\cong K\{a x^d\}\cong \Kweyl$ for some $a \in \kx$.
\item[(2)]
If $\bff$ has exactly two different linear factors {\rm{(}}up to scalars{\rm{)}}, then 
$K\cong K\{q x^a y^b\}$ for some $q \in \kx$ and $a,b \in \Zpos$. As a consequence, it is isomorphic 
to either $\Kweyl$, some $K_q$, or $K_{1,n,0}$ for some $n:=d-2\geq 2$.
\item[(3)]
If $\bff$ has at least three different linear factors {\rm{(}}up to scalars{\rm{)}}, 
then $K$ is isomorphic to $K\{p(x,y)(x+y)xy\}$
for some homogeneous polynomial $p(x,y)$ of degree $d-3$.
\item[(4)]
If $\deg \bff\leq 3$, then $K$ is isomorphic to either $\Kweyl$
or some $K_q$.
\end{enumerate}
If $\bff$ is in cases {\rm{(1), (2), (4)}}, then it is not $1$-flabby.
\end{lemma}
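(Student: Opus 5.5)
The plan is to exploit that, over algebraically closed $\Bbbk$, a homogeneous $\bff$ of degree $d$ factors as a product of linear forms. For linear forms $u=\alpha x+\beta y$ and $v=\gamma x+\delta y$ with $\delta_0:=\alpha\delta-\beta\gamma\ne 0$, we have $\Bbbk(x,y)=\Bbbk(u,v)$ and $\{u,v\}=\delta_0\{x,y\}=\delta_0\bff$. Rewriting $\bff$ in the new coordinates $u,v$ then moves $K$ into one of the families already treated. Parts (1)--(3) are such coordinate changes, part (4) adds one further substitution, and the flabbiness statement comes from the invariance of ${}^1\Gamma_0$.

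(1) Write $\bff=c\ell^d$ with $c \in \kx$. Take $u:=\ell$ and $v$ any linear form independent of $\ell$. Then $\{u,v\}=\delta_0 c\,u^d\in\Bbbk(u)$, so $K\cong K\{a u^d\}$ with $a = \delta_0 c$, and Proposition \ref{zzpro1.1}(1) gives $K\cong\Kweyl$.

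(2) Write $\bff=c\ell_1^a\ell_2^b$ with $a,b\ge1$ and $a+b=d$. Take $u:=\ell_1$ and $v:=\ell_2$, so that $\{u,v\}=\delta_0c\,u^av^b$, i.e.\ $K\cong K_{q,\kappa}$ with $q = \delta_0 c$ and $\kappa=(a-1,b-1)$. Lemma \ref{zzlem4.1} and Corollary \ref{zzcor4.2} then give the trichotomy. If $\kappa=(0,0)$ we get $K_q$. Otherwise $K\cong K_{1,n,0}$ with $n=\gcd(a-1,b-1)$ (a zero entry allowed), and $n=1$ gives $\Kweyl$. When one exponent equals $1$, this $n$ is exactly $d-2$. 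In general one only gets $n=\gcd(\kappa)$, and I would check whether the statement intends this reading.

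(3) Choose three pairwise independent factors $\ell_1,\ell_2,\ell_3$. Since $\ell_3=s\ell_1+t\ell_2$ with $s,t\in\kx$, set $u:=s\ell_1$ and $v:=t\ell_2$, so that $\ell_3=u+v$. Then $\bff=p_0(u,v)(u+v)uv$ with $p_0$ homogeneous of degree $d-3$. Absorbing the scalar $\delta_0$ into $p_0$ gives $K\cong K\{p(x,y)(x+y)xy\}$.

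(4) For $d\le3$ with at most two distinct factors, use (1) and (2). Here $a+b\le 3$ forces $\kappa\in\{(0,0),(1,0),(0,1)\}$, giving $K_q$ or $\Kweyl$. With three distinct factors, (3) reduces to $\{x,y\}=c\,xy(x+y)$. I expect this case to be the only real obstacle, since it is not directly covered by Corollary \ref{zzcor1.2} or Example \ref{zzexa5.1}. Here I would set $u:=x/y$, so that $\{u,y\}=y^{-1}\{x,y\}=c\,y^2u(u+1)$. Then with $v:=y^{-1}$ we get $\{u,v\}=-c\,u(u+1)\in\Bbbk(u)$, and Proposition \ref{zzpro1.1}(1) yields $K\cong\Kweyl$.

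Finally, for the flabbiness claim: in cases (1), (2) and (4), $K\cong \Kweyl$, $K_q$, or $K_{1,n,0}$. By Lemmas \ref{zzlem4.3}, \ref{zzlem4.4}, \ref{zzlem4.5}, ${}^1\Gamma_0(K)$ is then $\Bbbk$ or isomorphic to a polynomial ring in one variable. A Poisson isomorphism carries ${}^1\Gamma_0$ onto ${}^1\Gamma_0$ (Lemma \ref{zzlem3.7}(1)), so ${}^1\Gamma_0(K\{\bff\})$ has transcendence degree at most $1$. Hence it cannot equal $\Bbbk[x,y]$, and $\bff$ is not $1$-flabby.
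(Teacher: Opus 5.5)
Your proof is correct and follows the paper's route: linear coordinate changes reduce (1)--(3) to Proposition \ref{zzpro1.1}(1) and Lemma \ref{zzlem4.1}. The cubic $c\,xy(x+y)$ is handled by a single birational substitution; the paper uses $x\mapsto x^{-1}$, $y\mapsto y^{-1}$ to reach $K\{c(x+y)\}$, while you use $x/y$ and $y^{-1}$. Non-flabbiness then comes from ${}^1\Gamma_0$ being $\Bbbk$ or $\Bbbk[x]$ for $\Kweyl$, $K_q$ and $K_{1,n,0}$.

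Your caution about $n=d-2$ in part (2) is justified. The argument via Lemma \ref{zzlem4.1} only gives $n=\gcd(a-1,b-1)$, which equals $d-2$ exactly when $\min(a,b)=1$. For example, $K\{x^3y^3\}\cong K_{1,2,0}$ via the generators $xy,\,y$, and by Proposition \ref{zzpro4.6} this is not $K_{1,4,0}$. So the statement should be read with $n=\gcd(a-1,b-1)$.
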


\begin{proof} 
(1)(2) By a linear change of variables, any linear factor of $\bff$ can be changed to $x$, and if $\bff$ has a second, non-proportional linear factor, that may be changed to $y$. Thus, we may assume that $\bff$ equals $ax^d$ or $qx^ay^b$ or $g(x,y)xy$ for some homogeneous $g$.  Parts (1) and (2) thus follow from Proposition \ref{zzpro1.1}(1) and Lemma \ref{zzlem4.1}.

(3) Here we may assume $\bff = g(x,y)xy$ where $g \in \Bbbk[x,y]$ is homogeneous with a linear factor $ax+by$ for some $a,b \in \kx$.  The further change of variables $x \rightarrow a^{-1}x$ and $y \rightarrow b^{-1}y$ allows us to assume that $a=b=1$.

(4) If $\bff$ has at most two different linear factors (up to scalars), the result follows from parts (1) and (2).  Otherwise, we may assume
that $\{x,y\}=xy(x+y)$ by part (3). Then 
$\{x^{-1},y^{-1}\}=x^{-1}+y^{-1}$, so after a change of generators
$x\to x^{-1}$, $y\to y^{-1}$, we get 
$K \cong K\{x+y\}$. By part (1), $K\cong \Kweyl$.

The final assertion follows from the results of \S4.1, which show that ${^1\Gamma}_0(K)$ 
is isomorphic to either $\Bbbk$ or $\Bbbk[x]$ if $K$ is 
$\Kweyl$, or $K_q$, or $K_{1,n,0}$. 
\end{proof}

For the rest of this section, we assume implicitly that 
$\deg \bff\geq 4$. To avoid some technicalities, we add some generic 
conditions as below. As already noted, $\bff$ is a product of 
linear forms $l_i := a_i x+b_i y \neq 0$. In somewhat more generality, we 
consider the case $\bff =l_1\cdots l_n$ where $l_i := 
a_i x+b_i y+c_i\not\in \Bbbk$ for scalars $a_i,b_i,c_i \in \Bbbk$. 
Here $\deg \bff =n$. We say $\bff$ 
\emph{has distinct linear forms} if $l_i\not\in {\Bbbk^\times} l_j$ 
for all $i\neq j$, which is equivalent to saying that $\bff$ does 
not have a square divisor of positive degree. 

By Lemma \ref{zzlem6.1}, many homogeneous polynomials in $\Bbbk[x,y]$ of degree 3 or 
less are not $1$-flabby. 
If $\bff$ is of the form $x^2\prod_{i=1}^{n} l_i$, 
then one can also check that $\bff$ is not $1$-flabby
(by constructing a $1$-valuation $\nu$ via Lemma \ref{zzlem3.4}
with $\nu(y)=-1$ and $\nu(x)\gg0$).
On the other hand,  polynomials of degree 4 or more are likely to be $1$-flabby under 
some generic conditions. Below is one case.

\begin{lemma}
\label{zzlem6.2}
Let $\bff \in \Bbbk[x,y]$ be a product
$\prod_{i=1}^n l_i$ of 
distinct linear forms, where $l_i=a_i x+b_i y+c_i\not\in \Bbbk$. 
Suppose 
\begin{equation}
\label{E6.2.1}\tag{E6.2.1}
{\text{
for each $i$, there are at least three $j$
such that $l_i$ and $l_j$ generate $\Bbbk[x,y]$. }}
\end{equation}
Then ${^1\Gamma_0}(K\{\bff\}) = \Bbbk[x,y]$, i.e., $\bff$ is $1$-flabby.  This holds in particular if $\bff$ is a homogeneous polynomial of degree $n\ge4$ and $\bff$ has distinct linear forms.

As a consequence, $\vht_1(K\{\bff\})= \fht(K\{\bff\})=\deg \bff$ and $K\{\bff\}$ is height 
cohereditary.
\end{lemma}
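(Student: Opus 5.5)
Since $\bff\in\Bbbk[x,y]$, Corollary \ref{zzcor3.9} gives ${^1\Gamma_0}(K)\subseteq\Bbbk[x,y]$, where $K:=K\{\bff\}$. It therefore suffices to show $x,y\in{^1\Gamma_0}(K)$, since ${^1\Gamma_0}(K)$ is a $\Bbbk$-subalgebra. More symmetrically, I will show that every $l_i$ lies in ${^1\Gamma_0}(K)$. By \eqref{E6.2.1} there is some $j$ such that $l_i,l_j$ generate $\Bbbk[x,y]$, and then $\Bbbk[l_i,l_j]=\Bbbk[x,y]$. So it is enough to show that $\nu(l_i)\ge 0$ for every $i$ and every $1$-valuation $\nu$ on $K$.

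Fix $i$ and a $1$-valuation $\nu$, and suppose $\nu(l_i)<0$. Choose three indices $j_1,j_2,j_3$ as in \eqref{E6.2.1}. For each $j=j_k$, the pair $(l_i,l_j)$ is an affine coordinate system on $\Bbbk[x,y]$, so $\{l_i,l_j\}=\det\cdot\bff$ with nonzero Jacobian $\det\in\kx$. The valuation axiom then gives
\[
\nu(\bff)\ \ge\ \nu(l_i)+\nu(l_j)-1 .
\]
Write $\bff=\prod_m l_m$, so that $\nu(\bff)=\sum_m\nu(l_m)$. Each $l_m$ is an affine polynomial in $l_i,l_j$, so $\nu(l_m)\ge\min(\nu(l_i),\nu(l_j),0)$.

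The key observation is that $l_{j_1},l_{j_2},l_{j_3}$ are each independent of $l_i$ but are pairwise non-proportional, because the linear forms are distinct. Writing $l_{j_k}=\alpha_k l_i+\beta_k l_{j_1}+\gamma_k$ in coordinates $(l_i,l_{j_1})$, the three forms cannot all be "generic-negative". If $\nu(l_i)=-a<0$, then at most one direction in the coordinate plane can have cancellation raising its valuation above the generic minimum. Concretely, the minimal-value terms in $l_i,l_j$ determine a leading form, and only one line (up to scalar) can cancel it. Hence at least two of the $l_{j_k}$ satisfy $\nu(l_{j_k})=\min(\nu(l_i),\nu(l_{j_1}))\le -a$, while $l_i$ itself contributes $-a$. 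Summing, $\nu(\bff)$ is at most about $-3a$ plus bounded contributions from the others. Each other factor satisfies $\nu(l_m)\le$ its affine bound, giving an upper estimate, while the Poisson inequality gives $\nu(\bff)\ge\nu(l_i)+\nu(l_j)-1$ for the worst-cancelling $j$. Comparing the two forces $a\le 0$, a contradiction. The analysis splits into two cases: $\nu(l_i)<\nu(l_j)$, and $\nu(l_i)\ge\nu(l_j)$, where we swap the roles of $i$ and $j$.

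This inequality bookkeeping is the main obstacle. One must handle the possibility that $\nu(l_j)$ is very positive for the particular $j$ used. I would first try choosing $j$ among the three to be the one minimizing $\nu(l_j)$, and then use the other two as the extra negative factors.

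For the homogeneous case with $n\ge4$ distinct linear forms, any two distinct forms through the origin generate $\Bbbk[x,y]$, so each $l_i$ has $n-1\ge3$ partners and \eqref{E6.2.1} holds. For the consequence: if $\bff=\sum c_{ij}x^iy^j$, a linear change of variables (allowed, since $\fht$ is an isomorphism invariant and flabbiness is preserved because ${^1\Gamma_0}=\Bbbk[x,y]$ is coordinate-free) makes the monomial $x^my^{n-m}$ with $m,n-m\ge1$ appear as the top corner $c_{m,n-m}\neq0$ of the Newton box with $m+(n-m)=n$. Proposition \ref{zzpro3.18} then gives $\vht_1=\fht=n$ and height cohereditarity.
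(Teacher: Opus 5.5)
Your reduction is sound: Corollary \ref{zzcor3.9}, then $\nu(l_i)\ge 0$ for every $1$-valuation $\nu$, then $\nu(\bff)\ge\nu(l_p)+\nu(l_q)-1$ for any pair $l_p,l_q$ generating $\Bbbk[x,y]$. The central estimate, however, has a genuine gap.

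\emph{The key observation is false.} You claim that at least two of the three partners $l_{j_k}$ of $l_i$ have $\nu(l_{j_k})\le -a$. But ``distinct linear forms'' only means $l_p\notin\kx l_q$, so parallel forms such as $y$ and $y+1$ are allowed, and all three partners can lie in the single cancelling direction. For instance, $\bff=x(x+1)(x+2)\,y(y+1)(y+2)$ satisfies \eqref{E6.2.1}. Take $l_i=x$ and a discrete valuation with $\nu(x)=-a<0<b=\nu(y)$ (e.g.\ from Lemma \ref{zzlem3.4}). Then the partners $y,y+1,y+2$ of $x$ have values $b,0,0$.

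\emph{The upper bound on $\nu(\bff)$ is missing.} The step ``each other factor satisfies $\nu(l_m)\le$ its affine bound'' reverses an inequality. The bound $\nu(l_m)\ge\min(\nu(l_i),\nu(l_j),0)$ is a lower bound, whereas you need an upper bound on $\nu(\bff)=\sum_m\nu(l_m)$, and one factor may have arbitrarily large positive value (here $b$). Such a factor is harmless only if it is one of the two entries of the Poisson inequality, where it cancels. Choosing $j$ to minimize $\nu(l_j)$ goes the wrong way: in the example it only gives $\nu(\bff)\ge -a-1$, which is compatible with $\nu(\bff)=-3a+b$ for large $b$.

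\emph{The missing idea} is to argue globally, which is how the paper proceeds. Put $\alpha:=\min_m\nu(l_m)<0$ and $\beta:=\max_m\nu(l_m)$.
\begin{itemize}
\item If $\alpha=\beta$, any generating pair gives $n\alpha\ge 2\alpha-1$. Since $n\ge4$ is forced by \eqref{E6.2.1}, this yields $\alpha\ge0$.
\item If $\alpha<\beta$, a minimizing factor and a maximizing factor generate $\Bbbk[x,y]$; make them $x$ and $y$. Now apply \eqref{E6.2.1} to the \emph{maximizing} factor $y$. Its at least three partners are exactly the $l_m$ with nonzero $x$-coefficient, and each has value exactly $\alpha$. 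Each remaining factor other than $y$ has value $0$ (if $\beta>0$) or exactly $\beta$ (if $\beta\le 0$). Hence $n'\alpha+\beta\ge\nu(\bff)\ge\alpha+\beta-1$ with $n'\ge 3$, so $(n'-1)\alpha\ge -1$, a contradiction.
\end{itemize}
In the example above, the extra negative factors $x+1,x+2$ are partners of $y$, not of $x$.

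\emph{A smaller point on the consequence.} No affine change of variables produces the corner monomial required in Proposition \ref{zzpro3.18} unless every $l_i$ becomes axis-parallel. Indeed, $\deg_x\bff+\deg_y\bff$ exceeds $\deg\bff$ by the number of factors involving both variables; e.g.\ $xy(x+y)(x+2y)$ in any coordinates has $\deg_x\bff+\deg_y\bff=6>4$. The argument of that proposition adapts directly, though. For $\nu\in\mathcal{V}_{nc,1,w}(K\{\bff\})$ we have $F^\nu_0\cap{}^1\Gamma_0=\Bbbk$ with ${}^1\Gamma_0=\Bbbk[x,y]$, so $\nu(l_m)\le -1$ for all $m$. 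The Poisson inequality for a generating pair $l_1,l_2$ then gives $w\ge\sum_{m\ge 3}(-\nu(l_m))\ge n-2$. Lemmas \ref{zzlem3.14}(4) and \ref{zzlem3.15} finish the argument.
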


\begin{proof} 
Note that \eqref{E6.2.1} implies $n \ge 4$.
By Corollary \ref{zzcor3.9}, 
${^1\Gamma}_{0}(K\{\bff\})\subseteq \Bbbk[x,y]$. Since
$\sum_i \Bbbk l_i$ generates $\Bbbk[x,y]$, it suffices to show
that each $l_i\in {^1\Gamma}_{0}(K\{\bff\})$. By definition,
it remains to show that $\nu(l_i)\geq 0$ for all $i$ and 
for all $1$-valuations $\nu$ of $K\{\bff\}$.

Let $\nu$ be any $1$-valuation of $K\{\bff\}$. Set 
$$
\alpha :=\min( \nu(l_i)\mid i \in [1,n] ) \qquad\text{and}\qquad 
\beta :=\max( \nu(l_i)\mid i \in [1,n] ).
$$
Then 
both $\alpha$ and $\beta$ are finite. If $\alpha\geq 0$,
then $\nu(l_i)\geq \alpha\geq 0$ for all $i$, as required. 
It remains to consider the situation when $\alpha<0$. 

Case 1: $\alpha=\beta$. Then $\nu(l_i)=\alpha$ for all
$i$. After possible renumbering, $l_1$ and $l_2$ generate $\Bbbk[x,y]$. By an affine change of basis
$x\to l_1$, $y\to l_2$, we can assume that 
$l_1=x$ and $l_2=y$. Then 
$$
n \alpha =\nu(\bff)=\nu(\{x,y\})
\geq \nu(x)+\nu(y)-1 = 2\alpha -1 \,.
$$
Thus, $(n-2)\alpha \geq -1$.
Since $n \ge 4$ and $\alpha$ is an integer, we have
$\alpha\geq 0$, yielding a contradiction.

Case 2: $\alpha<\beta$. 
Without loss of generality, $\alpha=\nu(l_1)$
and $\beta=\nu(l_2)$. Then $l_1$ is not of the form
$al_2+b$ for any $a,b\in \Bbbk$. So $l_1$ and $l_2$
generate $\Bbbk[x,y]$. After an affine change of generators, we may assume that $l_1=x$ and $l_2=y$.

Subcase 2.1: $\beta > 0$.
For each $i\geq 3$, $l_i=a_i x+b_i y+c_i$ which 
has $\nu$-value $\alpha$ if $a_i\neq 0$ and $0$
if $a_i=0$ (since $l_i \ne b_i y$ when $i \ge 3$). By hypothesis \eqref{E6.2.1}, there are $n'\geq 3$
factors $l_j$ (including $l_1$) such that $l_j$ and $l_2$
generate $\Bbbk[x,y]$. Such $l_j= a_j x+b_j y+c_j$ where $a_j\neq 0$, and thus $\nu(l_j)=\alpha$. For all other $i\neq 2, j$, we have $l_i=b_i y+c_i$ 
(with $b_i,c_i\neq 0$) and consequently, $\nu(l_i)= 0$.  Now
$$n' \alpha + \beta =\nu(\bff)=\nu(\{x,y\})\geq \alpha+\beta-1,$$
which implies that $(n'-1)\alpha \geq -1$, yielding a contradiction.

Subcase 2.2: $\beta \le 0$.
For each $i\geq 3$, $l_i=a_i x+b_i y+c_i$ which 
has $\nu$-value $\alpha$ if $a_i\neq 0$ and $\nu$-value $\ge \min(\beta,0)=\beta$
if $a_i=0$. Thus $\nu(l_i) = \beta$ when $a_i=0$, by definition of $\beta$.

Suppose there are $n'$-many
$l_i$ with $\nu(l_i)=\alpha$ and $m'$-many $l_i$ with $\nu(l_i)=\beta$.
By hypothesis \eqref{E6.2.1}, $n'\geq 3$, and $m' \ge 1$ because $\nu(l_2) = \beta$. Now
$$n'\alpha + \beta \ge n'\alpha + m' \beta =\nu(\bff)=\nu(\{x,y\})\geq \alpha+\beta-1,$$
which implies that $(n'-1) \alpha\geq -1$, again yielding a contradiction.

 Therefore
$\alpha\geq 0$, which implies that $\bff$ is $1$-flabby.

The mentioned particular case is clear by verifying \eqref{E6.2.1}.

The general consequences follow from Proposition \ref{zzpro3.18}.
\end{proof}

For $\bff$ as in Lemma \ref{zzlem6.2}, the 
Poisson field $K\{\bff\}$ is not isomorphic 
to $\Kweyl$ or any $K_q$ or any $K_{1,n,0}$ 
with $n \ge 2$, as we see from 
Lemmas \ref{zzlem4.3}--\ref{zzlem4.5}. Also, 
$K\{\bff\}$ is not isomorphic to 
$K\{p(x)xy\}$ for any $p(x) \in \Bbbk[x]$ 
with $\deg p(x) \ge 2$, in view of 
Proposition \ref{zzpro5.2}. 

The last statement of Lemma \ref{zzlem6.2} 
may fail if $\bff$ is not homogeneous.  
For example, take 
$\bff = (x+3)(x+4)\cdots(x+n)xy$ for some 
$n \ge 4$.  Then $\bff$ has distinct 
linear forms, but 
${^1\Gamma}_{0}(K\{\bff\}) = \Bbbk[x]$ 
by Proposition \ref{zzpro5.2}.

\begin{proposition}
\label{zzpro6.3} 
Let $\bff$ and $\bfg$ be two nonzero polynomials in $\Bbbk[x,y]$.
Suppose that 
\begin{enumerate}
\item[(a)]
$\phi$ is a Poisson morphism from $K\{\bff\}\to K\{\bfg\}$, and
\item[(b)]
$\bff$ is a product of distinct linear forms and satisfying
\eqref{E6.2.1}.
\end{enumerate}
Then the following hold.
\begin{enumerate}
\item[(1)]
$\phi(\bff) \in \Bbbk[x,y] \bfg$ and $\deg \bff\leq \deg \bfg$.  
\item[(2)]
If $\deg \bff=\deg \bfg$, then  $\phi$ preserves the degree.
Consequently, $\phi$ is a $\Bbbk$-algebra isomorphism sending 
$x\mapsto c_{11}x+c_{12}y+c_1$ and $y\mapsto c_{21}x+c_{22}y+c_2$ 
for some $c_1,c_2\in \Bbbk$ and some $\begin{pmatrix} c_{11}&c_{12}\\
c_{21}&c_{22}\end{pmatrix} \in GL_2(\Bbbk)$. Further, $\phi(\bff) = (c_{11}c_{22} - c_{12}c_{21}) \bfg$.
\item[(3)] 
If $\bff$ is homogeneous and $\deg \bff=\deg \bfg$, then  $\phi(x)$ and $\phi(y)$ are homogeneous of degree 1 up to an affine change of variables. In
this setting, $\bfg$ is also homogeneous.
\item[(4)]{\rm{(Dixmier Problem)}}
$K\{\bff\}$ has the Dixmier property.
\end{enumerate}
\end{proposition}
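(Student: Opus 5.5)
Throughout, write $\bff=l_1\cdots l_n$ and $\phi(x)=u$, $\phi(y)=v$.

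\emph{Setting up.} By Lemma \ref{zzlem6.2}, ${^1\Gamma}_0(K\{\bff\})=\Bbbk[x,y]$. By Lemma \ref{zzlem3.7}(1) (case $v=0$) and Corollary \ref{zzcor3.9}, $\phi$ maps $\Bbbk[x,y]$ into ${^1\Gamma}_0(K\{\bfg\})\subseteq\Bbbk[x,y]$. So $u,v\in\Bbbk[x,y]$, and they are algebraically independent by Observation \ref{zzobs1.4}. Lemma \ref{zzlem9.1} gives the Jacobian formula
$$\phi(\bff)=\{u,v\}=J(u,v)\,\bfg,\qquad J(u,v):=u_xv_y-u_yv_x\in\Bbbk[x,y],$$
with $J\neq 0$. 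This immediately gives the first half of (1).

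\emph{Degree bound in (1).} For $\deg\bff\le\deg\bfg$ I would use the valuation $\nu$ of Lemma \ref{zzlem3.4} with $\nu(x)=\nu(y)=-1$, i.e.\ minus total degree. Set $a=\deg u$, $b=\deg v$, both $\ge1$. It is a $(\deg\bfg-2)$-valuation on $K\{\bfg\}$, and its restriction $\mu=\nu\circ\phi$ is a $(\deg\bfg-2)$-valuation on $K\{\bff\}$. It is nontrivial, since $\mu(x)=-a<0$.

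Now redo the case analysis of Lemma \ref{zzlem6.2} for $\mu$ with $w:=\deg\bfg-2$. Let $\alpha=\min_i\mu(l_i)$ and $\beta=\max_i\mu(l_i)$. Since $\phi$ maps $\Bbbk[x,y]$ into $\Bbbk[x,y]$ and $\nu<0$ on nonconstants, every $\mu(l_i)\le -1$; in particular $\beta<0$. Choose generators as in that proof.
\begin{itemize}
\item If $\alpha=\beta$, then $n\alpha\ge 2\alpha-w$, which gives $w\ge (n-2)(-\alpha)\ge n-2$.
\item If $\alpha<\beta$, we are in the analogue of Subcase 2.2. Let $n'$ be the number of $l_i$ with $\mu(l_i)=\alpha$ and $m'$ the number with $\mu(l_i)=\beta$; now $n'+m'=n$ because there are no $\mu$-value-$0$ factors. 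Then $n'\alpha+m'\beta\ge\alpha+\beta-w$, so $w\ge (n'-1)(-\alpha)+(m'-1)(-\beta)\ge n-2$.
\end{itemize}
In either case $\deg\bfg\ge n=\deg\bff$.

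\emph{Part (2).} Suppose $\deg\bff=\deg\bfg$. Then all the inequalities above are equalities, which forces $\alpha=\beta=-1$, i.e.\ $\deg\phi(l_i)=1$ for all $i$. Since the $l_i$ span the affine linear forms, $u$ and $v$ are affine linear. The remaining claims follow:
\begin{itemize}
\item The linear part of $(u,v)$ is invertible because $J\neq0$, so $\phi$ is an affine automorphism and in particular preserves degree.
\item $J=c_{11}c_{22}-c_{12}c_{21}$ is a constant, which gives $\phi(\bff)=\det\cdot\bfg$.
\end{itemize}
The main obstacle I expect is the equality bookkeeping in the case $\alpha<\beta$. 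Equality there needs both $-\alpha=1$ and $-\beta=1$, which contradicts $\alpha<\beta$; so in the equality case only $\alpha=\beta=-1$ survives.

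\emph{Part (3).} If $\bff$ is homogeneous, then every $l_i$ passes through the origin. The identity $\bfg=\det^{-1}\phi(\bff)$ shows that $\bfg$ is the product of the affine forms $\phi(l_i)$, each of which vanishes at the point $p=\phi^{-1}$-image of the origin. Translating by $p$ (an affine change of variables on the target) makes $\phi(x)$ and $\phi(y)$ linear homogeneous and $\bfg$ homogeneous.

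\emph{Part (4).} For an endomorphism, take $\bfg=\bff$. Part (2) shows $\phi$ is an affine automorphism of $\Bbbk[x,y]$, hence surjective onto $K$. Therefore $K\{\bff\}$ has the Dixmier property.
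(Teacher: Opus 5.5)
Your approach is essentially the paper's. The valuation $\mu=\nu\circ\phi$ with $\nu=-\deg$ satisfies $\mu(l_i)=-\deg\phi(l_i)$, and your inequality $\mu(\bff)\ge\mu(l_1)+\mu(l_2)-w$ is the paper's degree comparison in \eqref{E6.3.1}. Parts (1), (3) and (4) go through as you describe; the Jacobian identity you need is Lemma~\ref{zzlem9.2}, not Lemma~\ref{zzlem9.1}.

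\textbf{The gap.} It is in the equality analysis for part (2) when $\alpha<\beta$. From $w\ge(n'-1)(-\alpha)+(m'-1)(-\beta)\ge n-2$, equality does \emph{not} force $-\alpha=-\beta=1$. If $n'=1$, the coefficient of $-\alpha$ vanishes, so equality only requires $\beta=-1$ and leaves $\alpha\le-2$ free. This configuration really occurs once \eqref{E6.2.1} is dropped. Take $\bff=xy(y-1)(y-2)$ and $\bfg=\frac12 xy(y-1)(y-2)$.
\begin{itemize}
\item The map $x\mapsto x^2$, $y\mapsto y$ is a Poisson morphism $K\{\bff\}\to K\{\bfg\}$, since $\{x^2,y\}_{\bfg}=x^2y(y-1)(y-2)$.
\item It preserves $\Bbbk[x,y]$, and $\deg\bff=\deg\bfg=4$.
\item Here $n'=1$, $\alpha=-2$, $\beta=-1$, and $(n'-1)(-\alpha)+(m'-1)(-\beta)=2=n-2$, so equality holds.
\item Yet $\phi$ does not preserve degree.
\end{itemize}
After your setup step, your bookkeeping uses only that $\phi(\Bbbk[x,y])\subseteq\Bbbk[x,y]$ and that some pair of factors generates $\Bbbk[x,y]$. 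So it cannot exclude this case.

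\textbf{The fix.} Apply \eqref{E6.2.1} to $l_2=y$, the factor with value $\beta$. At least three $l_j$ generate $\Bbbk[x,y]$ together with $y$, so they have $a_j\ne0$, and each of them has $\mu(l_j)=\alpha$. Hence $n'\ge3$. Since $-\alpha\ge-\beta+1\ge2$, you get
\[
w\ \ge\ 2(n'-1)+(m'-1)\ =\ n+n'-3\ \ge\ n.
\]
So $\alpha<\beta$ is impossible when $\deg\bfg=\deg\bff$, and $\alpha=\beta=-1$ then follows from your first case. This is what the paper's lower bound $(m-3)s+3t$ for $\deg\phi(\bff)$ encodes: the three factors of maximal degree give $\deg\bfg-2\ge(m-4)s+2t$, which forces $s=t=1$ in the equal-degree case.
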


\begin{proof} 
(1) By Lemma \ref{zzlem6.2}, ${^1\Gamma}_{0}(K\{\bff\})=\Bbbk[x,y]$. By Lemma 
\ref{zzlem3.7}(1), $\phi$ maps ${^1\Gamma}_{0}(K\{\bff\})$ to 
${^1\Gamma}_{0}(K\{\bfg\})$ injectively. By Corollary \ref{zzcor3.9}, 
${^1\Gamma}_{0}(K\{\bfg\})\subseteq \Bbbk[x,y]$. So $\phi$ restricts to an 
injective algebra endomorphism of $\Bbbk[x,y]$. 

Let $m=\deg \bff$ and write $\bff=\prod_{i=1}^{m} l_i$ where
$l_i=a_i x+b_i y+c_i\not\in \Bbbk$. 
Let $s=\min\{ \deg\phi(l_i) \mid i\in [1,m]\}$ and
$t=\max\{ \deg \phi(l_i) \mid i\in [1,m]\}$. Without
loss of generality, we may assume that $s=\deg 
\phi(l_1)$ and $t=\deg \phi(l_2)$. If $s<t$, then 
$l_1$ and $l_2$ generate $\Bbbk[x,y]$. (For if not, $l_2 = \alpha l_1 + \beta$ for some $\alpha,\beta \in \Bbbk$, which would imply $\deg \phi(l_2) = \deg \phi(l_1)$.)  If $s=t$, we
can choose an $i \ne 1,2$ such that $l_1$ and $l_i$ 
generate $\Bbbk[x,y]$. So we may also assume that 
$l_1$ and $l_2$ generate $\Bbbk[x,y]$. In both cases, 
after an affine change of basis $x\to l_1$, $y\to l_2$,
we may assume that $l_1=x$ and $l_2=y$. Or equivalently, 
$s= \deg \phi(x)$ and $t= \deg \phi(y)$. 

Let $\{-,-\}_1$ (resp., $\{-,-\}_2$) be the Poisson bracket on 
$K\{\bff\}$ (resp., $K\{\bfg\}$). Then
\begin{equation}
\label{E6.3.1}\tag{E6.3.1}
\begin{aligned}
\phi(x)\phi(y) 
\prod_{i=3}^{m} (a_i \phi(x)+ b_i \phi(y)+c_i) &= \phi(\bff) = \phi(\{x,y\}_1) = \{\phi(x),\phi(y)\}_2  \\
&= (\phi(x)_x \phi(y)_y-\phi(x)_y \phi(y)_x)\{x,y\}_2  \\
&=(\phi(x)_x \phi(y)_y-\phi(x)_y \phi(y)_x)\bfg \,,
\end{aligned}
\end{equation}
which implies $\phi(\bff) \in \Bbbk[x,y] \bfg$.

The last term of \eqref{E6.3.1} has degree at most 
$$(s+t-2)+\deg \bfg.$$
The degree of the first term of \eqref{E6.3.1} is at least 
$(m-3)s+3 t$ since \eqref{E6.2.1} holds.  Namely, since $l_1 = x$ there are at least three $l_i$ with $b_i \ne 0$, and $\deg \phi(l_i) = t$ for these $l_i$ (by definition if $s=t$, 
and otherwise because 
$\deg \phi(x) < \deg \phi(y) = t$).
So we have
$(s+t-2)+\deg \bfg\geq (m-3) s+3t$.
This implies that  
\begin{equation}
\label{E6.3.2}\tag{E6.3.2}
\deg \bfg-2\geq (m-4)s+ 2 t\geq (m-2)s \ge m-2.
\end{equation}
Therefore $\deg \bfg\geq m=\deg \bff$. 

(2) Since $m= \deg \bff=\deg \bfg$, the inequalities \eqref{E6.3.2} 
imply that $s=t=1$. Thus, $\phi$ is a $\Bbbk$-algebra isomorphism 
and both $\phi(x)$ and $\phi(y)$ have degree 1. The described forms of $\phi(x)$ and $\phi(y)$ follow.  Further,
\begin{align*}
\phi(\bff) &= \phi(\{x,y\}_1) = \{ c_{11}x+c_{12}y+c_1,\, c_{21}x+c_{22}y+c_2 \}_2  \\
&= (c_{11}c_{22} - c_{12}c_{21}) \{x,y\}_2 = (c_{11}c_{22} - c_{12}c_{21}) \bfg \,.
\end{align*}

(3) By part (2), $\deg \phi(x)=\deg 
\phi(y)=1$, consequently,
$\phi(x)$ and $\phi(y)$ generate
$\Bbbk[x,y]$. Up to an affine change of variables in $K\{\bfg\}$,
we may assume that $\phi(x)=x$ and $\phi(y)=y$. The assertions 
follow.

(4) This is an immediate consequence of part (2).
\end{proof}

\begin{corollary} [Isomorphism Problem]  \label{zzcor6.4}
Let $\bff,\bfg \in \Bbbk[x,y]$ be two nonzero polynomials.
\begin{enumerate}
\item[(1)]
Suppose there are some $c_1,c_2 \in \Bbbk$ and some $\begin{pmatrix} c_{11}&c_{12}\\  c_{21}&c_{22}\end{pmatrix} \in GL_2(\Bbbk)$ such that
\begin{equation}  \label{E6.4.1}  \tag{E6.4.1}
\bfg = \frac{\bff(c_{11}x+c_{12}y+c_1\,,\, c_{21}x+c_{22}y+c_2)}{c_{11}c_{22} - c_{12}c_{21}} \,.
\end{equation}
Then $K\{\bff\} \cong K\{\bfg\}$.
\item[(2)]
Now assume that $\bff$ and $\bfg$ are products of distinct linear forms and both satisfy \eqref{E6.2.1}.  If $K\{\bff\} \cong K\{\bfg\}$, then there exist $c_1,c_2 \in \Bbbk$ and $\begin{pmatrix} c_{11}&c_{12}\\  c_{21}&c_{22}\end{pmatrix} \in GL_2(\Bbbk)$ such that \eqref{E6.4.1} holds.
\item[(3)] 
Assume that $\bff$ and $\bfg$ are each products of $4$ or more distinct homogeneous linear forms {\rm(}up to scalars{\rm)}.  Then $K\{\bff\} \cong K\{\bfg\}$ if and only if there is some $\begin{pmatrix} c_{11}&c_{12}\\  c_{21}&c_{22}\end{pmatrix} \in GL_2(\Bbbk)$ such that 
$$
\bfg = \frac{\bff(c_{11}x+c_{12}y\,,\, c_{21}x+c_{22}y)}{c_{11}c_{22} - c_{12}c_{21}} \,.
$$
\end{enumerate}
\end{corollary}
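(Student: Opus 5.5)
For part (1), I will write down the explicit change of variables. Set $\delta := c_{11}c_{22}-c_{12}c_{21} \ne 0$ and let $\phi$ be the $\Bbbk$-algebra automorphism of $\Bbbk(x,y)$ sending $x \mapsto X := c_{11}x+c_{12}y+c_1$ and $y \mapsto Y := c_{21}x+c_{22}y+c_2$. In $K\{\bfg\}$ we have, by Lemma \ref{zzlem9.1} (the Jacobian rule $\{u,v\} = \{u,v\}_w\{x,y\}$),
$$
\{X,Y\} = \delta\,\bfg = \bff(X,Y) = \phi(\bff).
$$
So $\phi$ carries $\{x,y\}=\bff$ in $K\{\bff\}$ to the bracket of the images. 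By the uniqueness in Corollary \ref{zzcor9.4}, $\phi$ is then a Poisson isomorphism $K\{\bff\}\to K\{\bfg\}$.

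For part (2), I let $\phi: K\{\bff\}\to K\{\bfg\}$ be a Poisson isomorphism. Since both $\bff$ and $\bfg$ satisfy the hypotheses of Proposition \ref{zzpro6.3}, applying part (1) of that proposition to $\phi$ and to $\phi^{-1}$ gives $\deg\bff\le\deg\bfg\le\deg\bff$, so the degrees are equal. Proposition \ref{zzpro6.3}(2) then says that $\phi(x)$ and $\phi(y)$ are affine forms $c_{11}x+c_{12}y+c_1$ and $c_{21}x+c_{22}y+c_2$ with invertible linear part, and that $\phi(\bff)=\delta\bfg$. Because $\phi(\bff)=\bff(\phi(x),\phi(y))$, this is exactly \eqref{E6.4.1}.

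For part (3), sufficiency is part (1) with $c_1=c_2=0$. For necessity, Lemma \ref{zzlem6.2} shows that a homogeneous product of at least $4$ distinct linear forms satisfies \eqref{E6.2.1}, so part (2) supplies an affine $\phi$ with $\bff(\phi(x),\phi(y))=\delta\bfg$. This is the main obstacle: I must show the translation part $(c_1,c_2)$ can be taken to be zero. I plan to do this by comparing homogeneous components. Write $\phi(x)=L_1+c_1$ and $\phi(y)=L_2+c_2$, where $L_1,L_2$ are linear forms. Since $\bff$ is homogeneous of degree $m$, the top-degree component of $\bff(L_1+c_1,L_2+c_2)$ is $\bff(L_1,L_2)$. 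Since $\bfg$ is homogeneous of degree $m$, all lower-degree components must vanish. Hence $\bfg=\delta^{-1}\bff(L_1,L_2)$, which is the asserted identity with only the linear part. Alternatively, Proposition \ref{zzpro6.3}(3) already allows discarding the affine translation. Either route gives the required homogeneous linear change of variables.
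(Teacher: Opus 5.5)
Your proof is correct, and parts (1) and (2) are the paper's proof. Your explicit application of Proposition \ref{zzpro6.3}(1) to both $\phi$ and $\phi^{-1}$ is exactly what is needed to get $\deg\bff=\deg\bfg$ before invoking Proposition \ref{zzpro6.3}(2). One small citation slip: the Jacobian rule is Lemma \ref{zzlem9.2}, not Lemma \ref{zzlem9.1}, although for affine substitutions bilinearity and skew-symmetry alone give $\{X,Y\}=\delta\{x,y\}$.

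The real difference is in the necessity half of (3).

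\textbf{The paper's route.} It shows that the translation vanishes. Since $\bff(\xhat,\yhat)=\delta\bfg$ is homogeneous, every factor $l_k(\xhat,\yhat)$ must be homogeneous. So $a_kc_1+b_kc_2=0$ for each linear factor $l_k=a_kx+b_ky$ of $\bff$, and two non-proportional factors force $c_1=c_2=0$.

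\textbf{Your route.} You compare degree-$m$ components: the top component of $\bff(L_1+c_1,L_2+c_2)$ is $\bff(L_1,L_2)\neq 0$, and homogeneity of $\bfg$ then gives $\delta\bfg=\bff(L_1,L_2)$ directly.

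\textbf{What each buys.} Your argument uses only the homogeneity of $\bff$ and $\bfg$, not their linear factorizations, and proves exactly the stated existence claim. The paper's argument gives the slightly stronger fact that the isomorphism supplied by part (2) is itself linear. You could recover that too: your comparison shows $\bff$ is invariant under translation by $(c_1,c_2)$. That is impossible for a homogeneous product of at least two distinct linear forms unless $(c_1,c_2)=0$.

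Finally, drop your fallback via Proposition \ref{zzpro6.3}(3). The affine change of variables there is made in $K\{\bfg\}$ and replaces $\bfg$ by a translate, so by itself it does not remove $(c_1,c_2)$ for the given $\bfg$. Your component argument is the one that does the work.
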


\begin{proof}
(1) Under these conditions, $\xhat := c_{11}x+c_{12}y+c_1$ and $\yhat := c_{21}x+c_{22}y+c_2$ generate the field $\Bbbk(x,y)$ over $\Bbbk$.  Since
$$
\{ \xhat,\yhat \}_{K\{\bfg\}} = (c_{11}c_{22} - c_{12}c_{21}) \{x,y\}_{K\{\bfg\}} =  (c_{11}c_{22} - c_{12}c_{21}) \bfg = \bff(\xhat,\yhat),
$$
there is a Poisson isomorphism $K\{\bff\} \rightarrow K\{\bfg\}$ sending $x \mapsto \xhat$ and $y \mapsto \yhat$.

(2) This follows from part (1) and Proposition \ref{zzpro6.3}(1)(2).

(3) Under the given conditions, $\bff$ and $\bfg$ both satisfy \eqref{E6.2.1}, and one implication is given by part (1).  If $K\{\bff\} \cong K\{\bfg\}$, there exist $c_i$ and $c_{ij}$ as in (1) such that \eqref{E6.4.1} holds.  Defining $\xhat$ and $\yhat$ as above, we have that $\bff(\xhat,\yhat)$ is homogeneous with respect to $x$, $y$.  It follows that if $l_1,\dots,l_d$ are the linear factors of $\bff$, then the $l_i(\xhat,\yhat)$ are homogeneous with respect to $x$, $y$.  Now $l_k = a_kx+b_ky$ for some $a_k,b_k \in \Bbbk$, and the homogeneity of $l_i(\xhat,\yhat)$ implies that $a_kc_1+b_kc_2=0$.  But $d\ge4$ and $l_2$ is not a scalar multiple of $l_1$, so $c_1=c_2=0$.  Therefore \eqref{E6.4.1} gives the required expression for $\bfg$.
\end{proof}

\begin{remark}  \label{zzrem6.5} 
Corollary \ref{zzcor6.4}(1) applies in particular when $\bff,\bfg \in \Bbbk[x,y]$ are homogeneous polynomials with degree different from $2$ such that $\bfg \in \kx \bff$.  Say $\bff$ and $\bfg$ have degree $n \ne 2$ and $\bfg = c \bff$ for some $c \in \kx$.  Choose $a \in \kx$ with $a^{n-2} = c$, and observe that $\frac{\bff(ax,ay)}{a^2} = a^{n-2} \bff(x,y) = \bfg$.

On the other hand, there are many Poisson isomorphisms $K\{\bff\} \cong K\{\bfg\}$ where $\bff$ and $\bfg$ are homogeneous and products of distinct linear forms but $\bfg \notin \kx \bff$.  For instance, take
$$
\bff := x y (x+y) (x+2y) \qquad \text{and} \qquad \bfg := y (x+y) (x+2y) (x+3y).
$$
Then \eqref{E6.4.1} holds with $c_1=c_2=0$ and $\bigl( \begin{smallmatrix} c_{11}&c_{12}\\ c_{21}&c_{22} \end{smallmatrix} \bigr) = \bigl( \begin{smallmatrix} 1&1\\ 0&1 \end{smallmatrix} \bigr)$.
\end{remark}

Now we can compute the automorphism group of $K\{\bff\}$
in the situation of Proposition \ref{zzpro6.3}.

\begin{theorem}[Automorphism Problem]
\label{zzthm6.6}  
Let $\bff\in \Bbbk[x,y]$ be a polynomial
of degree $n \ge 4$ with distinct linear forms and satisfying \eqref{E6.2.1}. 
\begin{enumerate}
\item[(1)]
There is an exact sequence of groups
$$1\to C\to \Aut_{Poi}(K\{\bff\})\to {\mathbb S}_n$$
where $C$ is a subgroup of $C_{n-2}$.
As a consequence, $|{\Aut_{Poi}}(K\{\bff\})|\leq (n-2) n!$.
\item[(2)]
If $f$ is homogeneous, then $C = C_{n-2}$ and $n-2 \le |{\Aut_{Poi}}(K\{\bff\})|$.
\end{enumerate}
\end{theorem}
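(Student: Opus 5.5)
By Proposition \ref{zzpro6.3}(2), applied with $\bfg=\bff$, every $\phi\in\Aut_{Poi}(K\{\bff\})$ is an affine change of variables. It sends $x\mapsto c_{11}x+c_{12}y+c_1$ and $y\mapsto c_{21}x+c_{22}y+c_2$, and satisfies $\phi(\bff)=\delta\,\bff$ with $\delta:=c_{11}c_{22}-c_{12}c_{21}\neq 0$. Write $\bff=\prod_{i=1}^n l_i$. Then $\phi(\bff)=\prod_i\phi(l_i)$, and each $\phi(l_i)$ is again a non-constant polynomial of degree $1$. By unique factorization in $\Bbbk[x,y]$, together with the assumption that the $l_i$ are distinct linear forms (pairwise non-proportional), there is a unique permutation $\sigma=\sigma_\phi\in\SB_n$ with $\phi(l_i)\in\kx\, l_{\sigma(i)}$ for all $i$. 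Composing gives $\sigma_{\phi\psi}=\sigma_\phi\sigma_\psi$, so $\phi\mapsto\sigma_\phi$ is a group homomorphism $\Aut_{Poi}(K\{\bff\})\to\SB_n$. Let $C$ be its kernel; the remaining work in (1) is to show that $C$ embeds in $C_{n-2}$, after which $|\Aut_{Poi}|\le (n-2)\,n!$ is immediate.

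To analyze $C$, I use \eqref{E6.2.1} to choose $l_1,l_2$ generating $\Bbbk[x,y]$ and make the affine change of generators $x\to l_1$, $y\to l_2$. This is a Poisson isomorphism onto another $K\{\bff'\}$ of the same type (Corollary \ref{zzcor6.4}(1)) and conjugates $C$ isomorphically, so I may assume $l_1=x$ and $l_2=y$. An element $\phi\in C$ fixes the lines through $x$ and through $y$, so $\phi(x)=ax$ and $\phi(y)=by$ with $a,b\in\kx$. Then $\phi(\bff)=\delta\bff$ with $\delta=ab$.

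There are two cases.
\begin{itemize}
\item Some factor $l_j=a_jx+b_jy+c_j$ has $a_jb_j\neq0$. From $\phi(l_j)=a a_jx+b b_jy+c_j\in\kx l_j$ we get $a=b$, and if moreover $c_j\neq 0$ then $a=1$.
\item No factor has both $a_j,b_j\neq0$. By \eqref{E6.2.1} applied to $l_1=x$, at least three factors have $b_j\neq0$; these are of the form $b_jy+c_j$, and by distinctness at least one has $c_j\neq0$, forcing $b=1$. Symmetrically $a=1$, so $C$ is trivial.
\end{itemize}
In every case $\phi$ is $x\mapsto ax$, $y\mapsto ay$. Comparing degree-$n$ parts in $\phi(\bff)=a^2\bff$ gives $a^n=a^2$, i.e.\ $a^{n-2}=1$. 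Hence $\phi\mapsto a$ embeds $C$ into $C_{n-2}$, which proves (1).

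For (2), when $\bff$ is homogeneous of degree $n$, every $\zeta$ with $\zeta^{n-2}=1$ gives the map $x\mapsto\zeta x$, $y\mapsto\zeta y$. It is a Poisson automorphism because $\{\zeta x,\zeta y\}=\zeta^2\bff=\zeta^n\bff=\bff(\zeta x,\zeta y)$. It fixes every homogeneous linear form up to a scalar, so it lies in $C$. Since $\Bbbk$ is algebraically closed, this gives $C=C_{n-2}$, and therefore $n-2\le|\Aut_{Poi}(K\{\bff\})|$.

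The main obstacle is the case analysis for the kernel. One must check that the normalization $l_1=x$, $l_2=y$ together with \eqref{E6.2.1} always forces $a=b$ (or $a=b=1$), in particular in the configuration where no factor is ``mixed'', as in $x(x+1)(x+2)y(y+1)(y+2)$.
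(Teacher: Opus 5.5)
Your proof is correct and follows the same route as the paper. It uses Proposition \ref{zzpro6.3}(2) to show every automorphism is affine with $\phi(\bff)\in\kx\bff$, the induced permutation of the linear factors, the normalization $l_1=x$, $l_2=y$ to get $a^{n-2}=1$ on the kernel, and the scalings $x\mapsto\zeta x$, $y\mapsto\zeta y$ for part (2). Your second case (no factor with $a_jb_j\neq 0$) is actually needed: the paper's proof asserts that \eqref{E6.2.1} always supplies some $l_i$ with $a_i,b_i\in\kx$, which fails for $\bff=x(x+1)(x+2)y(y+1)(y+2)$, so your argument closes a small gap there, where the kernel turns out to be trivial.
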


\begin{proof} Write $\bff=\prod_{i=1}^n l_i$ where
the $l_i$ are the different linear forms 
of $\bff$.

(1) Let $\phi$ be an automorphism of $K\{\bff\}$. By Proposition 
\ref{zzpro6.3}(2), $\phi(x)$ and $\phi(y)$ are linear forms, and $\phi(\bff)=c\bff$ for some nonzero scalar 
$c$.  The $\phi(l_i)$ are then also linear 
forms of $\bff$, so the $\phi(l_i)= \gamma_i l_{\sigma(i)}$ for some 
$\sigma\in {\mathbb S}_n$ and some 
$\gamma_i\in \kx$. Then the map $\Phi:
\phi\to \sigma$ defines a group homomorphism  
$\Aut_{Poi}(K\{\bff\})\to {\mathbb S}_n$. It remains to show that
the kernel of $\Phi$ is (isomorphic to) a subgroup of $C' := \{ a \in \kx \mid a^{n-2} = 1 \} \cong C_{n-2}$.

Let $\phi\in \ker \Phi$. Then $\phi(l_i)= \gamma_i l_i$
for all $i$. Up to a change of variables,
we may assume that $l_1=x$ and $l_2=y$. Then
$\phi(x)= \gamma_1 x$ and $\phi(y)= \gamma_2 y$. By \eqref{E6.2.1}, there is an $i \ge 3$ such that $l_i = a_i x + b_i y + c_i$ with $a_i,b_i \in \kx$, $c_i \in \Bbbk$, and
$\phi(l_i)=\gamma_i l_i$ implies that $\gamma_{i}=\gamma_1=\gamma_2$.  Thus $\phi(x)=ax$ and $\phi(y)=ay$ for 
some $a\in \Bbbk^{\times}$.
Then 
$$
a^n \bff = \prod_{i=1}^n (al_i) = \phi(\bff) = \phi(\{x,y\}) = \{ax,ay\} = a^2 \bff,
$$
so we obtain
that $a^{n-2}=1$ and $a \in C'$.

(2) For any $a \in C'$, we have $\bff(ax,ay) = a^n \bff(x,y) = \{ax,ay\}$, so there is a Poisson
automorphism of $K\{\bff\}$ sending $x \mapsto ax$ and $y \mapsto ay$. The assertion follows.
\end{proof}

%%%%%%%%%%%%%%%%%%%%%%%%%

\section{Family (4): $K\{cf(x)g(y)\}$ where $c\in \Bbbk^{\times}$, $f \in \Bbbk[x]$, $g \in \Bbbk[y]$}
\label{zzsec7}

We concentrate on the special case when $f$ and $g$ are monic, split over $\Bbbk$, and squarefree, i.e.,
$f=\prod_{i=1}^m (x+\xi_i)$ and $g=\prod_{j=1}^{n} (y+\chi_j)$
where $(\xi_i)_{i=1}^{m}$ and $(\chi_j)_{j=1}^n$ 
are families of distinct scalars. 

If $m=0$ or $n=0$, then $K \cong \Kweyl$ by Proposition \ref{zzpro1.1}(1).  If $m\ge1$ and $n=1$, or if $m=1$ and $n\ge1$, then $K \cong K\{p(x)xy\}$ for some $p \in \Bbbk[x]$, placing $K$ in our second family (Section \ref{zzsec5}). When $n\geq m=2$ or $m\geq n=2$, we refer to the 
following easy lemma.

\begin{lemma}
\label{zzlem7.1}
Let $\bff= c f(x)g(y)$ with $\deg f(x)\geq \deg g(y)=2$.
\begin{enumerate}
\item[(1)]
If $g(y)=(y+a)^2$, then $K\{\bff\}\cong \Kweyl$.
\item[(2)]
If $g(y)=(y+a)(y+b)$ with $a\neq b$, then $K\{\bff\}\cong 
K\{ p(x)xy\}$ for some $p(x)\in \Bbbk[x]$.
\end{enumerate}
Similar statements hold when $x$ and $y$ are switched.
\end{lemma}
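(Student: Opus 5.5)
The plan is to reduce everything to Corollary \ref{zzcor1.3} by a change of variables, after moving the $y$-dependence into the $x$ slot. First shift $y$: replace $y$ by $y' := y+a$ in case (1), and in case (2) by $y' := y+a$ as well. This preserves the bracket, since $\{x,y'\} = \{x,y\}$, and it turns $g$ into $(y')^2$ in case (1) and into $y'(y'+b-a)$ in case (2), where $b-a \neq 0$. So we may assume $\bff = c f(x) y^2$ or $\bff = c f(x) y(y+e)$ with $e \in \kx$.

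\textbf{Case (1).} Here $\bff = c f(x) y^2$. Set $y_1 := y^{-1}$; then
$$
\{x,y_1\} = -y^{-2}\{x,y\} = -c f(x) \in \Bbbk(x),
$$
so $K\{\bff\} = \Bbbk(x,y_1) \cong \Kweyl$ by Proposition \ref{zzpro1.1}(1). Equivalently, this is the computation in the first bullet of Corollary \ref{zzcor1.3}(3), and it uses no hypothesis on $\deg f$.

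\textbf{Case (2).} Here $\{x,y\} = c f(x) y (y+e)$. Following the second bullet of Corollary \ref{zzcor1.3}(3), take $u := y^{-1}$, so that
$$
\{x,u\} = -c f(x)(1 + e u),
$$
and then $v := 1 + e u$, so that $\{x,v\} = -ce f(x) v$. Thus $K = \Bbbk(x,v)$ with bracket $-ce f(x) v$, which is $K\{-ce f(x) y\}$ after renaming $v$ as $y$. This is not yet of the shape $p(x)xy$, since $x$ need not divide $-ce f(x)$. To fix this, choose any root $r \in \Bbbk$ of $f$ (I expect this to require $\Bbbk$ algebraically closed, or $f$ split, as in the standing assumption of this section) and replace $x$ by $x - r$. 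Then $f(x) = (x-r) f_1(x)$ becomes $x f_1(x+r)$, and we get $K \cong K\{p(x) x y\}$ with $p(x) := -ce f_1(x+r)$. If $f$ were constant this would fail, but $\deg f \ge 2$, so $f$ has a root.

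\textbf{Switched case.} Interchanging $x$ and $y$ multiplies the bracket by $-1$, and the sign is absorbed into $c$; so the same argument gives the symmetric statements. The only real obstacle is the need for a root of $f$ in $\Bbbk$ in case (2). I will handle it using the section's standing assumption that $f$ splits; if necessary I will note that otherwise one only gets $K\{h(x)y\}$ with $\deg h \ge 2$.
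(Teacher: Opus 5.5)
Your proof is correct and follows the paper's argument: shift $y$ so that $a=0$, pass to $y^{-1}$ (which gives a bracket lying in $\Bbbk(x)$ in case (1)), then in case (2) pass to $1+by^{-1}$ and translate $x$ by a root of $f$ so that $f(x)=xf_1(x)$, yielding $p(x)=-bcf_1(x)$. The paper's step ``up to a change of variables, $f(x)=xf_1(x)$'' also needs a root of $f$ in $\Bbbk$, which the section's standing assumption that $f$ splits provides; your appeal to the second bullet of Corollary~\ref{zzcor1.3}(3) is a mis-citation (that bullet uses a different substitution), but your computation stands on its own.
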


\begin{proof}
(1) Up to a change of variables, we may assume that $a=0$, so
$$\{x, y^{-1}\}
=-y^{-2} \{x,y\}=-y^{-2} cf(x) y^2
=-cf(x).$$
The assertion follows by Proposition \ref{zzpro1.1}(1).

(2) Up to a change of variables,
we may assume that $f(x)=x f_1(x)$ and $a=0$ (and hence $b\neq 0$). Let $y_1=y^{-1}$; then 
$$\{x, y_1\}=-y^{-2}\{x,y\}
=-y^{-2} c f(x) y (y+b)=
-c f(x) (1+b y_1).$$
Let $y_2= 1+b y_1$; then 
$$\{x, y_2\}=b\{x,y_1\}=-bcf(x) y_2=(-bc f_1(x)) xy_2.$$
The assertion follows by setting $p(x)=-bcf_1(x)$.
\end{proof}
Throughout the rest of 
this section we assume that $m\geq 3$ and $n\geq 3$.

\begin{lemma}
\label{zzlem7.2} 
Suppose $\bff$ is of the form 
$c\prod_{i=1}^m (x+\xi_i) \prod_{j=1}^{n} (y+\chi_j)$
where $c \in \kx$ and $(\xi_i)_{i=1}^{m}$, $(\chi_j)_{j=1}^n$ 
are families of distinct scalars in $\Bbbk$.  If $m,n\ge3$, then $\bff$ is $1$-flabby, 
namely, ${^1\Gamma}_{0}(K\{\bff\})=\Bbbk[x,y]$.

As a consequence, $\vht_1(K\{\bff\})= \fht(K\{\bff\})=\deg \bff$ and $K\{\bff\}$ is height 
cohereditary.
\end{lemma}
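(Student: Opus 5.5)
The plan follows the template of Lemma \ref{zzlem6.2}. By Corollary \ref{zzcor3.9} we already have ${^1\Gamma}_0(K\{\bff\})\subseteq \Bbbk[x,y]$. Since ${^1\Gamma}_0$ is a $\Bbbk$-subalgebra, it suffices to show $x,y\in {^1\Gamma}_0(K\{\bff\})$, that is, $\nu(x)\ge 0$ and $\nu(y)\ge 0$ for every $1$-valuation $\nu$ of $K\{\bff\}$. The point is to control $\nu(f(x))$ and $\nu(g(y))$ from above in terms of $\nu(x),\nu(y)$, and then use
$$
\nu(f(x))+\nu(g(y)) \;=\; \nu(\{x,y\}) \;\ge\; \nu(x)+\nu(y)-1 .
$$

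First, for any $1$-valuation $\nu$ and the polynomial $f$ of degree $m$:
\begin{itemize}
\item if $\nu(x)<0$, then $\nu(f(x))=m\nu(x)$ by Lemma \ref{zzlem3.10}(1);
\item if $\nu(x)\ge 0$, then each factor $x+\xi_i$ has $\nu\ge 0$, and because the $\xi_i$ are distinct, at most one factor can have positive value.
\end{itemize}
The last claim holds because $\nu(x+\xi_i)>0$ and $\nu(x+\xi_j)>0$ would force $\nu(\xi_i-\xi_j)>0$, contradicting that $\nu$ vanishes on $\kx$. So in the second case $\nu(f(x))=\nu(x+\xi_{i_0})$ for at most one index $i_0$ (and $0$ if there is none). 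The same dichotomy holds for $g(y)$.

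Suppose, for contradiction, that $\nu(x)=-a<0$ with $a\ge1$. We split on $\nu(y)$.
\begin{itemize}
\item If $\nu(y)=-b<0$, the inequality gives $-ma-nb\ge -a-b-1$, i.e.\ $(m-1)a+(n-1)b\le 1$. This is impossible since $m,n\ge3$.
\item If $\nu(y)\ge0$, we have $\nu(g(y))=\nu(y+\chi_{j_0})=:c\ge0$ for the exceptional index, or $c=0$ if there is none. Here $\nu(y)\le c$ unless $\chi_{j_0}\ne 0$, in which case $\nu(y)=0$. In either case $\nu(y)\le c$, so the inequality gives $-ma+c\ge -a+\nu(y)-1\ge -a+c-1$, i.e.\ $(m-1)a\le 1$. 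This contradicts $m\ge3$.
\end{itemize}
Hence $\nu(x)\ge0$, and by symmetry (using $n\ge 3$) $\nu(y)\ge0$. We actually only need $m,n\ge 2$ here; the hypothesis $m,n\ge 3$ is certainly enough.

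The one point requiring care, and the main obstacle, is the bound $\nu(y)\le \nu(g(y))$ when $\nu(y)\ge0$. This is where distinctness of the roots (squarefreeness) is used. The key case is when $-\chi_{j}=0$ is not the exceptional root: then $\nu(y)=0\le \nu(g)$. When $y$ itself is the exceptional factor, $\nu(g)=\nu(y)$. The same argument gives $\nu(x)\le\nu(f(x))$ when $\nu(x)\ge0$, which handles the symmetric case.

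Thus ${^1\Gamma}_0(K\{\bff\})=\Bbbk[x,y]$, so $\bff$ is $1$-flabby. The consequences then follow from Proposition \ref{zzpro3.18}, applied with $d=1$: the leading coefficient is $c_{mn}=c\ne0$ and $m,n\ge1$, so
$$
\vht_1(K\{\bff\})=\fht(K\{\bff\})=m+n=\deg\bff,
$$
and $K\{\bff\}$ is height cohereditary.
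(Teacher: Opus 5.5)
\textbf{There is a genuine gap in the subcase $\nu(x)=-a<0\le\nu(y)$.} Your chain $-ma+c\ge -a+\nu(y)-1\ge -a+c-1$ needs $\nu(y)\ge c=\nu(g(y))$, but you assert the opposite inequality $\nu(y)\le c$. The inequality you actually need fails in exactly the case you call harmless: if the exceptional factor is $y+\chi_{j_0}$ with $\chi_{j_0}\ne0$, then $\nu(y)=0<c$.

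Nor can the single inequality $\nu(\bff)\ge\nu(x)+\nu(y)-1$ be enough. Assume $\chi_1\neq0$ and take the valuation of Lemma~\ref{zzlem3.4} in the frame $(x,\,y+\chi_1)$ with $\nu(x)=-1$ and $\nu(y+\chi_1)=m$. Then $\nu(y)=0$ and $\nu(\bff)=-m+m=0\ge -2=\nu(x)+\nu(y)-1$. So this valuation satisfies everything your argument uses, and no contradiction can be derived. What excludes it is the bracket $\{x,\,y+\chi_1\}=\bff$, since $0\ge -1+m-1$ fails for $m\ge3$.

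The repair is the device behind the paper's proof. The paper checks \eqref{E6.2.1} and invokes Lemma~\ref{zzlem6.2}, where the linear factors of minimal and maximal $\nu$-value are made into coordinates by an affine change of variables. Here this means applying the $1$-valuation axiom to $\{x,\,y+\chi_{j_0}\}=\bff$, where $j_0$ maximizes $\nu(y+\chi_j)$. When $\nu(y)\ge0$ you have $\nu(g(y))=\nu(y+\chi_{j_0})$; if no factor has positive value, both are $0$. The inequality then reads $-ma+\nu(y+\chi_{j_0})\ge -a+\nu(y+\chi_{j_0})-1$, i.e.\ $(m-1)a\le1$, which contradicts $m\ge3$. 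Your subcase $\nu(x),\nu(y)<0$ and the final appeal to Proposition~\ref{zzpro3.18} are fine as written.

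This last step genuinely uses $m\ge3$, so your side remark that $m,n\ge2$ suffices is false. For $\bff=x(x+1)y(y+1)$, the valuation of Lemma~\ref{zzlem3.4} with $\nu(x)=-1$ and $\nu(y)=0$ has $\nu(\bff)=-2=\nu(x)+\nu(y)-1$. It is therefore a $1$-valuation, so $x\notin{^1\Gamma}_0(K\{\bff\})$ and this $\bff$ is not $1$-flabby.
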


\begin{proof} It is easy to see that \eqref{E6.2.1}
holds. The assertions follow from Lemma \ref{zzlem6.2}.
\end{proof}

For $\bff$ as in Lemma \ref{zzlem7.2}, the 
Poisson field $K\{\bff\}$ is not isomorphic 
to $\Kweyl$ or any $K_q$ or any $K_{1,n,0}$ 
with $n\ge2$, as we see from Lemmas 
\ref{zzlem4.3}--\ref{zzlem4.5}. Also, 
$K\{\bff\}$ is not isomorphic to $K\{p(x)xy\}$ 
for any $p(x) \in \Bbbk[x]$ with 
$\deg p(x) \ge 2$, in view of 
Proposition \ref{zzpro5.2}.

\begin{proposition}
\label{zzpro7.3}
Let $\bff$ and $\bfg$ be two nonzero polynomials in $\Bbbk[x,y]$. Suppose that
\begin{enumerate}
\item[(a)]
$\phi$ is a Poisson morphism from $K\{\bff\}$ to $K\{\bfg\}$, and
\item[(b)]
$\bff$ is of the form 
$c\prod_{i=1}^m (x+\xi_i) \prod_{j=1}^{n} (y+\chi_j)$
where $c \in \kx$ and $(\xi_i)_{i=1}^{m}$, $(\chi_j)_{j=1}^n$ 
are families of distinct scalars in $\Bbbk$, with $m,n\ge3$.
\end{enumerate}
Then the following hold.
\begin{enumerate}
\item[(1)]
$\phi(\bff) \in \Bbbk[x,y] \bfg$ and $\deg \bff\leq \deg \bfg$.  
\item[(2)]
If $\deg \bff=\deg \bfg$, then  $\phi$ preserves the degree.
Consequently, $\phi$ is a $\Bbbk$-algebra isomorphism sending 
$x\mapsto c_{11}x+c_{12}y+c_1$ and $y\mapsto c_{21}x+c_{22}y+c_2$ 
for some $c_1,c_2\in \Bbbk$ and some $\begin{pmatrix} c_{11}&c_{12}\\
c_{21}&c_{22}\end{pmatrix} \in GL_2(\Bbbk)$. Further, $\phi(\bff) = (c_{11}c_{22} - c_{12}c_{21}) \bfg$.
\item[(3)]{\rm{(Dixmier Problem)}}
$K\{\bff\}$ has the Dixmier property.
\item[(4)]
$\Aut_{Poi}(K\{\bff\})$ is isomorphic to a subgroup of $\{\phi\in \Aut(\Bbbk[x,y])
\mid \phi(\bff) \in \kx \bff\}.$
\end{enumerate}
\end{proposition}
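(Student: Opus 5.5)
The plan is to reduce everything to Proposition \ref{zzpro6.3}, since hypothesis (b) places $\bff$ in the setting of Lemma \ref{zzlem6.2}. Write $\bff = c\prod_{i=1}^m l_i \prod_{j=1}^n l'_j$ with $l_i = x+\xi_i$ and $l'_j = y+\chi_j$. The scalar $c$ is harmless. Either absorb it by rescaling $x$ (which replaces $\bff$ by a scalar multiple of a polynomial with the same linear factors, up to a change of variables in $\Bbbk[x,y]$), or observe that the proofs of Lemma \ref{zzlem6.2} and Proposition \ref{zzpro6.3} only use the valuations and degrees of the factors, so a nonzero constant multiplier changes nothing.

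First I check that the factors are distinct linear forms satisfying \eqref{E6.2.1}. The forms $x+\xi_i$ are pairwise non-proportional because the $\xi_i$ are distinct, and likewise for the $y+\chi_j$. No $x$-form is proportional to a $y$-form. Any $x+\xi_i$ together with any $y+\chi_j$ generates $\Bbbk[x,y]$. Hence each $l_i$ has at least $n\ge 3$ partners and each $l'_j$ has at least $m\ge3$ partners, which is exactly \eqref{E6.2.1}. This is the observation already made in the proof of Lemma \ref{zzlem7.2}.

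Parts (1), (2), and (3) then follow verbatim from Proposition \ref{zzpro6.3}(1), (2), and (4). Here $\phi(\{x,y\})$ picks up the constant $c$ in both the $\phi(\bff)$ computation and the final identity $\phi(\bff)=(c_{11}c_{22}-c_{12}c_{21})\bfg$, and \eqref{E6.3.1} goes through with an extra factor $c$ on the left.

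For (4), I take $\bfg=\bff$. By (2), every Poisson automorphism $\phi$ restricts to an affine automorphism of $\Bbbk[x,y]$ with $\phi(\bff)=\det(c_{ij})\,\bff\in\kx\bff$. The restriction map $\Aut_{Poi}(K\{\bff\})\to\Aut(\Bbbk[x,y])$ is a group homomorphism, and it is injective because $\Bbbk[x,y]$ generates $K$. Its image lies in the stated set, so the claim follows.

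The only potential obstacle is making sure the constant $c$ does not break the reduction to Proposition \ref{zzpro6.3}. I expect this to be routine.
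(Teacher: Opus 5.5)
Your proposal is correct and matches the paper's argument: the paper also observes that the factors satisfy \eqref{E6.2.1} (as in Lemma \ref{zzlem7.2}), deduces (1)--(3) directly from Proposition \ref{zzpro6.3}, and obtains (4) from part (2) as you do. The constant $c$ is even less of an issue than you suggest. You can simply absorb it into one factor, e.g.\ replace $x+\xi_1$ by $c(x+\xi_1)$, which is still a linear form not proportional to any other factor, so Proposition \ref{zzpro6.3} applies verbatim.
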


\begin{proof} 
(1)(2)(3) These assertions follow from Proposition \ref{zzpro6.3}. 

(4) The assertion follows from part (2). 
\end{proof}

The Isomorphism Problem in the current setting follows from Proposition \ref{zzpro7.3} just as Corollary \ref{zzcor6.4} follows from Proposition \ref{zzpro6.3}.  One implication is part of Corollary \ref{zzcor6.4}, so we only state the other.

\begin{corollary}[Isomorphism Problem]  \label{zzcor7.4} 
Let $\bff = \gamma_1f_1(x)g_1(y)$ and $\bfg = \gamma_2 f_2(x)g_2(y)$ where $\gamma_1,\gamma_2 \in \kx$, $f_1,f_2 \in \Bbbk[x]$, and $g_1,g_2 \in \Bbbk[y]$.  Assume also that $f_1,f_2,g_1,g_2$ are monic, squarefree, split over $\Bbbk$, and have degrees $\ge3$.
 If $K\{\bff\} \cong K\{\bfg\}$, then there exist $c_1,c_2 \in \Bbbk$ and $\begin{pmatrix} c_{11}&c_{12}\\  c_{21}&c_{22}\end{pmatrix} \in GL_2(\Bbbk)$ such that \eqref{E6.4.1} holds.
\end{corollary}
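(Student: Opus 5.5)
The argument mirrors the proof of Corollary \ref{zzcor6.4}(2). The plan is to verify that $\bff$ satisfies the hypotheses of Proposition \ref{zzpro6.3}, apply that proposition to an isomorphism $\phi$, and read off \eqref{E6.4.1}.

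First, $\bff = \gamma_1 \prod_{i=1}^m (x+\xi_i) \prod_{j=1}^n (y+\chi_j)$, where $m,n \ge 3$ and the $\xi_i$, and likewise the $\chi_j$, are distinct because $f_1,g_1$ are monic, squarefree and split. These factors are pairwise non-proportional: two $x$-factors differ by their constant terms, and an $x$-factor is never proportional to a $y$-factor. So $\bff$ has distinct linear forms. Condition \eqref{E6.2.1} also holds. Each $x+\xi_i$ generates $\Bbbk[x,y]$ together with every $y+\chi_j$, and there are $n \ge 3$ of these. Symmetrically, each $y+\chi_j$ pairs with the $m \ge 3$ factors $x+\xi_i$. (This is exactly the check quoted in the proof of Lemma \ref{zzlem7.2}.) The same reasoning applies to $\bfg$, and $\deg \bff = m+n$, $\deg \bfg = \deg f_2 + \deg g_2$.

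Now let $\phi: K\{\bff\} \to K\{\bfg\}$ be a Poisson isomorphism. Proposition \ref{zzpro7.3}(1), equivalently \ref{zzpro6.3}(1), gives $\deg \bff \le \deg \bfg$. Applying the same result to $\phi^{-1}$, with the roles of $\bff$ and $\bfg$ exchanged, gives $\deg \bfg \le \deg \bff$, so the degrees are equal. Proposition \ref{zzpro7.3}(2) then shows that $\phi(x) = c_{11}x + c_{12}y + c_1$ and $\phi(y) = c_{21}x + c_{22}y + c_2$ with $\left(\begin{smallmatrix} c_{11}&c_{12}\\ c_{21}&c_{22}\end{smallmatrix}\right) \in GL_2(\Bbbk)$, and that $\phi(\bff) = (c_{11}c_{22}-c_{12}c_{21})\bfg$.

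Since $\phi$ is a $\Bbbk$-algebra map, $\phi(\bff) = \bff(\phi(x),\phi(y)) = \bff(c_{11}x+c_{12}y+c_1,\, c_{21}x+c_{22}y+c_2)$. Dividing by the determinant gives \eqref{E6.4.1}. The only step needing real care is the degree equality, and it is handled by the symmetric use of part (1) for $\phi$ and $\phi^{-1}$. Everything else is bookkeeping already carried out in Proposition \ref{zzpro6.3}.
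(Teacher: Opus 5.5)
Your proposal is correct and follows essentially the paper's route. The paper only remarks that Corollary \ref{zzcor7.4} follows from Proposition \ref{zzpro7.3} just as Corollary \ref{zzcor6.4}(2) follows from Proposition \ref{zzpro6.3}(1)(2). Your argument spells this out: check that both flags satisfy \eqref{E6.2.1}, apply part (1) to $\phi$ and $\phi^{-1}$ to get equal degrees, and read off \eqref{E6.4.1} from part (2).
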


\begin{theorem}[Automorphism Problem]  \label{zzthm7.5} 
Let  $\bff = c\prod_{i=1}^m (x+\xi_i) \prod_{j=1}^{n} (y+\chi_j)$
where $c \in \kx$ and $(\xi_i)_{i=1}^{m}$, $(\chi_j)_{j=1}^n$ 
are families of distinct scalars in $\Bbbk$ and $m,n \ge 3$.  Define
\begin{itemize}
\item 
$G := \{ \phi \in \Aut_{Poi}(K\{\bff\}) \mid \phi(x) \in \Bbbk[x],\; \phi(y) \in \Bbbk[y] \}$;
\item 
$m^* := m-1$ if there is some $s \in [1,m]$ such that $m \xi_s = \sum_{i=1}^m \xi_i$, and $m^* := m$ otherwise;
\item 
$n^* := n-1$ if there is some $s \in [1,n]$ such that $n \chi_s = \sum_{j=1}^n \chi_j$, and $n^* := n$ otherwise;
\item 
$H := \{ (\gamma,\delta) \in (\kx)^2 \mid \gamma^{m^*} = \delta^{n^*} = \gamma^{m-1} \delta^{n-1} = 1 \}$.
\end{itemize}
Then the following hold.
\begin{enumerate}
\item[(1)] 
$G$ is a normal subgroup of $\Aut_{Poi}(K\{\bff\})$ with index at most $2$.
\item[(2)]
$G$ is isomorphic to a subgroup of $H$.
\item[(3)]
$\Aut_{Poi}(K\{\bff\}) = G$ except possibly when $m=n$ is even and $m^* = n^* = m$, in which case $G$ is cyclic of order at most $m$.
\item[(4)]
$|{\Aut_{Poi}}(K\{\bff\})| \le (m-1)(n-1)$.
\end{enumerate}
\end{theorem}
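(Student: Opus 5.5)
By Proposition \ref{zzpro7.3}(2) every Poisson automorphism $\phi$ is affine: $\phi(x)=c_{11}x+c_{12}y+c_1$, $\phi(y)=c_{21}x+c_{22}y+c_2$, with $\phi(\bff)=D\bff$ and $D=c_{11}c_{22}-c_{12}c_{21}$. Because $\phi$ multiplies $\bff$ by a scalar, it permutes the linear factors of $\bff$ up to scalars. These factors fall into two parallel classes, the $x+\xi_i$ (direction $x$) and the $y+\chi_j$ (direction $y$). Since $m,n\ge3$, each class contains at least three members, and an affine map sends parallel lines to parallel lines. Hence $\phi$ either preserves the two directions or swaps them. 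In the first case $c_{12}=c_{21}=0$, so $\phi\in G$. In the second case $c_{11}=c_{22}=0$, which forces $m=n$.

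For (1), let $\pi:\Aut_{Poi}\to C_2$ record whether the directions are swapped. This is a homomorphism with kernel $G$, so $G$ is normal of index at most $2$.

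For (2), take $\phi\in G$, so $\phi(x)=\gamma x+a$ and $\phi(y)=\delta y+b$. Then $\phi$ permutes the roots: $\{\gamma x+a+\xi_i\}$ equals $\{\gamma(x+\xi_{\sigma(i)})\}$, and similarly for $y$. Comparing leading coefficients in $\phi(\bff)=\gamma\delta\bff$ gives $\gamma^m\delta^n=\gamma\delta$, that is, $\gamma^{m-1}\delta^{n-1}=1$. The set $\{-\xi_i\}$ is mapped to itself by $t\mapsto \gamma^{-1}(t-a)$, which I rewrite as the affine map $t\mapsto \gamma t+a'$ on $\{\xi_i\}$. This map fixes the centroid $\bar\xi=\frac1m\sum\xi_i$, so it is $t\mapsto\gamma(t-\bar\xi)+\bar\xi$, and $a$ is determined by $\gamma$. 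Multiplication by $\gamma$ then permutes the finite set $\{\xi_i-\bar\xi\}$ of $m$ points, which is nonzero in at least two points. The orbits under $\langle\gamma\rangle$ therefore have size $\operatorname{ord}(\gamma)$ on the nonzero points, so $\operatorname{ord}(\gamma)$ divides the number of nonzero points. That number is $m^*$, since $m^*=m-1$ exactly when some $\xi_s=\bar\xi$. Hence $\gamma^{m^*}=1$, and likewise $\delta^{n^*}=1$. The map $\phi\mapsto(\gamma,\delta)$ is an injective homomorphism, because $a,b$ are determined by $\gamma,\delta$. This gives $G\hookrightarrow H$.

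For (3), a swapping automorphism requires $m=n$. If $m=n$ is odd, or if $m^*\neq m$ or $n^*\neq n$, I need to rule out the swap. Write a swapping automorphism as $\phi(x)=\alpha y+a$, $\phi(y)=\beta x+b$. Then $\phi(\bff)=\pm\alpha\beta\bff$ with the sign $-1$, since $D=-\alpha\beta$. Comparing the $x^ny^m$ coefficients gives $\alpha^m\beta^n=-\alpha\beta$, so $(\alpha\beta)^{m-1}=-1$. The same centroid argument shows that $\{\chi_j-\bar\chi\}$ is $\alpha^{-1}$ times $\{\xi_i-\bar\xi\}$ (with the appropriate sign convention), so $m^*=n^*$. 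Now $\phi^2\in G$ corresponds to $(\gamma,\delta)=(\alpha\beta,\alpha\beta)$, and $\gamma^{m^*}=1$ together with $\gamma^{m-1}=-1$ gives $(\alpha\beta)^{\gcd(m^*,2(m-1))}=1$ while $(\alpha\beta)^{m-1}=-1$.
If $m^*=m-1$, we get $1=-1$, a contradiction.
If $m^*=m$ with $m$ odd, then $\gcd(m,2m-2)=\gcd(m,2)=1$, so $\alpha\beta=1$, again a contradiction.
So the swap is possible only when $m=n$ is even and $m^*=n^*=m$. In that case, elements of $H$ satisfy $\gamma^m=\delta^m=1$ and $(\gamma\delta)^{m-1}=1$, and I expect to deduce that $\gamma=\delta$ via $\delta^{-1}=\gamma^{m-1}\delta^{m-2}\cdots$. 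The cleanest route is $\gamma^{m-1}\delta^{m-1}=1$ together with $\gamma^m=\delta^m=1$, which gives $\gamma^{-1}\delta^{-1}=1$, so $\delta=\gamma^{-1}$. Then $H\cong\mu_m$ is cyclic of order at most $m$, and so is $G$.

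For (4), in the generic case $|H|$ is at most the number of pairs with $\gamma^{m^*}=\delta^{n^*}=1$ and $\gamma^{m-1}\delta^{n-1}=1$. I would count this by fixing $\gamma$: the admissible $\delta$ lie in $\mu_{n^*}\cap\{\delta:\delta^{n-1}=\gamma^{1-m}\}$, a coset of $\mu_{\gcd(n^*,n-1)}$. Combined with the analogous bound with the roles swapped, this gives $|H|\le (m-1)(n-1)$ when either of $m^*,n^*$ is reduced. When $m^*=m$ and $n^*=n$, the count is $|H|\le \gcd(m,n-1)\cdot m$, which I would refine by symmetry to at most $(m-1)(n-1)$ using $m,n\ge3$.
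In the exceptional even case, $|\Aut|\le 2m\le(m-1)^2$ since $m\ge4$.

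I expect this final counting step for (4) in the generic case to be the main obstacle. I would first try to show that $H$ is cyclic of order dividing $\gcd$-type quantities bounded by $(m-1)(n-1)$.
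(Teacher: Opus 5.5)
Your parts (1)--(3) are correct and follow the paper's route. You use the affine form from Proposition \ref{zzpro7.3}(2) and the dichotomy $c_{12}=c_{21}=0$ versus $c_{11}=c_{22}=0$. You then get the root permutation and the leading-coefficient relation $\gamma^{m-1}\delta^{n-1}=1$. For a swap you compare $\theta(\bff)=(\alpha\beta)^m\bff=-\alpha\beta\bff$ and use $\theta^2\in G$. Your centroid argument in (2) repackages the paper's Claims 1--3: the paper's fixed value $c_1/(c_{11}-1)$ is exactly $\bar\xi$. It also handles $\gamma=1$ and injectivity uniformly via $a=(\gamma-1)\bar\xi$.

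The gap is in (4), which you leave unfinished, and the bound you do write rests on a miscomputation. Fixing $\gamma$, the admissible $\delta$ form a coset of $\mu_{\gcd(n^*,n-1)}$, so the relevant factor is $\gcd(n^*,n-1)$, not $\gcd(m,n-1)$. Your bound $\gcd(m,n-1)\cdot m$, even paired with its mirror image $\gcd(n,m-1)\cdot n$, does not give the theorem. For $(m,n)=(3,4)$ these are $9$ and $8$, both larger than $(m-1)(n-1)=6$.

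The missing step is one line.
\begin{itemize}
\item If $n^*=n$, then $\gcd(n^*,n-1)=\gcd(n,n-1)=1$. So $\delta$ is determined by $\gamma$, and $|H|\le m^*\le m$.
\item Symmetrically, if $m^*=m$, then $\gamma^m=1$ and $\gamma^{m-1}\delta^{n-1}=1$ force $\gamma=\delta^{n-1}$. So $|H|\le n^*\le n$.
\item Since $m,n\ge 3$, both $m$ and $n$ are at most $(m-1)(n-1)$.
\item When $m^*=m-1$ and $n^*=n-1$, the bound $|H|\le(m-1)(n-1)$ is immediate from the definition.
\end{itemize}
With $|G|\le|H|$, this settles every case where $\Aut_{Poi}(K\{\bff\})=G$. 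Your estimate $2m\le(m-1)^2$ (valid for $m\ge4$) handles the index-$2$ case. This is exactly how the paper finishes, so there is no real obstacle once the gcd is computed correctly.
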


\begin{proof}
(1) We first claim that any $\phi \in \Aut_{Poi}(K\{\bff\})$ is either in $G$ or satisfies $\phi(x) \in \Bbbk[y]$, $\phi(y) \in \Bbbk[x]$.  By Proposition \ref{zzpro7.3}(2), we have $\phi(x) = c_{11}x+c_{12}y+c_1$ and $\phi(y) = c_{21}x+c_{22}y+c_2$ 
for some $c_1,c_2\in \Bbbk$ and some $\bigl( \begin{smallmatrix} c_{11}&c_{12}\\
c_{21}&c_{22}\end{smallmatrix} \bigr) \in GL_2(\Bbbk)$, and $\phi(\bff) \in \kx \bff$.  Thus,
\begin{equation}  \label{E7.5.1}  \tag{E7.5.1}
\phi(\bff) = c \prod_{i=1}^m (c_{11}x+c_{12}y+c_1 +\xi_i) \prod_{j=1}^n (c_{21}x+c_{22}y+c_2 + \chi_j).
\end{equation}
Since $\phi(\bff) \in \kx \bff$, the factor $c_{11}x+c_{12}y+c_1 +\xi_1$ must be a scalar multiple of some $x+\xi_i$ or of some $y+\chi_j$, and likewise for $c_{21}x+c_{22}y+c_2 + \chi_1$.  Note that if $\bigl( d_{ij} \bigr) = \bigl( c_{ij} \bigr)^{-1}$, then
\begin{align*}
d_{11}(c_{11}x+c_{12}y+c_1 +\xi_1) + d_{12}(c_{21}x+c_{22}y+c_2 + \chi_1) &= x + \text{a scalar}  \\
d_{21}(c_{11}x+c_{12}y+c_1 +\xi_1) + d_{22}(c_{21}x+c_{22}y+c_2 + \chi_1) &= y + \text{a scalar},
\end{align*}
so $c_{11}x+c_{12}y+c_1 +\xi_1$ and $c_{21}x+c_{22}y+c_2 + \chi_1$ cannot both be in $\Bbbk[x]$ or both in $\Bbbk[y]$.  Thus, either $c_{12} = c_{21} = 0$, in which case $\phi \in G$, or else $c_{11} = c_{22} = 0$, whence $\phi(x) \in \Bbbk[y]$, $\phi(y) \in \Bbbk[x]$.  This verifies the claim.

Observe from \eqref{E7.5.1} that we can only have $c_{11} = c_{22} = 0$ when $m=n$.  Thus, $\Aut_{Poi}(K\{\bff\}) = G$ when $m \ne n$.

It is clear that $G$ is a subgroup of $\Aut_{Poi}(K\{\bff\})$, and it follows from the claim that $G$ is normal.  Suppose there exists $\psi \in \Aut_{Poi}(K\{\bff\}) \setminus G$.  For any $\phi \in \Aut_{Poi}(K\{\bff\}) \setminus G$, we see from the claim that $\phi\psi^{-1} \in G$, whence $\phi G = \psi G$.  Consequently, 
$$
|{\Aut_{Poi}}(K\{\bff\}) / G | \le 2.
$$

(2) For any $\phi \in G$, we have, by definition,
$$
\phi(x) = c_{11}(\phi) x + c_1(\phi) \qquad \text{and} \qquad \phi(y) = c_{22}(\phi) y + c_2(\phi)
$$
for some $c_{11}(\phi), c_{22}(\phi) \in \kx$ and $c_1(\phi), c_2(\phi) \in \Bbbk$.  The rule $\phi \mapsto (c_{11}(\phi), c_{22}(\phi))$ defines a homomorphism $\Phi : G \rightarrow (\kx)^2$.  We need to show that $\Phi$ is injective and $\Phi(G) \subseteq H$.

Fix $\phi \in G$, and abbreviate $c_{ii} := c_{ii}(\phi)$, $c_i := c_i(\phi)$.  In view of Proposition \ref{zzpro7.3}(2), \eqref{E7.5.1} reduces to
\begin{equation}  \label{E7.5.2}  \tag{E7.5.2}
c_{11} c_{22} \bff = \phi(\bff) = c \prod_{i=1}^m (c_{11}x+c_1 +\xi_i) \prod_{j=1}^n (c_{22}y+c_2 + \chi_j).
\end{equation}
Comparing leading coefficients in this equation, we obtain $c_{11} c_{22} c = c c_{11}^m c_{22}^n$, and so $c_{11}^{m-1} c_{22}^{n-1} = 1$.  We will be done on showing that
\begin{enumerate}
\item[(a)]
$c_{11}^{m^*} = 1$, and $c_{11} = 1 \implies c_1 = 0$;
\item[(b)]
$c_{22}^{n^*} = 1$, and $c_{22} = 1 \implies c_2 = 0$.
\end{enumerate}
Since (a) and (b) are symmetric, we just prove (a).

Comparing factors from $\Bbbk[x]$ in \eqref{E7.5.2}, we see that there must be a permutation $\sigma \in \SB_m$ such that $c_{11}x + c_1 + \xi_i = c_{11}(x + \xi_{\sigma(i)})$ for all $i$, that is,
\begin{equation}  \label{E7.5.3}  \tag{E7.5.3}
c_{11} \xi_{\sigma(i)} = c_1 + \xi_i \qquad \forall\; i \in [1,m].
\end{equation}
Summing this equation over $i \in [1,m]$ yields 
\begin{equation}  \label{E7.5.4}  \tag{E7.5.4}
(c_{11}-1) \sum_{i=1}^m \xi_i = m c_1 \,.
\end{equation}
In particular, the implication $c_{11}  = 1 \implies c_1 = 0$ follows.

Since $1^{m^*} = 1$, we assume $c_{11} \ne 1$ for the remainder of the proof of (a).

\textbf{Claim 1:} $s \in [1,m]$ is a fixed point of $\sigma$ if and only if $\xi_s = c_1/(c_{11}-1)$, if and only if $m \xi_s = \sum_{i=1}^m \xi_i$.

The first forward implication follows from \eqref{E7.5.3}, and the second equivalence is immediate from \eqref{E7.5.4}.  If $\xi_s = c_1/(c_{11}-1)$, then $c_{11} \xi_s = c_1 + \xi_s = c_{11} \xi_{\sigma(s)}$, and so $\sigma(s) = s$ because the $\xi_i$ are distinct.

\textbf{Claim 2:} $\xi_{\sigma^t(i)} = (c_{11}^{-1} + \cdots + c_{11}^{-t}) c_1 + c_{11}^{-t} \xi_i$ for all $i \in [1,m]$ and $t \ge 0$.

This is an obvious induction on $t$.

\textbf{Claim 3:} Let $\calO$ be a $\sigma$-orbit of cardinality $t>1$.  Then $c_{11}$ is a primitive $t$-th root of unity, i.e., $t$ is the order of $c_{11}$ in the group $\kx$.

If $i \in \calO$, then $\sigma^t(i) = i$, and by Claim 2 we see that $(c_{11}^t - 1) \xi_i = \frac{c_{11}^t-1}{c_{11}-1}\, c_1$.  Since there are at least $2$ indices in $\calO$, we must have $c_{11}^t = 1$.  Thus, $c_{11}$ has order $d<\infty$ in $\kx$, and $d \mid t$.  Now $c_{11}^d \xi_i = (c_{11}^{d-1} + \cdots + 1) c_1 + \xi_i$ for any $i \in \calO$.  Claim 2 shows that $\xi_{\sigma^d(i)} = \xi_i$ and hence $\sigma^d(i) = i$.  This forces $t=d$, proving Claim 3.

We can now complete the proof of (a) and thus of (2).  If $\sigma$ has a fixed point $s$, then by Claim 1, $s$ is the only fixed point of $\sigma$, and $m^* = m-1$.  By Claim 3, all the non-singleton orbits of $\sigma$ have the same cardinality $t$, and $c_{11}^t = 1$.  Since $[1.m] \setminus \{s\}$ is a disjoint union of these latter orbits, $t \mid m-1$, and thus $c_{11}^{m^*} = 1$.

If $\sigma$ has no fixed points, then $m^* = m$ and $[1,m]$ is a disjoint union of $\sigma$-orbits of cardinality $t$.  In this case, $c_{11}^t = 1$ and $t \mid m$, so again $c_{11}^{m^*} = 1$.

(3) That $\Aut_{Poi}(K\{\bff\}) = G$ when $m \ne n$ was already noted in the proof of part (1).  Suppose there exists $\theta \in \Aut_{Poi}(K\{\bff\}) \setminus G$, so that $m=n$.  Now $\theta(x) = c_{12}y+c_1$ and $\theta(y) = c_{21}x+c_2$ for some $c_{12},c_{21} \in \kx$ and $c_1,c_2 \in \Bbbk$.  Observe that $\theta^2 \in G$ and
$$
\theta^2(x) = c_{12}c_{21} x + c_{12}c_2+c_1 \,, \qquad\qquad \theta^2(y) = c_{12}c_{21} y + c_{21}c_1+c_2 \,.
$$
Then $(c_{12}c_{21}, c_{12}c_{21}) \in H$, whence
$$
(c_{12}c_{21})^{m^*} = (c_{12}c_{21})^{n^*} = (c_{12}c_{21})^{2(m-1)} = 1.
$$

In the present case, \eqref{E7.5.1} reads
\begin{align*}
\theta(\bff) &= c \prod_{i=1}^m (c_{12}y+c_1+\xi_i) \prod_{j=1}^m (c_{21}x+c_2+\chi_j)  \\
&= (c_{12}c_{21})^m c \prod_{j=1}^m (x+c_{21}^{-1}(c_2+\chi_j)) \prod_{i=1}^m (y+c_{12}^{-1}(c_1+\xi_i)) \,,
\end{align*}
from which we conclude that $\theta(\bff) = (c_{12}c_{21})^m \bff$.  On the other hand, Proposition \ref{zzpro7.3}(2) says that $\theta(\bff) = - c_{12}c_{21} \bff$.  Consequently, $(c_{12}c_{21})^{m-1} = -1$.  In particular, $m^*,n^* \ne m-1$, and so $m^*=n^* = m$.  Thus $c_{12}c_{21} = -1$ and $m$ is even.

We now get $H = \{ (\gamma,\gamma^{-1}) \in (\kx)^2 \mid \gamma^m = 1 \}$, which is cyclic of order at most $m$, and then the same holds for $G$ by part (2).

(4)  We first show that $|H| \le (m-1)(n-1)$, which is clear in case $m^* = m-1$ and $n^* = n-1$.
If $m^* = m$, then all $(\gamma,\delta) \in H$ satisfy $\gamma = \delta^{n-1}$, whence $|H| \le n \le (m-1)(n-1)$.  Similarly, $|H| \le m \le (m-1)(n-1)$ in case $n^* = n$.  Therefore, we have $|{\Aut_{Poi}}(K\{\bff\})| \le (m-1)(n-1)$ whenever $\Aut_{Poi}(K\{\bff\}) = G$.

Finally, suppose that $\Aut_{Poi}(K\{\bff\}) \ne G$, whence $G$ has index $2$ and $|{\Aut_{Poi}}(K\{\bff\})| = 2 |G|$.  By part (3), $m=n$ is even and $|G| \le m$.  Since then $m \ge 4$, we conclude that $|{\Aut_{Poi}}(K\{\bff\})| \le 2m < (m-1)^2$ in this case.
\end{proof}

\begin{example}  \label{zzexa7.6}  
For some examples where $\Aut_{Poi}(K\{\bff\})$ is as large as possible, let $m,n \ge 3$ and assume that $\Bbbk$ contains a primitive $(m-1)$-st root of unity, say $\gamma$, and a primitive $(n-1)$-st root of unity, say $\delta$.  Set $\bff := x y \prod_{i=1}^{m-1} (x - \gamma^i) \prod_{j=1}^{n-1} (y - \delta^j)$ and $K := K\{\bff\}$.  

There is a field automorphism $\phi$ of $K$ sending $x \mapsto \gamma^{-1} x$ and $y \mapsto y$.  Since
$$
\{ \gamma^{-1} x, y \} = \gamma^{-1} \bff = \gamma^{-m} \bff = \bff(\gamma^{-1} x, y),
$$
we see that $\phi \in \Aut_{Poi}(K)$.  Similarly, there exists $\psi \in \Aut_{Poi}(K)$ sending $x \mapsto x$ and $y \mapsto \delta^{-1} y$.  In the notation of Theorem \ref{zzthm7.5}, $\phi,\psi \in G$.  Note that $\phi^s \psi^t = \Id$ if and only if $m-1 \mid s$ and $n-1 \mid t$.  Thus $G$ has order at least $(m-1)(n-1)$.  Theorem \ref{zzthm7.5}(4) therefore implies that $\Aut_{Poi}(K)$ has order $(m-1)(n-1)$, the maximum possible.
\end{example}

One next goal of this section is to show that 
the automorphism group of $K\{\bff\}$  is
trivial for some $\bff$ as in Theorem \ref{zzthm7.5}.  This of course holds if $\Bbbk$ has no nontrivial $m^*$-th or $n^*$-th roots of unity, but it also holds for suitable $\bff$ when $\Bbbk$ is algebraically closed.  We go to a situation somewhat parallel to that of Corollary \ref{zzcor5.7}(3).

\begin{corollary}  \label{zzcor7.7} 
Let $K = K\{\bff\}$ under the following hypotheses:
\begin{enumerate}
\item[(a)]
 $\bff = c\prod_{i=1}^m (x+\xi_i) \prod_{j=1}^{n} (y+\chi_j)$
where $c \in \kx$ and $(\xi_i)_{i=1}^{m}$, $(\chi_j)_{j=1}^n$ 
are families of distinct scalars in $\Bbbk$, with $m,n\ge3$.  
\item[(b)]
$m \ne n$, or $m=n$ is odd.
\item[(c)]
The families $(\xi_i-\xi_1)_{i=2}^m$ and $(\chi_j-\chi_1)_{j=2}^n$ are $\ZZ$-linearly independent.
\item[(d)]
$(\xi_i-\xi_1)^m \ne (\xi_1-\xi_j)^m$ for all $i\ne j$, and $(\chi_i-\chi_1)^n \ne (\chi_1-\chi_j)^n$ for all $i \ne j$.
\end{enumerate}

Then $\Aut_{Poi}(K)$ is trivial.
\end{corollary}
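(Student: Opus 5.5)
By hypothesis (b) and Theorem \ref{zzthm7.5}(3), $\Aut_{Poi}(K) = G$. The exceptional case there requires $m=n$ even, which (b) rules out. So it suffices to show that every $\phi \in G$ is the identity. By the proof of Theorem \ref{zzthm7.5}(2), such a $\phi$ has $\phi(x) = c_{11}x + c_1$ and $\phi(y) = c_{22}y+c_2$. Moreover there is a permutation $\sigma \in \SB_m$ with $c_{11}\xi_{\sigma(i)} = c_1 + \xi_i$ for all $i$ (E7.5.3), and similarly a permutation $\tau\in\SB_n$ with $c_{22}\chi_{\tau(j)} = c_2+\chi_j$. Because $\Phi$ is injective and $c_{11}=1$ forces $c_1=0$, it is enough to show $c_{11}=1$. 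The argument for $c_{22}=1$ is symmetric, using the $\chi$-part of hypotheses (c) and (d).

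The first step removes the translation. Subtracting the $i=1$ instance of (E7.5.3) from the general one gives
$$
c_{11}(\xi_{\sigma(i)} - \xi_{\sigma(1)}) = \xi_i - \xi_1 \qquad \forall\, i .
$$
Put $k:=\sigma(1)$ and $\eta_i := \xi_i - \xi_1$, so that $\eta_1=0$ and the $\eta_i$ with $i\ge 2$ are $\ZZ$-linearly independent by (c). Then $\xi_{\sigma(i)}-\xi_k = \eta_{\sigma(i)} - \eta_k$, and hence $c_{11}(\eta_{\sigma(i)}-\eta_k) = \eta_i$ for all $i$.

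The second step shows $k=1$. Suppose $k\neq 1$. Take $i$ with $\sigma(i)=1$, that is, $i=\sigma^{-1}(1)$; then $-c_{11}\eta_k = \eta_i$, and $i\ne 1$ because $\sigma(1)=k\ne1$. Raising to the $m$-th power gives $c_{11}^m (\xi_1-\xi_k)^m = (\xi_i-\xi_1)^m$. We also know $c_{11}^{m^*}=1$ and $c_{11}^{m-1}c_{22}^{n-1}=1$. If $c_{11}^m=1$, this contradicts (d) directly, provided $i\ne k$. If $c_{11}^m \ne 1$, I would instead use linear independence. For each $i'\ne 1,\sigma^{-1}(1)$ we have $\eta_{i'} = c_{11}(\eta_{\sigma(i')} - \eta_k)$. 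Combining these relations along a $\sigma$-orbit, and using Claim~3 (all nontrivial orbits have length equal to the order $t$ of $c_{11}$), one obtains an integer relation among the $\eta$'s with nonzero coefficients. I would try to make this precise as follows. The vectors $\eta_i$ for $i\ge2$ span a $\QQ$-space $V$ of dimension $m-1$, and multiplication by $c_{11}$ permutes the translated set $\{\eta_j-\eta_k\}$ up to the relation above. That forces $c_{11}$ to act on $V\otimes\QQ$ as a rational linear map of finite order. Its eigenvalues are then roots of unity permuting the basis, and comparing with the affine structure should pin down $c_{11}=-1$ with $\sigma$ an involution exchanging $1$ and $k$. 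That case gives $(\xi_i-\xi_1)^m = (\xi_1-\xi_k)^m$ with $i=k$ only when $m$ is even. I expect this case analysis, especially $c_{11}=-1$ with $i=k$, to be the main obstacle. I would handle it by using another index $j\ne 1,k$ (available since $m\ge3$): the relation $-(\eta_{\sigma(j)} - \eta_k)=\eta_j$ gives $\eta_j+\eta_{\sigma(j)} = \eta_k$, which contradicts (c).

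The final step treats $k=1$. Then $c_{11}\eta_{\sigma(i)} = \eta_i$, so multiplication by $c_{11}$ permutes the $\ZZ$-independent set $\{\eta_2,\dots,\eta_m\}$. If $\sigma$ is not the identity, pick $i$ with $\sigma(i)=j\ne i$; then $c_{11} = \eta_i/\eta_j$. Following the whole orbit of length $t$ gives $c_{11}^{\,t}=1$ and $\eta_{j} = c_{11}^{-s}\eta_i$ for the other elements of the orbit. Hence $c_{11}^m \eta_j^m$ and $\eta_i^m$ are related, and condition (d) (with $\xi_1$ in the middle) is designed to exclude exactly this, as in Corollary~\ref{zzcor5.7}(3). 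If $\sigma$ is the identity, then $c_{11}\eta_2=\eta_2$ with $\eta_2\neq0$, so $c_{11}=1$. In every case $c_{11}=1$ and $c_1=0$. Doing the same for $y$ gives $\phi=\Id$.
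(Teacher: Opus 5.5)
Your skeleton follows the paper's: reduce to $G$ via Theorem~\ref{zzthm7.5}(3), normalize with $\eta_i=\xi_i-\xi_1$, and analyze the permutation $\sigma$ of \eqref{E7.5.3}. Your case $k=\sigma(1)\neq1$ with $c_{11}^m=1$ is essentially the paper's argument. There (d) forces $\sigma^{-1}(1)=k$. Then $-c_{11}\eta_k=\eta_k$ gives $c_{11}=-1$ at once, and a third index $j$ gives $\eta_j+\eta_{\sigma(j)}=\eta_k$, contradicting (c).

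The remaining two steps have genuine gaps, and both come from not extracting enough from (c). First, you handle the branch $c_{11}^m\neq1$ only by a heuristic (a ``rational linear map of finite order'' that ``should pin down $c_{11}=-1$''), which is not a proof. In fact this branch is empty. For each $s$, $m\xi_s-\sum_i\xi_i=m\eta_s-\sum_{i\ge2}\eta_i$ is a nontrivial $\ZZ$-combination of $\eta_2,\dots,\eta_m$. So (c) gives $m^*=m$, and then $c_{11}^m=1$ by Theorem~\ref{zzthm7.5}(2).

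Second, in the case $k=1$ you appeal to (d), but (d) cannot rule out a nontrivial $\sigma$ fixing $1$. Suppose $\omega\in\Bbbk$ is a primitive cube root of unity, and take $m=4$, $(\xi_i)=(0,1,\omega^2,\omega)$. The $x$-part of $\bff$ is then $x(x^3+1)$, so $x\mapsto\omega x$, $y\mapsto y$ is a nontrivial Poisson automorphism. It corresponds to $c_{11}=\omega$, $c_1=0$, $\sigma=(2\,3\,4)$ in \eqref{E7.5.3}. Yet (d) holds for these $\xi_i$, since $\eta_2^4,\eta_3^4,\eta_4^4$ are $1,\omega^2,\omega$. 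Only (c) fails, because $\eta_2+\eta_3+\eta_4=0$. Moreover, even knowing $c_{11}^m=1$, for odd $m$ your relation $\eta_i^m=\eta_j^m$ is compatible with (d).

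The right tool here is again (c). Summing $c_{11}\eta_{\sigma(i)}=\eta_i$ over all $i$ gives $(c_{11}-1)\sum_{i\ge2}\eta_i=0$. Since $\sum_{i\ge2}\eta_i\neq0$, this forces $c_{11}=1$. The paper does the same thing in a different order. With $\xi_1=0$, \eqref{E7.5.4} and $\sum_i\xi_i\neq0$ show that $c_{11}\neq1$ forces $c_1\neq0$. Then Claim~1 in the proof of Theorem~\ref{zzthm7.5}(2), together with $m^*=m$, makes $\sigma$ fixed-point-free, so $k\neq1$ automatically. Once you insert these two uses of (c), your argument is complete and coincides with the paper's.
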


\begin{proof}
Set $x' := x+\xi_1$ and $y' := y+\chi_1$.  Then $K = \Bbbk(x',y')$ with
$$
\{x',y'\} = \{x,y\} = \bff = c\prod_{i=1}^m (x'+\xi'_i) \prod_{j=1}^{n} (y'+\chi'_j)
$$
where $\xi'_i := \xi_i-\xi_1$ for all $i$ and $\chi'_j := \chi_j-\chi_1$ for all $j$.  Thus, there is no loss of generality in assuming that $\xi_1 = \chi_1 = 0$.

Because of hypothesis (c), we cannot have $m \xi_s = \sum_{i=1}^m \xi_i$ for any $s \in [1,m]$, and so $m^* = m$ in the notation of Theorem \ref{zzthm7.5}.  Similarly, $n^*=n$.

Towards a contradiction, suppose there is some nontrivial $\phi \in \Aut_{Poi}(K)$.  Then $\phi(x) \ne x$ or $\phi(y) \ne y$, so by symmetry we may assume that $\phi(x) \ne x$.  By Theorem \ref{zzthm7.5}(3), $\phi(x) = c_{11}x+c_1$ and $\phi(y) = c_{22}y+c_2$ for some $c_{11},c_{22} \in \kx$ and $c_1,c_2 \in \Bbbk$.  Then $c_1 \ne 0$ or $c_{11} \ne 1$.

By \eqref{E7.5.4}, $(c_{11}-1) \sum_{i=1}^m \xi_i = mc_1$.  Since $\sum_{i=1}^m \xi_i = \sum_{i=2}^m \xi_i \ne 0$ by hypothesis (c), $c_1=0$ would imply $c_{11}=1$, contradicting our current assumptions.  Thus, $c_1 \ne 0$, and then $c_{11} \ne 1$ as well.  From the proof of Theorem \ref{zzthm7.5}(2), there is some $\sigma \in \SB_m$ such that \eqref{E7.5.3} holds.  Due to the fact, noted above, that $m \xi_s \ne \sum_{i=1}^m \xi_i$ for $s \in [1,m]$, Claim 1 in the previous proof shows that $\sigma$ has no fixed points.

Setting $k := \sigma(1) \ne 1$, we obtain $c_{11} \xi_k = c_1$ from \eqref{E7.5.3}.  Setting $l := \sigma^{-1}(1)$, we obtain $0 = c_1 + \xi_l$.  From the proof of Theorem \ref{zzthm7.5}(2), $c_{11}^m=1$, so
$$
\xi_k^m = (c_{11} \xi_k)^m = c_1^m = (- \xi_l)^m \,.
$$
Hypothesis (d) now implies that $k=l$, whence $-c_{11} c_1 = c_{11} \xi_l = c_{11} \xi_k = c_1$, and thus $c_{11} = -1$.

Equation \eqref{E7.5.3} now reads $\xi_{\sigma(i)} = - c_1 - \xi_i$ for all $i$.  It follows that $\xi_{\sigma^2(i)} = \xi_i$ for all $i$, and consequently $\sigma^2 = \Id$.  Since $\sigma$ has no fixed points, $[1,m]$ is a disjoint union of $2$-point $\sigma$-orbits.  The assumption that $m\ge3$ now yields $m\ge4$, and so $\sigma$ has an orbit $\{j,\sigma(j)\}$ different from $\{1,k\}$.  Since $\xi_j + \xi_{\sigma(j)} = - c_1 = \xi_k$, we again contradict hypothesis (c).

Therefore $\Aut_{Poi}(K)$ contains no nontrivial automorphisms.
\end{proof}

%%%%%%%%%%%%%%%%%%%%%%%%%%%%%%%%

\section{The polynomial flag problem}
\label{zzsec8}

In this section we will construct Poisson fields $K\{\bff\}$ 
with infinite flag height, that is, $K\{\bff\} \not\cong K\{\bfg\}$ for any polynomial $\bfg \in \Bbbk[x,y]$.  Various bounds on rational functions are needed, which we define in general.

Let $F:=\Bbbk(x_1,\cdots,x_n)$ be the field of rational 
functions in $n$ variables. If $n=2$, we also use $(x,y)$ for 
$(x_1,x_2)$. For an $n$-tuple $\bfs = (s_1,\cdots,s_n)$ of 
elements of $F$, let $\Bbbk[\bfs]$ denote the $\Bbbk$-subalgebra 
of $F$ generated by $s_1,\cdots, s_n$. When this notation is used, 
we do not assume that $s_1,\cdots,s_n$ are algebraically 
independent. Let $Q(\Bbbk[\bfs])$ denote the fraction field of 
$\Bbbk[\bfs]$, realized as a subfield of $F$. Let
$$\Phi = \Phi(F) :=\{ \bfs \in F^n \mid Q(\Bbbk[\bfs])=F\}.$$
The condition $Q(\Bbbk[\bfs])=F$ forces that $s_1,\cdots,s_n$
are algebraically independent over $\Bbbk$ and that $\Bbbk[\bfs]
\cong \Bbbk[x_1,\cdots,x_n]$. When $\bfs \in \Phi$, 
we call $\bfs$ a {\it fractional frame} of $F$. Let
$$\overline{\Phi}:=
\{ \Bbbk[\bfs]\mid \bfs \in \Phi\}.$$
By definition, the map $\phi:\Phi\to \overline{\Phi}$ defined 
by $\phi(\bfs)=\Bbbk[\bfs]$ is onto, but not one-to-one. 

Next we will define some numerical invariants of an element 
$h\in F$. In the following discussion we fix $h\in F$. For each 
$\bfs \in \Phi$, write $h$ as a rational function in 
$s_1,\cdots,s_n$,
\begin{equation}
\label{E8.0.1}\tag{E8.0.1}
h=\frac{f(s_1,\cdots,s_n)}{g(s_1,\cdots,s_n)} \,,
\end{equation}
where $f:=f(s_1,\cdots,s_n)$ and $g:=g(s_1,\cdots,s_n)$ are in 
$\Bbbk[\bfs]$ (namely, these are polynomials in $s_1,\cdots,s_n$) 
and where $\gcd(f,g)=1$ in $\Bbbk[\bfs]$. (We may further assume that $g$ is monic if necessary). Note that the pair 
$(f,g)$ in \eqref{E8.0.1} is unique up to multiplication by scalars from 
$\Bbbk$. For a given $\bfs \in \Phi$ and given \eqref{E8.0.1}, we 
consider a prime decomposition of the polynomial $g$ in 
$\Bbbk[\bfs]$:
\begin{equation}
\label{E8.0.2}\tag{E8.0.2}
g=p_1^{m_1}\cdots p_w^{m_w},
\end{equation}
where the $p_j$ are pairwise non-associate primes in $\Bbbk[\bfs]$ and 
the $m_j \in \Zpos$. (So $w=0$ if $g\in \Bbbk^{\times}$.)  The prime 
decomposition is unique up to scalar multiples of the $p_j$ and 
a permutation of $p_1,\cdots,p_w$. Let $\deg_{\bfs}$ denote the 
total degree of rational functions in $Q(\Bbbk[\bfs])$. Hence, 
$\deg_{\bfs} s_i=1$ for all $i=1,\cdots,n$. 

\begin{definition}
\label{zzdef8.1} 
Let $h\in F$.
\begin{enumerate}
\item[(1)]
The {\it denominator degree bound} of $h$ is defined to be
$$\ddb(h) :=\min\{\deg_{\bfs} g\mid \bfs \in \Phi,\; 
{\text{$(f,g)$ as in \eqref{E8.0.1}}}\}.$$
\item[(2)]
The {\it denominator prime-divisor bound} of $h$ is defined 
to be
$$\dpb(h) :=\min\{ w \mid \bfs \in \Phi,\; g=p_1^{m_1}\cdots p_w^{m_w}\;
\text{as in \eqref{E8.0.2}}\}.$$
\item[(3)]
The {\it framed degree bound} of $h$ is defined to be
$$\fdb(h) :=\min\{\deg_{\bfs} h \mid \bfs \in \Phi,\; 
{\text{$(f,g)$ 
as in \eqref{E8.0.1}, $\deg_{\bfs} g=\ddb(h)$}}\}.$$
\item[(4)]
$h$ is called a {\it framed polynomial} if $h\in 
\Bbbk[\bfs]$ for some $\bfs\in \Phi$.
\end{enumerate}
\end{definition}

By definition, $0\leq \dpb(h)\leq \ddb(h) < \infty$ and the following
are equivalent:
\begin{enumerate}
\item[(i)]
$\dpb(h)=0$,
\item[(ii)]
$\ddb(h)=0$,
\item[(iii)]
$h$ is a framed polynomial. 
\end{enumerate}

We can calculate $\ddb$, $\dpb$, and $\fdb$ for certain 
elements in $F$.

\begin{lemma}
\label{zzlem8.2}
Let $u\in F\setminus \Bbbk$ and let 
$\{a_i\}_{i=1}^{w'}\sqcup \{b_j\}_{j=0}^w$ be a set of distinct 
scalars in $\Bbbk$, where $w$ and $w'$ are non-negative 
integers. Let 
$h=\frac{(u-a_1)(u-a_2)\cdots (u-a_{w'})}{(u-b_0)(u-b_1)\cdots (u-b_w)}$.
\begin{enumerate}
\item[(1)]
$\ddb(h)\geq \dpb(h)\geq w$.
\item[(2)]
Let $\varphi: F\to F$ be a $\Bbbk$-algebra map. Then 
$\ddb(\varphi(h))\geq \dpb(\varphi(h))\geq w$.
\item[(3)]
If $u=x_1+b_0$ and $w+1\geq w'$, then $\ddb(h)=\dpb(h)= w$.
\item[(4)]
If $u=x_1+b_0$ and $w+1\geq w'$, then $\fdb(h)=1$.
\end{enumerate}
\end{lemma}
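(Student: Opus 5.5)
The plan is to reduce everything to a bookkeeping argument in the polynomial ring $\Bbbk[\bfs]\cong\Bbbk[x_1,\dots,x_n]$, which is a UFD. Fix a fractional frame $\bfs\in\Phi$ and write $u=A/B$ with $A,B\in\Bbbk[\bfs]$ coprime. For any scalar $c$ we have $u-c=(A-cB)/B$. The basic observation is that the polynomials $B$ and $A-cB$, for $c$ ranging over distinct scalars, are pairwise coprime: a common prime divisor of $A-cB$ and $A-c'B$ with $c\ne c'$ divides $B$ and hence $A$. Consequently
$$
h=\frac{B^{\,w+1-w'}\prod_{i=1}^{w'}(A-a_iB)}{\prod_{j=0}^{w}(A-b_jB)},
$$
and after moving a possible negative power of $B$ into the denominator this fraction is already in lowest terms. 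Moreover, at most one of the $A-b_jB$ is a constant: if two were constants, then $A$ and $B$ would both be constants and $u\in\Bbbk$, contrary to hypothesis. So the denominator has at least $w$ nonconstant, pairwise coprime factors. Each contributes at least one prime not shared with the others, giving $\dpb(h)\ge w$. Since each prime has degree $\ge1$, also $\ddb(h)\ge\dpb(h)$. This proves (1).

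For (2), note that $\varphi(h)$ has exactly the same shape with $u$ replaced by $\varphi(u)$, because $\varphi$ fixes the scalars $a_i,b_j$. So it suffices to check that $\varphi(u)\notin\Bbbk$ and then apply (1). A $\Bbbk$-algebra map between fields is injective. Since $F$ is purely transcendental over $\Bbbk$, the field $\Bbbk$ is algebraically closed in $F$, so $u$ is transcendental over $\Bbbk$. Hence $\varphi(u)$ is transcendental as well; in particular $\varphi(u)\notin\Bbbk$, and the factors $\varphi(u)-b_j$ are nonzero.

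For (3) and the upper bound in (4), I will exhibit an explicit frame. Take $\bfs=(x_1^{-1},x_2,\dots,x_n)\in\Phi$ and set $s_1=x_1^{-1}$. Then $u-c=(1+(b_0-c)s_1)/s_1$, and so
$$
h=\frac{s_1^{\,w+1-w'}\prod_{i}(1+(b_0-a_i)s_1)}{\prod_{j\ge1}(1+(b_0-b_j)s_1)}.
$$
Here $w+1\ge w'$ makes the numerator a polynomial. The denominator consists of $w$ distinct linear primes, so $\ddb(h)\le w$, and with (1) this gives $\ddb(h)=\dpb(h)=w$. Reading $\deg_{\bfs}h$ as $\deg f-\deg g$, this frame gives $\deg_{\bfs}h=(w+1)-w=1$, so $\fdb(h)\le 1$.

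The main obstacle is the reverse inequality $\fdb(h)\ge1$, which has to hold for every frame attaining $\deg_{\bfs}g=w$. I would argue as follows. In such a frame, the lowest-terms form above has denominator degree at least $\sum_j\deg(A-b_jB)$. This forces exactly one factor, say $A-b_kB=c$, to be a constant, and every other factor to have degree $1$. Then $A=b_kB+c$, so for $c'\ne b_k$ we get $A-c'B=(b_k-c')B+c$, which has degree $\deg B$. Also $\deg B\ge1$, since otherwise $u$ would be constant. When $w\ge1$ this gives $\deg B=1$, and the numerator has degree $(w+1-w')+w'=w+1$, so $\deg_{\bfs}h=1$. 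When $w=0$ the denominator is constant and $\deg_{\bfs}h=\deg B\ge1$. In both cases the minimum is exactly $1$, proving (4).
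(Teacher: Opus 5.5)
Your proof is correct and follows essentially the same route as the paper: the pairwise-coprime factorization of $h$ through $u=A/B$, the frame $(x_1^{-1},x_2,\dots,x_n)$ for the upper bounds in (3) and (4), and, for $\fdb(h)\ge 1$, the observation that a frame with denominator degree $w$ forces exactly one factor $A-b_jB$ to be constant. You also supply the short justification, via injectivity of $\varphi$, that $\varphi(u)\notin\Bbbk$ in (2), which the paper leaves implicit.
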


\begin{proof}
Replacing $u$ by $u-b_0$ and $a_i$, $b_j$ by $a_i-b_0$, $b_j-b_0$, we may assume that $b_0=0$.  The first hypothesis in (3), (4) then becomes $u = x_1$.

(1) Pick any fractional frame $\bfs$ and write $u$ as a 
rational function $u=\frac{f}{g}$ where $f$ and $g$ are 
co-prime polynomials in $\Bbbk[\bfs]$. Then 
\begin{equation}  
\label{E8.2.1}\tag{E8.2.1}
\begin{aligned}
h&=\frac{(u-a_1)(u-a_2)\cdots (u-a_{w'})}{u(u-b_1)\cdots (u-b_w)}\\
 &=g^{w-w'+1}\frac{(f-a_1 g)(f-a_2 g)
 \cdots (f-a_{w'}g)}{f(f-b_1 g)\cdots (f-b_w g)}
\end{aligned}
\end{equation}
and the polynomials in $G:=\{f-a_i g\}_{i=1}^{w'}\sqcup 
\{f-b_j g\}_{j=0}^{w}\sqcup\{g\}$ are pairwise coprime. 
Since $u$ is not in $\Bbbk$, at most one of $f$, $g$ is a scalar, so at most one element of $G$
is a scalar. Thus the denominator of $h$, which is either
$$
f(f-b_1 g)\cdots (f-b_w g) \qquad\text{or} 
\qquad g^{w'-w-1}f(f-b_1 g)\cdots (f-b_w g)
$$
depending on whether $w' \le w+1$ or $w'\geq w+1$, is a product 
of at least $w$ many non-scalar relatively coprime factors. So 
$\dpb(h)\geq w$. The inequality $\ddb(h)\geq \dpb(h)$ has 
already been noted.

(2) Note that $\varphi(h)$ has the form 
$\frac{(\varphi(u)-a_1)(\varphi(u)-a_2)\cdots (\varphi(u)-a_{w'})}
{(\varphi(u)-b_0)(\varphi(u)-b_1)\cdots (\varphi(u)-b_w)}$
where $\varphi(u)\in F\setminus \Bbbk$. The assertion follows 
from part (1).

(3) Let $\bfs =(x_1^{-1}, x_2,\cdots,x_n)$. Then
$$h=\frac{s_1^{w-w'+1}(1-a_1 s_1)(1-a_{2}s_1)\cdots (1-a_{w'}s_1)}
{(1-b_1 s_1)\cdots (1-b_w s_1)}$$
which implies that $\ddb(h)\leq w$. The assertion follows from 
part (1). In this case, one also sees that $\fdb(h)\leq 1$. 

(4) We continue the proof of part (1) under the extra hypotheses 
that $u=x_1$ and $w+1\geq w'$. Let $\bfs \in \Phi$ such that the 
denominator of $h$ in \eqref{E8.0.1} has $\deg_{\bfs} = w$. But 
in the form \eqref{E8.2.1}, the denominator of $h$ is 
$f(f-b_1 g)\cdots (f-b_w g)$, due to $w-w'+1 \ge 0$. Thus, one 
of the elements in $\{f-b_j g\}_{j=0}^w$ must be a scalar. This 
implies that none of $f-a_i g $ or $g$ is a scalar. So the 
numerator of $h$ has degree at least $w+1$, and hence 
$\deg_{\bfs} h \ge 1$. Thus $\fdb(h)\geq 1$. By the proof of 
part (3), $\fdb(h)\leq 1$. The assertion follows.
\end{proof}

\begin{definition}
\label{zzdef8.3}
Let $V$ be a nonempty subset of $F$. 
\begin{enumerate}
\item[(1)]
The {\it denominator degree bound} 
of $V$ is defined to be
$$\ddb(V) :=\sup\{\ddb(h)\mid h\in V\}.$$
\item[(2)]
The {\it denominator prime-divisor bound} 
of $V$ is defined to be
$$\dpb(V) :=\sup\{\dpb(h)\mid h\in V\}.$$
\end{enumerate}
\end{definition}

\begin{lemma}
\label{zzlem8.4}
Let $K$ be the Poisson field $K\{\bff\}$ for some $\bff\in
\Bbbk(x,y)$. Let $V$ be the subalgebra ${^1\Gamma}_{0}(K)$.
\begin{enumerate}
\item[(1)]
If $\bff\in \Bbbk[x,y]$, then $\ddb(V)=0$.
\item[(2)]
Write $\bff=\frac{a}{b}$ where $a,b$ are in $\Bbbk[x,y]$, and 
let $w$ be the number of non-associate prime divisors of $b$. 
Then $\dpb(V)\leq w \le \deg b$.
\end{enumerate}
\end{lemma}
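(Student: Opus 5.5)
The plan is to bound ${^1\Gamma}_0(K)$ from above by a polynomial-type ring, via Theorem \ref{zzthm3.8}, and then read off denominators in the standard frame $\bfs=(x,y)$.

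For part (1), Corollary \ref{zzcor3.9} gives $V\subseteq \Bbbk[x,y]$ directly. Each $h\in V$ is then a polynomial in the fractional frame $(x,y)\in\Phi$, so its reduced denominator is a scalar. Hence $\ddb(h)=0$ for all $h\in V$, and $\ddb(V)=0$.

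For part (2), write $b=p_1^{m_1}\cdots p_w^{m_w}$ with the $p_j$ pairwise non-associate primes of $\Bbbk[x,y]$. The localization $A:=\Bbbk[x,y][p_1^{-1},\dots,p_w^{-1}]=\Bbbk[x,y][b^{-1}]$ is the candidate Poisson subalgebra. We first check that $A$ is closed under the bracket. The bracket on $K$ is a biderivation, and $\{x,y\}=a/b\in A$, so $\{\Bbbk[x,y],\Bbbk[x,y]\}\subseteq A$. Brackets with $b^{-1}$ are handled by $\{u,b^{-1}\}=-b^{-2}\{u,b\}\in A$. Next, $A$ is noetherian, being a localization of a polynomial ring. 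It is a normal domain, since it is a localization of a UFD. Its GK-dimension is $2$, and its fraction field is $K$. Theorem \ref{zzthm3.8} then yields $V\subseteq A$.

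Now take $h\in V$. We have $h=c/b^k$ for some $c\in\Bbbk[x,y]$ and $k\ge0$. After reducing to lowest terms in $\Bbbk[x,y]$, the denominator divides $b^k$. It is therefore a product of powers of at most $w$ of the primes $p_j$, so with $\bfs=(x,y)$ we get $\dpb(h)\le w$. Taking the supremum gives $\dpb(V)\le w$. Finally, $w\le\deg b$ holds because each non-associate prime factor of $b$ has degree at least $1$. (If $b\in\kx$, then $w=0$ and the argument reduces to part (1).)

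The only point needing care is verifying the hypotheses of Theorem \ref{zzthm3.8} for $A$, mainly that it is a Poisson subalgebra. The biderivation computation above settles this, so I expect no real obstacle.
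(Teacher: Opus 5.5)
Your proposal is correct and follows the paper's proof essentially verbatim: Corollary~\ref{zzcor3.9} gives (1), and for (2) you apply Theorem~\ref{zzthm3.8} to $A=\Bbbk[x,y][b^{-1}]$ and then read off denominators in the frame $(x,y)$. Your explicit check that $A$ is a Poisson subalgebra, and your reduction of $c/b^k$ to lowest terms (needed because $\dpb$ is defined using coprime $(f,g)$), are small points the paper leaves implicit.
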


\begin{proof}
(1) By Corollary \ref{zzcor3.9}, $V$ is a subset of $\Bbbk[x,y]$.
Since every element in $\Bbbk[x,y]$ has $\ddb$ 0, the assertion
follows.

(2) Let $A$ be the localization $\Bbbk[x,y][b^{-1}]$. Then 
$A\subseteq K$ is a noetherian normal domain such that $A$ is a Poisson 
subalgebra of $K$ and $Q(A)=K$. By Theorem \ref{zzthm3.8}, 
$V$ is a subset of $A$. It remains to show that every
element in $A$ has $\dpb$ bounded by $w$.

Write $b=p_1^{m_1}\cdots p_{w}^{m_w}$ as a prime decomposition. 
Then $w\leq \deg b$. Since every element $h$ in $A$ is of the 
form $c b^{-n}$ with $c \in \Bbbk[x,y]$ and $n \in \Znn$, one 
can write $h$ as $\frac{c}{p_1^{m_1n}\cdots p_{w}^{m_wn}}$. 
Hence $\dpb(h)\leq w$. The assertion follows.
\end{proof}

Now we are ready to give an example where $K\{\bff\}$ is not 
isomorphic to $K\{\bfg\}$ for any $\bfg\in \Bbbk[x,y]$.

\begin{example}
\label{zzexa8.5}  
Let $h \in \Bbbk(x,y)$ be an element with positive $\ddb$, see 
Lemma \ref{zzlem8.2}. Let $f(t) \in \Bbbk[t]$ be a polynomial of degree 
$\geq 2$ and $\bff :=xyf(h)$. By Corollary \ref{zzcor3.11}, 
$h\in {^1\Gamma}_{0}(K)$ where $K:=K\{\bff\}$. Since 
$\ddb(h)\geq 1$, we have $\ddb(V)\geq 1$ where 
$V={^1\Gamma}_{0}(K)$. If $K\cong K\{\bfg\}$ for some 
$\bfg\in \Bbbk[x,y]$, then $\ddb(V)=0$ by Lemma 
\ref{zzlem8.4}(1), yielding a contradiction.
\end{example}

We can generalize the above example to the following theorem.
For every nonnegative integer $w$, 
let $G_{w}$ be the set of elements $\bfg \in \Bbbk(x,y)$ such that 
$\bfg =a/b$ for some coprime polynomials $a,b \in \Bbbk[x,y]$ and that the number of non-associate prime divisors of $b$ is no more than $w$.

\begin{theorem}
\label{zzthm8.6}
Let $w$ be a nonnegative integer and let 
$h\in \Bbbk(x,y)$ be a nonzero element such that 
$\dpb(\varphi(h))>w$ for every $\Bbbk$-algebra map 
$\varphi: \Bbbk(x,y) \to \Bbbk(x,y)$ {\rm{(}}see Lemma \ref{zzlem8.2}(2){\rm{)}}. 
Let $f(t) \in \Bbbk[t]$ be a 
polynomial of degree at least $2$. Then there is no Poisson 
algebra morphism from $K\{xyf(h)\}$ to $K\{\bfg\}$ for any $\bfg\in G_w$.
\end{theorem}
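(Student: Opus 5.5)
Suppose, for a contradiction, that $\phi : K\{xyf(h)\} \to K\{\bfg\}$ is a Poisson algebra morphism with $\bfg \in G_w$. The plan is to generalize Example \ref{zzexa8.5}: push the element $h$ of ${^1\Gamma}_0$ forward along $\phi$, and use the fact that elements of ${^1\Gamma}_0(K\{\bfg\})$ have small $\dpb$.

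\textbf{Step 1: $h$ lies in ${^1\Gamma}_0$.} Set $K := K\{xyf(h)\}$ and $n := \deg f \ge 2$. We have $\{x,y\} = xy f(h)$, and $x,y$ are nonzero. Since $\dpb(h) > w \ge 0$ (take $\varphi = \Id$), $h$ is not a framed polynomial, so $h \notin \Bbbk$. Corollary \ref{zzcor3.11}, applied with $u=x$, $v=y$ and $d=1<n$, gives $h \in {^1\Gamma}_0(K)$.

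\textbf{Step 2: push $h$ forward.} By Lemma \ref{zzlem3.7}(1) with $v=0$ (no nontriviality hypothesis is needed), $\phi$ maps ${^1\Gamma}_0(K)$ into ${^1\Gamma}_0(K\{\bfg\})$. Hence $\phi(h) \in {^1\Gamma}_0(K\{\bfg\})$.

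\textbf{Step 3: bound $\dpb$ in the target.} Write $\bfg = a/b$ with $a,b$ coprime and $b$ having at most $w$ non-associate prime divisors. Lemma \ref{zzlem8.4}(2) gives $\dpb({^1\Gamma}_0(K\{\bfg\})) \le w$, so $\dpb(\phi(h)) \le w$. The edge case $\bfg = 0$ needs a separate remark:
\begin{itemize}
\item If $\bfg = 0$, the bracket on the target is trivial. Then $\phi(xyf(h)) = \{\phi(x),\phi(y)\} = 0$, which is impossible because $\phi$ is an injective field map and $xyf(h) \ne 0$.
\end{itemize}
So we may assume $\bfg \neq 0$, and Lemma \ref{zzlem8.4}(2) applies as stated.

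\textbf{Step 4: contradiction.} The morphism $\phi$ is a $\Bbbk$-algebra map $\Bbbk(x,y) \to \Bbbk(x,y)$, so the hypothesis on $h$ gives $\dpb(\phi(h)) > w$. This contradicts Step 3.

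\textbf{Main obstacle.} The only delicate point is checking that each cited result applies. Corollary \ref{zzcor3.11} needs $h \notin \Bbbk$, which Step 1 verifies. Lemma \ref{zzlem3.7}(1) is used with $v=0$, so valuations of the target that restrict to trivial ones cause no problem. In Lemma \ref{zzlem8.4}(2), the bound uses only the denominator $b$ of $\bfg$; its proof goes through Theorem \ref{zzthm3.8} applied to $\Bbbk[x,y][b^{-1}]$, which is a Poisson subalgebra of $K\{\bfg\}$ because $\{x,y\} = a b^{-1}$ lies in it. Once these are confirmed, the argument is a direct chain of these results.
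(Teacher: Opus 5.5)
Your proposal is correct and is the paper's own argument: Corollary~\ref{zzcor3.11} (with $d=1<\deg f$) puts $h$ in ${^1\Gamma}_0(K\{xyf(h)\})$, Lemma~\ref{zzlem3.7}(1) sends $\phi(h)$ into ${^1\Gamma}_0(K\{\bfg\})$, and Lemma~\ref{zzlem8.4}(2) forces $\dpb(\phi(h))\le w$, which contradicts the hypothesis. Your extra checks, namely $h\notin\Bbbk$ via $\dpb(h)>0$ and the degenerate case $\bfg=0$, are left implicit in the paper, and you handle them correctly.
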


\begin{proof}
Let $K :=K\{xyf(h)\}$ and $P:=K\{\bfg\}$ where $\bfg\in G_w$. Suppose there is a 
Poisson algebra morphism $\varphi: K\to P$. So we may 
consider $K$ as a Poisson subfield of $P$ via the embedding 
$\varphi$. By Lemma \ref{zzlem3.7}(1) and Corollary 
\ref{zzcor3.11}, 
$h\in {^1\Gamma}_{0}(K) \subseteq {^1\Gamma}_{0}(P)$. 
Since $\dpb(\varphi(h))> w$ by assumption, we have $\dpb(V)>w$ 
where $V={^1\Gamma}_{0}(P)$. By Lemma \ref{zzlem8.4}(2), 
$\dpb(V)\leq w$, yielding a contradiction.
\end{proof}

Finally we list some open questions.

\begin{questions}
\label{zzque8.7}
Let $K$ be a Poisson field of transcendence degree at least 2.
\begin{enumerate}
\item[(1)]
Is there a connection between the following three properties
for Poisson fields $K$?
\begin{enumerate}
\item[(a)] 
$K$ has the Dixmier property;
\item[(b)]
$K$ has only finitely many Poisson subfields with the same transcendence degree as $K$;
\item[(c)]
$\Aut_{Poi}(K)$ is finite.
\end{enumerate}
\item[(2)]
What is the structure of the Poisson Cremona group $Cr_2(\Bbbk,1) = \Aut_{Poi}(\Kweyl)$?
\item[(3)]
Let $K = K\{\bff\}$ where $\bff$ is a nonzero polynomial in $\Bbbk[x,y]$.  Dumas proved in \cite[Proposition, p.12]{Dumas} that if $\deg \bff \le 2$, then $K$ is isomorphic to either $\Kweyl$ or some $K_q$ (Corollary \ref{zzcor1.2}).  This cannot hold when $\deg \bff > 2$, in view of Proposition \ref{zzpro4.6}. Can such Poisson fields (with $\bff \in \Bbbk[x,y]$) be classified?
\item[(4)]
Suppose $K$ is a Poisson field (say with $\trdeg K = 2$) such that $H = \Aut_{Poi}(K)$ is finite. Then $K^H$ is a Poisson field with the same transcendence degree as $K$.  Does $K^H$ have the Dixmier property?
\item[(5)]
Do there exist Poisson fields $K$ (say with $\trdeg K = 2$) which are \emph{minimal} in the sense that $K$ has no proper Poisson subfields with the same transcendence degree? 
\end{enumerate}
\end{questions}

%%%%%%%%%%%%%%%%%%%%%%%%%%%%%%%%
\section{Appendix: Some basic Poisson algebra items}
\label{zzsec9}

Recall that, in this paper, a Poisson algebra is commutative by definition.

\begin{lemma}  
\label{zzlem9.1}
Let $A$ be a Poisson algebra, $x,y \in A$, 
and $a,b,c,d \in \Znn$. Then
$$
\{ x^ay^b, x^cy^d \} 
= (ad-bc) x^{a+c-1} y^{b+d-1} \{x,y\}.
$$
If $x$ and $y$ are invertible in $A$, the 
same formula holds for all $a,b,c,d \in \ZZ$.
\end{lemma}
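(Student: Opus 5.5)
We prove the monomial identity $\{x^ay^b, x^cy^d\} = (ad-bc)x^{a+c-1}y^{b+d-1}\{x,y\}$ using only the Leibniz rule and antisymmetry; no earlier result is needed. The statement makes sense because whenever a factor $x^{a+c-1}$ or $y^{b+d-1}$ would have a negative exponent, the coefficient of that term vanishes. We will keep track of this carefully.

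The first step is the one-variable case. For $a \in \Znn$ we have $\{x^a, z\} = a x^{a-1}\{x,z\}$, interpreted as $0$ when $a=0$. This follows by induction on $a$ from the Leibniz rule $\{x^{a}, z\} = x\{x^{a-1},z\} + x^{a-1}\{x,z\}$. The base case $a=0$ holds because $\{1,z\}=0$, since $\{1,z\} = \{1\cdot 1,z\} = 2\{1,z\}$. By antisymmetry the same formula holds in the second slot.

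The second step expands both arguments by Leibniz:
\[
\{x^ay^b, x^cy^d\} = x^a x^c\{y^b,y^d\} + x^a y^d\{y^b,x^c\} + y^b x^c\{x^a,y^d\} + y^b y^d\{x^a,x^c\}.
\]
The first and last terms vanish. Indeed, by the first step $\{x^a,x^c\} = ac\,x^{a-1}x^{c-1}\{x,x\} = 0$, and similarly for the powers of $y$. Applying the first step to the middle terms gives
\[
y^b x^c \cdot a d\, x^{a-1}y^{d-1}\{x,y\} \quad\text{and}\quad x^a y^d \cdot b c\, y^{b-1}x^{c-1}\{y,x\}.
\]
These combine to $(ad-bc)x^{a+c-1}y^{b+d-1}\{x,y\}$. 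The only care needed is the bookkeeping of exponents: when, say, $a=0$, the term $ad\,x^{a-1}$ is simply $0$ and no negative power actually appears. So the formula should be read term by term in this way, and this is the main point to check in the polynomial case.

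For invertible $x,y$ and arbitrary integer exponents, we first establish $\{x^{-1},z\} = -x^{-2}\{x,z\}$. This comes from applying Leibniz to $0 = \{x x^{-1}, z\} = x\{x^{-1},z\} + x^{-1}\{x,z\}$. Combined with the first step, it gives $\{x^a,z\} = a x^{a-1}\{x,z\}$ for all $a\in\ZZ$: for $a<0$, write $x^a = (x^{-1})^{-a}$ and use the chain computation $\{(x^{-1})^{m}, z\} = m x^{-(m-1)}\{x^{-1},z\} = -m x^{-m-1}\{x,z\}$. The Leibniz expansion of the second step is then identical, now with all exponents legitimately in $\ZZ$, and it yields the same formula.
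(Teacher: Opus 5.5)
Your proof is correct and takes essentially the same route as the paper: expand $\{x^ay^b,x^cy^d\}$ by the Leibniz rule, apply the power rule $\{x^a,z\}=ax^{a-1}\{x,z\}$, and use the vanishing of $\{x,x^c\}$ and $\{y,y^d\}$. The paper expands one argument at a time and leaves the power rule (including the negative-exponent case via $\{x^{-1},z\}=-x^{-2}\{x,z\}$) and the exponent bookkeeping implicit; your write-up makes those steps explicit.
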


Note: For $A= K_q$ and the variables $x$, $y$, this is Equation \eqref{E1.5.1}.

\begin{proof}
This is a straight bracket calculation:
\begin{align*}
\{ x^ay^b, x^cy^d \} &= x^a \{y^b, x^cy^d\} + y^b \{x^a,x^cy^d\}  \\
&= b x^a y^{b-1} \{y,x^cy^d\} + a x^{a-1} y^b \{x, x^cy^d \}  \\
&= b x^a y^{b-1+d} \{y,x^c\} + a x^{a-1+c} y^b \{x,y^d\}  \\
&= bc x^{a+c-1} y^{b-1+d} \{y,x\} + ad x^{a-1+c} y^{b+d-1} \{x,y\}  \\
&= (ad-bc) x^{a+c-1} y^{b+d-1} \{x,y\}
\end{align*}
for allowable $a$, $b$, $c$, $d$.
\end{proof}

It is well known that a Poisson bracket on the polynomial algebra $\Bbbk[x,y]$ is uniquely determined by $\{x,y\}$, and that every polynomial in $\Bbbk[x,y]$ occurs as $\{x,y\}$ for a (unique) Poisson bracket $\{-,-\}$ (e.g., \cite[p.235, last paragraph of \S9.1.1]{LGPV}).  Here we note the corresponding facts for Poisson brackets on $\Bbbk(x,y)$.

\begin{lemma}  \label{zzlem9.2}
Let $K = \Bbbk(x,y)$ with a Poisson bracket $\{-,-\}$, and set $f := \{x,y\}$. Then $\{g,h\} = \{g,h\}_wf$ for all $g,h \in K$.
\end{lemma}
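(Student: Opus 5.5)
The plan is to show that for fixed $g \in K$, both sides of $\{g,h\} = \{g,h\}_w f$ define $\Bbbk$-derivations of $K$ in the variable $h$, and then to reduce to the generators. Any derivation of $K=\Bbbk(x,y)$ over $\Bbbk$ is determined by its values on $x$ and $y$. The reason is that $K$ is a localization of $\Bbbk[x,y]$. On polynomials, the Leibniz rule forces $D(u) = u_x D(x) + u_y D(y)$. This extends uniquely to quotients via $D(u/v) = (vD(u) - uD(v))/v^2$. So it suffices to check the identity on generators in each argument.

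The first step is to fix $h$ and compare the two maps $g \mapsto \{g,h\}$ and $g \mapsto \{g,h\}_w f$. Both are $\Bbbk$-derivations of $K$. For the Poisson bracket this is the Leibniz rule together with antisymmetry. For the second map it holds because $\{-,h\}_w = h_y\,\partial_x - h_x\,\partial_y$ is a derivation and multiplication by $f$ preserves this. Hence it suffices to verify the identity for $g \in \{x,y\}$.

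Now fix $g = x$ and regard both sides as derivations in $h$. At $h=x$ both sides vanish, since $\{x,x\}=0=\{x,x\}_w$. At $h=y$ we get $\{x,y\} = f = 1\cdot f = \{x,y\}_w f$, using $\{x,y\}_w = 1$. The case $g=y$ follows the same way, or by antisymmetry of both brackets.

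Combining the two steps, the identity holds for all $g,h$. The only point needing care is the uniqueness of a derivation on the rational function field given its values on $x$ and $y$; I regard this as standard (quotient rule) and do not expect a real obstacle.
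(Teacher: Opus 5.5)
Your proof is correct, but it takes a different route from the paper's.

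\textbf{The paper's route.} The paper works bottom-up. It applies the monomial formula of Lemma~\ref{zzlem9.1} to both brackets to get
$\{x^ay^b,x^cy^d\} = (ad-bc)x^{a+c-1}y^{b+d-1}f = \{x^ay^b,x^cy^d\}_w f$,
and extends this to $\Bbbk[x,y]$ by bilinearity. For $g=st^{-1}$ and $h=uv^{-1}$ it then writes out the four-term quotient-rule expansion of $\{st^{-1},uv^{-1}\}$ in terms of brackets of polynomials. That expansion has the same shape for $\{-,-\}$ and for $\{-,-\}_w$, which finishes the proof.

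\textbf{Your route.} You isolate one structural fact: a $\Bbbk$-derivation of $\Bbbk(x,y)$ is determined by its values on $x$ and $y$. You apply it once in each argument: in $g$ to reduce to $g\in\{x,y\}$, and then in $h$ to reduce to the generator pairs.

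\textbf{Comparison.}
\begin{itemize}
\item The two proofs rest on the same mechanism, since the paper's four-term expansion is just the quotient rule made explicit.
\item Your packaging avoids both the monomial computation and that expansion.
\item It also proves more: any $\Bbbk$-bilinear biderivation of $\Bbbk(x,y)$ is determined by its values on pairs of generators, and the argument carries over verbatim to $\Bbbk(x_1,\dots,x_n)$.
\item The paper's version has the advantage of being purely computational and of reusing Lemma~\ref{zzlem9.1}, which it needs elsewhere anyway.
\item The paper itself invokes your principle (a derivation of $\Bbbk(x,y)$ killing $x$ and $y$ is zero) in the proof of Corollary~\ref{zzcor9.4}.
\end{itemize}
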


\begin{proof}
If $g=0$ or $h=0$, the result is trivial, so assume $g,h \ne 0$.

First consider $g = x^ay^b$ and $h = x^cy^d$ for $a,b,c,d \in \Znn$. Applying Lemma \ref{zzlem9.1} to both $\{-,-\}$ and $\{-,-\}_w$, we obtain
$$
\{ x^ay^b, x^cy^d \} = (ad-bc) x^{a+c-1} y^{b+d-1} f = \{ x^ay^b, x^cy^d \}_w f.
$$
It immediately follows that $\{g,h\} = \{g,h\}_w f$ for all $g,h \in \Bbbk[x,y]$.

In the general case, $g = st^{-1}$ and $h = uv^{-1}$ for some nonzero $s,t,u,v \in \Bbbk[x,y]$. Then
\begin{align*}
\{g,h\} &= \{ st^{-1}, uv^{-1} \} = \bigl( \{s,uv^{-1}\} t - s \{t,uv^{-1}\} \bigr) t^{-2}  \\
&= \bigl( \{s,u\} v - u \{s,v\} \bigr) v^{-2} t^{-1} - s \bigl( \{t,u\} v - u \{t,v\} \bigr) v^{-2} t^{-2}  \\
&= \{s,u\} v^{-1} t^{-1} - \{s,v\} uv^{-2}t^{-1} - \{t,u\} sv^{-1}t^{-2} + \{t,v\} suv^{-2}t^{-2} \,.
\end{align*}
The same equation holds with $\{-,-\}$ replaced by $\{-,-\}_w$, and thus, invoking $\{s,u\} = \{s,u\}_w f$ etc., we conclude that $\{g,h\} = \{g,h\}_wf$.
\end{proof}

\begin{lemma}  \label{zzlem9.3}
Let $A$ be a  Poisson algebra and $f \in A$. Set $\{a,b\}' := \{a,b\} f$ for  $a,b\in A$. Then $\{-,-\}'$ is a Poisson bracket on $A$ if and only if
\begin{equation}  \label{E9.3.1}  \tag{E9.3.1}
\bigl( \{a,b\}\{f,c\} + \{c,a\}\{f,b\} + \{b,c\}\{f,a\} \bigr) f = 0 \qquad \forall\; a,b,c \in A.
\end{equation}
\end{lemma}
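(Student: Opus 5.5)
We will verify the three Poisson axioms for $\{-,-\}'$ directly. Bilinearity and antisymmetry of $\{-,-\}'$ are inherited from $\{-,-\}$, since multiplication by the fixed element $f$ is $A$-linear. For the Leibniz rule in the first slot we compute
$$\{ab,c\}' = \{ab,c\}f = \bigl(a\{b,c\} + b\{a,c\}\bigr)f = a\{b,c\}' + b\{a,c\}'.$$
The second slot then follows by antisymmetry. So the only axiom that can fail is the Jacobi identity, and the statement reduces to showing that Jacobi for $\{-,-\}'$ is equivalent to \eqref{E9.3.1}.

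To see this, we expand one Jacobi term using the Leibniz rule of the original bracket:
$$\{\{a,b\}',c\}' = \{\{a,b\}f,\,c\}f = \bigl(f\{\{a,b\},c\} + \{a,b\}\{f,c\}\bigr)f.$$
Next we sum cyclically over $(a,b,c)$. The first contributions give $f^2$ times the Jacobiator of $\{-,-\}$, which vanishes because $\{-,-\}$ is a Poisson bracket. The remaining contributions give exactly
$$\bigl(\{a,b\}\{f,c\} + \{b,c\}\{f,a\} + \{c,a\}\{f,b\}\bigr)f.$$
Up to reordering the summands, this is the left side of \eqref{E9.3.1}.

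Hence the cyclic sum $\{\{a,b\}',c\}'+\{\{b,c\}',a\}'+\{\{c,a\}',b\}'$ vanishes for all $a,b,c$ if and only if \eqref{E9.3.1} holds. Both implications follow simultaneously.

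The only point requiring care is that the cyclic permutations pair correctly with the terms in \eqref{E9.3.1}:
\begin{itemize}
\item $(a,b,c)$ gives $\{a,b\}\{f,c\}$;
\item $(b,c,a)$ gives $\{b,c\}\{f,a\}$;
\item $(c,a,b)$ gives $\{c,a\}\{f,b\}$.
\end{itemize}
This matches the expression in the statement. Commutativity of $A$ is used to move the factor $f$ freely throughout.
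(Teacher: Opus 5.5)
Your proposal is correct and follows the paper's proof essentially verbatim. Both reduce to the Jacobi identity, since bilinearity, skew-symmetry and the biderivation property are inherited. Both then expand $\{\{a,b\}f,c\}f = \{\{a,b\},c\}f^2 + \{a,b\}\{f,c\}f$ and observe that, after cyclic summation, the $f^2$ terms cancel by the Jacobi identity for $\{-,-\}$, leaving exactly the left side of \eqref{E9.3.1}.
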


\begin{proof}
It is clear that $\{-,-\}'$ is a bilinear, skew-symmetric biderivation on $A$, so it is a Poisson bracket if and only if it satisfies the Jacobi identity. Observe that
$$
\{ \{u,v\}', w\}' = \{ \{u,v\} f, w\} f = \{ \{u,v\},w \} f^2 + \{u,v\} \{f,w\} f
$$
for $u,v,w \in A$. Consequently,
\begin{align*}
\{\{a,b\}',c\}' + \{\{c,a\}',b\}' &+ \{\{b,c\}',a\}' = \{\{a,b\},c\}  f^2 + \{a,b\}\{f,c\} f  \\
&\qquad\qquad\qquad\qquad + \{\{c,a\},b\} f^2 + \{c,a\}\{f,b\} f  \\
&\qquad\qquad\qquad\qquad + \{\{b,c\},a\} f^2 + \{b,c\}\{f,a\} f  \\
&= \{a,b\}\{f,c\} f + \{c,a\}\{f,b\} f + \{b,c\}\{f,a\} f
\end{align*}
for $a,b,c \in A$.
Therefore $\{-,-\}'$ satisfies the Jacobi identity if and only if \eqref{E9.3.1} holds.
\end{proof}

\begin{corollary}  \label{zzcor9.4}
Let $K = \Bbbk(x,y)$ and $\bff \in K$. Then there is a unique Poisson bracket $\{-,-\}$ on $K$ such that $\{x,y\} = \bff$.
\end{corollary}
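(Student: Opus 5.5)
Uniqueness comes from Lemma \ref{zzlem9.2}: any Poisson bracket on $K$ with $\{x,y\} = \bff$ satisfies $\{g,h\} = \{g,h\}_w \bff$ for all $g,h \in K$, so it is determined by $\bff$. For existence, the natural candidate is $\{g,h\} := \{g,h\}_w \bff$. Here $\{-,-\}_w$ is the Weyl bracket on $K$, given by the usual Jacobian formula and extended from $\Bbbk[x,y]$ to $K$ via partial derivatives. I will take for granted that $\{-,-\}_w$ is a Poisson bracket on $K$: it is bilinear, skew-symmetric, a biderivation in each argument since the partial derivatives are derivations of $K$, and the Jacobi identity is the standard fact for the Jacobian bracket (a polynomial identity in partial derivatives that holds on $\Bbbk[x,y]$ and hence on $K$ as a formal identity). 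With this candidate, $\{x,y\} = \{x,y\}_w \bff = \bff$.

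To verify that the candidate satisfies the Jacobi identity, I will apply Lemma \ref{zzlem9.3} with $A = K$, the bracket $\{-,-\}_w$, and $f = \bff$. The candidate is then a Poisson bracket if and only if
$$
\bigl( \{a,b\}_w\{\bff,c\}_w + \{c,a\}_w\{\bff,b\}_w + \{b,c\}_w\{\bff,a\}_w \bigr)\bff = 0 \qquad \forall\; a,b,c \in K.
$$
The key step is to show that the expression in parentheses vanishes identically. This is a dimension-two phenomenon: for fixed $a,b,c$, the map
$$
T(u) := \{a,b\}_w\{u,c\}_w + \{c,a\}_w\{u,b\}_w + \{b,c\}_w\{u,a\}_w
$$
is a derivation of $K$ in $u$, and it is alternating trilinear in $(a,b,c)$. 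Writing $\{u,v\}_w = \det\bigl(\nabla u,\nabla v\bigr)$, where $\nabla u := (u_x,u_y)$, we get
$$
T(u) = \det(\nabla a,\nabla b)\det(\nabla u,\nabla c) + \det(\nabla c,\nabla a)\det(\nabla u,\nabla b) + \det(\nabla b,\nabla c)\det(\nabla u,\nabla a),
$$
which is alternating in the vectors $\nabla a,\nabla b,\nabla c \in K^2$.

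Any alternating trilinear form on the two-dimensional $K$-space $K^2$ is zero. Equivalently, if $\nabla a,\nabla b,\nabla c$ are linearly dependent, say $\nabla c = \lambda\nabla a + \mu\nabla b$, then substituting and expanding shows the terms cancel. Hence $T(\bff) = 0$, the displayed condition holds, and Lemma \ref{zzlem9.3} shows that $\{-,-\}_w\bff$ is a Poisson bracket.

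The only real obstacle is this cancellation identity, which is a short linear-algebra check, so I would write it explicitly via the dependence substitution. Alternatively, one can note that it is the Plücker-type identity $\det(v_1,v_2)v_3 + \det(v_3,v_1)v_2 + \det(v_2,v_3)v_1 = 0$ in $K^2$, paired with $\nabla\bff$ through $\det(\nabla\bff,-)$.
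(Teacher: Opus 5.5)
Your proposal is correct and follows the paper's proof: uniqueness comes from Lemma \ref{zzlem9.2}, and existence from applying Lemma \ref{zzlem9.3} to $\{-,-\}_w$ and $\bff$, which reduces everything to the vanishing of $\{a,b\}_w\{\bff,c\}_w + \{c,a\}_w\{\bff,b\}_w + \{b,c\}_w\{\bff,a\}_w$. The only difference is how you check that identity: the paper treats it as a derivation in $c$ and verifies that it kills $x$ and $y$, while you deduce it from the vanishing of alternating trilinear forms on $K^2$; these are two presentations of the same two-dimensional cancellation.
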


\begin{proof}
Uniqueness is clear in case of existence. If such a bracket exists, it must equal $\{-,-\}_w \bff$. We apply Lemma \ref{zzlem9.3} with $\{-,-\}$ and $\{-,-\}'$ replaced by $\{-,-\}_w$ and $\{-,-\}_w \bff$.

Let $a,b \in K$, and consider the derivation
$$
\delta := \{a,b\}_w\{\bff,-\}_w + \{-,a\}_w\{\bff,b\}_w + \{b,-\}_w\{\bff,a\}_w \,.
$$
We compute that
$$
\delta(x) = (a_xb_y - a_yb_x) (- \bff_y) + a_y (\bff_xb_y - \bff_yb_x) + (- b_y) (\bff_xa_y - \bff_ya_x) = 0,
$$
and similarly $\delta(y)= 0$. Consequently, $\delta = 0$, that is,
$$
\{a,b\}_w\{\bff,c\}_w + \{c,a\}_w\{\bff,b\}_w + \{b,c\}_w\{\bff,a\}_w = 0
$$
for all $c \in K$. This verifies \eqref{E9.3.1} for $\{-,-\}_w$.
\end{proof}

\begin{example}  \label{zzexa9.5}
There exists a Poisson field $K$ with an element $f \in K$ such that the biderivation $\{-,-\} f$ on $K$ is not a Poisson bracket.

Take $K = \Bbbk(x_1,x_2,y_1,y_2)$ to be the rank $2$ Weyl Poisson field, where
$$
\{x_i,x_j\} = \{y_i,y_j\} = 0 \qquad\text{and}\qquad \{x_i,y_j\} = \delta_{ij}
$$
for $i,j=1,2$. Observe that
$$
\{x_1,y_1\} \{x_2,y_2\} + \{y_2,x_1\} \{x_2,y_1\} + \{y_1,y_2\} \{x_2,x_1\} = 1,
$$
whence \eqref{E9.3.1} fails with $a=x_1$, $b=y_1$, $c=y_2$ and $f=x_2$. Therefore $\{-,-\} x_2$ is not a Poisson bracket on $K$.
\end{example}

\subsection*{Acknowledgments} 
The authors thank Hongdi Huang, Xingting Wang, and Milen Yakimov for many valuable conversations and correspondences on the subject. 
The first-named author was supported by the US National Science Foundation 
grant DMS-1601184, and the second-named author by the US National Science Foundation 
grants DMS-2001015 and DMS-2302087.

\providecommand{\bysame}{\leavevmode\hbox to3em{\hrulefill}\thinspace}
\providecommand{\MR}{\relax\ifhmode\unskip\space\fi MR }
\providecommand{\MRhref}[2]{%

\href{http://www.ams.org/mathscinet-getitem?mr=#1}{#2} }
\providecommand{\href}[2]{#2}

\end{document}